\definecolor{RubineRed}{HTML}{E6004C}
\definecolor{NiceBlue}{HTML}{0099E6}
\definecolor{NiceGreen}{HTML}{3BB300}
\definecolor{NiceGray}{HTML}{818589}
\definecolor{RED}{HTML}{FF0000}
\definecolor{DarkRed}{HTML}{8B0000}
\DeclareRobustCommand{\neswarrow}{%
  \mathrel{\text{\ooalign{$\swarrow$\cr$\nearrow$}}}%
}
\newcommand{\setword}[2]{
  \phantomsection
  #1\def\@currentlabel{#1}\label{#2}%
}
\theoremstyle{plain}
\newtheorem{theorem}{Theorem}
\newtheorem{proposition}[theorem]{Proposition}
\newtheorem{lemma}[theorem]{Lemma}
\newtheorem{corollary}[theorem]{Corollary}
\theoremstyle{remark}
\newtheorem{definition}[theorem]{Definition}
\newtheorem{example}[theorem]{Example}
\newtheorem{remark}[theorem]{Remark}
\begin{document}

\begin{frontmatter}
\title{Genealogies of records of stochastic processes with stationary increments as unimodular trees}
\runtitle{Records of stochastic processes and unimodular trees}

\begin{aug}
\author[A]{\fnms{François}~\snm{Baccelli}\ead[label=e1]{Francois.Baccelli@ens.fr}},
\author[A]{\fnms{Bharath}~\snm{Roy Choudhury}\ead[label=e2]{bharath.roychoudhury@gmail.com}}
\address[A]{INRIA/ENS - PSL, Paris \printead[presep={,\ }]{e1,e2}}

\end{aug}

\begin{abstract}
Consider a stationary sequence $X=(X_n)$ of integer-valued random variables with mean $m \in [-\infty, \infty]$. Let $S=(S_n)$ be the stochastic process with increments $X$ and such that $S_0=0$. For each time $i$, draw an edge from $(i,S_i)$ to $(j,S_j)$, where $j>i$ is the smallest integer such that $S_j \geq S_i$, if such a $j$ exists.
This defines the record graph of $X$.

It is shown that if $X$ is ergodic, then its record graph exhibits the following phase transitions when $m$ ranges from  $-\infty$ to $\infty$.
For $m<0$, the record graph has infinitely many connected components which are all finite trees. At $m=0$, it is either a one-ended tree or a two-ended tree.
For $m>0$, it is a two-ended tree.

The distribution of the component of $0$ in the record graph is analyzed when $X$ is an i.i.d. sequence of random variables whose common distribution is supported on $\{-1,0,1,\ldots\}$, making $S$ a skip-free to the left random walk.
For this random walk, if $m<0$, then the component of $0$ is a unimodular typically re-rooted Galton-Watson Tree.
If $m=0$, then the record graph rooted at $0$ is a one-ended unimodular random tree, specifically, it is a unimodular Eternal Galton-Watson Tree.
If $m>0$, then the record graph rooted at $0$ is a unimodularised bi-variate Eternal Kesten Tree.

A unimodular random directed tree is said to be record representable if it is the component of $0$ in the record graph of some stationary sequence.
It is shown that every infinite unimodular ordered directed tree with a unique succession line is record representable.
In particular, every one-ended unimodular ordered directed tree has a unique succession line and is thus record representable.
\end{abstract}

\begin{keyword}[class=MSC]
\kwd{60G10}
\kwd{60C05}
\kwd{60J80}
\kwd{05C80}
\end{keyword}

\begin{keyword}
\kwd{unimodularity}
\kwd{skip-free to the left random walks}
\kwd{Galton-Watson Trees}
\kwd{depth first search order}
\end{keyword}

\end{frontmatter}




\section{Introduction}
Consider a stationary sequence \(X=(X_n)_{n \in \mathbb{Z}}\) of integer-valued random variables with common mean \(m\) that exists and lies in \([-\infty,\infty]\).
Let \(S=(S_n)_{n \in \mathbb{Z}}\) be the stochastic process starting at \(0\) with increments \(X\) namely,
\begin{equation}\label{eq_sums_increment}
    S_0=0, \, S_n = \sum_{k=0}^{n-1}X_k,\, n >0 \text{ and } S_n = - \sum_{k=n}^{-1}X_k,\,  n<0.
\end{equation}
For each integer \(i\), the \emph{record} (epoch) \(R_X(i)\) of \(i\) is given by
\begin{equation} \label{eq_recordMap}
    R_X(i) = \begin{cases}
        \inf\{n>i:S_n \geq S_i\} \text{ if the infimum exists,}\\
        i \text{ otherwise.}
    \end{cases}
\end{equation}
Construct the \emph{record graph} \(\mathbb{Z}^R_X\) of \(X\) on the vertex set \(\mathbb{Z}\) by drawing a directed edge from each integer \(i\) to its record \(R_X(i)\) (see Fig.~\ref{fig_record_graph}).
Remove all the self loops, namely all directed edges \(i\) to \(i\), if there are any.
The record graph thus obtained is a random directed graph, with \(\mathbb{Z}\) as the vertex set and where the directed edges are random.
Let \(T\) be the connected component of \(0\) in the record graph \(\mathbb{Z}^R_X\), i.e., the subgraph induced on the subset of integers \(j\) such that there is either a directed path from \(j\) to \(0\) or from \(0\) to \(j\).
In other words, \(T\) is the connected component of \(0\) in the undirected graph of \(\mathbb{Z}^R_X\).
The focus of this paper is on the following two objects: the random directed graph \(\mathbb{Z}^R_X\) and the random rooted directed graph \([T,0]\), with \(0\) as the root.
In particular, the following three questions are addressed:
\begin{enumerate}
    \item[(Q1)] When is the record graph connected, and more generally how do the structural properties of the record graph depend on the distribution of \(X\)?
    \item[(Q2)] Are there instances of \(X\) for which the distribution of \([T,0]\) can be computed explicitly?
    \item[(Q3)] When is a random rooted graph the component of \(0\) in the record graph of a stationary sequence?
\end{enumerate}

\begin{figure}[htbp] 
  \centering 
  \includegraphics[scale=0.9]{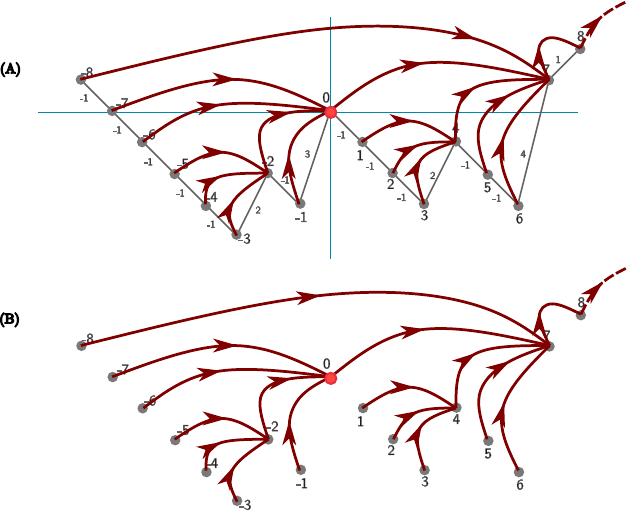}
  \caption{(A) An illustration of the trajectory \(\{(n,S_n): n \in \mathbb{Z}\}\) and the record map. (B) The component of \(0\) in the record graph.}
  \label{fig_record_graph}
  \end{figure}

The connectivity of the record graph depends only on the mean of the increment \(X_0\).
Indeed, it is shown in the paper that the record graph is connected (in which case \(T\) is infinite) if and only if the mean is non-negative.
Since \(R_X(i) \geq i\) for all \(i \in \mathbb{Z}\) and since self loops if any were removed, the record graph does not contain any cycle.
Under the assumption that \(X\) is a stationary and ergodic, it is shown that \([T,0]\) exhibits a phase transition at \(0\) when the mean is varied in \([-\infty,\infty]\).
In the regions \([-\infty,0)\) and \((0,\infty]\), \(T\) is a finite directed tree and a two-ended directed tree, respectively.
At \(\mathbb{E}[X_0]=0\), \(T\) is either a two-ended or a one-ended directed tree.

The distribution of \([T,0]\) is explicitly computed when \((X_n)_{n \in \mathbb{Z}}\) is an i.i.d. sequence of random variables such that their common mean exists, and the support of \(X_n\) is \(\{-1,0,1,2,\cdots\}\) and non-degenerate, for all \(n \in \mathbb{Z}\).

Concerning the last question on record representation of graphs, it is shown that under a mild assumption, certain classes of ordered directed trees can be represented as the component of \(0\) in the record graph of some stationary sequence.
This includes the infinite trees associated to virtually all dynamics on ordered networks, thereby underscoring the importance of delving into the record graphs of stationary sequences.

These results are now stated more precisely after introducing some terminology from \cite{baccelliEternalFamilyTrees2018a}.

\subsection{Some terminology} \label{subsec_intro_terminology}
Informally, a random rooted graph is called \emph{unimodular} if for all the covariant ways of sending mass, the average total mass sent from the root to every vertex is the same as the average total mass received at the root from every vertex (see Def.~\ref{def_unimodularity}).

A \emph{vertex-shift} \(f\) is a covariant collection of maps \(\{f_G:V(G) \to V(G)\}\) that are indexed by graphs and that satisfy a measurability condition (see Def.~\ref{def_vertex_shift}).
The map \(f_G\) associated to a graph \(G\) gives a deterministic dynamic on the graph, i.e., it describes how to move from one vertex to the next vertex on the graph according to the dynamic.
One such example is the record map given in Eq.~(\ref{eq_recordMap}).
It is defined on the integer graph \((\mathbb{Z},(x_n)_{n \in \mathbb{Z}})\) in which every edge \((n,n+1)\) of the integer line has the label \(x_n\), where \((x_n)_{n \in \mathbb{Z}}\) is a realization of \((X_n)_{n \in \mathbb{Z}}\).
It describes how to move to the next record starting from every integer.
An instance of \(X=(X_n)_{n \in \mathbb{Z}}\), which is given a special emphasis in the paper, is a sequence of i.i.d. random variables satisfying the following conditions: for all \(n \in \mathbb{Z}\),
\begin{equation}\label{eq_skip_free_condition}
    X_n \in \{-1,0,1,2,\cdots\}, \ \mathbb{E}[X_n] \text{ exists, }\text{ and } 0<\mathbb{P}[X_n=-1]\leq1.
\end{equation}

The stochastic process \(S\) (as in Eq.~(\ref{eq_sums_increment})) with the increment sequence \(X\) satisfying Eq.~(\ref{eq_skip_free_condition}) is known as \emph{skip-free to the left random walk} or \emph{left-continuous random walk}. 

A \emph{Family Tree} is a (rooted) directed tree whose out-degree is at most \(1\).
A Family Tree in which every vertex has out-degree \(1\) is called an \emph{Eternal Family Tree (EFT)}.
In a Family Tree \(T\), a vertex \(u \in V(T)\) is called a child of a vertex \(v\in V(T)\) if there is a directed edge from \(u\) to \(v\).

In this case, \(v\) is called the parent of \(u\).
Similarly, for some \(n \geq 1\), a vertex \(u\) is a descendant of order \(n\) (or \(n\)-th descendant) of a vertex \(v\) if there is a directed path of length \(n\) from \(u\) to \(v\), in which case \(v\) is called the ancestor of order \(n\) (or \(n\)-th ancestor) of \(u\).

\begin{remark}\label{remark_record_graph_forest}
  Observe that the record graph of a network \((\mathbb{Z},x)\) does not contain cycles and that every vertex in the record graph has out-degree at most \(1\).
The first observation follows from the fact that if \(R_x(i)\not = i\), then \(R_x(i)>i\) and from the fact that self loops were removed.
The second observation is obvious since \(R_x\) is a function.
Thus, every component of the record graph is a (non-rooted) Family Tree.
\end{remark}

A Family Tree \(T\) is said to be \emph{ordered} if the children of every vertex \(u \in V(T)\) are totally ordered.
In this case, one obtains a total order on \(V(T)\) using the depth-first search order.

Given a vertex-shift $f$, construct a directed graph called the \emph{$f$-graph} $G^f$ associated to each graph $G$ in the following way:
the vertices of \(G^f\) are the same as those of $G$ and its directed edges are given by $\{(x,f_G(x)):x \in V(G), f_G(x)\not = x\}$.
Note that this definition of $f$-graph does not allow self-loops which differs from the definition used in  \cite{baccelliEternalFamilyTrees2018a}.

Using the vertex-shift $f$, one obtains an equivalence relation on $V(G)$ by declaring two vertices $u$ and $v$ to be equivalent if and only if there exists some positive integer $n$ such that $f^n_G(u)=f^n_G(v)$.
The equivalence class of \(u \in V(G)\) is called the \emph{foil of \(u\)} (denoted as foil\((u)\)).
Since the connected component $G^f(u)$ of every vertex $u$ in $G^f$ is the set \(\{v \in V(G): f^n_G(v)=f^m_G(u) \text{ for some } n \geq 1, m \geq 1\} \), it follows that foil\((u)\subseteq G^f(u)\).

\subsubsection{Foil classification theorem of unimodular networks} \label{par_foil_classification}
This theorem \cite[Theorem 3.9]{baccelliEternalFamilyTrees2018a} states that, given a unimodular random rooted network $[\mathbf{G}, \mathbf{o}]$ and a vertex-shift \(f\), a.s. every component of \(\mathbf{G}^f\) belongs to one of the three classes: $\mathcal{F}/\mathcal{F}$, $\mathcal{I}/\mathcal{I}$, and $\mathcal{I}/\mathcal{F}$.
The class $\mathcal{F}/\mathcal{F}$ corresponds to the component being finite and its foils being finite.

 In this case, the component is either a Family Tree or it has a unique cycle of length $n$ (for some $n>1$).
Moreover, in the former case, the component has a single foil, whereas in the latter case, it has exactly $n$ foils; where $n$ is the length of the cycle.

Components of either class \(\mathcal{I}/\mathcal{I}\) or of class \(\mathcal{I}/\mathcal{F}\) are (non-rooted) EFTs.
The class \(\mathcal{I}/\mathcal{I}\) corresponds to the component being infinite and all foils of the component being infinite.
Further, every EFT of this class is characterized by the properties that it has one end and that all of its vertices have finitely many descendants.
The class \(\mathcal{I}/\mathcal{F}\) corresponds to the component being infinite and all foils of the component being finite.
Every EFT of this class has a unique bi-infinite directed path, characterized by the set of vertices with  infinitely many descendants. The remaining vertices have finitely many descendants.

Note that, in the statement of this theorem, the relation between the number of foils and cycle length for class \(\mathcal{F}/\mathcal{F}\) is slightly modified compared to \cite{baccelliEternalFamilyTrees2018a} due to the absence of self-loops in the definition of \(f\)-graph here. 
The rest of the statement remains unchanged.

\subsection{Statement of the results} Concerning (Q1), it is shown that when \(X\) is assumed to be a stationary and ergodic sequence of random variables whose common mean exists, the record graph \(\mathbb{Z}^R_X\) is a.s. connected if and only if \(\mathbb{E}[X_0] \geq 0\).
Under the same assumptions, it is also shown that the component \([T,0]\) of \(0\) in this graph exhibits a phase transition when \(\mathbb{E}[X_0]\) is varied from \(-\infty\) to \(\infty\).
If \(\mathbb{E}[X_0] \in [-\infty,0)\), then \([T,0]\) is a finite unimodular Family Tree of class \(\mathcal{F}/\mathcal{F}\) and all the components of the record graph are finite.
If \(\mathbb{E}[X_0] \in (0,\infty]\), then \([T,0]\) is a unimodular Family Tree of class \(\mathcal{I}/\mathcal{F}\).
At \(\mathbb{E}[X_0]=0\), \([T,0]\) is a unimodular Family Tree of either class \(\mathcal{I}/\mathcal{F}\) or class \(\mathcal{I}/\mathcal{I}\).
An example of \(X\) for which \(\mathbb{E}[X_0] =0\) and \([T,0]\) is of class \(\mathcal{I}/\mathcal{I}\) is obtained when \(X\) is i.i.d. and satisfies the conditions in Eq.~(\ref{eq_skip_free_condition}).
Another example of \(X\) for which \(\mathbb{E}[X_0] =0\) but \([T,0]\) is of class \(\mathcal{I}/\mathcal{F}\) is obtained using the construction of a Loynes' sequence \cite{baccelliElementsQueueingTheory2003} (see Example \ref{example_i_f_mean_0}).


Concerning (Q2), the distribution of \([T,0]\) is explicitly computed when \(X\) is i.i.d. and satisfies the conditions in Eq.~(\ref{eq_skip_free_condition}).
In this case, there are three phases corresponding to \(\mathbb{E}[X_0]<0\), \(\mathbb{E}[X_0]=0\) and \(\mathbb{E}[X_0]>0\).
Let \(\pi\) be the distribution of \(X_0+1\).
When \(\mathbb{E}[X_0]<0\), it is shown in Theorem~\ref{20230213191656} that the distribution of \([T,0]\) is (what is termed here as) the Typically re-rooted Galton-Watson Tree (\(TGWT(\pi)\)) with the offspring distribution \(\pi\).
It is obtained by re-rooting to a root uniformly chosen in the size-biased version of the Galton-Watson Tree (\(GW(\pi)\)), where \(\pi\) is the offspring distribution.
It is shown in Proposition~\ref{20230305162953} that the following two properties: (i) unimodularity and (ii) the independence between the offspring distribution of the root and the non-descendant part of the tree, characterize a \(TGWT\).
When \(\mathbb{E}[X_0]=0\), the distribution of \([T,0]\) is the Eternal Galton-Watson Tree (\(EGWT(\pi)\)) with the offspring distribution \(\pi\) (see Theorem~\ref{theorem:R-graph_egwt}).
This result is proved using the characterization of \(EGWT\) given in \cite{baccelliEternalFamilyTrees2018a}.
When \(\mathbb{E}[X_0]>0\), it is shown in Theorem~\ref{r_graph_positive_drift_20230203174636} that \([T,0]\) is obtained from a typical re-rooting operation of the bush of the root of (what is termed here as) the bi-variate Eternal Kesten Tree (\(EKT(\tilde{\pi},\bar{\pi})\)) with the offspring distributions \(\tilde{\pi},\bar{\pi}\).
The latter offspring distributions are related to the increment \(X_0\), the hitting probability and the Doob transform of the random walk associated to \(X\).


Concerning (Q3), it is useful to introduce the notion of succession line of an ordered Family Tree passing through a vertex of the tree using the depth-first search order, also called the Royal Line of Succession (RLS) order.
It is shown that a unimodular ordered EFT can have at most two succession lines.
If a unimodular ordered EFT \([\mathbf{T}',\mathbf{o}']\) has a unique succession line, then, using the encoding of the number of children of each vertex in the succession line, it is shown that there exists a stationary sequence \(Y\) such that the component of \(0\) in the record graph of \(Y\) is \([\mathbf{T}',\mathbf{o}']\).
In this case, \([\mathbf{T}',\mathbf{o}']\) is said to have a \emph{record representation}.
Given a vertex-shift \(f\) and a unimodular graph \([\mathbf{G},\mathbf{o}]\), the component of \(\mathbf{o}\) in the \(f\)-graph \(\mathbf{G}^f\) is a unimodular Family Tree \([\mathbf{T}'',\mathbf{o}'']\).
Assign a uniform order to the children of every vertex of \(\mathbf{T}''\) to obtain a unimodular ordered Family Tree.
If it is infinite and has a unique succession line, then the above result implies that it has a record representation.
In particular, if a vertex-shift \(f\) on a network \((\mathbb{Z},X)\) associated to a stationary sequence \(X\) satisfies the condition that the component of \(0\) is infinite and has a unique succession line, then \(f\) has a record representation on \(X\), i.e., there exists a stationary sequence \(Y\) such that the component of \(0\) in the record graph of \(Y\) has the same distribution as that of the component of \(0\) in the \(f\)-graph of \((\mathbb{Z},X)\).

\subsection{Literature and novelty}
The statistics of records of time series have found many applications in finance, hydrology and physics.
See for instance the references in the survey \cite{godrecheRecordStatisticsStrongly2017} for a non-exhaustive list of applications.
These studies of records are focused on statistics such as the distribution of $n$-th record, the age of a record (i.e., the time gap between two consecutive records), and the age of long-lasting records (\cite{wergenRecordsStochasticProcesses2013}, \cite{godrecheRecordStatisticsStrongly2017}). 
When the time series under consideration is an i.i.d. sequence, it is shown in \cite{stepanovCharacterizationTheoremWeak1994} that the mean of the height between two consecutive record values conditioned on the initial record value could characterize the distribution of the i.i.d. sequence.
In contrast, in the present work, the focus is on the joint local structure of the records of stochastic processes that has stationary increments, with a special emphasis on skip-free random walks. 
The structure has a natural family tree flavor with an order, suggesting to term it as `genealogy of records'.

The relation between excursions of skip-free random walks and finite Galton-Watson Trees was extensively discussed in, e.g., \cite{benniesRandomWalkApproach2000}, \cite{legallRandomTreesApplications2005b}, and \cite{jimpitmanCombinatorialStochasticProcesses2006}.
In fact, the authors of \cite{benniesRandomWalkApproach2000} encode the critical Galton-Watson Tree and show that the encoding gives a correspondence between records of an excursion of the random walk and ancestors of the root in the Galton-Watson Tree.
In the present paper, these ideas are extended to infinite trees using the succession line and the RLS order.
To the best of the authors' knowledge, this complete description of the record graph of skip-free random walks, the phase transition in the stationary and ergodic case, and the record representation of a class of unimodular Family Trees are new.

\section{Preliminaries}

\subsection{Space of networks}
The following framework for networks is adopted from \cite{aldousProcessesUnimodularRandom2007}.
A graph \(G\) is a pair \(G=(V,E)\), where \(V=V(G)\) is the vertex set of \(G\) and \(E=E(G)\) is the  edge set of \(G\).
The size of a graph is the cardinality of its vertices.
A network is a graph \(G=(V,E)\) together with a complete separable space \(\Xi\) and two maps, one from \(V\) to \(\Xi\) and the other from \(E\) to \(\Xi\).
The space \(\Xi\) is called the mark space and the images under the two functions are called marks.
A graph is locally finite if every vertex of it has a finite degree.
All the graphs considered in this paper are locally finite.
A rooted network is a pair \((G,u)\), where \(G\) is a (locally finite) connected network and \(u\) is a distinguished vertex of \(G\).
A rooted isomorphism \(\alpha:(G,u) \to (G',u')\) of two rooted networks \((G,u),\, (G',u')\) is a bijective map \(\alpha:V(G) \to V(G')\) satisfying the three conditions: (1) \((x,y) \in E(G)\) if and only if \(\left(\alpha(x),\alpha(y)\right) \in E(G')\), (2) \(\alpha(u)=u'\) and (3) marks of the vertices and the edges of \(G\) are the same as the marks of their images under \(\alpha\).
Two rooted networks are said to be equivalent if and only if there exists a rooted isomorphism between them.
The equivalence class of a rooted network \((G,u)\) is denoted by \([G,u]\) and, for brevity, the latter is also called a rooted network. 
The space of all equivalent classes of rooted networks is denoted by \(\mathcal{G}_*\).
Similarly, one defines a doubly rooted network \((G,u,v)\), doubly rooted isomorphism and an equivalence relation using the doubly rooted isomorphism.
The equivalence class of a doubly rooted network \((G,u,v)\) is denoted by \([G,u,v]\), and the set of all equivalent classes of doubly rooted networks is denoted by \(\mathcal{G}_{**}\).
For a rooted network \((G,u)\), for a doubly rooted network \((G,u,v)\), and a non-negative integer \(r\), the rooted network \((G,u)_r\) is the subgraph (rooted at \(u\)) induced by all the vertices of \(G\) that are at a graph distance of at most \(r\) from \(u\), and the doubly rooted network \((G,u,v)_r\) (rooted at \(u,v\)) is the subgraph induced by all those vertices of \(G\) that are at a graph distance of at most \(r\) from both \(u\) and \(v\).
Both the spaces \(\mathcal{G}_*,\, \mathcal{G}_{**}\) are complete separable metric spaces respectively under the local metrics \(d_*, \,d_{**}\) defined in the following: for any two rooted networks \([G,u],\, [G',u']\) and for any two doubly rooted networks \([G,u,v],[G',u',v']\), \(d_*([G,u],[G',u']) = 2^{-r_{12}}\) and \(d_{**}([G,u,v],[G',u',v'])=2^{-r'_{12}}\), where \(r_{12}\) (resp. \(r'_{12}\)) is the supremum of \(r \geq 0\) such that \((G,u)_r, \, (G',u')_r\) (resp. \((G,u,v)_r, \, (G',u',v')_r\)) are isomorphic.
Similarly, the set of all equivalent classes of rooted Family Trees (resp. doubly rooted Family Trees), denoted by \(\mathcal{T}_*\) (resp. \(\mathcal{T}_{**}\)), is a complete separable metric space with the above respective metrics.
A random rooted network \([\mathbf{G},\mathbf{o}]\) is a measurable map from a probability space to \(\mathcal{G}_*\).

\subsection{Unimodularity and vertex-shifts}
\begin{definition}[unimodularity]\label{def_unimodularity}
    A random network \([\mathbf{G},\mathbf{o}]\) is \emph{unimodular} if it satisfies the mass transport principle, i.e., for every measurable function \(h:\mathcal{G}_{**} \to \mathbb{R}_{\geq 0}\),
    \begin{equation}
        \mathbb{E}\left[h_+([\mathbf{G},\mathbf{o}])\right] = \mathbb{E}\left[h_-([\mathbf{G},\mathbf{o}])\right],
    \end{equation}
    where \(h_+([\mathbf{G},\mathbf{o}])= \sum_{u \in V(\mathbf{G})}h([\mathbf{G},\mathbf{o},u])\), and  \(h_-([\mathbf{G},\mathbf{o}])=\sum_{u \in V(\mathbf{G})}h([\mathbf{G},u,\mathbf{o}])\).
\end{definition}

An example of a (random) unimodular network is \([\mathbb{Z},0,X]\), where the vertex set is \(\mathbb{Z}\), edge set is \(\{(n,n+1):n \in \mathbb{Z}\}\), \(X=(X_n)_{n \in \mathbb{Z}}\) is a stationary sequence of integer-valued random variables and \(X_n\) is the mark of the edge \((n,n+1)\), for all \(n \in \mathbb{Z}\).

Dynamics on networks and some results on dynamics on unimodular networks studied in \cite{baccelliEternalFamilyTrees2018a} are now introduced. 
\begin{definition}[vertex-shift]\label{def_vertex_shift}
    A \emph{vertex-shift} \(f\) is a collection of maps \(f_G:V(G)\to V(G)\), indexed by locally-finite connected networks \(G\), satisfying the following properties:
    \begin{enumerate}
        \item  \textit{Covariance:} For any graph isomorphism $\alpha:G \rightarrow H$ of two graphs $G$ and $H$, $f$ satisfies $f_H \circ \alpha = \alpha \circ f_G$,
        \item \textit{Measurability:} The map  $[G,u,v] \mapsto \mathbf{1}_{\{f_G(u)=v\}}$ from the doubly rooted space $\mathcal{G}_{**}$ is measurable. 
      \end{enumerate}
\end{definition}

A trivial vertex-shift is the identity vertex-shift defined by \(I_G(u)=u\), for all \(u \in V(G)\) and for all \(G\).
Given a subcollection of maps that are defined on a subset of networks and that satisfy the above covariance and measurability properties, this subcollection can be extended to obtain a vertex-shift by defining it as identity on the rest of networks.
This allows one to define a vertex-shift on the support of a random rooted network.
It is easy to observe (see Subsection~\ref{subsec_record_vs}) that the record map gives a vertex-shift and hence forth it is called the \emph{record vertex-shift}, denoted by \(R\).  

\begin{example}
 Another example of a vertex-shift is the \emph{Parent vertex-shift \(F\)}.
 It is indexed by Family Trees \(T\) and defined by \(u \mapsto F_T(u)\), for all \(u \in V(T)\), where \(F_T(u)=u\) if \(u\) does not have a parent, and \(F_T(u)\) is the parent of \(u\) otherwise.
\end{example}



Consider the record vertex-shift \(R\) and a network \((\mathbb{Z},x)\), where \(x=(x_n)_{n \in \mathbb{Z}}\) is a sequence of integers.
The following convention is used in this paper.
For a vertex \(i \in \mathbb{Z}\) and any positive integer $n \geq 1$, a vertex of the form \(R^n_x(i) \not = i\), if it exists, is called the \emph{ancestor of order \(n\) of \(i\)} (also called the \(n\)-th ancestor of \(i\)).
Denote the set of \emph{$n$-th descendants} (also called the descendants of order \(n\)) of vertex $i \in \mathbb{Z}$ by $D_n(i):= \{k \in \mathbb{Z}\backslash \{i\}: R_x^n(k)=i\} = \{k<i: R_x^n(k)=i\}$ and its cardinality by $d_n(i):=\#D_n(i)$, with the convention that \(R^0_x(i)=D_0(i)=i\).
Denote the set of all descendants of $i$ by $D(i):=\{i\} \cup \bigcup_{n \geq 1}D_n(i)$ and its cardinality by $d(i)=\#D(i)$.
For any vertex \(i \in \mathbb{Z}\), its ancestor of order \(1\), if it exists, is called the \emph{parent of \(i\)}; its descendants of order \(1\) are called the \emph{children of \(i\)}; and the set \(\{k \in \mathbb{Z}\backslash \{i\}:R_x(k)=R_x(i)\}\) is called the \emph{set of siblings of \(i\)}.
The notion of ancestors and descendants is consistent with that of the Family Trees.
This is useful because the components of the record graph are Family Trees.


A \emph{covariant} subset is a collection of subsets \(\mathfrak{S}_G\) indexed by (locally-finite) connected networks \(G\), where \(\mathfrak{S}_G\) is a subset of \(V(G)\), that satisfy the following properties; (1) covariance property: \(\alpha(\mathfrak{S}_G) = \mathfrak{S}_{\alpha(G)}\) for every network isomorphism \(\alpha\), (2) measurability: the map \([G,o] \mapsto \mathbf{1}\{o \in \mathfrak{S}_G\}\) is measurable.
A \emph{covariant partition} is a collection of partitions \(\Pi_G\) indexed by (locally-finite) connected networks \(G\), where \(\Pi_G\) is a partition of \(V(G)\) and the collection satisfies the following two conditions, (1) every network isomorphism \(\alpha\) satisfies \(\alpha \circ \Pi_{G}=\Pi_{\alpha(G)}\), (2) the subset \(\{[G,o,u]:u \in \Pi_G(o)\}\) is a measurable subset, where \(\Pi_G(o)\) is the element that contains \(o\).

Examples of covariant partitions can be constructed from any vertex-shift.
Given a vertex-shift \(f\), the collection of \(f\)-foils of \(G\) indexed by \(G\) is a covariant partition.
Similarly, the collection of subsets \(\mathfrak{S}_G = \{u \in V(G): f_G(u)=u\}\) of \(V(G)\) indexed by network \(G\) is an example of a covariant subset.


Vertex-shifts on unimodular networks have interesting properties.
Some of the properties which will be used in the later sections are stated below (see \cite{baccelliEternalFamilyTrees2018a} for their proofs).

\begin{theorem}[Point-stationarity] \label{thm_point_stationarity}
    Let \(f\) be  a vertex-shift and \([\mathbf{G},\mathbf{o}]\) be a unimodular measure.
    Then, the map \(\theta_f\) preserves the distribution of \([\mathbf{G},\mathbf{o}]\) if and only if \(f\) is  a.s. bijective.
\end{theorem}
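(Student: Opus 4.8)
The plan is to derive both implications from a single consequence of the mass transport principle, namely an explicit formula for the law of $\theta_f[\mathbf{G},\mathbf{o}]$ in terms of the law of $[\mathbf{G},\mathbf{o}]$ reweighted by the number of $f$-preimages of the root. Recall that $\theta_f$ acts by re-rooting, $\theta_f[\mathbf{G},\mathbf{o}]=[\mathbf{G},f_{\mathbf{G}}(\mathbf{o})]$, and for a vertex $v$ write $N(v):=\#\{u\in V(\mathbf{G}):f_{\mathbf{G}}(u)=v\}$ for its number of preimages; by the measurability axiom of a vertex-shift, $N$ is a covariant measurable function of the rooted network. Fixing any bounded measurable $g:\mathcal{G}_*\to\mathbb{R}_{\geq 0}$, I would apply Definition~\ref{def_unimodularity} to the transport
\begin{equation*}
    h([\mathbf{G},u,v]) = g([\mathbf{G},v])\,\mathbf{1}\{f_{\mathbf{G}}(u)=v\}.
\end{equation*}
Here $h_+([\mathbf{G},\mathbf{o}])=g([\mathbf{G},f_{\mathbf{G}}(\mathbf{o})])=g(\theta_f[\mathbf{G},\mathbf{o}])$ and $h_-([\mathbf{G},\mathbf{o}])=g([\mathbf{G},\mathbf{o}])\,N(\mathbf{o})$, so the mass transport principle yields the key identity
\begin{equation}\label{eq_point_stat_key}
    \mathbb{E}\!\left[g(\theta_f[\mathbf{G},\mathbf{o}])\right] = \mathbb{E}\!\left[g([\mathbf{G},\mathbf{o}])\,N(\mathbf{o})\right],
\end{equation}
valid for every such $g$.

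For the forward implication, suppose $f$ is a.s. bijective. Then $f_{\mathbf{G}}$ is a.s. both injective and surjective, so every vertex, and in particular the root, has exactly one preimage, i.e. $N(\mathbf{o})=1$ almost surely. Substituting into \eqref{eq_point_stat_key} gives $\mathbb{E}[g(\theta_f[\mathbf{G},\mathbf{o}])]=\mathbb{E}[g([\mathbf{G},\mathbf{o}])]$ for all bounded measurable $g\geq 0$, which is exactly the assertion that $\theta_f$ preserves the law of $[\mathbf{G},\mathbf{o}]$.

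For the converse, suppose $\theta_f$ preserves the law. Then the left side of \eqref{eq_point_stat_key} equals $\mathbb{E}[g([\mathbf{G},\mathbf{o}])]$, so $\mathbb{E}[g([\mathbf{G},\mathbf{o}])(N(\mathbf{o})-1)]=0$ for all $g\geq 0$. Choosing $g=\mathbf{1}\{N\geq 2\}$ makes the integrand nonnegative and forces $\mathbb{P}[N(\mathbf{o})\geq 2]=0$, while choosing $g=\mathbf{1}\{N=0\}$ forces $\mathbb{P}[N(\mathbf{o})=0]=0$; together these give $N(\mathbf{o})=1$ almost surely. It remains to promote this statement about the root to one about all vertices at once, and I expect this to be the only delicate point. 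The tool is the standard ``everything shows at the root'' principle for unimodular networks: a covariant subset $\mathfrak{S}$ with $\mathbb{P}[\mathbf{o}\in\mathfrak{S}]=0$ is almost surely empty. Applying it to the covariant subsets $\{v:N(v)\geq 2\}$ and $\{v:N(v)=0\}$ shows that a.s. every vertex has exactly one preimage, so $f_{\mathbf{G}}$ is a.s. both injective and surjective, hence a.s. bijective.

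Finally I would record the auxiliary principle, since it carries the converse. Given a covariant subset $\mathfrak{S}$ with $\mathbb{P}[\mathbf{o}\in\mathfrak{S}]=0$, have each vertex distribute a unit of mass equally among the (finitely many, by local finiteness) vertices of $\mathfrak{S}$ nearest to it, whenever $\mathfrak{S}_{\mathbf{G}}\neq\emptyset$. The mass leaving the root is $\mathbf{1}\{\mathfrak{S}_{\mathbf{G}}\neq\emptyset\}$, whereas the mass entering the root is positive only on the null event $\{\mathbf{o}\in\mathfrak{S}\}$; the mass transport principle then gives $\mathbb{P}[\mathfrak{S}_{\mathbf{G}}\neq\emptyset]=0$. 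Distributing mass equally among the nearest vertices avoids any covariant tie-breaking, and finiteness of the nearest set is exactly what local finiteness guarantees.
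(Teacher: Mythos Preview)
The paper does not give its own proof of this theorem; it is quoted from \cite{baccelliEternalFamilyTrees2018a} (see the sentence preceding Theorem~\ref{thm_point_stationarity}). Your argument is correct and is essentially the standard proof found in that reference: the mass-transport identity $\mathbb{E}[g(\theta_f[\mathbf{G},\mathbf{o}])]=\mathbb{E}[g([\mathbf{G},\mathbf{o}])\,N(\mathbf{o})]$, combined with the ``everything shows at the root'' principle (your auxiliary lemma, which is the contrapositive of Lemma~\ref{lemma_non_empty_covariant_set} in this paper), gives both directions cleanly.
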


\begin{proposition}\label{prop_injective_bijective}
    Let \([\mathbf{G},\mathbf{o}]\) be unimodular and \(f\) be a vertex-shift.
    \begin{itemize}
        \item If \(f\) is injective a.s., then \(f\) is bijective a.s.
        \item If \(f\) is surjective a.s., then \(f\)is bijective a.s.
    \end{itemize}
\end{proposition}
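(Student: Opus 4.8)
The plan is to rest both implications on a single mass-transport identity and then to promote an almost-sure statement about the root into one about every vertex. \emph{First}, I would feed the mass transport principle of Definition~\ref{def_unimodularity} the function $h([G,u,v])=\mathbf{1}\{f_G(u)=v\}$, which is non-negative and measurable exactly by the measurability clause of Definition~\ref{def_vertex_shift}. For this $h$ the outgoing mass is $h_+([\mathbf{G},\mathbf{o}])=\sum_{v}\mathbf{1}\{f_\mathbf{G}(\mathbf{o})=v\}=1$ deterministically, because $f_\mathbf{G}(\mathbf{o})$ is a single vertex, while the incoming mass is $h_-([\mathbf{G},\mathbf{o}])=\#f_\mathbf{G}^{-1}(\mathbf{o})$, the number of preimages of the root. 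Unimodularity thus gives $\mathbb{E}[\#f_\mathbf{G}^{-1}(\mathbf{o})]=1$, with no hypothesis on $f$ beyond its being a vertex-shift; in particular the root has finitely many preimages almost surely.

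\emph{Next}, I would dispatch the two cases in parallel from this identity. If $f$ is injective almost surely then $\#f_\mathbf{G}^{-1}(\mathbf{o})\le 1$ almost surely, and a random variable that is at most $1$ with mean $1$ must equal $1$ almost surely; if instead $f$ is surjective almost surely then $\#f_\mathbf{G}^{-1}(\mathbf{o})\ge 1$ almost surely, and a random variable that is at least $1$ with mean $1$ again equals $1$ almost surely. Either way I reach the common conclusion that $\#f_\mathbf{G}^{-1}(\mathbf{o})=1$ almost surely.

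\emph{Finally}, it remains to upgrade this root-level statement into the global assertion that almost surely \emph{every} vertex has exactly one preimage, which is precisely bijectivity of $f_\mathbf{G}$. To do so I would consider the covariant subset $\mathfrak{S}_G:=\{v\in V(G):\#f_G^{-1}(v)\neq 1\}$, which satisfies $\mathbb{P}[\mathbf{o}\in\mathfrak{S}_\mathbf{G}]=0$ by the previous step, and argue that any such covariant subset is empty almost surely. This last point is itself a short mass transport: for a fixed radius $r$, let each vertex $u$ send unit mass to every vertex of $\mathfrak{S}_\mathbf{G}$ within distance $r$ of $u$; the mass received at $\mathbf{o}$ equals $\mathbf{1}\{\mathbf{o}\in\mathfrak{S}_\mathbf{G}\}$ times the (finite) ball size and so has expectation $0$, whence the mass sent out of $\mathbf{o}$ vanishes almost surely, i.e.\ no vertex within distance $r$ of $\mathbf{o}$ lies in $\mathfrak{S}_\mathbf{G}$; letting $r\to\infty$ over the connected graph gives $\mathfrak{S}_\mathbf{G}=\emptyset$ almost surely. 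I expect this passage from the root to all vertices to be the only genuinely non-mechanical step, the two mean-one arguments and the evaluation of $h_+$ and $h_-$ being routine.
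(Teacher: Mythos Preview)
Your argument is correct and is precisely the standard mass-transport proof of this fact. Note, however, that the paper does \emph{not} actually prove Proposition~\ref{prop_injective_bijective}: it is listed among the preliminary results with the remark ``see \cite{baccelliEternalFamilyTrees2018a} for their proofs.'' So there is no in-paper proof to compare against; what you have written is essentially the argument given in that reference.

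One small simplification: your final step---promoting $\mathbb{P}[\mathbf{o}\in\mathfrak{S}_\mathbf{G}]=0$ to $\mathfrak{S}_\mathbf{G}=\emptyset$ a.s.---is exactly the contrapositive of Lemma~\ref{lemma_non_empty_covariant_set}, which the paper also quotes from \cite{baccelliEternalFamilyTrees2018a}. You could therefore replace your ad hoc ball-mass-transport by a direct appeal to that lemma, though what you wrote is of course a valid self-contained proof of the needed direction.
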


\begin{lemma}[No Infinite/Finite Inclusion]\label{lemma_no_infinite_finite}
    Let \([\mathbf{G},\mathbf{o}]\) be a unimodular network, \(\mathbf{\mathfrak{S}}\) be a covariant subset and \(\mathbf{\Pi}\) be a covariant partition.
    Almost surely, there is no infinite element \(E\) of \(\mathbf{\Pi}_{\mathbf{G}}\) such that \(E \cap \mathbf{\mathfrak{S}}_{\mathbf{G}}\) is finite and non-empty.
\end{lemma}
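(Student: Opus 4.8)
The plan is to argue by contradiction using the mass transport principle of Definition~\ref{def_unimodularity} twice: once to reduce the assertion to a statement about the root, and once to produce the contradiction through a transport whose out-mass is bounded but whose in-mass is infinite. The first ingredient is the auxiliary observation that for \emph{any} covariant subset \(\mathfrak{T}\) of a unimodular network \([\mathbf{G},\mathbf{o}]\), one has \(\mathfrak{T}_{\mathbf{G}}=\emptyset\) almost surely as soon as \(\mathbb{P}(\mathbf{o}\in\mathfrak{T}_{\mathbf{G}})=0\). Indeed, applying Definition~\ref{def_unimodularity} to \(h([G,u,v])=\mathbf{1}\{v\in\mathfrak{T}_G\}\) gives \(h_+([G,o])=\#\mathfrak{T}_G\) and \(h_-([G,o])=\mathbf{1}\{o\in\mathfrak{T}_G\}\cdot\#V(G)\); the hypothesis makes the integrand of \(\mathbb{E}[h_-]\) vanish a.s.\ (so \(\mathbb{E}[h_-]=0\), with the convention \(0\cdot\infty=0\) causing no issue), whence \(\mathbb{E}[\#\mathfrak{T}_{\mathbf{G}}]=0\) and \(\mathfrak{T}_{\mathbf{G}}=\emptyset\) a.s. This step genuinely uses unimodularity rather than mere re-rooting invariance.

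Next I would introduce the set of \emph{marked} vertices
\[
\mathfrak{T}_G=\{u\in\mathfrak{S}_G:\ \Pi_G(u)\text{ is infinite and }\#(\mathfrak{S}_G\cap\Pi_G(u))<\infty\}.
\]
I would check that \(\mathfrak{T}\) is a covariant subset: covariance holds because isomorphisms preserve \(\mathfrak{S}\), \(\Pi\) and cardinalities, and measurability holds because \(\#\Pi_G(o)\) and \(\#(\mathfrak{S}_G\cap\Pi_G(o))\) are measurable functions of \([G,o]\) (each is a countable sum of indicators of the measurable events defining the covariant partition and the covariant subset). The purpose of this definition is the equivalence: an infinite element \(E\) of \(\Pi_{\mathbf{G}}\) with \(E\cap\mathfrak{S}_{\mathbf{G}}\) finite and non-empty exists if and only if \(\mathfrak{T}_{\mathbf{G}}\neq\emptyset\), since every vertex of such an \(E\cap\mathfrak{S}_{\mathbf{G}}\) lies in \(\mathfrak{T}_{\mathbf{G}}\) and, conversely, each vertex of \(\mathfrak{T}_{\mathbf{G}}\) marks precisely such an element. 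By the auxiliary observation, it therefore suffices to prove \(\mathbb{P}(\mathbf{o}\in\mathfrak{T}_{\mathbf{G}})=0\).

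Suppose, for contradiction, that \(\mathbb{P}(\mathbf{o}\in\mathfrak{T}_{\mathbf{G}})>0\), and apply Definition~\ref{def_unimodularity} to
\[
h([G,u,v])=\frac{\mathbf{1}\{\Pi_G(u)\text{ infinite}\}\,\mathbf{1}\{\#(\mathfrak{S}_G\cap\Pi_G(u))<\infty\}\,\mathbf{1}\{v\in\mathfrak{S}_G\cap\Pi_G(u)\}}{\#(\mathfrak{S}_G\cap\Pi_G(u))},
\]
in which each vertex of a marked element spreads one unit of mass uniformly over the finitely many points of \(\mathfrak{S}_G\) in its element (note that whenever the numerator is nonzero the denominator is at least \(1\), so \(h\) is well defined and measurable). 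Then \(h_+([G,o])=\mathbf{1}\{\Pi_G(o)\text{ infinite},\ \mathfrak{S}_G\cap\Pi_G(o)\text{ finite and non-empty}\}\le 1\), so \(\mathbb{E}[h_+]\le 1\). On the other hand, on the event \(\{\mathbf{o}\in\mathfrak{T}_{\mathbf{G}}\}\) one has \(o\in\mathfrak{S}_G\), \(\#\Pi_G(o)=\infty\) and \(\#(\mathfrak{S}_G\cap\Pi_G(o))<\infty\), so \(h_-([G,o])=\#\Pi_G(o)/\#(\mathfrak{S}_G\cap\Pi_G(o))=+\infty\). Since this event has positive probability, \(\mathbb{E}[h_-]=+\infty\), contradicting \(\mathbb{E}[h_+]=\mathbb{E}[h_-]\). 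Hence \(\mathbb{P}(\mathbf{o}\in\mathfrak{T}_{\mathbf{G}})=0\), and the lemma follows.

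The step I expect to be most delicate is the choice of normalization by \(\#(\mathfrak{S}_G\cap\Pi_G(u))\) in the final transport: one must keep the total out-mass of every vertex bounded, so that \(\mathbb{E}[h_+]<\infty\), while still funneling an infinite amount of mass into the root on the bad event. An unnormalized transport (each vertex sending a full unit to every point of \(\mathfrak{S}_G\) in its element) would yield \(\infty=\infty\) and no contradiction, which is precisely why the quantitative bound on \(h_+\) is essential. A secondary point requiring care is the reduction of the first paragraph, namely invoking unimodularity to pass from ``a bad element exists somewhere'' to ``the root is marked with positive probability.''
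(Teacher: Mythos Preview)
Your proof is correct and follows the standard mass-transport argument. Note, however, that the paper does not actually prove this lemma: it is one of the results listed with the remark ``see \cite{baccelliEternalFamilyTrees2018a} for their proofs,'' so there is no in-paper proof to compare against. Your argument is essentially the one given in that reference: reduce to the root via Lemma~\ref{lemma_non_empty_covariant_set} (your auxiliary observation is precisely its contrapositive), then spread unit mass from each vertex of a bad partition element uniformly over the finitely many marked points it contains, obtaining bounded out-mass but infinite in-mass on an event of positive probability. The normalization you flag as delicate is exactly the right device, and your handling of the \(0/0\) edge case (numerator vanishes whenever the denominator does) is fine.
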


\begin{lemma}\label{lemma_non_empty_covariant_set}
    Let \([\mathbf{G},\mathbf{o}]\) be a unimodular network, \(\mathfrak{S}\) be a covariant subset.
    Then, \(\mathbb{P}[\mathfrak{S}_{\mathbf{G}} \text{ is non-empty}]>0\) if and only if \(\mathbb{P}[\mathbf{o} \in \mathfrak{S}_{\mathbf{G}}]>0\).
\end{lemma}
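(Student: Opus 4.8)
The plan is to prove the nontrivial implication by the mass transport principle, after disposing of the easy direction. The \emph{if} direction is immediate: on the event $\{\mathbf{o} \in \mathfrak{S}_{\mathbf{G}}\}$ the set $\mathfrak{S}_{\mathbf{G}}$ contains the root and is therefore non-empty, so $\mathbb{P}[\mathfrak{S}_{\mathbf{G}} \neq \emptyset] \geq \mathbb{P}[\mathbf{o} \in \mathfrak{S}_{\mathbf{G}}] > 0$. It thus remains to establish the \emph{only if} direction, which I would phrase in contrapositive form: assuming $\mathbb{P}[\mathbf{o} \in \mathfrak{S}_{\mathbf{G}}]=0$, I would show that $\mathfrak{S}_{\mathbf{G}} = \emptyset$ almost surely.

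To this end I would design a covariant mass transport forcing every vertex to deposit a unit of mass onto $\mathfrak{S}$. Since $\mathbf{G}$ is connected and locally finite, whenever $\mathfrak{S}_{\mathbf{G}} \neq \emptyset$ each vertex $u$ has a finite, non-empty set $N_{\mathbf{G}}(u)$ of elements of $\mathfrak{S}_{\mathbf{G}}$ realizing the finite distance $\mathrm{dist}(u, \mathfrak{S}_{\mathbf{G}})$; local finiteness guarantees that balls are finite, so $N_{\mathbf{G}}(u)$ is finite. I would then define $h: \mathcal{G}_{**} \to \mathbb{R}_{\geq 0}$ by
\[
 h([\mathbf{G}, u, v]) = \mathbf{1}\{\mathfrak{S}_{\mathbf{G}} \neq \emptyset\}\, \frac{\mathbf{1}\{v \in N_{\mathbf{G}}(u)\}}{|N_{\mathbf{G}}(u)|},
\]
that is, $u$ sends total mass $1$ split equally among its nearest elements of $\mathfrak{S}_{\mathbf{G}}$ (and nothing when $\mathfrak{S}_{\mathbf{G}}$ is empty, the indicator being present to avoid the indeterminate $0/0$).

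With this choice, $h_+([\mathbf{G},\mathbf{o}]) = \mathbf{1}\{\mathfrak{S}_{\mathbf{G}} \neq \emptyset\}$, so $\mathbb{E}[h_+] = \mathbb{P}[\mathfrak{S}_{\mathbf{G}} \neq \emptyset]$. On the other hand $N_{\mathbf{G}}(u) \subseteq \mathfrak{S}_{\mathbf{G}}$, so $\mathbf{o}$ receives positive mass only when $\mathbf{o} \in \mathfrak{S}_{\mathbf{G}}$; hence $h_-([\mathbf{G},\mathbf{o}]) = 0$ on $\{\mathbf{o} \notin \mathfrak{S}_{\mathbf{G}}\}$, and the hypothesis $\mathbb{P}[\mathbf{o}\in\mathfrak{S}_{\mathbf{G}}]=0$ gives $\mathbb{E}[h_-]=0$. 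The mass transport principle (Def.~\ref{def_unimodularity}) yields $\mathbb{E}[h_+]=\mathbb{E}[h_-]=0$, whence $\mathbb{P}[\mathfrak{S}_{\mathbf{G}} \neq \emptyset]=0$, as required.

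The only delicate point, and the step I would treat most carefully, is checking that $h$ is a genuine measurable function on $\mathcal{G}_{**}$ and is covariant, so that it is admissible in the mass transport principle. This reduces to observing that emptiness of $\mathfrak{S}_{\mathbf{G}}$ (which, by connectedness, is equivalent to $\mathrm{dist}(\mathbf{o}, \mathfrak{S}_{\mathbf{G}}) < \infty$), the graph distance from $u$ to $\mathfrak{S}_{\mathbf{G}}$, membership of $v$ in the nearest set $N_{\mathbf{G}}(u)$, and the cardinality $|N_{\mathbf{G}}(u)|$ are all determined by the local doubly rooted structure and inherit measurability from the defining measurability of the covariant subset $\mathfrak{S}$ (the measurability of $[G,o]\mapsto \mathbf{1}\{o \in \mathfrak{S}_G\}$), together with local finiteness, which makes each $N_{\mathbf{G}}(u)$ finite and each ball a finite union. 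Covariance is then inherited from the covariance of $\mathfrak{S}$ and the isomorphism-invariance of graph distance.
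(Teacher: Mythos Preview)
Your argument is correct and is the standard mass-transport proof of this fact. Note, however, that the paper does not actually prove this lemma: it is listed among the preliminary results quoted from \cite{baccelliEternalFamilyTrees2018a} (see the sentence ``Some of the properties which will be used in the later sections are stated below (see \cite{baccelliEternalFamilyTrees2018a} for their proofs)'' preceding Theorem~\ref{thm_point_stationarity}). So there is no in-paper proof to compare against; your approach matches the usual one in the unimodularity literature.

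One small slip: you write ``emptiness of $\mathfrak{S}_{\mathbf{G}}$ (which, by connectedness, is equivalent to $\mathrm{dist}(\mathbf{o}, \mathfrak{S}_{\mathbf{G}}) < \infty$)'' --- you mean \emph{non}-emptiness there. Otherwise the measurability and covariance discussion is adequate: for each $r\ge 0$ the event $\{\mathrm{dist}(u,\mathfrak{S}_G)\le r\}$ is a finite disjunction of events $\{w\in\mathfrak{S}_G\}$ over $w$ in the (finite) $r$-ball around $u$, each measurable by the defining property of a covariant subset, and $N_G(u)$ and $|N_G(u)|$ are determined by the ball of radius $\mathrm{dist}(u,\mathfrak{S}_G)$.
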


Recall that for a vertex-shift \(f\) and a network \(G\), \(G^f\) denotes the \(f\)-graph of \(G\).
For any vertex \(u \in V(G)\), denote the component of \(u\) in \(G^f\) by \(G^f(u)\).
The following lemma follows because the map \([G,o] \mapsto [G^f(o),o]\) is measurable.
\begin{lemma}\label{lemma_f_graph_unimodular}
    Let \(f\) be a vertex-shift and \([\mathbf{G}, \mathbf{o}]\) be a random rooted network.
    If \([\mathbf{G}, \mathbf{o}]\) is unimodular, then \([\mathbf{G}^f(\mathbf{o}),\mathbf{o}]\) is unimodular and conditioned on being infinite, \([\mathbf{G}^f(\mathbf{o}),\mathbf{o}]\) is a unimodular EFT.
\end{lemma}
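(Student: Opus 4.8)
The plan is to prove the two assertions separately: first that \([\mathbf{G}^f(\mathbf{o}),\mathbf{o}]\) is unimodular, by transporting the mass transport principle from \([\mathbf{G},\mathbf{o}]\) through the component-extraction map; then that on the infinite event the component is an Eternal Family Tree, by invoking the Foil Classification Theorem of Subsection~\ref{par_foil_classification} after checking that conditioning on infiniteness preserves unimodularity.

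For unimodularity I would verify Definition~\ref{def_unimodularity} directly for the law of \([\mathbf{G}^f(\mathbf{o}),\mathbf{o}]\). Fix a measurable \(h:\mathcal{G}_{**}\to\mathbb{R}_{\geq 0}\) and lift it to a transport on \(\mathbf{G}\) by
\[
\tilde h([G,u,v]) := h([G^f(u),u,v])\,\mathbf{1}\{v \in V(G^f(u))\}.
\]
Thus \(\tilde h\) sends mass only between vertices lying in a common \(f\)-component, and inside each component it reproduces \(h\) evaluated on that component. Since \(v \in V(G^f(u))\) holds exactly when \(u\) and \(v\) lie in the same component, i.e. \(G^f(u)=G^f(v)\), summing \(\tilde h\) over the second coordinate gives \(\tilde h_+([G,o]) = \sum_{v \in V(G^f(o))} h([G^f(o),o,v])\), while summing over the first coordinate gives \(\tilde h_-([G,o]) = \sum_{v \in V(G^f(o))} h([G^f(o),v,o])\). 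Applying the mass transport principle of the unimodular \([\mathbf{G},\mathbf{o}]\) to \(\tilde h\) therefore yields exactly the mass transport principle for \([\mathbf{G}^f(\mathbf{o}),\mathbf{o}]\) applied to \(h\). The role of the measurability of \([G,o]\mapsto[G^f(o),o]\), together with its doubly rooted analogue \([G,u,v]\mapsto[G^f(u),u,v]\) on the measurable event \(\{v\in V(G^f(u))\}\), is precisely to guarantee that \(\tilde h\) is a legitimate (measurable) transport function.

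For the second assertion, every vertex of \(G^f\) has out-degree at most one, since \(x\) points only to \(f_G(x)\) and only when \(f_G(x)\neq x\); hence each component is a non-rooted Family Tree or carries a single cycle. Applying the Foil Classification Theorem to \([\mathbf{G},\mathbf{o}]\) and \(f\), almost surely each component is of class \(\mathcal{F}/\mathcal{F}\), \(\mathcal{I}/\mathcal{I}\) or \(\mathcal{I}/\mathcal{F}\), and the two infinite classes consist exactly of EFTs; so on the event \(\{\mathbf{G}^f(\mathbf{o}) \text{ is infinite}\}\) the component \([\mathbf{G}^f(\mathbf{o}),\mathbf{o}]\) is an EFT. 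It remains to check that conditioning on this event preserves unimodularity. The event depends only on the unrooted component and is thus invariant under re-rooting within it, so for the already unimodular \([\mathbf{G}^f(\mathbf{o}),\mathbf{o}]\) the factor \(\mathbf{1}\{\mathbf{G}^f(\mathbf{o}) \text{ infinite}\}\) is unchanged when the root moves inside the component; it therefore pulls out of both sides of the mass transport principle, and dividing by the probability of the event gives the mass transport principle for the conditioned law. Combining the two parts shows that, conditioned on being infinite, \([\mathbf{G}^f(\mathbf{o}),\mathbf{o}]\) is a unimodular EFT.

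The main obstacle is technical rather than conceptual: one must confirm that the lifted transport \(\tilde h\) is genuinely measurable, which reduces to the measurability of the component-extraction map and its doubly rooted version flagged before the statement. A secondary care-point is that the conditioning step relies essentially on the infiniteness event being a function of the component alone, so that its indicator commutes with the root exchange in the mass transport principle.
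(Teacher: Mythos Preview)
Your argument is correct and is precisely the natural expansion of what the paper gives: the paper supplies no proof at all, only the one-line remark preceding the lemma that the map \([G,o]\mapsto[G^f(o),o]\) is measurable. Your lifting of \(h\) to \(\tilde h\) on \([\mathbf{G},\mathbf{o}]\) and your use of the Foil Classification Theorem together with the root-invariance of the infiniteness event constitute the standard way to turn that remark into a proof, so there is nothing substantively different to compare.
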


The results that are not stated here but, that are used in the later sections are the foil classification theorem of unimodular networks (see Sec. \ref{par_foil_classification}) \cite[Theorem 3.9]{baccelliEternalFamilyTrees2018a} and the classification theorem of unimodular Family Trees \cite[Proposition 4.3, Proposition 4.4]{baccelliEternalFamilyTrees2018a}.
The latter theorem states that any unimodular (rooted) Family Tree is either of class \(\mathcal{F}/\mathcal{F}\), or of class \(\mathcal{I}/\mathcal{F}\), or of class \(\mathcal{I}/\mathcal{I}\).

\subsection{Unimodular Eternal Galton-Watson Trees} \label{subsec_EGWT}
 Unimodular Eternal Galton-Watson Trees (\(EGWT\)s) are instances of unimodular EFTs of class \(\mathcal{I}/\mathcal{I}\).
    They are parametrized by a non-trivial offspring distribution which has mean \(1\).
    A characterizing property of \(EGWT\)s is stated in Theorem~\ref{theorem_characterization_egwt}, see \cite{baccelliEternalFamilyTrees2018a} for more properties.
    In this work, \(EGWT\)s show up as the record graph of skip-free to the left random walks when the increments of the random walk have zero mean.

    Let $\pi$ be a probability distribution on $\mathbb{Z}_{\geq 0}$ such that mean $m(\pi)=1$ and \(\pi(1)<1\), and let $\hat{\pi}$ be its size-biased distribution defined by $\hat{\pi}(k) = k \pi(k), \forall k \geq 0$.

    The ordered unimodular Eternal Galton-Watson Tree with offspring distribution $\pi$, $EGWT(\pi)$, is a rooted ordered Eternal Family Tree $[\mathbf{T},\mathbf{o}]$ with the following properties. 
    The root $\mathbf{o}$ and all of its descendants reproduce independently with the common offspring distribution \(\pi\).
    For all $n \geq 1$, the ancestor $F^n(\mathbf{o})$ of $\mathbf{o}$ reproduces with distribution $\hat{\pi}$. 
    The descendants of $F^n(\mathbf{o})$ which are not the descendants of $F^{n-1}(\mathbf{o})$ (and not $F^{n-1}(\mathbf{o})$) reproduce independently with the common offspring distribution \(\pi\), with the convention $F^0(\mathbf{o})=\mathbf{o}$.
    All individuals (vertices) reproduce independently.
    The order among the children of every vertex is uniform.

    In particular, except for the vertices $F^n(\mathbf{o}),\, n\geq 1$, which reproduce independently with distribution $\hat{\pi}$, all the remaining vertices of the tree reproduce independently with the offspring distribution $\pi$. See Figure~\ref{fig_EGWT} for an illustration. 

    \begin{figure}[htbp] 
    \centering 
    \includegraphics[scale=0.8]{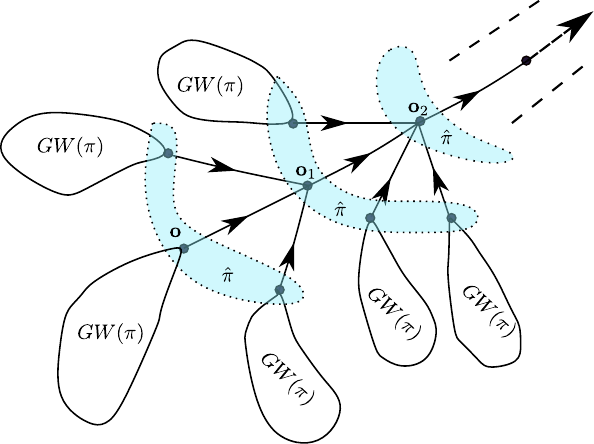}
    \caption{The Eternal Galton-Watson Tree with offspring distribution $\pi$. The Galton-Watson Tree with offspring distribution \(\pi\) is denoted by $GW(\pi)$, $\hat{\pi}$ is the size-biased distribution of $\pi$. The order among the children of every vertex is considered to be increasing from left to right.}
    \label{fig_EGWT}
    \end{figure}

    \begin{remark}\label{remark_supp_EGWT}
    Let $[\mathbf{T},\mathbf{o}]$ be a unimodular $EGWT(\pi)$, where the mean $m(\pi)=1$. Then, the descendant trees of all vertices, except for the ancestors of the root $\mathbf{o}$, are independent  critical Galton-Watson Trees with the offspring distribution $\pi$.
    Therefore, the descendant trees are finite.
    Hence, a realization of $EGWT(\pi)$ has the property that the descendant tree of every vertex is finite.
    So, \(EGWT(\pi)\) is of class \(\mathcal{I}/\mathcal{I}\).
    \end{remark}

    Similar construction exists for \(EGWT(\pi)\) when the mean \(m(\pi) \not = 1\).
    However, \(EGWT(\pi)\) is unimodular if and only if the mean \(m(\pi)\) is \(1\); and 
    the following theorem characterizes unimodular \(EGWT\)s (see \cite[Proposition 6.5, Theorem 6.6]{baccelliEternalFamilyTrees2018a}).

    \begin{theorem} \label{theorem_characterization_egwt}
        A unimodular Eternal Family Tree \([\mathbf{T},\mathbf{o}]\) is an \(EGWT\) if and only if the number of children \(d_1(\mathbf{o})\) of the root is independent of the non-descendant tree \(D^c(\mathbf{o})\), i.e., the subtree induced by \((V(\mathbf{T})\backslash D(\mathbf{o}))\cup \{\mathbf{o}\}\).
    \end{theorem}

    Note the emphasis on the Family Tree being eternal in Theorem~\ref{theorem_characterization_egwt}.
    For a similar characterization result about finite Family Trees, see Proposition~\ref{20230305162953}.

    \subsection{Typical re-rooting operation}

    The typical re-rooting operation is applied to the probability distribution of a random Family Tree whose size is finite in mean, and the operation results in another probability distribution.

    \begin{definition}[Typical re-rooting]\label{def_typical_rerooting}
        Let \([\mathbf{T},\mathbf{o}]\) be a random Family Tree that satisfies \(\mathbb{E}[\#V(\mathbf{T})]< \infty\) and \(\mathcal{P}\) be its distribution.
        The typical re-rooting of \([\mathbf{T},\mathbf{o}]\) (or its distribution \(\mathcal{P}\)) is the probability measure \(\mathcal{P}'\) defined by:
        \begin{equation}
            \mathcal{P}'[A] = \frac{1}{\mathbb{E}[\#V(\mathbf{T})]} \mathbb{E}\left[\sum_{u \in V(\mathbf{T})} \mathbf{1}_A([\mathbf{T},u])\right],
        \end{equation}
        for any measurable subset \(A\) of \(\mathcal{T}_*\).
    \end{definition}

    A random Family Tree whose distribution is \(\mathcal{P}'\) is also said to be a typically re-rooted \([\mathbf{T},\mathbf{o}]\).
    The following lemma shows that the Family Tree obtained from the typical re-rooting of any Family Tree that has finite mean size is unimodular.
    Since the unimodular Family Tree thus obtained is also finite, it is of class \(\mathcal{F}/\mathcal{F}\).
    So, this operation can be used to generate unimodular Family Trees of class \(\mathcal{F}/\mathcal{F}\).

    \begin{lemma}
        Let \([\mathbf{T},\mathbf{o}]\) be a random Family Tree that satisfies \(\mathbb{E}[\#V(\mathbf{T})]< \infty\).
        Then, the random Family Tree \([\mathbf{T}',\mathbf{o}']\) obtained by typically re-rooting \([\mathbf{T},\mathbf{o}]\) is unimodular.
    \end{lemma}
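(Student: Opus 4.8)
The plan is to verify directly that the typically re-rooted distribution $\mathcal{P}'$ satisfies the mass transport principle of Definition~\ref{def_unimodularity}. As a preliminary step I would record that $\mathcal{P}'$ is a genuine probability measure: the hypothesis $\mathbb{E}[\#V(\mathbf{T})] < \infty$ forces $\#V(\mathbf{T}) < \infty$ almost surely and makes the normalising constant finite, while $\#V(\mathbf{T}) \geq 1$ (the tree contains its root) makes it strictly positive, so that taking $A = \mathcal{T}_*$ in Definition~\ref{def_typical_rerooting} gives $\mathcal{P}'[\mathcal{T}_*] = 1$. I would also note that the defining identity of $\mathcal{P}'$, stated for indicators $\mathbf{1}_A$, extends by monotone convergence to every nonnegative measurable functional $\Phi$ on $\mathcal{T}_*$, so that $\mathbb{E}'[\Phi([\mathbf{T}',\mathbf{o}'])] = \mathbb{E}[\sum_{u \in V(\mathbf{T})}\Phi([\mathbf{T},u])]/\mathbb{E}[\#V(\mathbf{T})]$, where $\mathbb{E}'$ denotes expectation under $\mathcal{P}'$.

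Fix an arbitrary measurable $h : \mathcal{G}_{**} \to \mathbb{R}_{\geq 0}$ and apply the displayed identity to the nonnegative functionals $h_+$ and $h_-$ from Definition~\ref{def_unimodularity}. Unfolding $h_+([\mathbf{T},u]) = \sum_{v} h([\mathbf{T},u,v])$ and interchanging the inner (almost surely finite) sum with the expectation by Tonelli's theorem (legitimate since $h \geq 0$), I obtain
\begin{equation*}
\mathbb{E}'\big[h_+([\mathbf{T}',\mathbf{o}'])\big] = \frac{1}{\mathbb{E}[\#V(\mathbf{T})]}\, \mathbb{E}\!\left[\sum_{u \in V(\mathbf{T})} \sum_{v \in V(\mathbf{T})} h([\mathbf{T},u,v])\right],
\end{equation*}
and, identically, $\mathbb{E}'[h_-([\mathbf{T}',\mathbf{o}'])]$ equals the same quantity but with $h([\mathbf{T},v,u])$ in the summand. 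These two double sums run over the same finite index set $V(\mathbf{T}) \times V(\mathbf{T})$ and differ only by the names of the summation variables, so relabeling $u \leftrightarrow v$ shows they coincide inside the expectation. Hence $\mathbb{E}'[h_+] = \mathbb{E}'[h_-]$, which is precisely the mass transport principle for $[\mathbf{T}',\mathbf{o}']$; since $h$ is arbitrary, $[\mathbf{T}',\mathbf{o}']$ is unimodular.

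The argument is essentially a bookkeeping computation, so there is no serious obstacle; the only points demanding care are measurability and the interchange of sum and expectation. Measurability of $[\mathbf{T},u] \mapsto h_\pm([\mathbf{T},u])$ follows from the measurability of $h$ and of the re-rooting and vertex-enumeration maps on $\mathcal{T}_*$, while the Tonelli interchange is justified by nonnegativity together with the almost sure finiteness of $V(\mathbf{T})$. This finiteness, guaranteed by $\mathbb{E}[\#V(\mathbf{T})] < \infty$, is the single place where the hypothesis is genuinely used beyond the well-definedness of $\mathcal{P}'$, and it is also what makes the symmetry of the double sum a finite, term-by-term identity rather than a rearrangement of a series.
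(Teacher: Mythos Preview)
Your proof is correct and follows essentially the same approach as the paper: both verify the mass transport principle directly by unfolding the definition of $\mathcal{P}'$, expanding $h_+$ and $h_-$ into double sums over $V(\mathbf{T})\times V(\mathbf{T})$, and observing that these double sums agree by the relabeling $u\leftrightarrow v$. Your version is more explicit about the preliminary checks (well-definedness of $\mathcal{P}'$, extension to general nonnegative functionals, measurability, Tonelli), which the paper leaves implicit; one small remark is that the almost sure finiteness of $V(\mathbf{T})$ is not actually needed for the symmetry step, since for nonnegative summands the double sum can be rearranged freely even when countably infinite.
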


    \begin{proof}
        For any measurable function \(h:\mathcal{T}_{**} \to \mathbb{R}_{\geq 0}\),
        \begin{align*}
            \mathbb{E}\left[h_+([\mathbf{T}',\mathbf{o}'])\right] &= \frac{1}{\mathbb{E}[\#V(\mathbf{T})]}\mathbb{E}\left[\sum_{u \in V(\mathbf{T})} h_+([\mathbf{T},u])\right]\\
                &= \frac{1}{\mathbb{E}[\#V(\mathbf{T})]} \mathbb{E}\left[\sum_{u \in V(\mathbf{T})} \sum_{v \in V(\mathbf{T})}h([\mathbf{T},u,v]) \right]\\
                &= \frac{1}{\mathbb{E}[\#V(\mathbf{T})]}\mathbb{E}\left[\sum_{v \in V(\mathbf{T})} h_-([\mathbf{T},v])\right] =\mathbb{E}\left[h_-([\mathbf{T}',\mathbf{o}'])\right].
        \end{align*} 
    \end{proof}

    A construction outlined in \cite{baccelliEternalFamilyTrees2018a} offers a method for generating EFTs of class \(\mathcal{I}/\mathcal{F}\).
    In this paper, this construction is referred to as the \emph{typically re-rooted joining}, and it is defined in the following way.

    \begin{definition}[Typically re-rooted joining] \label{def_typical_reroot_joining}
        Let $([\mathbf{T}_i,\mathbf{o}_i])_{i \in \mathbb{Z}}$ be a stationary sequence of random rooted Family Trees such that \(\mathbb{E}[\#V(\mathbf{T}_i)]<\infty\) for all \(i \in \mathbb{Z}\).
        The typically re-rooted joining of $([\mathbf{T}_i,\mathbf{o}_i])_{i \in \mathbb{Z}}$ is the probability measure \(\mathcal{P}'\) given by the following: the EFT \([\mathbf{T}',\mathbf{o}']\) is first obtained by joining \((\mathbf{T}_i)_{i \in \mathbb{Z}}\) by adding directed edges \(\{(\mathbf{o}_i,\mathbf{o}_{i+1}):i \in \mathbb{Z}\}\), where \(\mathbf{o}'=\mathbf{o}_0\), and one then defines
        \begin{equation}
            \mathcal{P}'[A] = \frac{1}{\mathbb{E}[\#V(\mathbf{T}_0)]} \mathbb{E}\displaystyle\left[\sum_{v \in V(\mathbf{T}_0)} \mathbf{1}_{A}([\mathbf{T}',v]) \right],
        \end{equation}
        for every measurable subset \(A\) of \(\mathcal{T}_*\).
    \end{definition}

    The Family Tree \([\mathbf{T}_i,\mathbf{o}_i]\) is called the \(i\)-th bush of \([\mathbf{T}',\mathbf{o}']\) and \([\mathbf{T}_0,\mathbf{o}_0]\) is called the bush of the root.

    Let \([\bar{\mathbf{T}},\bar{\mathbf{o}}]\) be the typical re-rooting of \([\mathbf{T}_0,\mathbf{o}_0]\), and \(\mathbf{u}\) be the unique ancestor of \(\bar{\mathbf{o}}\) that does not have any parent.
    Let \([\mathbf{T},\bar{\mathbf{o}}]\) be the EFT obtained by joining \(\{[\mathbf{T}_i,\mathbf{o}_i]: i \in \mathbb{Z}\backslash \{0\}\} \cup \{[\bar{\mathbf{T}},\bar{\mathbf{o}}]\}\) by adding directed edges \(\{(\mathbf{o}_i,\mathbf{o}_{i+1}): i \in \mathbb{Z}\backslash \{-1,0\}\} \cup \{(\mathbf{o}_{-1},\mathbf{u}),(\mathbf{u},\mathbf{o}_1)\}\).
Observe that the distribution of \([\mathbf{T},\bar{\mathbf{o}}]\) is the same as the probability measure \(\mathcal{P}'\).

The following results show that the typically re-rooted joining of a stationary sequence is unimodular and belongs to class \(\mathcal{I}/\mathcal{F}\), and that every unimodular EFT of class \(\mathcal{I}/\mathcal{F}\) is obtained in this manner.

\begin{theorem}[\cite{baccelliEternalFamilyTrees2018a}]\label{thm:I_F_unimodularizable}
    Let $[\mathbf{T},\mathbf{o}]$ be the typically re-rooted joining of a stationary sequence $([\mathbf{T}_i,\mathbf{o}_i])_{i \in \mathbb{Z}}$, where $\mathbb{E}[\# V(\mathbf{T}_0)]< \infty$.
  Then, $[\mathbf{T},\mathbf{o}]$ is unimodular.
  \end{theorem}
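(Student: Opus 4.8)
The plan is to verify the mass transport principle of Definition~\ref{def_unimodularity} directly for the measure $\mathcal{P}'$, reducing it to an identity between two double sums that is then settled using the stationarity of the sequence $([\mathbf{T}_i,\mathbf{o}_i])_{i\in\mathbb{Z}}$. Write $c:=\mathbb{E}[\#V(\mathbf{T}_0)]$, which by stationarity equals $\mathbb{E}[\#V(\mathbf{T}_i)]$ for every $i$. Fix a measurable $h:\mathcal{T}_{**}\to\mathbb{R}_{\geq 0}$. Since the definition of $\mathcal{P}'$ extends from indicators to arbitrary non-negative measurable functions on $\mathcal{T}_*$, applying it to $[\mathbf{T},\mathbf{o}]\mapsto h_{\pm}([\mathbf{T},\mathbf{o}])$ gives
\begin{equation*}
\mathbb{E}_{\mathcal{P}'}\!\left[h_+([\mathbf{T},\mathbf{o}])\right]=\frac{1}{c}\,\mathbb{E}\!\left[\sum_{v\in V(\mathbf{T}_0)}\sum_{u\in V(\mathbf{T}')}h([\mathbf{T}',v,u])\right],
\end{equation*}
and the analogous formula for $\mathbb{E}_{\mathcal{P}'}[h_-([\mathbf{T},\mathbf{o}])]$ with $h([\mathbf{T}',v,u])$ replaced by $h([\mathbf{T}',u,v])$. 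As $h\geq 0$ every sum is non-negative and Tonelli applies, so it suffices to show these two right-hand sides agree (with the value $+\infty$ permitted).

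The key step is to decompose the inner sum over $V(\mathbf{T}')$ according to the bush containing each vertex: every $u\in V(\mathbf{T}')$ lies in exactly one $V(\mathbf{T}_j)$, $j\in\mathbb{Z}$. Let $\theta$ be the measure-preserving shift of the underlying probability space with $\mathbf{T}_i\circ\theta=\mathbf{T}_{i+1}$ and $\mathbf{o}_i\circ\theta=\mathbf{o}_{i+1}$. The observation I would emphasize is that, \emph{as an unrooted directed network}, the joined tree is shift-invariant: the spine edge set $\{(\mathbf{o}_i,\mathbf{o}_{i+1}):i\in\mathbb{Z}\}$ and the total vertex set $\bigsqcup_{i}V(\mathbf{T}_i)$ are both unchanged when all indices are shifted, so $\mathbf{T}'\circ\theta=\mathbf{T}'$ on the common vertex set, while only the distinguished bush moves, $V(\mathbf{T}_0\circ\theta)=V(\mathbf{T}_1)$. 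Consequently $[\mathbf{T}'\circ\theta^{k},v,u]=[\mathbf{T}',v,u]$ for all vertices $v,u$ and all $k$.

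Set $A_j:=\mathbb{E}[\sum_{v\in V(\mathbf{T}_0)}\sum_{u\in V(\mathbf{T}_j)}h([\mathbf{T}',v,u])]$, so the first right-hand side equals $\tfrac1c\sum_j A_j$. Using the $\theta$-invariance of $\mathbb{P}$ to replace $\omega$ by $\theta^{-j}\omega$, and then $\mathbf{T}_0\circ\theta^{-j}=\mathbf{T}_{-j}$, $\mathbf{T}_j\circ\theta^{-j}=\mathbf{T}_0$ and $\mathbf{T}'\circ\theta^{-j}=\mathbf{T}'$, I obtain $A_j=\mathbb{E}[\sum_{v\in V(\mathbf{T}_{-j})}\sum_{u\in V(\mathbf{T}_0)}h([\mathbf{T}',v,u])]$. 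On the other hand, the second right-hand side is $\tfrac1c\sum_j B_j$ with $B_j:=\mathbb{E}[\sum_{v\in V(\mathbf{T}_0)}\sum_{u\in V(\mathbf{T}_j)}h([\mathbf{T}',u,v])]$, and merely renaming the two dummy vertices gives $B_j=\mathbb{E}[\sum_{v\in V(\mathbf{T}_j)}\sum_{u\in V(\mathbf{T}_0)}h([\mathbf{T}',v,u])]=A_{-j}$. Hence $\sum_j B_j=\sum_j A_{-j}=\sum_j A_j$, which is exactly the required equality, so $[\mathbf{T},\mathbf{o}]$ satisfies the mass transport principle and is unimodular.

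I expect the main obstacle to be the content of the second paragraph: making precise, at the level of the underlying probability space, that the joined unrooted network is literally invariant under the index shift while the marked ``root bush'' is not, and checking that the doubly-rooted isomorphism class $[\mathbf{T}',v,u]$ depends only on the unrooted network together with the ordered pair of roots, so that no bush labelling leaks into the marks of $\mathbf{T}'$. Once this invariance is set up cleanly, the remainder is bookkeeping with non-negative series and Tonelli's theorem, exactly parallel to the single-tree computation in the preceding lemma.
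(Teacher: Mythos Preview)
The paper does not contain its own proof of this theorem: it is stated with a citation to \cite{baccelliEternalFamilyTrees2018a} and used as a black box. So there is nothing in the present paper to compare your argument against.

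That said, your proposal is correct and is essentially the standard direct verification of the mass transport principle for this construction. The decomposition by bushes, the use of the index shift $\theta$ to turn $A_j$ into a sum with the roles of the $0$-th and $j$-th bushes exchanged, and the observation that $B_j=A_{-j}$ after a relabelling of dummy variables, together yield $\sum_j A_j=\sum_j B_j$. You have also correctly isolated the one genuinely delicate point: that the doubly-rooted equivalence class $[\mathbf{T}',v,u]$ is unchanged under $\theta$, because the joined tree as an unrooted marked network carries no trace of which bush is ``the zeroth one''. Provided you make this invariance precise (for instance by exhibiting the explicit network isomorphism $\mathbf{T}'(\omega)\to\mathbf{T}'(\theta\omega)$ that sends each vertex of $\mathbf{T}_i(\omega)$ to the corresponding vertex of $\mathbf{T}_{i-1}(\theta\omega)$ and checking it respects all edges, including the spine edges), the rest is indeed just Tonelli and bookkeeping. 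This is the same mechanism as in the original reference, phrased slightly differently.
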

  \begin{theorem}[\cite{baccelliEternalFamilyTrees2018a}]\label{thm_eft_I_f_joining}
    Let $[\mathbf{T},\mathbf{o}]$ be a unimodular EFT of class $\mathcal{I}/\mathcal{F}$ a.s., and $[\mathbf{T}',\mathbf{o}']$ be the Family Tree obtained by conditioning $[\mathbf{T},\mathbf{o}]$ on the event that $\mathbf{o}$ belongs to the bi-infinite path of $\mathbf{T}$.
    Then, $[\mathbf{T}',\mathbf{o}']$ is the joining of some stationary sequence of finite Family Trees $([\mathbf{T}_i,\mathbf{o}_i])_{i \in \mathbb{Z}}$ and $[\mathbf{T},\mathbf{o}]$ is the typically re-rooted joining of  $([\mathbf{T}_i,\mathbf{o}_i])_{i \in \mathbb{Z}}$.
  \end{theorem}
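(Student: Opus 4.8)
The plan is to realise $[\mathbf{T},\mathbf{o}]$ through the unique bi-infinite path supplied by the foil classification, and to transfer every structural statement to this path by means of the mass-transport principle. Since $[\mathbf{T},\mathbf{o}]$ is a unimodular EFT of class $\mathcal{I}/\mathcal{F}$, the foil classification of Section~\ref{par_foil_classification} gives a.s. a unique bi-infinite directed path $\mathfrak{S}$, equal to the set of vertices with infinitely many descendants, all other vertices having finitely many. The collection $\mathfrak{S}$ is a covariant subset and is a.s. non-empty, so Lemma~\ref{lemma_non_empty_covariant_set} yields $\mathbb{P}[\mathbf{o}\in\mathfrak{S}]>0$; hence conditioning on $\{\mathbf{o}\in\mathfrak{S}\}$ is meaningful and $[\mathbf{T}',\mathbf{o}']$ is well-defined. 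Next I would decompose the tree into bushes. Because $\mathbf{T}$ is a tree and $\mathfrak{S}$ is a connected path closed under the parent vertex-shift $F$, each connected component of the off-spine forest is attached to $\mathfrak{S}$ by a single edge, and that edge necessarily joins the component to a spine vertex \emph{from below}; thus each component lies inside the descendant trees of the off-spine children of one spine vertex, all of which are finite. This produces a covariant ``bush-root'' map $r$ sending every vertex $u$ to the unique spine vertex $r(u)$ whose bush $\mathbf{T}(r(u)):=r^{-1}(r(u))$ contains $u$ (with $r(u)=u$ for $u\in\mathfrak{S}$), and every bush is finite.

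With this in hand, the first assertion is essentially structural: conditionally on $\{\mathbf{o}\in\mathfrak{S}\}$, writing $\mathfrak{S}=(v_i)_{i\in\mathbb{Z}}$ with $v_0=\mathbf{o}'$ and $v_{i+1}=F(v_i)$, the tree $\mathbf{T}'$ is exactly the tree obtained by joining the bushes $[\mathbf{T}_i,\mathbf{o}_i]:=[\mathbf{T}(v_i),v_i]$ along the edges $\{(\mathbf{o}_i,\mathbf{o}_{i+1})\}$, with $\mathbf{o}'=\mathbf{o}_0$. To obtain stationarity of $(\mathbf{T}_i)_{i\in\mathbb{Z}}$ I would introduce the vertex-shift $f$ defined by $f=F$ on $\mathfrak{S}$ and $f=\mathrm{id}$ off $\mathfrak{S}$. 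As $F$ acts on the bi-infinite path $\mathfrak{S}$ as the shift and is the identity elsewhere, $f$ is a.s. a bijection of $V(\mathbf{T})$, so point-stationarity (Theorem~\ref{thm_point_stationarity}) shows that $\theta_f$ preserves the law of $[\mathbf{T},\mathbf{o}]$. Since $f$ preserves both $\mathfrak{S}$ and its complement, it preserves the event $\{\mathbf{o}\in\mathfrak{S}\}$, hence the conditional law $[\mathbf{T}',\mathbf{o}']$; under this conditional law $\theta_f$ simply moves the root from $v_0$ to $v_1$ and relabels $\mathbf{T}_i\mapsto\mathbf{T}_{i+1}$, which is precisely the stationarity of the bush sequence.

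It remains to identify $[\mathbf{T},\mathbf{o}]$ with the typically re-rooted joining of $(\mathbf{T}_i)$. I would do this by a direct mass transport: for measurable $A\subseteq\mathcal{T}_*$, apply the mass-transport principle (Definition~\ref{def_unimodularity}) to $h([\mathbf{T},x,y])=\mathbf{1}_A([\mathbf{T},x])\,\mathbf{1}\{y=r(x)\}$, i.e. let each vertex $x$ send the mass $\mathbf{1}_A([\mathbf{T},x])$ to its bush-root $r(x)$. The outgoing mass at $\mathbf{o}$ is $\mathbf{1}_A([\mathbf{T},\mathbf{o}])$, while the incoming mass is $\mathbf{1}\{\mathbf{o}\in\mathfrak{S}\}\sum_{x\in\mathbf{T}(\mathbf{o})}\mathbf{1}_A([\mathbf{T},x])$, so unimodularity gives
\[
\mathbb{P}\big[[\mathbf{T},\mathbf{o}]\in A\big]=\mathbb{P}[\mathbf{o}\in\mathfrak{S}]\,\mathbb{E}\Big[\textstyle\sum_{x\in V(\mathbf{T}_0)}\mathbf{1}_A([\mathbf{T}',x])\ \Big|\ \mathbf{o}\in\mathfrak{S}\Big],
\]
where conditionally $\mathbf{T}(\mathbf{o})=\mathbf{T}_0$ is the bush of the root. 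Taking $A=\mathcal{T}_*$ forces $\mathbb{P}[\mathbf{o}\in\mathfrak{S}]^{-1}=\mathbb{E}[\#V(\mathbf{T}_0)]$, which in particular shows $\mathbb{E}[\#V(\mathbf{T}_0)]<\infty$, so Theorem~\ref{thm:I_F_unimodularizable} applies and the right-hand side is genuinely the typically re-rooted joining of Definition~\ref{def_typical_reroot_joining}; substituting $\mathbb{P}[\mathbf{o}\in\mathfrak{S}]=\mathbb{E}[\#V(\mathbf{T}_0)]^{-1}$ back yields $\mathbb{P}[[\mathbf{T},\mathbf{o}]\in A]=\mathcal{P}'[A]$ for all $A$, which is the claim. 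The step I expect to be most delicate is this inversion: one must be sure that $r$ is a genuinely covariant, everywhere-defined map with finite fibres, so that both sides of the transport are finite and the conditional law really is the joining of the stationary bush sequence. The finiteness of the \emph{bushes} (rather than merely of individual descendant sets), which underlies $\mathbb{E}[\#V(\mathbf{T}_0)]<\infty$, is exactly where the class $\mathcal{I}/\mathcal{F}$ hypothesis enters essentially, through the single-attaching-edge property of the spine in a tree.
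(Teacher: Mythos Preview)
The paper does not supply a proof of this theorem; it is quoted from \cite{baccelliEternalFamilyTrees2018a} and used as a black box in the proof of Theorem~\ref{r_graph_positive_drift_20230203174636}. So there is no in-paper argument to compare against.

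Your proof is correct and is in fact the standard one. The three ingredients---(i) that the spine $\mathfrak{S}$ is a non-empty covariant subset so conditioning is legitimate, (ii) that the ``shift along the spine, identity off it'' vertex-shift is a.s.\ bijective so point-stationarity gives stationarity of the bush sequence, and (iii) the mass transport from each vertex to its bush-root to recover the typically re-rooted joining formula---are exactly the tools the paper has on hand, and indeed the vertex-shift in (ii) is the same one the paper introduces just after Definition~\ref{def_typical_reroot_joining} to justify unimodularity of ordered joinings. Your concern about the covariance and finite fibres of $r$ is well placed but harmless: $r(u)$ is the unique spine ancestor of $u$ with the fewest $F$-steps, which is covariant because both $F$ and $\mathfrak{S}$ are, and its fibres are the bushes $D(v_i)\setminus D(v_{i-1})$, finite because every off-spine vertex has only finitely many descendants in class $\mathcal{I}/\mathcal{F}$. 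One cosmetic point: you could state up front that the bush of $v_i$ is $D(v_i)\setminus D(v_{i-1})$ rather than defining $r$ circularly via ``the bush containing $u$''; this matches the paper's convention in Subsection~\ref{subsec_bi_variate_ekt} and makes the finiteness immediate.
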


\subsection{The record vertex-shift and its properties}\label{subsec_record_vs}

Observe that the record map defined in Eq.~(\ref{eq_recordMap}) is covariant.
This property is apparent when considering any two networks \((\mathbb{Z},x)\) and \((\mathbb{Z},y)\) associated with the sequences \(x=(x_n)_{n \in \mathbb{Z}}\) and \(y=(y_n)_{n \in \mathbb{Z}}\), respectively.
The two networks are isomorphic, preserving the order on \(\mathbb{Z}\), if and only if \(y\) can be obtained by shifting \(x\), i.e., there exists an integer \(i\) such that \(T_ix = y\), where \(T_ix = (x_{n-i})_{n \in \mathbb{Z}}\).

In this case, the isomorphism \(\alpha_i:(\mathbb{Z},x)\to (\mathbb{Z},y)\) associated to the integer \(i\) is given by \(\alpha_i(n) = n+i\), for all \(n \in \mathbb{Z}\).
Therefore, the record map is covariant if and only if \(R_{T_ix}(k+i) = i+R_x(k)\), for all integers \(k,i\) and for any integer-valued sequence \(x=(x_n)_{n \in \mathbb{Z}}\).
Indeed, for any integers \(k,i\),
\begin{align*}
    R_{T_ix}(k+i) &= \begin{cases} \inf\{ j > k+i: \sum_{l=k+i}^{j-1} x_{l-i} \geq 0\} \text{ if infimum is defined},\\ 
      k+i \text{ otherwise} \end{cases}\\
    &= \begin{cases} i + \inf\{j>k: \sum_{l=k}^{j-1} x_{l}\}  \text{ if infimum is defined} \\
       i+ k \text{ otherwise}\end{cases}\\
    &= i+ R_{x}(k).
  \end{align*}

The record map also satisfies the measurability condition as the map $[\mathbb{Z},i,j,x] \mapsto 1_{R_x(i)=j}$ is a function of $(x_i,x_{i+1},\cdots,x_{j-1})$.
Therefore, the record map induces a vertex-shift which is termed as the record vertex-shift.

The record vertex-shift possesses several key properties that play a crucial role in determining the record graph.
A key property is now described.

Let $x:=(x_n)_{n \in \mathbb{Z}}$ be an arbitrary integer-valued sequence, and $(\mathbb{Z},x)$ be its associated network.
For the sequence \(x\), it is convenient to work with the partial sums \(y(k,i) = s_i-s_k= \sum_{j=k}^{i-1}x_j\), for any integers \(k<i\).
Define the function $L: \mathbb{R}^{\mathbb{Z}} \times \mathbb{Z}\rightarrow \mathbb{Z} \cup \{-\infty\}$ as
\begin{equation}\label{eq:L_x_defn}
  L_{x}(i) = \begin{cases}
    \inf\{j<i: y(k,i) \geq 0, \forall j \leq k < i\} \text{, if infimum exists;}\\
    i \text{ otherwise}.
    \end{cases}
\end{equation}
The integer \(L_x(i)\) is the largest integer (if it exists) up to which all the sums in the past of \(i\) are at most equal to \(s_i\).
Note that $-\infty \leq L_x(i) <i$ if and only if $-y(j,i) \leq 0$ for all integers \(j\) such that $L_x(i) \leq j<i$.
Therefore, \(L_x(i)=i\) if and only if $x_{i-1}<0$.

The following lemma shows that the set of descendants of any integer $i$ is the set of integers that lie between $L_x(i)$ and $i$ (including $L_x(i)$ and \(i\)).

Recall the notation: for a sequence \(x\) and an integer \(i \in \mathbb{Z}\), \(D_1(i):=\{j<i: R_x(j)=i\}\) is the set of children of \(i\) and \(D(i):=\{j<i: R^n_x(j)=i\) for some \(n>0\} \cup \{i\}\) is the set of descendants of \(i\).

\begin{lemma}\label{lemma:descendants}\label{lemma_descendants}
    Let $x = (x_n)_{n \in \mathbb{Z}}$ be an integer-valued sequence and $R$ be the record vertex-shift on the network \((\mathbb{Z},x)\). 
  The set of descendants of any integer $i$ is given by
    \begin{equation*}
      D(i) = \{j \in \mathbb{Z}: L_{x}(i) \leq j \leq i\}.
    \end{equation*}
  \end{lemma}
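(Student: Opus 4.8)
The plan is to prove the identity by double inclusion, writing $\ell := L_x(i)$ and relying on two elementary monotonicity facts about the orbit of the record vertex-shift. Fix $i$ and, for a vertex $j$ whose orbit is defined up to step $n$, write $R^m := R_x^m(j)$. The first fact is that the heights along an orbit are non-decreasing: since $R^{m+1} = R_x(R^m)$ is by definition the first time after $R^m$ at which the walk is at least $s_{R^m}$, one has $s_{R^{m+1}} \geq s_{R^m}$ whenever the step exists, hence $s_{R^0} \leq s_{R^1} \leq \cdots$. The second fact is that strictly between two consecutive orbit points the walk stays strictly below: for $R^m < t < R^{m+1}$ one has $s_t < s_{R^m}$. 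These two observations, together with the fact that every proper descendant is strictly smaller than its ancestor (because $R_x(k) > k$ when the step is not a self-loop), are all that is needed.

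For the inclusion $D(i) \subseteq \{j : \ell \leq j \leq i\}$, take $j \in D(i)$, so $R^n = i$ for some $n \geq 0$ and, by strict monotonicity of the orbit, $j = R^0 < R^1 < \cdots < R^n = i$ with every $R^m \leq i$. Suppose for contradiction that $j < \ell$. By definition of $\ell$ as the infimum of the indices $j'$ for which $s_i \geq s_k$ holds for all $k \in [j', i)$, failure of this condition at $j$ together with its validity on $[\ell, i)$ produces an index $k$ with $j \leq k < \ell$ and $s_k > s_i$. Locating $k$ in the partition $[j, i) = \bigsqcup_{m=0}^{n-1}[R^m, R^{m+1})$, either $k = R^m$, in which case $s_k = s_{R^m} \leq s_{R^n} = s_i$ by the first fact, or $R^m < k < R^{m+1}$, in which case $s_k < s_{R^m} \leq s_i$ by the second fact; both contradict $s_k > s_i$. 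Hence $j \geq \ell$, and since descendants satisfy $j \leq i$ this gives the inclusion.

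For the reverse inclusion $\{j : \ell \leq j \leq i\} \subseteq D(i)$, the case $j = i$ is trivial, so fix $\ell \leq j < i$ and show the orbit of $j$ reaches $i$. The key point is an induction showing that every orbit point lying below $i$ in fact lies in $[\ell, i)$ and therefore satisfies $s_i \geq s_{R^m}$ by the defining property of $\ell$; consequently $i$ belongs to $\{t > R^m : s_t \geq s_{R^m}\}$, which forces $R^{m+1} = R_x(R^m) \leq i$. Since the orbit is then strictly increasing, bounded above by $i$, and integer-valued, it can take at most $i - j$ steps before it must equal $i$, so $R^n = i$ for some $n$ and $j \in D(i)$.

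I expect the only real care to be bookkeeping rather than a deep obstacle: one must verify that the infimum defining $\ell$ is attained (or handle $\ell = -\infty$) and treat the two degenerate cases, namely $\ell = i$ (which, as noted after the definition of $L_x$, corresponds to $x_{i-1} < 0$ and should force $i$ to have no children, matching $\{j : \ell \leq j \leq i\} = \{i\}$) and $\ell = -\infty$ (where the first inclusion is vacuous and the induction above runs unchanged for every $j < i$). The delicate point in the first inclusion is ensuring the violating index $k$ lies in $[j, \ell)$ rather than $[\ell, i)$, which is exactly where the two monotonicity facts are brought to bear; once the partition of $[j,i)$ by the orbit is set up, the contradiction is immediate.
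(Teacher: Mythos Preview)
Your proof is correct, and the reverse inclusion $\{j:\ell\le j\le i\}\subseteq D(i)$ is argued essentially as in the paper: from $s_{R^m}\le s_i$ one gets $R^{m+1}\le i$, and a strictly increasing integer orbit bounded above by $i$ must reach $i$.

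The forward inclusion, however, is handled by a genuinely different device. The paper proves it by contrapositive: for $j<\ell$ it sets $k=\ell-1$, notes $y(k,i)<0$, and shows that the record of $j$ either stays below $\ell$ or jumps past $i$, so the orbit can never land in $[\ell,i]$. You instead argue directly from two global monotonicity facts about the orbit (heights non-decreasing along it, and strictly below between consecutive orbit points), concluding that if an orbit from $j$ reached $i$ then $s_t\le s_i$ for every $t\in[j,i)$, which contradicts the existence of a point $k\in[j,\ell)$ with $s_k>s_i$ guaranteed by the definition of $\ell$. Your route is a touch more conceptual---it isolates the ``walk stays under the terminal height along any record orbit'' principle once and for all---while the paper's is more localized, pinning the obstruction at the single index $\ell-1$. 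Both are short; yours transfers cleanly to other vertex-shifts with the same monotone-height property, whereas the paper's argument is tailored to the record map but makes the ``jump over $[\ell,i]$'' phenomenon explicit, which is reused later (e.g., in the interval property). Your treatment of the degenerate cases $\ell=i$ and $\ell=-\infty$ is accurate.
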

  
  \begin{proof}
  It is first proved that $ \{j \in \mathbb{Z}: L_{x}(i) \leq j \leq i\}  \subset D(i)$.
  If $L_x(i)=i$, then there is nothing to prove.
  So, assume that $L_{x}(i)<i$ (note that $L_{x}(i)$ can be $-\infty$). 
  Consider any integer $j$ such that $L_{x}(i) \leq j < i$.
  Since \(y(j,i) \geq 0\), it follows that
  $$
  j< R_{x}(j) := \inf\{k>j: y(j,k) \geq 0\} \leq i.
  $$
  \noindent By iteratively applying the same argument to \(R^l(j)\) for each \(l>0\), one finds the smallest non-negative integer \(m\) such that \(R^m_x(j)=i\).
  Such an \(m\) exists as there are only finitely many integers between \(j\) and \(i\).
  This implies that $j$ is a descendant of $i$. 
  Therefore, $\{j:L_{x}(i) \leq j \leq i\} \subseteq D(i)$.
  
  It is now proved that $D(i) =  \{j \in \mathbb{Z}: L_{x}(i) \leq j \leq i\}$, i.e., if $-\infty< j<L_x(i)$, then \(j\)  is not a descendant of \(i\).
  For any $-\infty<j < L_{x}(i)$, the \emph{claim} is that, either $R_{x}(j)< L_{x}(i)$  or $R_{x}(j)>i$.
  Assume that the claim is true.
  If \(R_x(j)<L_x(i)\), then by applying the claim again to \(R_x(j)\), one obtains that, either \(R^2_x(j)<L_x(i)\) or \(R^2_x(j)>i\).
  Iterate this process several times until one finds the largest non-negative integer \(n\) such that $R_x^n(j)<L_x(i)$.
  Such an \(n\) exists as there are only finitely many integers between \(j\) and \(L_x(i)\).
  As \(n\) is the largest integer satisfying this condition, by applying the claim to $R_x^n(j)$, one obtains that $R_x^{n+1}(j)>i$.
  This implies that none of the descendants of $R_x^n(j)$ (including $j$) is a descendant of $i$, which completes the proof.
  
  The last \emph{claim} is now proved.
  Let $k := L_{x}(i)-1$ and let \(j\) be as in the claim.
  Suppose that $R_{x}(j) \geq L_x(i)$, i.e., $R_x(j)>k$.
  Then, $y(k,i)<0$, by the definition of $L_x(i)$. 
  Since $R_x(j)>k$, one has $y(j,k) \leq 0$ (equality holds only if $j=k$).
  So, $y(j,i) = y(j,k)+y(k,i)<0$.
  Therefore, for any $L_{x}(i) \leq m\leq i$, $y(j,m) = y(j,i)-y(m,i) <0$, since $y(m,i) \geq 0$ which follows by the definition of \(L_x(i)\).
  So, $R_{x}(j)>i$ and the claim is proved.
  \end{proof}

The following lemma shows that all the integers between an integer \(i\) and its record \(R_x(i)\) are descendants of \(R_x(i)\).

\begin{lemma}[Interval property]\label{lemma:record_descendants} \label{lemma_interval_property}
    Let $x = (x_n)_{n \in \mathbb{Z}}$ be an integer-valued sequence and $R$ be the record vertex-shift on the network \((\mathbb{Z},x)\).
    For all $i \in \mathbb{Z}$, the set of descendants of $R_x(i)$ contains $\{j:i \leq j \leq R_{x}(i)\}$.
    There could be more descendants of $R_x(i)$   that are less than $i$.
  \end{lemma}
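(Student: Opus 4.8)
The plan is to deduce this directly from Lemma~\ref{lemma_descendants}, which already identifies the descendant set of any vertex $v$ as the interval $\{j : L_x(v) \leq j \leq v\}$. Writing $r := R_x(i)$, it therefore suffices to show that $L_x(r) \leq i$; the desired inclusion $\{j : i \leq j \leq r\} \subseteq D(r)$ then follows immediately, since an interval contained in a larger interval sharing the right endpoint $r$ is just a matter of comparing left endpoints.

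If $r = i$ (the infimum in Eq.~(\ref{eq_recordMap}) does not exist), the claimed interval is the singleton $\{i\}$, which is trivially contained in $D(i)$, so assume $r > i$. By the definition of the record, $r$ is the first index after $i$ at which the walk returns to the level $s_i$, i.e., $s_r \geq s_i$ while $s_n < s_i$ for every $n$ with $i < n < r$. Combining these, $s_k \leq s_r$ holds for every $k$ with $i \leq k < r$: with $s_i \leq s_r$ at the left endpoint, and strictly ($s_k < s_i \leq s_r$) for $i < k < r$. Equivalently, $y(k,r) = s_r - s_k \geq 0$ for all $i \leq k < r$.

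This last statement says exactly that $i$ belongs to the set $\{j < r : y(k,r) \geq 0,\ \forall\, j \leq k < r\}$ appearing in the definition (Eq.~(\ref{eq:L_x_defn})) of $L_x(r)$. Taking the infimum gives $L_x(r) \leq i$, which is what was needed. By Lemma~\ref{lemma_descendants}, $D(r) = \{j : L_x(r) \leq j \leq r\} \supseteq \{j : i \leq j \leq r\}$, proving the first assertion; the final remark that there may be additional descendants of $r$ below $i$ is simply the observation that the inequality $L_x(r) \leq i$ can be strict (indeed $L_x(r)$ may even equal $-\infty$).

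There is no serious obstacle here: once Lemma~\ref{lemma_descendants} is available, the content reduces to translating the defining property of the record into a bound on $L_x(r)$. The only point requiring care is the bookkeeping of strict versus non-strict inequalities at the two endpoints $k = i$ and $k = r-1$, to ensure that the index set witnessing $y(\cdot,r) \geq 0$ genuinely reaches down to $i$ and hence forces $L_x(r) \leq i$ rather than merely $L_x(r) \leq i+1$.
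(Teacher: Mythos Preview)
Your proof is correct and follows essentially the same route as the paper's: both handle the trivial case $R_x(i)=i$ separately, then reduce to showing $L_x(R_x(i))\leq i$ by verifying $y(k,R_x(i))\geq 0$ for all $i\leq k<R_x(i)$ and invoking Lemma~\ref{lemma_descendants}. The only cosmetic difference is that you phrase the inequality via the partial sums $s_k$ whereas the paper writes it as $y(m,R_x(i))=y(i,R_x(i))-y(i,m)\geq 0$; the content is identical.
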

  \begin{proof}
    If \(R_x(i)=i\), then it follows from the definition of descendants that \(i = R_x(i) \in D(R_x(i))\).
    So, let us assume that \(i<R_x(i)\).
    By Lemma~\ref{lemma:descendants}, it is enough to show that $L_x(R_x(i))\leq i$.
    For any integer $m$ such that $i <m < R_x(i)$, since $y(i,m)<0$ and $y(i,R_x(i)) \geq 0$, one has
  \[
  y(m,R_x(i)) = y(i,R_x(i))- y(i,m) \geq 0.
  \]
  Thus, $L_{x}(R_{x}(i)) \leq i$.
  \end{proof}

\subsection{Royal Line of Succession (RLS) order} \label{subsec_RLS}
 Let $T$ be an ordered Family Tree, where the children of every vertex are totally ordered using $<$.
 The order on children can be extended to establish a total order on the vertices of $T$ as follows.

 Let $u,v$ be two distinct vertices of $T$.
 The vertex $v$ is said to have precedence over $u$, denoted as $v \succ u$ (or $u \prec v$), if either $F^k(u)=v$ for some \(k>0\), or there exists a common ancestor $w= F^m(u)=F^n(v)$ of $u$ and $v$ with $m,n$ being the smallest positive integers with this property, and $F^{m-1}(u)<F^{n-1}(v)$.
 In the latter case, both $F^{m-1}(u)$ and $F^{n-1}(v)$ can be compared, as they are children of $w$.
 \begin{remark} \label{remark:rls_order}
   For two vertices $u,v$ with common ancestor $w$, distinct from $u$ and $v$, $u \prec v$ if and only if $y_u \prec y_v $ for every descendant $y_u$ of $u$ and for every descendant $y_v$ of $v$.
 \end{remark}
 The resulting total order $\prec$ is termed the \emph{Royal Line of Succession (RLS) order} associated with \(<\), and it is also known as the \emph{Depth First Search order}.

The immediate successor and the immediate predecessor of any vertex in an ordered Family Tree are now defined using the RLS order.

\begin{definition}\label{defn_a_b}
  For any vertex $u$ of an ordered Family Tree $T$, let $B(u):=\{v \in V(T): v \prec u\}$ and $A(u):= \{v \in V(T): v \succ u\}$.
  Define the \emph{immediate successor} of \(u\), $b(u) = \max\{v \in B(u)\}$ if it exists, and the \emph{immediate predecessor} of \(u\), $a(u)= \min\{v \in A(v)\}$ if it exists, where $\max$ and $\min$ are taken with respect to $\prec$.
\end{definition}

The immediate predecessor of a vertex may not exist, even though the vertex is not the largest.
For instance, consider the Family Tree whose vertices are \(\{u_n:n \in \mathbb{Z}\}\) and directed edges \(\{(u_n,u_{n+1}):n \in \mathbb{Z}\}\).
Add a new vertex \(u\) to the Family Tree and a directed edge \((u,u_m)\), for some \(m \in \mathbb{Z}\), such that \(u_{m-1} \succ u\).
Then, \(A(u)=\{u_k:k \in \mathbb{Z}\}\), and therefore \(a(u)\) does not exist.

However, the following lemma shows that the immediate successor of a vertex always exists if the vertex is not the smallest.

\begin{lemma}\label{lemma:b_exists}
    Let $T$ be an ordered Family Tree, and $u \in V(T)$.
    If $u$ is not the smallest vertex, then $b(u)$ exists.
  \end{lemma}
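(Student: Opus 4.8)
The plan is to produce an explicit candidate for $b(u)=\max B(u)$ and check that it dominates every element of $B(u)$ in the order $\prec$; local finiteness of $T$ (so that every vertex has only finitely many children) will be used at each step to guarantee that the relevant largest child exists. I would first dispose of the case in which $u$ has at least one child: then the finite set of children of $u$ has a largest element $c$ in the order $<$, and I claim $b(u)=c$. That $c\prec u$ is immediate from the first clause of the definition of $\prec$ (an ancestor has precedence over its descendants). For maximality, take any $v\prec u$. Either $v$ is a proper descendant of $u$, hence lies in the subtree of some child $c'\le c$ of $u$, and then $v\preceq c$ follows from Remark~\ref{remark:rls_order} (comparing the subtrees of $c'$ and $c$) together with the ancestor clause; or $v$ is not a descendant of $u$, in which case $v$ and $u$ share a lowest common ancestor $w$ strictly above $u$, and since $c$ is a descendant of $u$ the child of $w$ on the path to $c$ equals the child on the path to $u$, so the defining inequality for $v\prec u$ gives $v\prec c$. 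In both cases $v\preceq c$, so $c=\max B(u)$.

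Next suppose $u$ is a leaf. I would climb the ancestor chain $u=F^0(u),F^1(u),F^2(u),\dots$ and let $j\ge 0$ be the smallest index such that $F^j(u)$ has a parent $p:=F^{j+1}(u)$ of which $F^j(u)$ is \emph{not} the smallest child; by local finiteness $p$ then has a largest child $s$ with $s<F^j(u)$, and I claim $b(u)=s$. The relation $s\prec u$ follows from the definition applied to the common ancestor $p$, using $s<F^j(u)$. For maximality, take any $v\prec u$; since $u$ is a leaf it is not an ancestor of $v$, so $v$ and $u$ share a lowest common ancestor $w=F^n(u)=F^m(v)$ with $F^{m-1}(v)<F^{n-1}(u)$. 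I would distinguish three positions of $w$. If $w$ lies strictly below $p$ (that is, $n\le j$), then $F^{n-1}(u)$ is a smallest child of $w$ by the minimality of $j$, which is incompatible with $F^{m-1}(v)<F^{n-1}(u)$; so this case is vacuous. If $w=p$, then $F^{m-1}(v)<F^j(u)$ forces $F^{m-1}(v)\le s$ by the choice of $s$, whence $v\preceq s$ (either $v$ descends from $s$, or $v$ and $s$ lie in distinct subtrees of $p$ with $v$ on the smaller side). If $w$ lies strictly above $p$, then the child of $w$ toward $s$ coincides with the child of $w$ toward $u$, and $F^{m-1}(v)<F^{n-1}(u)$ again yields $v\prec s$. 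Hence $s=\max B(u)$.

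Finally, the construction of $j$ can fail only if every ancestor of the leaf $u$ that has a parent is the smallest child of that parent, and I would show that this forces $u$ to be the $\prec$-smallest vertex of $T$, contradicting the hypothesis. Indeed, for any $v\neq u$ that is not an ancestor of $u$, the child of the common ancestor $w=F^n(u)=F^m(v)$ on the side of $u$ is then a smallest child, so $F^{n-1}(u)<F^{m-1}(v)$ and hence $u\prec v$; and if $v$ is an ancestor of $u$ then $u\prec v$ directly, so $B(u)=\emptyset$. I expect this leaf analysis to be the main obstacle: one must select the correct candidate by climbing the ancestor chain, recognize that an unbounded ascent through smallest children is exactly the situation where $u$ is the global minimum, and organize the domination of an arbitrary predecessor $v$ according to the location of its common ancestor with $u$. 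Once the candidate is fixed, every comparison reduces to a direct appeal to the definition of $\prec$ and to Remark~\ref{remark:rls_order}.
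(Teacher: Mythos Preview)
Your proof is correct and takes essentially the same approach as the paper: both identify the same explicit candidate for $b(u)$ (the largest child of $u$ when $u$ has children; otherwise the largest younger sibling of the first ancestor that has one). You go further than the paper by actually verifying maximality through a case analysis on the position of the lowest common ancestor, and by showing that failure of the ancestor-climbing construction forces $u$ to be the global minimum; the paper simply names the candidate and leaves these checks to the reader.
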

  \begin{proof}
    By hypothesis, $B(u)$ is non-empty.
    So, either of the following two cases occurs:
    \begin{itemize}
      \item If $u$ has children, then since it could only have finitely many children, $b(u)$ is the largest (eldest) child of $u$.
      \item If $u$ does not have any child, let $w=F^m(u)$ (for some $m \geq 1$) be the smallest ancestor of $u$ that has a child smaller than $F^{m-1}(u)$, with the notation $F^0(u)=u$. Such a vertex $w$ exists because $B(u)$ is non-empty.
      Then, $b(u)$ is the largest among those siblings of $F^{m-1}(u)$ that are smaller than $F^{m-1}(u)$.
    \end{itemize}
  \end{proof}

The iterates of the immediate successor and the immediate predecessor maps \(a,b\) starting from a vertex \(u\) of an ordered Family Tree give the succession line passing through the vertex \(u\).
\begin{definition}\label{defn_succession_line}
    Let $T$ be an ordered Family Tree and \(o \in V(T)\).
    The \emph{succession line} passing through \(o\) is the sequence of vertices $U((T,o))=(u_n)_{n \in \mathbb{Z}}$ iteratively defined as $u_0=o$ and,
    \begin{align} \label{eq:succ_line}
      u_n &= \begin{cases}
        b(u_{n+1}) \text{ if it exists}\\
        u_{n+1} \text{ otherwise}
      \end{cases}
      \text{ for $n<0$},\\ \nonumber
      u_n &= \begin{cases}
        a(u_{n-1}) \text{ if it exists}\\
        u_{n-1} \text{ otherwise}
      \end{cases}
      \text{ for $n>0$}.
    \end{align}
\end{definition}

\subsection{Relation between the RLS order and the integer order on the record graph}

Let \(x=(x_n)_{n \in \mathbb{Z}}\) be an integer-valued sequence and \((\mathbb{Z},x)\) be its associated network.
The components of the record graph are ordered Family Trees, where the children of every vertex in the record graph are ordered according to the integer order.
The RLS order on each component, which depends on \(x\), gives a total order among the vertices of the component.
Since the vertices of the component are also totally ordered according to the integer order, these two orders can be compared.
The following lemma demonstrates that these two orders are, in fact, equivalent.

\begin{lemma} \label{lemma:rls_order} \label{lemma_rls_order}
    Let $x=(x_n)_{n \in \mathbb{Z}}$ be a real-valued sequence, $(\mathbb{Z},x)$ be its associated network. 
    Let $i,j$ be two integers that belong to the same connected component of the record graph. 
    Then, $i<j$ (with respect to the order on $\mathbb{Z}$) if and only if $i \prec j$.
  \end{lemma}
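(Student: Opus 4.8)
The plan is to analyze the unique path in the component's tree between $i$ and $j$ and to split into two cases according to whether one vertex is an ancestor of the other or whether they branch apart at a lowest common ancestor. Because a component of the record graph is a Family Tree (each vertex has at most one parent, namely its record), the undirected path from $i$ to $j$ rises through successive records to a single ``peak'' and then descends; a vertex with two parents would be needed for a second peak, which is impossible since $R_x$ is a function. Hence either $j$ is an ancestor of $i$ (or vice versa), or there is a lowest common ancestor $w = R_x^m(i) = R_x^n(j)$ with $m,n \ge 1$ minimal, in which case $i' := R_x^{m-1}(i)$ and $j' := R_x^{n-1}(j)$ are two distinct children of $w$. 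This is exactly the dichotomy appearing in the definition of the RLS order, so both orders are defined for the pair $(i,j)$.

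In the ancestor case, say $j = R_x^k(i)$ for some $k \ge 1$ with $j \neq i$. Each step along $i \to R_x(i) \to \cdots \to j$ strictly increases the integer label, since $R_x(\ell) > \ell$ whenever $R_x(\ell) \neq \ell$, so $i < j$. On the other hand, the RLS order declares every ancestor to have precedence over its descendants, so $i \prec j$ as well; both relations hold, and symmetrically both fail when $i$ is the ancestor of $j$. Thus the equivalence is immediate here.

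The heart of the argument is the branching case, where I would reduce everything to comparing the two sibling branch-children $i'$ and $j'$. By the definition of the RLS order, $i \prec j$ holds precisely when $i' < j'$ in the integer (child) order, so it suffices to show that $i < j$ if and only if $i' < j'$. Here I would invoke Lemma~\ref{lemma_descendants}: the descendant sets are integer intervals, $D(i') = \{\ell : L_x(i') \le \ell \le i'\}$ and $D(j') = \{\ell : L_x(j') \le \ell \le j'\}$, with $i \in D(i')$ and $j \in D(j')$. Since $i'$ and $j'$ are distinct children of $w$, neither lies in the descendant set of the other, so these two intervals are disjoint. Assuming $i' < j'$, the fact that $i' \notin [L_x(j'), j']$ together with $i' < j'$ forces $i' < L_x(j')$; hence every element of $D(i')$ is strictly smaller than every element of $D(j')$, giving $i \le i' < L_x(j') \le j$ and so $i < j$. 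The reverse inequality $j' < i'$ yields $j < i$ by symmetry, establishing $i < j \iff i' < j'$ and completing the proof.

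The step I expect to be the main obstacle is the interval-ordering claim in the branching case: turning the tree-theoretic fact that a sibling is not a descendant of its sibling into the quantitative statement that the two descendant intervals are not merely disjoint but are ordered in the same way as the siblings themselves. This is where Lemma~\ref{lemma_descendants} (the interval description of descendants via $L_x$) does the real work, and some care is needed to argue that disjointness of $D(i')$ and $D(j')$, combined with $i' < j'$, pins $i'$ strictly below the left endpoint $L_x(j')$ rather than merely outside the interval.
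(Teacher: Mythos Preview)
Your argument is correct and rests on the same key fact as the paper's proof, namely the interval description of descendant sets from Lemma~\ref{lemma_descendants}. The organization differs slightly: the paper proves the two implications separately, using Lemma~\ref{lemma:record_descendants} (the interval property) to locate the common ancestor in the forward direction and Lemma~\ref{lemma_descendants} in the backward direction, whereas you set up the ancestor/branching dichotomy first and then handle the branching case symmetrically by proving the single equivalence $i<j \iff i'<j'$ via disjoint descendant intervals. Your route is arguably a bit cleaner since the symmetry lets you avoid invoking Lemma~\ref{lemma:record_descendants} at all; the paper's route has the minor advantage that it does not presuppose the existence of the lowest common ancestor but effectively constructs it. One small point worth making explicit in your write-up: the deduction ``neither lies in the descendant set of the other, so these two intervals are disjoint'' uses that $i'$ and $j'$ are the \emph{right endpoints} of their respective intervals; you do spell this out in the next sentence, and the case $L_x(j')=-\infty$ is automatically excluded since it would force $i'\in D(j')$.
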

  \begin{proof}
    Since \(i,j\) belong to the same component of the record graph, there exists an integer \(w=R^m(i)=R^k(j)\) which is their smallest common ancestor, i.e., $m$ and $k$ are the smallest non-negative integers satisfying this property.

    ($i<j \implies i \prec j$):
    Since $R^n(i)$ is an increasing sequence for $0<n < m$, there exists a largest $n_0 \leq m$ such that $R^{n_0}(i) \leq j$.
    If \(R^{n_0}(i)=j\), then \(i \prec j\), which is the desired relation.
    So, assume that \(R^{n_0}(i)<j\) and \(R^{n_0+1}(i)>j\).
    By Lemma~\ref{lemma:record_descendants}, both $R^{n_0}(i)$ and $j$ are descendants of $R^{n_0+1}(i)$, which implies  that $R^{n_0+1}(i)=w$.
    Hence, \(R^{m_0}(j)=R^{n_0+1}(i)=w\) for some \(m_0>0\).
    Since $R^{n_0}(i)<j \leq R^{m_0-1}(j)$, one has $R^{n_0}(i)\prec R^{m_0-1}(j)$, and by Remark \ref{remark:rls_order}, one has $i \prec j$.
  
    ($i\prec j \implies i < j$):
  The case where $j$ is an ancestor of $i$ is trivial.
  So, assume that $j$ is not an ancestor of $i$.
  Then, $k>0$, and $R^{m-1}(i) <  R^{k-1}(j)$.
  Since $R^{m-1} (i)$ is not a descendant of $R^{k-1}(j)$, but $j$ is a descendant of $R^{k-1}(j)$, by Lemma~\ref{lemma:descendants}, one has $i<R^{m-1}(i)<L_x(R^{k-1}(j)) \leq j$.
  \end{proof}

\section{Phase transition of the record graph of a stationary and ergodic sequence}
The framework of stationary sequences is adopted from \cite{baccelliElementsQueueingTheory2003} and \cite{bremaudProbabilityTheoryStochastic2020}.
 Let \(X=(X_n)_{n \in \mathbb{Z}}\) be a stationary sequence of random variables whose common mean exists and \((\mathbb{Z},X)\) be its associated network.
 For all \(i \in \mathbb{Z}\), let \(\mathbb{Z}^R_X(i)\) denote the component of \(i\) in the record graph of \((\mathbb{Z},X)\).
 The foil classification theorem (see Sec. \ref{par_foil_classification}) implies that, for each \(i \in \mathbb{Z}\), a.s. \(\mathbb{Z}^R_X(i)\) is either of class \(\mathcal{F}/\mathcal{F}\) or of class \(\mathcal{I}/\mathcal{F}\) or of class \(\mathcal{I}/\mathcal{I}\).
 Recall that the record graph is a directed forest (see Remark \ref{remark_record_graph_forest}).

 The main result of this section is Theorem~\ref{thm_phase_transition_stationary} which shows that under the  assumption that \(X\) is ergodic, a.s. every component of the record graph belongs to one of the above-mentioned three classes depending on the mean of the increment \(X_0\).
 When the mean is negative, the record graph has infinitely many components and every component is of class \(\mathcal{F}/\mathcal{F}\).
 When the mean is positive, the record graph is connected, and it is of class \(\mathcal{I}/\mathcal{F}\).
 When the mean is \(0\), the record graph is connected and it is either of class \(\mathcal{I}/\mathcal{F}\) or of class \(\mathcal{I}/\mathcal{I}\).
A family of examples for the latter are the i.i.d. sequences of random variables with mean \(0\), in which case the record graphs are of class \(\mathcal{I}/\mathcal{I}\).
This result follows from Chung-Fuchs recurrence theorem for random walks, see Proposition~\ref{prop_r_graph_of_iid_II}.

Theorem~\ref{thm_phase_transition_stationary} is proved after establishing several lemmas.

The following proposition and corollary show that the event that the component of any fixed integer in the record graph belonging to one of the classes \(\{\mathcal{F}/\mathcal{F},\mathcal{I}/\mathcal{I},\mathcal{I}/\mathcal{F}\}\) has trivial probability.

\begin{proposition}\label{prop_ergodic_trivial}
    Let \(X=(X_n)_{n \in \mathbb{Z}}\) be a stationary and ergodic sequence of random variables. 
    Then, 
    \begin{align*}
        \mathbb{P}[\mathbb{Z}^R_X(0) \text{ is of class } \mathcal{I}/\mathcal{I}] \in \{0,1\}, \\
        \mathbb{P}[\mathbb{Z}^R_X(0) \text{ is of class } \mathcal{I}/\mathcal{F}] \in \{0,1\}.
    \end{align*}
\end{proposition}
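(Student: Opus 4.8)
The plan is to combine ergodicity of the shift with the unimodularity of the network \([\mathbb{Z},0,X]\) (recall that a stationary sequence yields a unimodular \(\mathbb{Z}\)), thereby reducing the statement to a purely structural fact about coexistence of classes. Fix a class \(\mathcal{C}\in\{\mathcal{I}/\mathcal{I},\mathcal{I}/\mathcal{F}\}\) and, for every network \(G\), let \(\mathfrak{S}^{\mathcal{C}}_G\) be the set of vertices whose connected component in the record graph \(G^R\) is of class \(\mathcal{C}\). By the foil classification theorem (Sec.~\ref{par_foil_classification}) the class of a component is determined by the finiteness of the component and of its foils, both measurable conditions on the rooted record graph; hence \(\mathfrak{S}^{\mathcal{C}}\) is a covariant subset. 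Two vertices in the same component always have the same class, so each component lies entirely inside or entirely outside \(\mathfrak{S}^{\mathcal{C}}\).

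First I would extract the dichotomy. The event \(\{\mathfrak{S}^{\mathcal{C}}_{\mathbb{Z}}\neq\varnothing\}\) depends on \(X\) only through the unordered collection of components, so it is invariant under the shift; by ergodicity it has probability \(0\) or \(1\). Lemma~\ref{lemma_non_empty_covariant_set} together with unimodularity gives \(\mathbb{P}[\mathfrak{S}^{\mathcal{C}}_{\mathbb{Z}}\neq\varnothing]>0\) if and only if \(\mathbb{P}[0\in\mathfrak{S}^{\mathcal{C}}_{\mathbb{Z}}]>0\), i.e.\ if and only if \(\mathbb{P}[\mathbb{Z}^R_X(0)\text{ is of class }\mathcal{C}]>0\). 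Applying the same two facts to the complementary covariant subset \(V(G)\setminus\mathfrak{S}^{\mathcal{C}}_G\) shows that \(\mathbb{P}[\mathbb{Z}^R_X(0)\text{ is of class }\mathcal{C}]<1\) if and only if, almost surely, some component is not of class \(\mathcal{C}\). Consequently \(\mathbb{P}[\mathbb{Z}^R_X(0)\text{ is of class }\mathcal{C}]\in(0,1)\) would force the almost sure coexistence of a component of class \(\mathcal{C}\) and of a component of a different class. It therefore suffices to prove that, almost surely, all components of the record graph share a single class.

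To rule out coexistence I would read the class of each component off two shift-invariant features of the walk \(S\). By Lemma~\ref{lemma_descendants} the descendant set of \(v\) is the interval \(\{L_x(v)\le j\le v\}\), so \(v\) has infinitely many descendants if and only if \(L_x(v)=-\infty\), i.e.\ \(S_k\le S_v\) for all \(k<v\); and \(v\) is a root (\(R_x(v)=v\)) if and only if \(S_n<S_v\) for all \(n>v\). Via the foil classification this translates the three classes into the presence or absence of roots and of infinite-descendant vertices, which are in turn governed by \(\limsup_{n\to+\infty}S_n\) and \(\limsup_{k\to-\infty}S_k\). Both of these change only by the finite additive constant \(X_0\) when \(X\) is shifted, so the events \(\{\limsup_{n\to+\infty}S_n=+\infty\}\) and \(\{\limsup_{k\to-\infty}S_k=+\infty\}\) are invariant and, by ergodicity, each has probability \(0\) or \(1\). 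Granting that these two \(\limsup\)'s lie almost surely in \(\{-\infty,+\infty\}\), one checks that on \(\{\limsup_{n\to+\infty}S_n<+\infty\}\) every component is finite (class \(\mathcal{F}/\mathcal{F}\)), whereas on \(\{\limsup_{n\to+\infty}S_n=+\infty\}\) every component is infinite and is of class \(\mathcal{I}/\mathcal{I}\) or \(\mathcal{I}/\mathcal{F}\) according as \(\{\limsup_{k\to-\infty}S_k=+\infty\}\) holds or fails. Thus the common class of all components is an almost surely constant function of this invariant pair, coexistence of two classes has probability zero, and the proposition follows.

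The main obstacle is precisely the structural matching in the last paragraph, and in particular excluding the degenerate possibility that \(\limsup_{n\to+\infty}S_n\) (or \(\limsup_{k\to-\infty}S_k\)) is almost surely finite and different from \(\pm\infty\): a finite value \(\ell\) would split the vertices into those with \(S_v>\ell\) and those with \(S_v\le\ell\) and would genuinely produce finite and infinite components side by side, which is exactly the coexistence we must forbid. Ruling this out uses stationarity (the shift relation \(\limsup_n S_n\circ\theta=\limsup_n S_n-X_0\) forces the mean to vanish in the finite case) together with the recurrence of zero-mean ergodic sums, after which the interval identities of Lemmas~\ref{lemma_descendants}--\ref{lemma_interval_property} pin each component's class to the invariant pair. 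This recurrence/drift input is the same ingredient that drives the phase-transition theorem, so I would establish it first and feed it into the present argument.
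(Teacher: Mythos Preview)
Your approach has a genuine gap at the crucial final step. You plan to rule out the ``degenerate'' possibility that \(\limsup_{n\to+\infty}S_n\) (or its backward analogue) is a.s.\ finite by invoking ``recurrence of zero-mean ergodic sums''. But this is false for general stationary ergodic increments: in the paper's own Example~\ref{example_i_f_mean_0} (the reflected stationary M/M/1 queue), one has \(S_n=N_0-N_n\) with \(N_n\ge 0\) and \(N_n=0\) infinitely often in both time directions, so \(\limsup_{n\to+\infty}S_n=\limsup_{k\to-\infty}S_k=N_0\) is a.s.\ a finite positive random variable. The walk there even returns to \(0\) infinitely often, yet the limsup is bounded; and the record graph is connected of class \(\mathcal{I}/\mathcal{F}\), not \(\mathcal{F}/\mathcal{F}\) as your dichotomy (``finite limsup \(\Rightarrow\) all components finite'') would predict. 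Chung--Fuchs recurrence is an i.i.d.\ phenomenon and simply does not extend to the ergodic setting you need. Note also that Proposition~\ref{prop_ergodic_trivial} does not even assume the mean of \(X_0\) exists, so any argument routed through drift/recurrence analysis is importing extra hypotheses.

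The paper's proof avoids all of this with a one-line monotonicity. Set \(E_0=\{\mathbb{Z}^R_X(0)\text{ is of class }\mathcal{C}\}\) for \(\mathcal{C}\in\{\mathcal{I}/\mathcal{I},\mathcal{I}/\mathcal{F}\}\); then \(T^{-1}E_0=\{\mathbb{Z}^R_X(1)\text{ is of class }\mathcal{C}\}\). The only structural input is the interval property (Lemma~\ref{lemma_interval_property}): if the component of \(0\) is of class \(\mathcal{C}\), it is an EFT, so \(0\) has a parent \(R_X(0)>0\), and every integer between \(0\) and \(R_X(0)\)---in particular \(1\)---lies in that same component. Hence \(E_0\subseteq T^{-1}E_0\), and the elementary ergodicity fact that \(A\subseteq T^{-1}A\) forces \(\mathbb{P}[A]\in\{0,1\}\) finishes the proof. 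Your first reduction via covariant subsets and Lemma~\ref{lemma_non_empty_covariant_set} is correct but superfluous once you notice this containment; the coexistence analysis and the \(\limsup\) machinery are not needed at all.
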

\begin{corollary}\label{cor_comp_f_f}
    Under the assumptions of Proposition~\ref{prop_ergodic_trivial}, the event \(\{\mathbb{Z}^R_X(0)\)  is of class \(\mathcal{F}/\mathcal{F}\}\) has trivial probability measure.
\end{corollary}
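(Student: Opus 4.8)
The plan is to derive this as an immediate consequence of Proposition~\ref{prop_ergodic_trivial}, using only the fact that the three foil classes partition the sample space. First I would recall that the network \([\mathbb{Z},0,X]\) is unimodular (as observed right after Definition~\ref{def_unimodularity}) and that the record map induces the vertex-shift \(R\). The foil classification theorem (Sec.~\ref{par_foil_classification}) then guarantees that, almost surely, the component \(\mathbb{Z}^R_X(0)\) of the root belongs to exactly one of the three classes \(\mathcal{F}/\mathcal{F}\), \(\mathcal{I}/\mathcal{I}\), \(\mathcal{I}/\mathcal{F}\). Writing
\[
  A_{\mathcal{F}/\mathcal{F}} = \{\mathbb{Z}^R_X(0) \text{ is of class } \mathcal{F}/\mathcal{F}\}, \quad
  A_{\mathcal{I}/\mathcal{I}} = \{\mathbb{Z}^R_X(0) \text{ is of class } \mathcal{I}/\mathcal{I}\}, \quad
  A_{\mathcal{I}/\mathcal{F}} = \{\mathbb{Z}^R_X(0) \text{ is of class } \mathcal{I}/\mathcal{F}\},
\]
these three events are pairwise disjoint and cover the probability space up to a null set.

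Next I would simply use the partition to express the probability of interest in terms of the two probabilities already controlled by Proposition~\ref{prop_ergodic_trivial}:
\[
  \mathbb{P}[A_{\mathcal{F}/\mathcal{F}}] = 1 - \mathbb{P}[A_{\mathcal{I}/\mathcal{I}}] - \mathbb{P}[A_{\mathcal{I}/\mathcal{F}}].
\]
By Proposition~\ref{prop_ergodic_trivial}, each of the two subtracted probabilities lies in \(\{0,1\}\); and since \(A_{\mathcal{I}/\mathcal{I}}\) and \(A_{\mathcal{I}/\mathcal{F}}\) are disjoint, they cannot both have probability \(1\). Hence the right-hand side equals \(1\) when both vanish, and equals \(0\) when exactly one of them equals \(1\). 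In either case \(\mathbb{P}[A_{\mathcal{F}/\mathcal{F}}] \in \{0,1\}\), which is the claim.

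There is essentially no obstacle beyond invoking the partition correctly, since the substantive content is already established in Proposition~\ref{prop_ergodic_trivial}. The only point that deserves a moment's care is confirming that the three events genuinely partition the space — i.e., that a single component cannot simultaneously be, say, both \(\mathcal{I}/\mathcal{I}\) and \(\mathcal{I}/\mathcal{F}\) — which follows from the mutual exclusivity of the classes in the foil classification theorem. Granting that, the corollary is an immediate arithmetic consequence, and I would present it as a short paragraph rather than a displayed multi-step argument.
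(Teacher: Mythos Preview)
Your proposal is correct and follows essentially the same approach as the paper: invoke the foil classification theorem to see that the three classes partition the sample space (up to a null set), then use Proposition~\ref{prop_ergodic_trivial} to conclude that since the \(\mathcal{I}/\mathcal{I}\) and \(\mathcal{I}/\mathcal{F}\) events each have probability in \(\{0,1\}\), so must their complement \(\mathcal{F}/\mathcal{F}\). The paper's proof is a two-sentence version of exactly this argument; your write-up simply makes the arithmetic and the disjointness check explicit.
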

\begin{proof}
    By the foil classification theorem (Sec. \ref{par_foil_classification}), \(\mathbb{Z}^R_X(0)\) is either of class \(\mathcal{I}/\mathcal{I}\) or \(\mathcal{I}/\mathcal{F}\) or \(\mathcal{F}/\mathcal{F}\).
    By Proposition~\ref{prop_ergodic_trivial}, the first two events have trivial probability measure which implies that the third event has trivial probability measure.
\end{proof}

Let \(T\) be the shift map that maps every sequence \(x=(x_n)_{n \in \mathbb{Z}}\) to \(Tx =(x_{n+1})_{n\in \mathbb{Z}}\).
The shift map naturally maps any rooted network of the form \([\mathbb{Z},0,x]\) to \(T([\mathbb{Z},0,x]) = [\mathbb{Z},0,Tx]\).
\begin{proof}[Proof of Proposition~\ref{prop_ergodic_trivial}]
    For any integer \(i \in \mathbb{Z}\), consider the following events:
\begin{align*}
    E_i&:= \{[\mathbb{Z},0,x]:\mathbb{Z}^R_x(i) \text{ is of class } \mathcal{I}/\mathcal{I}\}\\
    F_i &:= \{[\mathbb{Z},0,x]:\mathbb{Z}^R_x(i) \text{ is of class } \mathcal{I}/\mathcal{F}\},
\end{align*}
and let \(T^{-1}E_{i} := \{[\mathbb{Z},0,x]:[\mathbb{Z},0,Tx]\in E_i\}\), \(T^{-1}F_{i} := \{[\mathbb{Z},0,x]:[\mathbb{Z},0,Tx]\in F_i\}\).
Then, for any \(i\in \mathbb{Z}\), one has
\begin{align}
    T^{-1}E_i  &= \{[\mathbb{Z},0,x]:\mathbb{Z}_{Tx}^R(i) \text{ is of class } \mathcal{I}/\mathcal{I}\} \nonumber\\
                      &=\{[\mathbb{Z},0,x]:\mathbb{Z}_x^R(i+1) \text{ is of class } \mathcal{I}/\mathcal{I}\} \label{eq_shift_I_I}\\
                      &=E_{i+1},\nonumber
\end{align}
where in Eq.(\ref{eq_shift_I_I}), the fact that \([\mathbb{Z},i,Tx] = [\mathbb{Z},i+1,x]\) was used.
Similarly, it can be shown that \(T^{-1}F_i = F_{i+1}\).

Observe  that if \([\mathbb{Z},0,x] \in E_{i}\) (similarly \(F_{i}\)), then, by the interval property (Lemma~\ref{lemma:record_descendants}), \(i+1\) and \(i\) are in the same connected component, i.e., \(\mathbb{Z}^R_x(i)=\mathbb{Z}^R_x(i+1)\), which implies that  \([\mathbb{Z},0,x] \in E_{i+1}\) (similarly \(F_{i+1}\)).
Thus, \(E_i \subseteq T^{-1}(E_i)\).
Similarly, \(F_i \subseteq T^{-1}(F_i)\).
Applying the following fact to \(E_0\) and \(F_0\) completes the proof: if \((\mathbb{P},T)\) is ergodic and \(A\) is a measurable subset such that \(A \subseteq T^{-1}A\), then \(\mathbb{P}[A] \in \{0,1\}\) (see \cite[Theorem 16.1.9]{bremaudProbabilityTheoryStochastic2020}).
\end{proof}

The following lemma is an application of Poincare's recurrence lemma.

\begin{lemma}\label{lemma_comp_0_fin_all_fin}
    Let \(X=(X_n)_{n \in \mathbb{Z}}\) be a stationary sequence of random variables, \((\mathbb{Z},X)\) be its associated network, and for all \(n \in \mathbb{Z}\), \(\mathbb{Z}^R_X(n)\) be the component of \(n\) in the record graph of \((\mathbb{Z},X)\).
    Then,
    \begin{align}
        \mathbb{P}[\mathbb{Z}^R_X(0) \text{ is finite}] &= \mathbb{P}[\mathbb{Z}^R_X(n) \text{ is finite, } \forall n \geq 1]
        =\mathbb{P}[\mathbb{Z}_X^R(n) \text{ is finite, } \forall n \leq -1] \label{eq_F_F_probability}.
    \end{align}
\end{lemma}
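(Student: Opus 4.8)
The plan is to reduce all three equalities to a single almost-sure dichotomy: \textbf{either every component of the record graph is finite, or the record graph is connected, the unique component being all of \(\mathbb{Z}\).} Granting this, the bookkeeping is immediate. On the event \(\{\text{every component is finite}\}\) all three events inside the probabilities hold; on the event \(\{\text{the unique component is } \mathbb{Z}\}\) all three fail, since \(\mathbb{Z}\) contains \(0\), a positive integer, and a negative integer at once. Hence almost surely \(\{\mathbb{Z}^R_X(0)\text{ finite}\}\), \(\{\mathbb{Z}^R_X(n)\text{ finite }\forall n\geq 1\}\) and \(\{\mathbb{Z}^R_X(n)\text{ finite }\forall n\leq -1\}\) all coincide with \(\{\text{every component is finite}\}\), and the three probabilities are equal.

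To set up the dichotomy I would first record the deterministic structure of components. By Lemma~\ref{lemma_descendants} the descendants of any \(i\) form the interval \(\{L_X(i),\dots,i\}\), and by the interval property (Lemma~\ref{lemma_interval_property}) the integers \(i,i+1,\dots,R_X(i)\) lie in one component whenever \(R_X(i)\neq i\). It follows that every component is a (finite or infinite) interval of consecutive integers, that a vertex \(i\) is parentless exactly when \(R_X(i)=i\), and that such an \(i\) is then the maximal element of its component, which equals \(\{L_X(i),\dots,i\}\). Since \([\mathbb{Z},0,X]\) is unimodular, the foil classification theorem (Section~\ref{par_foil_classification}) applies: an infinite component is an Eternal Family Tree, so each of its vertices has a parent and it has no maximal element. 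An infinite interval without a maximal element is either \([a,\infty)\) for some \(a\in\mathbb{Z}\) or all of \(\mathbb{Z}\); in particular there is at most one infinite component and it contains all sufficiently large positive integers.

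The hard part will be to exclude the half-line case \([a,\infty)\) with \(a\) finite, and this is exactly where Poincaré recurrence enters. Let \(T\) be the measure-preserving shift and consider the event \(W=\{0\text{ lies in an infinite component while }-1\text{ does not}\}\). On \(W\) the integer \(0\) is the least element of the unique infinite component, so \(T^{-n}W=\{\text{the infinite component equals }[n,\infty)\}\), and these events are pairwise disjoint over distinct \(n\). Poincaré's recurrence lemma states that almost every point of \(W\) lies in \(T^{-n}W\) for some \(n\geq 1\); since \(W\cap T^{-n}W=\emptyset\) for \(n\geq 1\), this forces \(\mathbb{P}[W]=0\). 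By stationarity \(\mathbb{P}[\text{infinite component}=[n,\infty)]=\mathbb{P}[W]=0\) for every \(n\), so almost surely no infinite component has a finite least element. Together with the structural paragraph this gives the dichotomy: almost surely the infinite component, if any, is \(\mathbb{Z}\).

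The genuine obstacle is precisely this exclusion of a half-line component: the interval description alone is consistent with a component \([a,\infty)\), and only stationarity—exploited through the disjointness of the shifts of the ``least element of the infinite component'' event together with recurrence—forces it to be null. Everything else is routine once this is in place. I note that one could instead avoid the appeal to the foil classification theorem entirely by ruling out a hypothetical component \((-\infty,b]\) through the same recurrence argument applied to the event that \(0\) is a parentless vertex with infinitely many descendants, i.e. \(\{R_X(0)=0,\ L_X(0)=-\infty\}\), whose positive shifts are again pairwise disjoint.
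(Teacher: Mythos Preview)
Your proof is correct and takes a genuinely different route from the paper's. Both arguments rest on Poincar\'e recurrence and the foil classification theorem, but they are organised quite differently. The paper applies Poincar\'e recurrence directly to the event \(A=\{\mathbb{Z}^R_X(0)\text{ finite}\}\) to obtain \(\mathbb{P}[A]=\mathbb{P}[\mathbb{Z}^R_X(n)\text{ finite for infinitely many }n\ge 1]\), and then proves a monotonicity claim---if \(\mathbb{Z}^R_X(n_1)\) is finite then \(\mathbb{Z}^R_X(k)\) is finite for every \(k<n_1\)---via the trajectory characterisation ``\((S_{n+n_1})_{n\ge 0}\) attains its maximum only finitely often'', with foil classification linking finiteness of a component to finiteness of the ancestor set. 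You instead establish the almost-sure \emph{pointwise} dichotomy ``all components finite or the graph is connected'' by first observing that components are intervals (Lemmas~\ref{lemma_descendants} and~\ref{lemma_interval_property}), then using foil classification to exclude \((-\infty,b]\) and Poincar\'e recurrence to exclude \([a,\infty)\); the lemma is then immediate bookkeeping. Your approach buys a strictly stronger intermediate statement: the pointwise dichotomy holds under stationarity alone, whereas the paper only records a dichotomy later, as a \(0\)--\(1\) law requiring ergodicity (the Corollary following Lemma~\ref{lemma_stationary_one_component}). The paper's approach is a shade more direct for the bare equalities and avoids the interval description of components. Your closing remark that one can dispense with foil classification entirely---ruling out \((-\infty,b]\) by the same disjoint-shifts argument applied to \(\{R_X(0)=0,\ L_X(0)=-\infty\}\)---is a nice sharpening.
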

\begin{proof}
    Consider the event \(A = \{\mathbb{Z}^R_X(0) \text{ is finite}\}\).
    By Poincar\'e's recurrence lemma, \(\mathbb{P}[A] = \mathbb{P}\left[\{\mathbb{Z}_{T^nX}^R(0) \text{ is finite for infinitely many }n \geq 1\}\right]\).
    For any \(\omega \in A\), if \(\mathbb{Z}_{T^nX(\omega)}^R(0)\) is finite for infinitely many \(n\geq 1\), then \(\mathbb{Z}_{X(\omega)}^R(n)\) is finite for all \(n \geq 1\).
    This fact, proved below, together with Poincare's recurrence lemma give the first equality in Eq.~(\ref{eq_F_F_probability}).

    The second equality of Eq.~(\ref{eq_F_F_probability}) follows by taking the shift  \(\widetilde{T} := T^{-1}\) instead of \(T\), and by using the equality of the events \(\{\mathbb{Z}_{\widetilde{T}^nX}^R(0) \text{ is finite}\} = \{\mathbb{Z}_X^R(-n) \text{ is finite}\}\) for all \(n \geq 1\).

    The aforementioned fact will now be proved.

    Since \(X\) is stationary, the network \([\mathbb{Z},0,X]\) is unimodular.
    So, apply the foil classification theorem (Sec. \ref{par_foil_classification}) to the record graph \(\mathbb{Z}_X^R\) to obtain the following result:
    for any integer \(n\), its component \(\mathbb{Z}_X^R(n)\) is finite if and only if there exists an integer \(i \in V(\mathbb{Z}_X^R(n))\) that has only finitely many ancestors.
    In particular, if a vertex has finitely many ancestors then it must have finitely many descendants.

    The \emph{claim} is that  if \(\mathbb{Z}_{X(\omega)}^R(n_1)\) is finite for any positive integer \(n_1\), then \(\mathbb{Z}_{X(\omega)}^R(k)\) is finite for all \(k<n_1\).
    This follows because \(\mathbb{Z}_{X(\omega)}^R(n_1)\) is finite if and only if the sequence \((S_{n+n_1})_{n \geq 0}\) attains a maximum value \(a_{n_1}\) finitely many times, i.e., \(S_{n'+n_1}=a_{n_1}\) for some \(n' \geq 0\) and \(S_{n+n_1}<a_{n_1}\) for all \(n>n'\) (equivalently, from the discussion in the above paragraph, \(n_1\) has finitely many ancestors).
    So, if \(k \leq n'+n_1\), then the sequence \((S_{n+k})_{n \geq 0}\) attains the maximum value \(a_{n_1}\) at \(N=n'+n_1-k\) and never attains it for all \(n>N\).
    Therefore, \(k\) has finitely many ancestors.
    By the discussion in the above paragraph, \(\mathbb{Z}_{X(\omega)}^R(k)\) is finite, and the claim is proved.

    The above claim implies that if there exists a subsequence \((n_k)_{k \geq 1}\) of non-negative integers such that \(n_k \to \infty\) as \(k \to \infty\) and \(\mathbb{Z}^R_{X(\omega)}(n_k)\) is finite for all \(k \geq 1\), then \(\mathbb{Z}^R_{X(\omega)}(n)\) is finite for all \(n \geq 1\).
    Using this and the fact that \(\mathbb{Z}^R_{T^nX}(0)= \mathbb{Z}^R_{X}(n)\) for all \(n \geq 0\), it follows that the events \(\{\mathbb{Z}_{T^nX}^R(0)\) is finite for infinitely many \(n \geq 1\}\) and \(\{\mathbb{Z}_{X}^R(n) \text{ is finite for all }n \geq 1\}\) are one and the same.
\end{proof}

The following lemma gives a sufficient condition for the connectedness of the record graph.
The proof is an application of the interval property of the record vertex-shift.
\begin{lemma}\label{lemma_stationary_one_component}
    Let \(X\) be a stationary sequence of random variables, and \((\mathbb{Z},X)\) be its associated network.
    If \(0\) has a.s. infinitely many ancestors in the record graph \(\mathbb{Z}^R_X\), then \(\mathbb{Z}^R_X\) is a.s. connected.
\end{lemma}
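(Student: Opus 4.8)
The goal is to show that if $0$ has infinitely many ancestors a.s., then the record graph is a.s. connected, i.e., all integers lie in a single component. The plan is to leverage the interval property (Lemma~\ref{lemma_interval_property}), which states that for every $i$, the set of descendants of $R_x(i)$ contains the entire integer interval $\{j : i \leq j \leq R_x(i)\}$. The key consequence I would extract is that consecutive integers $i$ and $i+1$ always belong to the same component: since $R_x(i) \geq i+1$ whenever $R_x(i) \neq i$, either $i+1$ lies strictly between $i$ and $R_x(i)$ (so $i+1$ is a descendant of $R_x(i)$, which is an ancestor of $i$, placing them in the same component), or $R_x(i) = i+1$ directly, or $R_x(i) = i$ meaning the infimum in Eq.~(\ref{eq_recordMap}) never exists.

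The first step is to understand what $R_x(i) = i$ means: it happens exactly when $S_n < S_i$ for all $n > i$, i.e., $i$ has no ancestor at all. So the only obstruction to $i$ and $i+1$ being linked by the interval property is the case where $i$'s record does not exist. The second step is to connect this to the hypothesis on $0$. The key observation is that if $0$ has infinitely many ancestors, then in particular the sequence $(S_n)_{n \geq 1}$ takes values $\geq S_0 = 0$ infinitely often, so $\limsup_{n \to \infty} S_n = +\infty$ along the ancestor chain; more precisely, the ancestor chain $0, R_x(0), R_x^2(0), \dots$ is an increasing sequence of integers tending to $+\infty$. I would argue that every integer $j > 0$ lies in some interval $\{i : R_x^k(0) \leq i \leq R_x^{k+1}(0)\}$ for suitable $k$, and hence by the interval property is a descendant of $R_x^{k+1}(0)$, which is an ancestor of $0$. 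This places every positive integer in the component of $0$.

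For the negative integers, I would proceed differently since the hypothesis directly controls only the forward ancestor chain of $0$. The cleanest route is to show that each integer $j$ has $j$ and $j+1$ in the same component unless $R_x(j) = j$, and then to rule out $R_x(j) = j$ occurring for any $j$ under the hypothesis, or to handle such $j$ separately. Here the crucial point is that if $R_x(j) = j$ for some $j$, this would create a ``barrier'': no path crosses from below $j$ to at or above $S_j$ going forward. But since $0$ has an infinite increasing ancestor chain tending to $+\infty$ with values $S_{R_x^k(0)} \geq 0$, the record values are unbounded; I would show that any such barrier at $j \leq 0$ would prevent $0$ from having an ancestor whose $S$-value exceeds $S_j$, contradicting the unboundedness of the ancestor values. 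Concretely, if $R_x(j) = j$ with $j < 0$, then $S_n < S_j$ for all $n > j$, in particular $S_0 < S_j$ and all ancestors of $0$ have $S$-value $< S_j$, contradicting that they grow to $+\infty$. Thus no barrier exists at any $j \leq 0$, so every such $j$ is linked to $j+1$, and by induction every negative integer connects to $0$.

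\textbf{Main obstacle.} The delicate part will be the negative-integer side: the hypothesis is asymmetric (it concerns ancestors of $0$, a forward notion), so I must translate ``$0$ has infinitely many ancestors'' into a statement ruling out $R_x(j) = j$ for $j \leq 0$. The argument above hinges on the fact that infinitely many ancestors forces the ancestor $S$-values to be unbounded above; I would need to verify carefully that $R_x^k(0) \to \infty$ and $S_{R_x^k(0)} \to \infty$ (the latter because the records are weakly increasing and, being infinitely many distinct integer levels reached, must grow without bound), and that any forward barrier $R_x(j)=j$ at $j \le 0$ caps all later $S$-values below $S_j$, yielding the contradiction. Once both directions are established, every integer lies in the component of $0$, so the record graph is a.s.\ connected.
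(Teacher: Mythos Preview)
Your argument for the positive integers is correct and matches the paper's approach (with the roles of $0$ and $i$ swapped). The gap is in the negative-integer part, and it is a genuine one: you never use stationarity, and without it the lemma is false at the level of sample paths. Concretely, your claim that $S_{R_X^k(0)} \to \infty$ is unfounded. The record \emph{epochs} $R_X^k(0)$ are distinct integers tending to $+\infty$, but the record \emph{heights} $S_{R_X^k(0)}$ are only non-decreasing and may well stabilise at a finite integer level (the record map uses $\geq$, not $>$). For instance, a trajectory with $S_{-1}=5$, $S_0=0$, and $S_n$ oscillating between $0$ and $-1$ for $n \geq 1$ gives $0$ infinitely many ancestors while $-1$ has none, so $-1$ and $0$ lie in different components.

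The paper's fix is exactly the missing ingredient: stationarity transfers the hypothesis from $0$ to every integer. If $A_i$ is the event that $i$ has infinitely many ancestors, then $\mathbb{P}[A_i]=\mathbb{P}[A_0]=1$ for each $i$, so a.s.\ \emph{every} integer has an infinite ancestor chain. Now for $i<0$ you run your positive-integer argument from $i$ instead of from $0$: the ancestor chain $(R_X^k(i))_k$ is strictly increasing and eventually passes $0$, so by the interval property $0$ is a descendant of some $R_X^k(i)$, placing $0$ and $i$ in the same component. This is precisely the paper's proof.
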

\begin{proof}
    For any \(i \in \mathbb{Z}\), let \(A_i\) be the event that \(i\) has infinitely many ancestors in \(\mathbb{Z}^R_X\).
    By the stationarity of \(X\), every event \(A_i\) occurs with probability \(1\) for each \(i\in \mathbb{Z}\) since \(A_0\) occurs with probability \(1\).
    Therefore, the event \(A = \cap_{i \in \mathbb{Z}}A_i\) occurs with probability \(1\).

    The next step consists in showing that every integer \(i<0\) belongs to the component of \(0\) in  \(\mathbb{Z}^R_X\).
    Similarly, by interchanging the role of \(i\) and \(0\), it can be shown that every integer \(i>0\) belongs to the component of \(0\).

    Let $i<0$.
    Since \(A\) occurs with probability \(1\), the sequence $(R^n_X(i))_{n \in \mathbb{Z}}$ is strictly increasing a.s..
    So, there exists a smallest (random) integer $k>0$ such that $R_X^{k-1}(i)<0 \leq R^k_X(i)$ a.s..
    By Lemma~\ref{lemma:record_descendants} (applied to $R_X^{k-1}(i)$), $0$ is a descendant of $R^k_X(i)$ a.s..
    Thus, a.s., $0$ and $i$ are in the same connected component.
\end{proof}

\begin{corollary}
    Let \(X=(X_n)_{n \in \mathbb{Z}}\) be a stationary and ergodic sequence of random variables, and \((\mathbb{Z},X)\) be its associated network.
    Then, the following dichotomy holds: either
    \begin{enumerate}
        \item the record graph is  a.s. connected, or
        \item a.s., every component of the record graph is finite.
    \end{enumerate}
\end{corollary}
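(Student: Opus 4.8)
The plan is to combine the trivial-probability results (Proposition~\ref{prop_ergodic_trivial} and Corollary~\ref{cor_comp_f_f}) with the structural dichotomy between finite and infinite components. First I would invoke the foil classification theorem (Sec.~\ref{par_foil_classification}) to record that, almost surely, \(\mathbb{Z}^R_X(0)\) lies in exactly one of the three classes \(\mathcal{F}/\mathcal{F}\), \(\mathcal{I}/\mathcal{I}\), \(\mathcal{I}/\mathcal{F}\). By Proposition~\ref{prop_ergodic_trivial} the events \(\{\mathbb{Z}^R_X(0)\text{ is of class }\mathcal{I}/\mathcal{I}\}\) and \(\{\mathbb{Z}^R_X(0)\text{ is of class }\mathcal{I}/\mathcal{F}\}\) each have probability in \(\{0,1\}\), and by Corollary~\ref{cor_comp_f_f} so does \(\{\mathbb{Z}^R_X(0)\text{ is of class }\mathcal{F}/\mathcal{F}\}\). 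Since these three events partition the sample space up to a null set, exactly one of them has probability \(1\). The argument then splits according to which class carries full probability.

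In the first case, suppose \(\mathbb{Z}^R_X(0)\) is a.s.\ of class \(\mathcal{I}/\mathcal{I}\) or \(\mathcal{I}/\mathcal{F}\). In either class the component of \(0\) is a.s.\ an infinite EFT, so every vertex, and in particular \(0\), has out-degree \(1\), i.e.\ \(R_X(i)\neq i\) for every \(i\) in the component. Because the record graph contains no cycles (Remark~\ref{remark_record_graph_forest}), the ancestor sequence \((R^n_X(0))_{n\geq 0}\) is strictly increasing in the integer order and hence \(0\) has infinitely many ancestors a.s. Lemma~\ref{lemma_stationary_one_component} then yields that \(\mathbb{Z}^R_X\) is a.s.\ connected, which is alternative~(1).

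In the second case, suppose \(\mathbb{Z}^R_X(0)\) is a.s.\ of class \(\mathcal{F}/\mathcal{F}\), i.e.\ \(\mathbb{P}[\mathbb{Z}^R_X(0)\text{ is finite}]=1\). By Lemma~\ref{lemma_comp_0_fin_all_fin} this forces \(\mathbb{P}[\mathbb{Z}^R_X(n)\text{ is finite},\ \forall n\geq 1]=1\) and \(\mathbb{P}[\mathbb{Z}^R_X(n)\text{ is finite},\ \forall n\leq -1]=1\); intersecting these two full-probability events with \(\{\mathbb{Z}^R_X(0)\text{ is finite}\}\) shows that a.s.\ \(\mathbb{Z}^R_X(n)\) is finite for every \(n\in\mathbb{Z}\), so every component of the record graph is finite, which is alternative~(2). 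Since exactly one of the two cases occurs with probability \(1\), the stated dichotomy follows.

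The only genuinely delicate point I anticipate is justifying that membership in an infinite class forces \(0\) to have infinitely many ancestors, so that Lemma~\ref{lemma_stationary_one_component} applies; this rests on the EFT property (out-degree \(1\) at every vertex) supplied by the foil classification theorem, together with the acyclicity of the record graph. Everything else is bookkeeping with the already-established trivial-probability statements and Lemma~\ref{lemma_comp_0_fin_all_fin}.
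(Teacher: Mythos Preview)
Your proposal is correct and follows essentially the same route as the paper's proof: split on whether the trivial-probability event that \(0\) has infinitely many ancestors holds with probability \(0\) or \(1\), then invoke Lemma~\ref{lemma_comp_0_fin_all_fin} in the first case and Lemma~\ref{lemma_stationary_one_component} in the second. The paper's version is more compressed---it bundles the classes \(\mathcal{I}/\mathcal{I}\) and \(\mathcal{I}/\mathcal{F}\) directly into the single event ``\(0\) has infinitely many ancestors'' without explicitly invoking the EFT out-degree property---but your more detailed justification of that step is entirely sound.
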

\begin{proof}
    By Proposition~\ref{prop_ergodic_trivial}, the event that \(0\) has infinitely many ancestors has trivial probability.
    If a.s., \(0\) has finitely many ancestors, then by Lemma~\ref{lemma_comp_0_fin_all_fin}, every component of the record graph is finite.
    Otherwise, by Lemma~\ref{lemma_stationary_one_component}, the record graph is a.s. connected.
\end{proof}

The main result is:

\begin{theorem}[Phase transition of the record graph]\label{thm_phase_transition_stationary}
    Let \(X=(X_n)_{n \in \mathbb{Z}}\) be a stationary and ergodic sequence of random variables such that their common mean exists.
    Let \([\mathbb{Z}^R_X(0),0]\) be the component of \(0\) in the record graph \(\mathbb{Z}_X^R\) of the network \((\mathbb{Z},X)\).
    \begin{enumerate}
        \item If \(\mathbb{E}[X_0]<0\), then a.s. every component of \(\mathbb{Z}^R_X\) is of class \(\mathcal{F}/\mathcal{F}\).
        \item If \(\mathbb{E}[X_0]>0\), then \(\mathbb{Z}^R_X\) is connected, and it is of class \(\mathcal{I}/\mathcal{F}\) a.s.
         \item If \(\mathbb{E}[X_0]=0\), then a.s., \(\mathbb{Z}^R_X\) is connected, and it is either of class \(\mathcal{I}/\mathcal{F}\) or of class \(\mathcal{I}/\mathcal{I}\).
    \end{enumerate}
\end{theorem}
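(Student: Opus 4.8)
The plan is to read off all three regimes from the asymptotics of the walk $S$ together with the structural dictionary already assembled. Birkhoff's ergodic theorem gives $S_n/n \to m$ and $S_{-n}/n \to -m$ a.s., so the sign of $m$ fixes whether $S_n \to +\infty$ or $-\infty$ in each direction. I will translate this into statements about ancestors and descendants using the facts established above: by Lemma~\ref{lemma_descendants}, $D(i) = \{j : L_X(i) \le j \le i\}$, so $i$ has infinitely many descendants exactly when $L_X(i) = -\infty$, i.e. when $S_i \ge S_k$ for all $k < i$; by (the proof of) Lemma~\ref{lemma_comp_0_fin_all_fin}, the component of $0$ is finite exactly when $(S_n)_{n \ge 0}$ attains its maximum finitely often; and by Lemma~\ref{lemma_stationary_one_component}, if $0$ has infinitely many ancestors a.s. then the record graph is a.s. connected. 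Finally, the dichotomy corollary above states that a.s. the graph is either connected or a forest of finite trees, and once it is connected its single component is an infinite unimodular EFT (Lemma~\ref{lemma_f_graph_unimodular}), so the classification of unimodular Family Trees leaves only classes $\mathcal{I}/\mathcal{F}$ and $\mathcal{I}/\mathcal{I}$, the former occurring precisely when some vertex has infinitely many descendants (foil classification, Sec.~\ref{par_foil_classification}).

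For $m < 0$, $S_n \to -\infty$, so $(S_n)_{n \ge 0}$ has a last maximum and attains it finitely often; hence the component of $0$ is finite, and by the dichotomy corollary a.s. every component is finite, i.e. of class $\mathcal{F}/\mathcal{F}$. For $m > 0$, $S_n \to +\infty$ guarantees that $R_X^{k}(0)$ is defined for every $k$ (from any current level one can always find a later time exceeding it, and records strictly increase), so $0$ has infinitely many ancestors and Lemma~\ref{lemma_stationary_one_component} yields connectedness. Moreover $S_{-n} \to -\infty$, so $\sup_{k \le 0} S_k$ is finite and attained at some $i^\ast \le 0$; then $S_{i^\ast} \ge S_k$ for all $k < i^\ast$, whence $L_X(i^\ast) = -\infty$ and $i^\ast$ has infinitely many descendants. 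Therefore the connected graph is of class $\mathcal{I}/\mathcal{F}$.

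The case $m = 0$ is where I expect the main obstacle. Here the classification need only be narrowed to $\{\mathcal{I}/\mathcal{F}, \mathcal{I}/\mathcal{I}\}$, so it suffices to prove connectedness, and by the dichotomy corollary this amounts to ruling out the all-finite phase; unlike the drifting cases one cannot simply invoke $S_n \to \pm\infty$, since a zero-mean ergodic walk may still wander off sublinearly. I argue by contradiction: suppose a.s. every component is finite. The set of roots $\{i : R_X(i) = i\} = \{i : S_n < S_i \text{ for all } n > i\}$ is a covariant, hence stationary ergodic, point process, and its intensity $p = \mathbb{P}[R_X(0) = 0]$ is strictly positive, for otherwise a.s. there is no root, contradicting that each of the infinitely many finite components carries exactly one root. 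Writing the successive roots as $\cdots < \rho_0 < \rho_1 < \cdots$, the defining property of a root forces $S_{\rho_{k+1}} < S_{\rho_k}$, so the ergodic theorem for the stationary root process (Palm calculus) applied to the increments $\Delta_k = S_{\rho_{k+1}} - S_{\rho_k} < 0$ gives $\tfrac{1}{k} S_{\rho_k} \to \mathbb{E}^0[\Delta_0] \in [-\infty, 0)$. On the other hand $\tfrac{1}{k} S_{\rho_k} = \tfrac{S_{\rho_k}}{\rho_k}\cdot\tfrac{\rho_k}{k} \to m\cdot\tfrac{1}{p} = 0$, since $S_{\rho_k}/\rho_k \to m = 0$ and $\rho_k/k \to 1/p$. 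This contradiction forces a.s. connectedness, and the classification then gives class $\mathcal{I}/\mathcal{F}$ or $\mathcal{I}/\mathcal{I}$. The crux is this root computation, whose only genuinely new input is the strict monotonicity $S_{\rho_{k+1}} < S_{\rho_k}$ of the root heights; it requires $\mathbb{E}|X_0| < \infty$ so that $S_{\rho_k}/\rho_k \to 0$ and the Palm mean of $\Delta_0$ is well defined, which holds whenever the mean exists and equals $0$, and I would also verify the covariance and measurability of the root process to justify the ergodic theorem along the Palm shift.
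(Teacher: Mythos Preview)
Your proof is correct and follows essentially the same route as the paper's: Parts~1 and~2 are identical, and for Part~3 your set of ``roots'' $\{i:R_X(i)=i\}=\{i:S_n<S_i\ \forall n>i\}$ coincides (by integer-valuedness) with the paper's covariant set of ``peak points'' $Q=\{k:\sum_{i=k}^{k+n}X_i\le -1\ \forall n\ge0\}$, and both arguments derive the same contradiction from the strict drop $S_{\rho_{k+1}}-S_{\rho_k}\le -1$ along this set. The only cosmetic difference is that you phrase the limit via the Palm mean $\mathbb{E}^0[\Delta_0]$, whereas the paper bypasses this by using the deterministic bound $S_{\rho_k}-S_{\rho_0}\le -k$ directly and then invoking the cross-ergodic theorem only for $i/n_i\to\mathbb{P}[0\in Q]$; the paper's version is slightly lighter since it never needs to argue that the Palm expectation of $\Delta_0$ is well defined.
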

\begin{proof}
    {\it (Proof of 1.)}
    Let \(\mathbb{E}[X_0]<0\).
    Since, by ergodicity, \(\frac{S_n}{n}\to \mathbb{E}[X_0]\) a.s., it follows that \(S_n \to -\infty\) a.s.
    Therefore, the sequence \((S_n)_{n \geq 0}\) attains a maximum value \(a_0\) at some (random) \(n'\geq 0\) and \(S_n<a_0\) for all \(n>n'\).
    This implies that \(\mathbb{Z}^R_X(0)\) is finite a.s. and hence it is of class \(\mathcal{F}/\mathcal{F}\).
    By Lemma~\ref{lemma_comp_0_fin_all_fin}, every component of \(\mathbb{Z}^R_X\) is finite and hence it is of class \(\mathcal{F}/\mathcal{F}\).

    {\it (Proof of 2.)}
    Let \(\mathbb{E}[X_0]>0\).
    Since, by ergodicity, \(\frac{S_n}{n}\to \mathbb{E}[X_0]\) a.s., it follows that \(S_n \to \infty\) a.s. and \(S_{-n} \to -\infty\) a.s.
    The former implies that \(0\) has infinitely many ancestors.
    The latter implies that there exists \(n_0 \leq 0\) such that \((S_n)_{n \leq 0}\) attains maxima at \(n_0\), i.e., \(S_n \leq S_{n_0}\), for all \(n \leq n_0\).
    Therefore, \(L_X(n_0) = -\infty\), and by Lemma~\ref{lemma:descendants}, it follows that \(n_0\) has infinitely many descendants.
   By Lemma~\ref{lemma_stationary_one_component}, \(\mathbb{Z}^R_X\) has a single component and thus, it is of class \(\mathcal{I}/\mathcal{F}\).

    {\it (Proof of 3.)}
    Let \(\mathbb{E}[X_0]=0\) and \(A\) be the event defined as
    \begin{equation*}
        A:=\{\mathbb{Z}^R_X(0) \text{ is of class }\mathcal{F}/\mathcal{F}\}.
    \end{equation*}
    In order to prove that \(\mathbb{P}[A]=0\), \emph{suppose} that \(\mathbb{P}[A]>0\).
    Then, by Corollary~\ref{cor_comp_f_f}, \(\mathbb{P}[A]=1\).
     Lemma~\ref{lemma_comp_0_fin_all_fin} implies that \( \mathbb{P}[\mathbb{Z}^R_X(n) \text{ is finite } \forall n \in \mathbb{Z}]=1\).
     Note that \(\mathbb{Z}^R_{X(\omega)}(n)\) is finite for all \(n  \in \mathbb{Z}\) if and only if there exists a (random) subsequence \((n'_k)_{k \in \mathbb{Z}}\) of \(\mathbb{Z}\) such that \(n'_k \to +\infty\), \(n'_{-k} \to -\infty\) as \(k \to \infty\) and for every \(k \in \mathbb{Z}\), \(S_n(\omega) < S_{n'_k}(\omega)\) for all \(n>n'_{k}\).
    In particular, a.s., \(S_n \to - \infty\) and \(S_{-n} \to \infty\) as \(n \to \infty\).
    Consider the covariant subset \(Q\) consisting of peak points 
    \begin{equation*}
        Q := \left\{k \in \mathbb{Z}: \sum_{i=k}^{k+n} X_i \leq -1 \, \forall n\geq 0\right\}.
    \end{equation*}
    The covariant set \(Q\) is a subset of integers such that the sums starting from these integers are always negative.
    The integers of the set \(Q\) can be enumerated as \(Q = (n_{i})_{i \in \mathbb{Z}}\), where \(n_i < n_{i+1}\) for all \(i \in \mathbb{Z}\) and \(n_0 > 0\) is the smallest positive integer that belongs to the set \(Q\).
    Such an enumeration is possible because \(S_n \to -\infty\) and \(S_{-n} \to \infty\); thus, \(n_i \to \infty\) as \(i \to \infty\).
    Note that the intermediate sums between two consecutive integers of \(Q\) satisfy
    \begin{equation}\label{eq_inter_sums}
        \sum_{k=n_i}^{n_{i+1}-1}X_k \leq -1, \, \forall i \in \mathbb{Z}.
    \end{equation}
    Since \(\mathbb{P}[Q \text{ is non-empty}]= \mathbb{P}[A]= 1\), the intensity \(\mathbb{P}[0 \in Q]\) of the set \(Q\) is positive (by Lemma~\ref{lemma_non_empty_covariant_set}).
    Birkhoff's pointwise ergodic theorem implies that a.s., \(\frac{S_n}{n} \to \mathbb{E}[X_0]\) as \(n \to \infty\).
    Since \(n_i \to \infty\) as \(i \to \infty\), it follows that a.s.,
    \begin{equation*}
        \lim_{i \to \infty} \frac{S_{n_i}}{n_i} = \lim_{n \to \infty} \frac{S_n}{n} = \mathbb{E}[X_0].
    \end{equation*}

    For all \(i\geq 1\),
    \begin{align}
        \frac{S_{n_i}}{n_i} &= \frac{X_0+\cdots+X_{n_0-1}+\sum_{k=0}^{i-1}(X_{n_k}+\cdots+X_{n_{k+1}-1})}{n_i} \nonumber \\
        &\leq \frac{X_0+\cdots+X_{n_0-1}+\sum_{k=0}^{i-1}(-1)}{n_i}\label{eq_upperbound}\\
        & = \frac{X_0+\cdots+X_{n_0-1}}{n_i}+ \left(\frac{-i}{n_i} \right) \nonumber,
    \end{align}
    where Eq.~(\ref{eq_upperbound}) is obtained using Eq.~(\ref{eq_inter_sums}).
    Taking the limit \(i \to \infty\) on both sides of Eq.~(\ref{eq_upperbound}), one obtains that a.s.,
    \begin{equation*}
        \mathbb{E}[X_0] \leq -\mathbb{P}[0 \in Q],
    \end{equation*}
    since \(\frac{-i}{n_i} = -\left(\frac{\left(\sum_{k=1}^{n_i}\mathbf{1}\{k \in Q\}\right)-1}{n_i}\right)\) and the latter converges a.s. to \(-\mathbb{P}[0 \in Q]\) as \(i \to \infty\) by the cross-ergodic theorem for point processes (see \cite[Section 1.6.4]{baccelliElementsQueueingTheory2003}).
    Since, the intensity \(\mathbb{P}[0 \in Q]>0\) and \(\mathbb{E}[X_0]=0\), one gets a contradiction.
    Hence,  \(\mathbb{P}[[\mathbb{Z}^R(0),0]\text{ is of class }\mathcal{F}/\mathcal{F}]=0\).
\end{proof}

Instances of \(X=(X_n)_{n \in \mathbb{Z}}\) for which \(\mathbb{E}[X_0]=0\), but the record graph is of class \(\mathcal{I}/\mathcal{I}\) occur when \(X=(X_n)_{n \in \mathbb{Z}}\) is an i.i.d. sequence of random variables.
This result is proved in the following proposition.

\begin{proposition}\label{prop_r_graph_of_iid_II}
    Let \(X=(X_n)_{n \in \mathbb{Z}}\) be an i.i.d. sequence of random variables such that their common mean exists and \((\mathbb{Z},X)\) be its associated network.
    If \(\mathbb{E}[X_0]=0\), then the record graph of \((\mathbb{Z},X)\) is of class $\mathcal{I}/\mathcal{I}$. 
\end{proposition}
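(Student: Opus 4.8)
The plan is to invoke the phase-transition theorem to narrow the possibilities and then to rule out the class $\mathcal{I}/\mathcal{F}$ by a local computation combined with recurrence. Since an i.i.d.\ sequence is stationary and ergodic, Theorem~\ref{thm_phase_transition_stationary} (the case $\mathbb{E}[X_0]=0$) applies and shows that $\mathbb{Z}^R_X$ is a.s.\ connected and of class either $\mathcal{I}/\mathcal{F}$ or $\mathcal{I}/\mathcal{I}$. By the foil classification recalled in Sec.~\ref{par_foil_classification}, these two classes are distinguished by the set of vertices having infinitely many descendants: in class $\mathcal{I}/\mathcal{F}$ this set is the (non-empty) bi-infinite directed path, whereas in class $\mathcal{I}/\mathcal{I}$ every vertex has finitely many descendants, so the set is empty. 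Hence it suffices to show that a.s.\ no integer has infinitely many descendants in the record graph.

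To this end, I would let $\mathfrak{S}$ denote the covariant subset consisting of those vertices having infinitely many descendants. By Lemma~\ref{lemma_descendants}, an integer $i$ lies in $\mathfrak{S}$ if and only if $L_X(i)=-\infty$, which happens precisely when $S_i \geq S_k$ for all $k<i$. The point of passing to this covariant subset is that Lemma~\ref{lemma_non_empty_covariant_set} reduces the global statement to a statement about the root: $\mathbb{P}[\mathfrak{S}_{\mathbb{Z}} \neq \emptyset]>0$ if and only if $\mathbb{P}[0 \in \mathfrak{S}_{\mathbb{Z}}]>0$. Thus it is enough to prove that $\mathbb{P}[0 \in \mathfrak{S}_{\mathbb{Z}}]=0$.

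Next I would write the event $\{0 \in \mathfrak{S}_{\mathbb{Z}}\}$ explicitly. Setting $W_m := S_0 - S_{-m} = X_{-1}+X_{-2}+\cdots+X_{-m}$ for $m \geq 1$, the increments $X_{-1},X_{-2},\ldots$ are i.i.d.\ with mean $0$, so $(W_m)_{m \geq 0}$ (with $W_0=0$) is a mean-zero random walk, and $\{0 \in \mathfrak{S}_{\mathbb{Z}}\}=\{W_m \geq 0 \text{ for all } m \geq 1\}$. By the Chung--Fuchs recurrence theorem, $(W_m)$ is recurrent; being non-degenerate and mean-zero it oscillates, so $\liminf_{m} W_m = -\infty$ a.s., whence $\mathbb{P}[W_m \geq 0 \ \forall m \geq 1]=0$. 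Combining with the previous paragraph gives $\mathbb{P}[\mathfrak{S}_{\mathbb{Z}} \neq \emptyset]=0$; therefore a.s.\ no vertex has infinitely many descendants, the connected record graph cannot be of class $\mathcal{I}/\mathcal{F}$, and by the dichotomy above it is of class $\mathcal{I}/\mathcal{I}$.

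The step I expect to be most delicate is the last one, namely upgrading Chung--Fuchs recurrence to $\liminf_m W_m = -\infty$ a.s.\ (equivalently, that the backward walk a.s.\ drops strictly below its starting height $0$). This is exactly where non-degeneracy of $X_0$ must enter: in the trivial case $X_0 \equiv 0$ one has $W_m \equiv 0$, every vertex belongs to $\mathfrak{S}$, and the record graph degenerates to a single bi-infinite path of class $\mathcal{I}/\mathcal{F}$, so the conclusion genuinely requires $X_0$ to be non-degenerate (which, having mean zero, forces $\mathbb{P}[X_0<0]>0$ and $\mathbb{P}[X_0>0]>0$). The remaining verifications — that $\mathfrak{S}$ is a bona fide covariant subset and that the descendant-count characterizations of $\mathcal{I}/\mathcal{F}$ and $\mathcal{I}/\mathcal{I}$ are as quoted — are routine given the results recalled in the preliminaries.
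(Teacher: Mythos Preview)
Your proof is correct and follows essentially the same route as the paper: reduce to $\mathcal{I}/\mathcal{I}$ versus $\mathcal{I}/\mathcal{F}$ via Theorem~\ref{thm_phase_transition_stationary}, then use Chung--Fuchs recurrence together with Lemma~\ref{lemma_descendants} to show $L_X(i)>-\infty$ a.s.\ for each $i$. The only variation is that you invoke Lemma~\ref{lemma_non_empty_covariant_set} to reduce the global statement to $i=0$, whereas the paper argues directly for each fixed $i$ and takes a countable intersection; your flag of the degenerate case $X_0\equiv 0$ is a genuine caveat that the paper's proof leaves implicit.
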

\begin{proof}
    By Theorem~\ref{thm_phase_transition_stationary}, the record graph is either of class \(\mathcal{I}/\mathcal{I}\) or of class \(\mathcal{I}/\mathcal{F}\).
    So, it is sufficient to show that every vertex of the record graph has finitely many descendants, which implies that the record graph is of class \(\mathcal{I}/\mathcal{I}\).

    Let \(i\) be an integer.
    Consider the i.i.d. sequence \((X_{i-1-n})_{n \geq 0}\) of random variables.
    By the Chung-Fuchs theorem \cite{kallenbergFoundationsModernProbability2021}, the random walk \(\left(y(i-n,i)\right)_{n \geq 0}\) associated to the increment sequence \((X_{i-1-n})_{n \geq 0}\) is recurrent, where \(y(i-n,i) = \sum_{k=i-n}^{i-1}X_k\), for all \(n \geq 0\).
    Therefore, a.s., \(y(i-n_0,i)<0\) for some \(n_0 \geq 1\).
    This implies that \(L_X(i)>-\infty\).
    By Lemma~\ref{lemma:descendants}, the number of descendants of \(i\) in the record graph of \((\mathbb{Z},X)\) is finite.
    Since this is true for any integer \(i\), the event that every integer (vertex) of the record graph has finitely many descendants occurs almost surely.
\end{proof}

The following construction gives an example of a stationary and ergodic sequence of random variables whose mean is \(0\) but the record graph associated to the sequence is of class \(\mathcal{I}/\mathcal{F}\).

\begin{example}[stationary, ergodic, mean \(0\) but the record graph is of class \(\mathcal{I}/\mathcal{F}\)]\label{example_i_f_mean_0}\normalfont
    See Figure~\ref{fig_example_i_f_mean_0} for an illustration of this example.
    Consider an \(M/M/1/\infty\) queue with arrival rate \(\lambda\) and service rate \(\mu\) satisfying \(\lambda<\mu\).
    Let \((N_n)_{n \geq 1}\) be the number of customers in the queue observed at all changes of state.
    The Markov chain \((N_n)_{n \geq 1}\) has a unique stationary distribution \(\eta\) (see \cite{asmussenAppliedProbabilityQueues2003}).
    The Markov chain \((N_n)_{n \geq 1}\) starting with the stationary distribution \(\eta\) is ergodic and \(N_n = 0\) for infinitely many \(n \geq 1\).
    Note that \(N_n \geq 0\) and \(N_{n+1}-N_{n}\in \{-1,+1\}\), for all \(n \geq 1\).
    Consider the stationary version \((-N_n)_{n \in \mathbb{Z}}\) of \((-N_n)_{n\geq 1}\) (which can be done by extending the probability space, see \cite[Theorem 5.1.14]{bremaudProbabilityTheoryStochastic2020}).
    Let \(Z_n = -N_n\) and \(X_n = Z_{n+1}-Z_n\), for all \(n \in \mathbb{Z}\).
    The sequence \(X=(X_n)_{ \in \mathbb{Z}}\) is a stationary sequence of random variables taking values in \(\{-1,+1\}\) and with mean \(\mathbb{E}[X_0] = \mathbb{E}[Z_1-Z_0]=0\) (since the sequence \((Z_n)_{n \in \mathbb{Z}}\) is stationary).
    Consider the record vertex-shift on the network \((\mathbb{Z},X)\).
    Let \((S_n)_{n \in \mathbb{Z}}\) be the sums of \(X\) starting at \(0\) as in Eq.~(\ref{eq_sums_increment}).
    The relation between \(S_n\) and \(N_n\) is given by \(S_n=-N_n+N_0\), for all \(n \in \mathbb{Z}\).
    Since \(N_n\geq 0\) for all \(n\in \mathbb{Z}\) and \(N_n=0\) a.s. for infinitely many \(n \geq 1\), \(S_n \leq N_0\) for all \(n \in \mathbb{Z}\) and \(S_n = N_0\) for infinitely many \(n \geq 1\).
    Therefore, \(0\) has infinitely many ancestors (since some \(k\)-th record epoch of \(0\) satisfies \(S_{R^k(0)}=N_0\) and \(S_{R^{(k+i)}(0)}=N_0\), for all \(i \geq 1\)).
    Similarly, since \(S_n \leq N_0\) for all \(n< 0\), some ancestor of \(0\) has infinitely many descendants (by the interval property of the record vertex-shift).
    Thus, the component \([\mathbb{Z}_X^R(0),0]\) of the record graph of \((\mathbb{Z},X)\) is of class \(\mathcal{I}/\mathcal{F}\).
\end{example}

\begin{figure}[htbp]
    \centering 
    \includegraphics[scale=0.90]{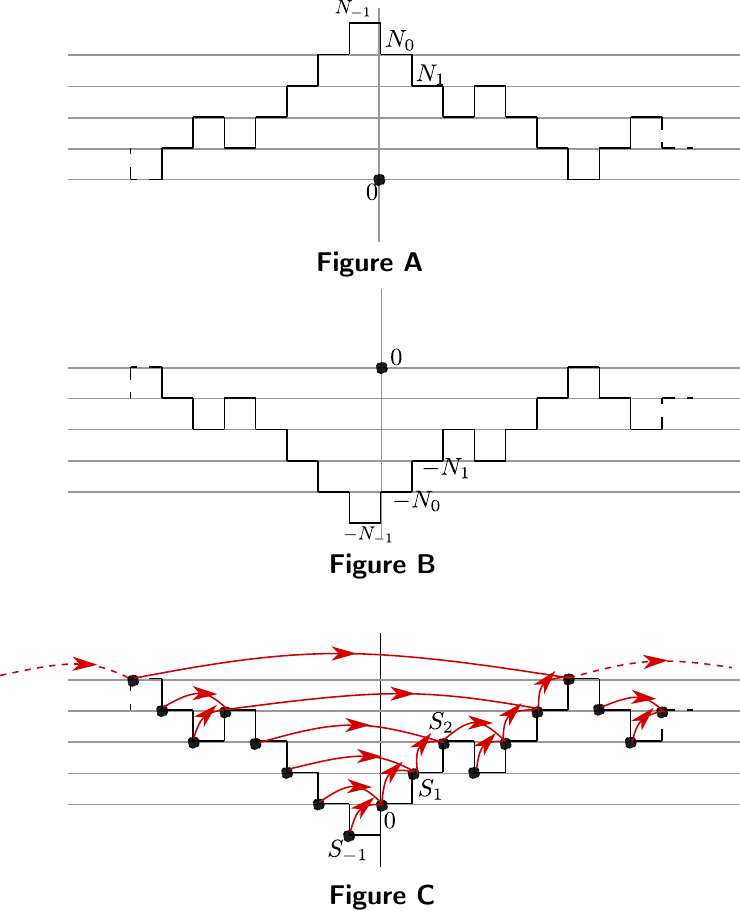}
    \caption{An Illustration of Example \ref{example_i_f_mean_0} for constructing a stationary sequence \(X=(X_n)_{n \in \mathbb{Z}}\) with \(\mathbb{E}[X_0]=0\) such that the component of \(0\) of the record graph of the network \((\mathbb{Z},X)\) is of class \(\mathcal{I}/\mathcal{F}\).
    Figure A depicts the stationary sequence \((N_n)_{n \in \mathbb{Z}}\), Figure B gives the stationary sequence \((-N_n)_{n \in \mathbb{Z}}\), and Figure C is obtained from Figure B by shifting the x-axis to \(-N_0\). In Figure C, the increment sequence of \((S_n)_{n \in \mathbb{Z}}\) is \(X\), the record graph is drawn in red and the sequence of top most arrows is the bi-infinite path in the record graph.}
    \label{fig_example_i_f_mean_0}
  \end{figure}








\section{Record vertex-shift on skip-free to the left random walks}
Let \(X=(X_n)_{n \in \mathbb{Z}}\) be an i.i.d. sequence of random variables satisfying the conditions of Eq.~(\ref{eq_skip_free_condition}) and \(S=(S_n)_{n \in \mathbb{Z}}\) be the skip-free to the left random walk starting at \(0\) with the increment sequence \(X\), as in Eq.~(\ref{eq_sums_increment}).

The main objective of this section is to describe the distribution of the component of \(0\) in the record graph of the network \((\mathbb{Z},X)\) for the three phases associated to \(\mathbb{E}[X_0]<0\), \(\mathbb{E}[X_0]=0\) and \(\mathbb{E}[X_0]>0\).
This is described in Subsection \ref{subsec_recordGraph_randomWalk}.

This section is organized as follows.
The first subsection describes the relation between the number of offsprings of a vertex in the record graph and the increment associated to it, the second subsection focuses on the properties of skip-free to the left random walks, which will be used to compute the distributions of the record graphs.
Two new families of unimodular random rooted networks are introduced in the third subsection.
Finally, the fourth subsection focuses on the aforementioned main objective.

\subsection{Relation between offspring count in the record graph and the respective increment of the sequence} \label{subsec_relation}
For a sequence \(x=(x_k)_{k \in \mathbb{Z}}\), where \(x_k \in \{-1,0,1,2,\ldots\}\) for all \(k \in \mathbb{Z}\), the number of children of every vertex \(i \in \mathbb{Z}\) in the record graph of \((\mathbb{Z},x)\) depends on the values of the three functions \(L_x\), \(l_x\) and the type function \(t_x\) at \(i\).

This subsection elaborates on these relationships, which are instrumental in characterizing the distribution of the component of \(0\) in the record graph.

Recall the notation, $y(j,k):=\sum_{l=j}^{k-1}x_l$ for all integers $j<k$ and for the sequence $x=(x_n)_{n \in \mathbb{Z}}$. 
The function \(L_x\) is defined in Eq. (\ref{eq:L_x_defn}).
\begin{definition}[Type function]
  The {\em type function} associated to \(x\) is the map $t_x:\mathbb{Z} \to \mathbb{Z}$ that assigns to each integer $i \in \mathbb{Z}$, the integer $t_x(i)\in \{-1,0,1,\ldots\}$ defined by 
\begin{equation}
  t_x(i)= \inf\{n \geq -1: y(m,i) = n \text{ for some } m<i\}.
\end{equation}
\end{definition}
The integer \(t_x(i)\) is called the {\em type} of \(i\).
The type \(t_x(i)\) of an integer \(i\) can also be written as
\begin{equation} \label{eq_type_inf}
  t_x(i)  = \inf\{y(m,i)\vee -1: m<i\},
\end{equation}
where \(\vee\) is the max function.
The type function is analogous to the Loynes' construction of the work load process \cite{baccelliElementsQueueingTheory2003}.

The function $l_x:\mathbb{Z}\to \mathbb{Z}$ is the map that assigns every integer $i$ to
\begin{equation}\label{eqn:def_l_x}
    l_x(i)= \sup\{m<i:y(m,i)=t_x(i)\}.
\end{equation}
The relation between the functions \(L_x\) and \(l_x\) is as follows:
first note that for every \(i \in \mathbb{Z}\),  \(-\infty<l_x(i)<i\) holds since \(t_x(i)=\min\{y(m,i)\vee -1: m<i\}\) and \(l_x(i)\) is the largest \(m\) for which \(y(m,i)\) attains \(t_x(i)\), when \(m\) is varied in \(\mathbb{Z}_{<i}\). 
 If \(t_x(i)=-1\), then \(l_x(i)=L_x(i)-1\).
 For any integer \(i\), \(t_x(i) \geq 0\) if and only if \(L_x(i)=-\infty\).

The following lemma relates the offspring count of any vertex \(i\) in the record graph of \((\mathbb{Z},x)\) and its associated increment \(x_{i-1}\) when \(L_x(i)< - \infty\).
Note that, skip-free to the left condition is necessary for this lemma to hold.

\begin{lemma}\label{lemma_offspring_count}
    Let $x = (x_k)_{k \in \mathbb{Z}}$ be a sequence such that $x_k \in \{-1,0,1,2,\ldots\}$ for all \(k \in \mathbb{Z}\).
    Let $(\mathbb{Z},x)$ be the network associated to \(x\), and $T$ be the record graph of \((\mathbb{Z},x)\).
    Let $i$ be an integer such that $L_x(i)>-\infty$ (see Eq.~(\ref{eq:L_x_defn})).
    Then,
    \begin{enumerate}
      \item The number of children $d_1(i)$ of $i$ in $T$ is given by $d_1(i) = x_{i-1}+1$.
      \item Let $d_1(i)=n>0$ and $i_n<i_{n-1}<\cdots<i_1$ be the positions on $\mathbb{Z}$ of the \(n\) children of $i$. Then, the position $i_m$ of the $m$-th child satisfies the relation $m = x_{i-1}+1-y(i_m,i)$.
      \item Let $d_1(i) = n>0$, and $m \in \{1,2,\cdots,n\}$. Then, an integer $i'<i$ is the $m$-th child of $i$ if and only if $i'$ is the largest integer among $\{L_x(i), \cdots,i-1\}$ that satisfies $y(i',i) = x_{i-1}+1-m$.
    \end{enumerate} 
  \end{lemma}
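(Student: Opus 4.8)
The plan is to translate everything into statements about the trajectory $(S_k)$ and to exploit the skip-free-to-the-left property, which forces the walk, read backwards in time, to increase by at most one at each step. First I would dispose of the degenerate case: if $x_{i-1}<0$, i.e.\ $x_{i-1}=-1$, then (as noted just before the lemma) $L_x(i)=i$, so $i$ has no descendant other than itself, $d_1(i)=0=x_{i-1}+1$, and parts (2)--(3) are vacuous. Hence I may assume $x_{i-1}\ge 0$; then $y(i-1,i)=x_{i-1}\ge 0$ gives $L:=L_x(i)\le i-1$, and by hypothesis $L>-\infty$, so $L$ is a genuine integer. From the definition of $L_x$ I record three facts: $S_k\le S_i$ for all $L\le k\le i-1$; $S_{L-1}>S_i$; and, combining the latter with the skip-free bound $S_L\ge S_{L-1}-1$ together with integrality, the key identity $S_L=S_i$. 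By Lemma~\ref{lemma:descendants} every child of $i$ lies in $\{L,\ldots,i-1\}$, and for $j$ in this range one has $R_x(j)=i$ — i.e.\ $j$ is a child of $i$ — precisely when $S_j>S_k$ for all $j<k<i$ (the requirement $S_j\le S_i$ being automatic). In other words, the children of $i$ are exactly the \emph{strict right-to-left records} of $(S_k)$ on $\{L,\ldots,i-1\}$.

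The heart of the argument is a counting statement about these records. Reading the trajectory backwards from $i-1$, the increment from $S_{k+1}$ to $S_k$ equals $-x_k\le 1$, so the backward-read sequence rises by at most one per step. I would use this to show that successive strict right-to-left record values differ by exactly one: once a backward running maximum $v$ has been attained, the walk can surpass it only by first hitting $v+1$, since it cannot jump over an integer level while increasing. Starting from the first record $S_{i-1}$, and using $S_L=S_i$ to guarantee that the level $S_i$ is actually reached inside $\{L,\ldots,i-1\}$, the record values sweep exactly the consecutive integers $S_{i-1},S_{i-1}+1,\ldots,S_i$, each occurring at a unique record epoch, with no record below $S_{i-1}$ or above $S_i$. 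Counting them gives $d_1(i)=S_i-S_{i-1}+1=x_{i-1}+1$, which is part (1).

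For parts (2) and (3) I would set up the dictionary between a child and its height. Labelling the children $i_n<\cdots<i_1=i-1$, the record values increase as the index decreases, so $i_m$ is the record epoch of value $S_{i-1}+(m-1)=S_i-x_{i-1}-1+m$; hence $y(i_m,i)=S_i-S_{i_m}=x_{i-1}+1-m$, which rearranges to part (2). Finally, being a strict right-to-left record, $i_m$ admits no $k\in(i_m,i)$ with $S_k\ge S_{i_m}$, so $i_m$ is the \emph{largest} index in $\{L,\ldots,i-1\}$ at level $S_{i_m}$, i.e.\ the largest index with $y(\cdot\,,i)=x_{i-1}+1-m$; conversely that level is attained (it is a record value), so this largest index exists and must equal $i_m$. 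This is part (3).

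The step I expect to be most delicate is the ``increase by exactly one'' claim for the backward records: this is where skip-freeness is genuinely used and where integer-valuedness must be invoked to rule out overshoot. Coupled with it, establishing $S_L=S_i$ so that the record values reach all the way up to $S_i$ (rather than stopping short) is what pins the count down to exactly $x_{i-1}+1$ rather than a mere upper bound. Everything else is bookkeeping with the value-to-index correspondence.
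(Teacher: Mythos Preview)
Your proposal is correct and follows essentially the same approach as the paper. Both arguments rest on the same two skip-free consequences---consecutive children have $S$-values differing by exactly one (the paper phrases this as $y(i_{m+1},i_m)=-1$, you phrase it as the backward running maximum rising in unit steps), and $S_{L_x(i)}=S_i$---and then count. Your reformulation in terms of strict right-to-left records of the trajectory is a pleasant repackaging but not a different idea; the paper just works directly with increments between successive children and arrives at part~(1) last rather than first.
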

  \begin{proof}
    If $x_{i-1}=-1$, then $L_x(i) = i$. So, by Lemma~\ref{lemma:descendants}, $i$ has no descendant.
    Then, \((1.)\) holds as \(d_1(i) = x_{i-1}+1=0\), whereas the other statements are empty.
  
    So, assume that $x_{i-1}\geq 0$.
    Clearly $i-1$ is then a child of $i$ as \(i\) is the record of \(i-1\).
    Therefore, \(d_1(i)\geq 1\).
    It is shown below that the sum of increments between any two successive children is $-1$.

    Let $i_n<i_{n-1}<\cdots <i_1$ be the positions on $\mathbb{Z}$ of the children of $i$, with $i_1=i-1$.
  Consider a child $i_m$ with $m<n$.
  If $i_{m+1} = i_m-1$, then $x_{i_m-1}=-1$. 
  Therefore, $y(i_{m+1},i_m)=-1$. 
  Consider now the case where $i_{m+1}<i_m-1$, and let $i'$ be an integer with $i_{m+1}<i'<i_{m}$.
    As $i'$ is not a child of $i$, and $i_m$ is a child of $i$, by Lemma~\ref{lemma:record_descendants}, one has $R_x(i')\leq i_m$, and there exists a smallest $k>0$ such that $R_x^k(i')=i_m$.
    Therefore, $y(i',i_m) = \sum_{l=0}^{k-1} y(R_x^l(i'),R_x^{l+1}(i')) \geq 0$. 
  
    In particular, $y(i_{m+1}+1,i_m) \geq 0$. 
  But, $y(i_{m+1},i_m)<0$, because the record of $i_{m+1}$ is \(i\) (not $i_m$).
  Therefore, the only possibility is that $x_{i_{m+1}}=-1$.
  This also implies that $y(i_{m+1}+1,i_m)=0$.
  Indeed, if $y(i_{m+1}+1,i_m)\geq 1$, then $y(i_{m+1},i_m)=x_{i_{m+1}}+y(i_{m+1}+1,i_m)=-1+y(i_{m+1}+1,i_m) \geq 0$, which gives a contradiction that \(i_m\) is the record of \(i_{m+1}\).
  Hence,  $y(i_{m+1},i_m)=x_{i_{m+1}}+y(i_{m+1}+1,i_m)=-1$.
  Thus, it was shown that the sum of increments between two successive children is \(-1\). 
  This is now used to prove the three statements of the lemma.
  
  The proof of the \emph{second statement} follows because, for any child $i_m$ of $i$,  one has 
  \begin{equation} \label{eqn:cond_child}
    y(i_m,i)=y(i_m,i_1)+x_{i-1}=y(i_m,i_{m-1})+\cdots+y(i_2,i_1)+x_{i-1}=-(m-1)+x_{i-1}.
  \end{equation}
  The proof of the third statement is as follows.
  Since \(i_m\) is a child of \(i\), \(y(i_m,i)\geq 0\), for all  \(1 \leq m \leq n\).
  Further, for any $i_m<i' \leq i-1$, $y(i',i) = y(i',i_j)+y(i_j,i)$ holds, with $i_j\geq i'$ being the unique smallest integer which is a child of $i$.
  Such an \(i_j\) always exists since \(i-1\) is a child of \(i\).
  Moreover, \(j<m\).
  Since \(y(i',i_j) \geq 0\), one has
  \[y(i',i)=y(i',i_j)+x_{i-1}+1-j \geq x_{i-1}+1-j > x_{i-1}+1-m,\]
  where Eq.~(\ref{eqn:cond_child}) was used to get the first equality.
  This proves the \emph{forward implication of the third statement}: if $h$ is the $m$-th child of $i$, then $h<i$ is the largest integer that satisfies $y(h,i)=x_{i-1}+1-m$.
  
  As for the \emph{backward implication of the third statement}, if $i'<i$ is the largest integer satisfying $y(i',i)=x_{i-1}+1-m \geq 0$ for some $m \in \{1,2, \cdots, n\}$, then $y(i',j)= y(i',i)-y(j,i)<0$, for all $i'<j \leq i-1$.
  So, $i'$ is a child of $i$. 
  But, by Eq.~(\ref{eqn:cond_child}), there is only one child that satisfies this condition, namely, the $m$-th child of $i$.
  This proves the third statement.
  
  The next step consists in proving that $y(L_x(i),i)=0$.
  Indeed, from the definition of $L_x(i)$, it follows that $y(L_{x}(i)-1, i) = x_{L_x(i)-1}+y(L_x(i),i)<0$.
  But $y(L_x(i),i) \geq 0$, implying that $x_{L_{x}(i)-1}=-1$ (the only possibility for \(x_{L_{x}(i)-1}\)).
  Hence, $y(L_x(i),i)=0$.
  This implies that $i_n = L_x(i)$ since no integer smaller than \(L_x(i)\) can be a descendant of \(i\) by Lemma~\ref{lemma:descendants}.
  To prove the first statement, observe that 
  \begin{equation*}
    y(i_n,i) = x_{i-1}+\sum_{k=1}^{n-1} y(i_k,i_{k+1}) = x_{i-1}-(n-1).
  \end{equation*}
  Therefore, $y(i_n,i)=0=x_{i-1}+1-n$ which gives $d_1(i) = n =x_{i-1}+1$, proving the \emph{first statement}.
  \end{proof}
  
  \begin{remark}\label{remark_smallest_child_type_negative}
    From the last part of the proof of Lemma~\ref{lemma_offspring_count}, it follows that, if \(L_x(i)>-\infty\), then \(L_x(i)\) is the smallest child of \(i\).
  \end{remark}

  The following lemma describes the smallest child of a vertex in the record graph.

\begin{lemma}\label{defn_l_20230404185835}\label{smallest_child_20230404185835}
    Let $x = (x_k)_{k \in \mathbb{Z}}$ be a sequence such that $x_k \in \{-1,0,1,2,\ldots\}$ for all \(k \in \mathbb{Z}\).
    Let $(\mathbb{Z},x)$ be the network associated to \(x\), and $T$ be the record graph of \((\mathbb{Z},x)\).
    For any integer \(i\), the following dichotomy holds:
  \begin{itemize}
    \item if \(t_x(i)=-1\), then \(L_x(i)\) is the smallest child of \(i\) in \(T\),
    \item if \(t_x(i)\geq 0\), then \(l_x(i)\) is the smallest child of \(i\) in \(T\).
  \end{itemize}
\end{lemma}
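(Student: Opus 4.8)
The plan is to split along the dichotomy in the statement, since one case reduces to material already proved. When $t_x(i)=-1$, the identities recorded just before the lemma give $t_x(i)=-1 \iff L_x(i)>-\infty$, and in that regime $l_x(i)=L_x(i)-1$; in particular $L_x(i)>-\infty$, so Remark~\ref{remark_smallest_child_type_negative} (the tail of the proof of Lemma~\ref{lemma_offspring_count}) already identifies $L_x(i)$ as the smallest child of $i$. Thus the genuine content lies in the case $t_x(i)\geq 0$, where $L_x(i)=-\infty$ and the offspring-count lemma no longer applies; here I would argue directly from the definition of the record map $R_x$.

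For the case $t_x(i)\geq 0$, the first step is to rephrase the hypothesis in terms of the backward sums. Since $y(m,i)$ is integer-valued, the formula $t_x(i)=\inf\{y(m,i)\vee(-1):m<i\}$ in Eq.~(\ref{eq_type_inf}) together with $t_x(i)\geq 0$ forces $y(m,i)\geq 0$ for every $m<i$, so that $t:=t_x(i)=\min_{m<i}y(m,i)$ and, by Eq.~(\ref{eqn:def_l_x}), $l_x(i)$ is the \emph{largest} index $m<i$ at which this minimum is attained (its finiteness being part of its definition). Note in passing that $y(i-1,i)=x_{i-1}\geq 0$, so $i-1$ is a child and the set of children is non-empty.

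Next I would check that $l_x(i)$ is a child of $i$, i.e.\ that $R_x(l_x(i))=\inf\{n>l_x(i):y(l_x(i),n)\geq 0\}=i$. For an intermediate index $l_x(i)<k<i$ write $y(l_x(i),k)=y(l_x(i),i)-y(k,i)=t-y(k,i)$; because $t$ is the minimum of $y(\cdot,i)$ and $l_x(i)$ is its largest minimiser, one has $y(k,i)\geq t$ and $y(k,i)\neq t$, hence $y(k,i)>t$ and $y(l_x(i),k)<0$. Since also $y(l_x(i),i)=t\geq 0$, the infimum is attained exactly at $n=i$, giving $R_x(l_x(i))=i$. Minimality is then the last step: for any $j<l_x(i)$ we have $y(j,l_x(i))=y(j,i)-t\geq 0$, i.e.\ $S_{l_x(i)}\geq S_j$ with $j<l_x(i)<i$, so $R_x(j)\leq l_x(i)<i$ and $j$ cannot be a child of $i$. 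Combining these shows $l_x(i)$ is the smallest child.

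The step I expect to be the crux is the verification $R_x(l_x(i))=i$, and specifically the strict inequality $y(k,i)>t$ on the intermediate indices: this strictness is precisely what the \emph{maximality} built into the definition of $l_x(i)$ supplies, and getting it wrong would wrongly place the record of $l_x(i)$ before $i$. It is worth emphasising that the skip-free assumption $x_k\in\{-1,0,1,\ldots\}$ is used here only indirectly, through the good behaviour of $t_x$ and $l_x$ (the floor at $-1$ and the finiteness of $l_x$); once those are in hand the argument is a clean bookkeeping on the sums $y(\cdot,i)$.
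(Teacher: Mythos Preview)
Your proof is correct and follows essentially the same route as the paper's own argument: the $t_x(i)=-1$ case is dispatched via Remark~\ref{remark_smallest_child_type_negative}, and for $t_x(i)\geq 0$ you verify $R_x(l_x(i))=i$ from the strict inequality $y(k,i)>t$ on intermediate indices (coming from the maximality in the definition of $l_x(i)$), then show no $j<l_x(i)$ can be a child via $y(j,l_x(i))\geq 0$. The paper's proof is the same computation with the same two steps; your version is slightly more explicit in isolating why the strict inequality holds and in noting that the set of children is non-empty.
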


\begin{proof}
  If \(t_x(i)=-1\), then \(L_x(i)>-\infty\).
  So, by Remark \ref{remark_smallest_child_type_negative}, \(L_x(i)\) is the smallest child of \(i\) in \(T\).

  So, assume that \(t_x(i)\geq 0\).
  In this case, $i$ is the record of $l_x(i)$, i.e., $i = R(l_x(i))$.
  This follows because, for any $l_x(i)<m<i$, one has $y(m,i)>t_x(i)$ by the definition of $t_x(i)$ and $l_x(i)$.
  Since $y(l_x(i),i)=t_x(i)$, it follows that $y(l_x(i),m) = y(l_x(i),i)-y(m,i)<0$.
  Thus, $R(l_x(i))=i$.
  Further, if $t_x(i)\geq 0$, then $y(m,l_x(i)) \geq 0$, for all $m<l_x(i)$.
  Indeed, for \(m<l_x(i)\), since $y(m,i) \geq t_x(i)$, one has $y(m,l_x(i)) = y(m,i)-y(l_x(i),i) \geq t_x(i)-t_x(i)=0$.
  So, $R(m) \leq l_x(i)$ for all $m<l_x(i)$.
  This implies that $l_x(i)$ is the smallest among the children of $i$ as none of the integers smaller than $l_x(i)$ are children of $i$.
\end{proof}

The following lemma describes the relation between the number of children of any integer \(i\), its associated increment \(x_i\) and its type \(t_x(i)\) when \(t_x(i) \geq 0\).
When \(t_x(i)=-1\), the type does not play any role in determining this relation.

\begin{lemma}\label{20230405112810}
  Let $i \in \mathbb{Z}$ be an integer.
  The following dichotomy holds:
  \begin{itemize}
    \item If \(t_x(i)=-1\), then the number of children of $i$ in $T$ is given by \(d_1(i,T)=x_{i-1}+1\).
    \item If \(t_x(i) \geq 0\), then the number of children of $i$ in $T$ is given by $d_1(i,T) = x_{i-1}+1-t_x(i)$.
  \end{itemize}
\end{lemma}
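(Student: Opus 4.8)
The plan is to follow the stated dichotomy, dispatching the type-$(-1)$ case by a direct appeal to the earlier offspring-count lemma and doing the real work in the type-nonnegative case. For $t_x(i) = -1$, I would invoke the equivalence recorded right after Eq.~(\ref{eqn:def_l_x}), namely that $t_x(i) \geq 0$ if and only if $L_x(i) = -\infty$; thus $t_x(i) = -1$ is precisely the hypothesis $L_x(i) > -\infty$ under which Lemma~\ref{lemma_offspring_count} is stated, and its first conclusion gives $d_1(i,T) = x_{i-1} + 1$ immediately, matching the claim with no further work.

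The case $t_x(i) \geq 0$ is where the argument must be assembled, since now $L_x(i) = -\infty$ and Lemma~\ref{lemma_offspring_count} is not available. I would first observe that $t_x(i) \geq 0$ forces $x_{i-1} \geq 0$: taking $m = i-1$ in the definition of the type produces the term $x_{i-1} \vee (-1)$, so if $x_{i-1} = -1$ the type would be pulled down to $-1$. Hence $y(i-1,i) = x_{i-1} \geq 0$, so $R_x(i-1) = i$ and $i-1$ is a child; in particular $i$ has at least one child, and I would enumerate the children as $i_n < i_{n-1} < \cdots < i_1 = i-1$, setting $n = d_1(i,T)$.

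The crux is to re-establish the telescoping identity $y(i_m, i) = x_{i-1} + 1 - m$ for each child \emph{without} assuming $L_x(i)$ finite. Here I would point out that the portion of the proof of Lemma~\ref{lemma_offspring_count} showing that the increment sum between two consecutive children equals $-1$ relies only on the interval property (Lemma~\ref{lemma_interval_property}) and the skip-free-to-the-left hypothesis, and never on the finiteness of $L_x(i)$; consequently $y(i_{m+1}, i_m) = -1$ for all $m$, and summing telescopically from $i_1 = i-1$ yields the identity in the present setting as well. This is the step I expect to be the main obstacle, not because it is technically deep but because it requires isolating exactly which part of the earlier proof transfers and verifying that no use of $L_x(i) > -\infty$ was made there.

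To conclude, I would identify the smallest child: by the second bullet of Lemma~\ref{smallest_child_20230404185835}, when $t_x(i) \geq 0$ the smallest child of $i$ is $l_x(i)$, so $i_n = l_x(i)$, and by the definitions of $l_x$ and $t_x$ one has $y(l_x(i), i) = t_x(i)$. Substituting $m = n$ into the telescoping identity then gives $t_x(i) = y(i_n, i) = x_{i-1} + 1 - n$, i.e. $d_1(i,T) = n = x_{i-1} + 1 - t_x(i)$, as claimed.
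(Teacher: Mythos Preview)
Your argument is correct and takes a genuinely different route from the paper's. The paper does not re-open the proof of Lemma~\ref{lemma_offspring_count}; instead it treats that lemma as a black box and manufactures a situation in which its hypothesis holds. Concretely, for $t_x(i)\geq 0$ the paper defines an auxiliary sequence $\bar{x}$ by setting $\bar{x}_n=-1$ for all $n<l_x(i)$ and $\bar{x}_n=x_n$ otherwise. This forces $L_{\bar{x}}(i)=l_x(i)-t_x(i)>-\infty$, so Lemma~\ref{lemma_offspring_count} applies to $\bar{x}$ and gives $d_1(i,\bar{T})=x_{i-1}+1$. One then checks that the children of $i$ in $\bar{T}$ lying in $[l_x(i),i-1]$ coincide with those in $T$, while the modified tail contributes exactly $t_x(i)$ extra children in $[L_{\bar{x}}(i),l_x(i)-1]$, yielding $d_1(i,T)=x_{i-1}+1-t_x(i)$ by subtraction.

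Your approach avoids the auxiliary construction by observing (correctly) that the consecutive-children argument inside Lemma~\ref{lemma_offspring_count} uses only the interval property and the skip-free bound, so the telescoping identity $y(i_m,i)=x_{i-1}+1-m$ survives when $L_x(i)=-\infty$; combining this with Lemma~\ref{smallest_child_20230404185835} and $y(l_x(i),i)=t_x(i)$ finishes directly. This is more economical and arguably cleaner, at the cost of requiring the reader to revisit an earlier proof rather than invoke its statement. The paper's route is more modular but needs the bookkeeping of comparing $T$ with $\bar{T}$. One small presentational point: you enumerate the children as $i_n<\cdots<i_1$ before establishing finiteness; it would be tidier to cite Lemma~\ref{smallest_child_20230404185835} first (the smallest child is $l_x(i)>-\infty$, so there are finitely many) and then enumerate.
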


\begin{figure}[h]
    \begin{center}
      \includegraphics[scale=0.92]{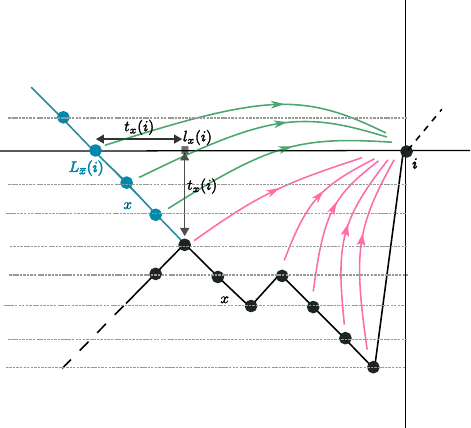}
    \end{center}
    \caption{The trajectories of $x=(x_n)_{n \in \mathbb{Z}}$ (in black) and $\bar{x}=(\bar{x}_n)_{n \in \mathbb{Z}}$ (in blue). The children of $i$ in $T$ are drawn in red, and the additional children of $i$ in $\bar{T}$ are drawn in green. See Lemma~\ref{20230405112810}.}
    \label{fig:relation_children_type_20230405112810}
\end{figure}
\begin{proof}
  If \(t_x(i)=-1\), then \(L_x(i) > - \infty\) (see the paragraph after Eq.~(\ref{eqn:def_l_x})), which is the condition needed to apply Lemma~\ref{lemma_offspring_count}.
  The first part of the statement follows from part 1 of Lemma~\ref{lemma_offspring_count}.

  The second part of the statement will now be proved.
  To follow the proof, see Fig. \ref{fig:relation_children_type_20230405112810}. Define a new sequence $\bar{x} = (\bar{x}_n)_{n \in \mathbb{Z}}$ by
  \[\bar{x}_n = \begin{cases}
    -1, \forall n<l_x(i)\\
    x_n, \forall n \geq l_x(i). 
  \end{cases}\]
  Let $\bar{y}(j,k)=\sum_{l=j}^{k-1}\bar{x}_l$ for all integers $j<k$.
  Then, $y(j,k)=\bar{y}(j,k)$ for all $ l_x(i) \leq j < k$.
  Thus, 
  \begin{equation}\label{eq:undisturbed_x_20230405112810}
      \bar{y}(k,i) = y(k,i) \geq t_x(i)>0 \quad \forall l_x(i)\leq k < i,
  \end{equation}
  and 
  \[\bar{y}(k,i) = k- l_x(i)+t_x(i) \quad \forall k<l_x(i).\]
  Therefore, (see Eq.(\ref{eq:L_x_defn}) for the definition of \(L\)) 
  \begin{equation}\label{eq:rel_l}
    L_{\bar{x}}(i) = l_x(i) - t_x(i)> - \infty.
  \end{equation}
  Let $\bar{T}$ denote the record graph of the network $(\mathbb{Z},\bar{x})$, $D_1(i,\bar{T})$ denote the set of children of $i$ in $\bar{T}$ and let $d_1(i,\bar{T}) = \#D_1(i,\bar{T})$.
  Then, by the first part of Lemma~\ref{lemma_offspring_count}, it follows that $d_1(i,\bar{T})=\bar{x}_{i-1}+1$.
  Since $l_x(i)<i$, one has $x_{i-1} = \bar{x}_{i-1}$ which implies that $d_1(i,\bar{T}) = x_{i-1}+1$.

  Note that $\bar{y}(l_x(i),i)=t_x(i)$ and $\bar{y}(j,i)>t_x(i)$ for all $l_x(i)<j<i$.
  Therefore, by the third part of Lemma~\ref{lemma_offspring_count}, $l_x(i)$ is the $x_{i-1}+1-t_x(i)$-th child of $i$ in $\bar{T}$.
  Since $\bar{x}_j=-1$ for all $j<l_x(i)$, by (3) of Lemma~\ref{lemma_offspring_count}, one gets that
  \[\{j \in \mathbb{Z}: L_{\bar{x}}(i) \leq j < l_x(i)\} \subseteq D_1(i,\bar{T}).  \]

  By Eq. (\ref{eq:rel_l}), $\#\{j \in \mathbb{Z}:L_{\bar{x}}(i)  \leq j < l_x(i)\}=l_x(i)-L_{\bar{x}}(i)=t_x(i)$.
  Observe that $D_1(i,\bar{T})\cap \{j \in \mathbb{Z}: l_x(i)\leq j < i\} = D_1(i,T)$ by Eq. (\ref{eq:undisturbed_x_20230405112810}).
  Finally, Lemma~\ref{defn_l_20230404185835} implies that $l_x(i)$ is the smallest child of $i$ in $T$.
  Thus,
  \begin{align*}
      D_1(i,T) &= D_1(i,\bar{T})\backslash \{j \in \mathbb{Z}: L_{\bar{x}} \leq j<l_x(i)\},
  \end{align*}
  which gives
  \[d_1(i,T)=d_1(i,\bar{T})- t_x(i)=x_{i-1}+1-t_x(i).\]
\end{proof}

\subsection{Properties of skip-free to the left random walks} \label{subsec_properties_skip_free}
 The lemmas in this subsection can be found in \cite[Chapter 5]{nicolascurienRandomWalksGraphs}.
They are presented here utilizing the notations established in this paper for the sake of completeness and to maintain consistency.
Before delving into the proofs of the lemmas, the following notations are introduced.

For any \(j \in \mathbb{Z}\), let \(\eta_j\) be the hitting time of the random walk \(S\) defined as 
\begin{equation}\label{eq_eta_j}
    \eta_j:=\begin{cases}
      \inf\{n\geq 0: S_n = j\} \text{ if } S_n=j \text{ for some } n\geq 0\\
      \infty \text{ otherwise.}
    \end{cases}
\end{equation}
Let \(\tau\) be the \emph{weak upper record} (epoch) of the random walk \(S\) defined as 
\begin{equation}\label{eq_tau_j_weak_record}
    \tau:=\begin{cases}
        \infty \text{ if } S_n<0 \  \forall n>0,\\
        \inf\{n>0: S_n \geq 0\} \text{ otherwise,}
    \end{cases}
\end{equation}
$S_{\tau}$ be the \emph{weak upper record height}, and $X_{\tau-1}$ be its last increment (both are defined to be arbitrary when \(\tau = \infty\)).
For an event \(A\), let $\mathbb{P}_k[A]$ denote the probability of the event $A$ when the random walk starts at $k \in \mathbb{Z}$.
Let $\mathbb{P}:=\mathbb{P}_0$, and $c := \mathbb{P}[\eta_{-1} < \infty]$.
The last condition of Eq.~(\ref{eq_skip_free_condition}) implies that \(0 <c \leq 1\).

\begin{lemma}  \label{20230118184651}\label{hitting_time_20230118184651}
    For $0 \leq j \leq k$, $\mathbb{P}[\eta_{j-k}< \infty]= \mathbb{P}_k[\eta_j < \infty]= c^{k-j}$.
\end{lemma}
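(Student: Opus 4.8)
The plan is to prove the two equalities in turn: the first, $\mathbb{P}_k[\eta_j < \infty] = \mathbb{P}[\eta_{j-k} < \infty]$, is pure spatial homogeneity, and the second, that this common value equals $c^{k-j}$, follows from a strong Markov decomposition that crucially uses the skip-free-to-the-left structure.

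First I would record that since $X=(X_n)$ is i.i.d., the walk $S$ is spatially homogeneous: the law of $(S_n - S_0)_{n \geq 0}$ does not depend on the starting height. Hence starting at $k$ and hitting level $j$ has the same probability as starting at $0$ and hitting level $j-k$, which gives $\mathbb{P}_k[\eta_j < \infty] = \mathbb{P}[\eta_{j-k} < \infty]$. Writing $m := k-j \geq 0$, it then remains to prove that $\mathbb{P}[\eta_{-m} < \infty] = c^m$.

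The case $m=0$ is immediate since $\eta_0 = 0$. For $m \geq 1$, the essential observation—and the one place where the hypothesis $X_n \geq -1$ is used—is that the walk decreases by at most one at each step, so its trajectory cannot jump over a negative level. Concretely, on $\{\eta_{-m} < \infty\}$ the walk equals $-m$ at some finite time, and since consecutive values differ by at least $-1$, a discrete intermediate-value argument forces it to have visited $-1$ strictly before; thus $\{\eta_{-m} < \infty\} \subseteq \{\eta_{-1} < \infty\}$ and $\eta_{-1} \leq \eta_{-m}$ on this event. Applying the strong Markov property at the stopping time $\eta_{-1}$ (finite on this event), the post-$\eta_{-1}$ process is an independent copy of $S$ started at $-1$, and reaching $-m$ from $-1$ is, by homogeneity, the event that a fresh walk satisfies $\eta_{-(m-1)} < \infty$. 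This yields the recursion
\begin{equation*}
  \mathbb{P}[\eta_{-m} < \infty] = \mathbb{P}[\eta_{-1} < \infty]\,\mathbb{P}[\eta_{-(m-1)} < \infty] = c\,\mathbb{P}[\eta_{-(m-1)} < \infty],
\end{equation*}
and an induction on $m$ gives $\mathbb{P}[\eta_{-m} < \infty] = c^m$, hence $\mathbb{P}[\eta_{j-k}<\infty] = c^{k-j}$.

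I expect the only delicate point to be the clean justification of the decomposition at $\eta_{-1}$: one must verify that $\eta_{-1}$ is a genuine stopping time and that the restarted walk is independent of $\mathcal{F}_{\eta_{-1}}$, which is exactly what the strong Markov property supplies, together with the skip-free argument that the walk cannot reach $-m$ without first passing through $-1$. Everything else is a routine consequence of spatial homogeneity.
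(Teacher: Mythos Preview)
Your proposal is correct and follows essentially the same approach as the paper: spatial homogeneity for the first equality, then an induction on $k-j$ using the strong Markov property at $\eta_{-1}$, with the skip-free-to-the-left hypothesis guaranteeing that level $-1$ must be visited before level $-m$. Your write-up is in fact a bit more explicit than the paper's about the intermediate-value argument justifying $\eta_{-1} \leq \eta_{-m}$.
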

\begin{proof}
    The first equality in the statement is obvious since the probability is left invariant by a shift of the starting point of the random walk.
    The second equality follows because of the skip free property which prohibits the random walk from taking jumps smaller than $-1$.
    It is proved by induction on $k-j$.

    If $j=k$, then $\mathbb{P}_j[\eta_j<\infty]=1$.
    So, assume that $0 \leq j <k$.
    By induction, \(\mathbb{P}_{k'}[\eta_{j'}< \infty] = c^{k'-j'}\) for all $0 \leq j' \leq k'$ such that $k'-j'<k-j$. 
    Then,
    \begin{align*}
        \mathbb{P}_k[\eta_j< \infty] = \mathbb{P}_k[\eta_{k-1}< \infty]\mathbb{P}_{k-1}[\eta_j < \infty] = \mathbb{P}[\eta_{-1}< \infty]c^{k-1-j} = c^{k-j},
    \end{align*}
    where the second equation is obtained by applying the inductive statement to $k'=k-1$, and $j'=j$.
\end{proof}

\begin{lemma}[\cite{benniesRandomWalkApproach2000}]\label{20230119141633}
    For all integers $j,k$ such that $0 \leq j \leq k$, 
   \[\mathbb{P}[\tau<\infty,S_{\tau}=j,X_{\tau-1}=k] = \mathbb{P}[X_0=k] c^{k-j}.\]
\end{lemma}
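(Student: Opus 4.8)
The plan is to strip off the last increment $X_{\tau-1}$ of the weak ascending record and to recognise what remains as a first-passage probability, which was already evaluated in Lemma~\ref{hitting_time_20230118184651}.

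First I would condition on the value of $\tau$. Writing $\tau=m+1$, the definition $\tau=\inf\{n>0:S_n\ge0\}$ shows that the event $\{\tau<\infty,\,S_\tau=j,\,X_{\tau-1}=k\}$ is the disjoint union over $m\ge0$ of the events
\[
A_m:=\{S_n<0 \text{ for } 1\le n\le m\}\cap\{S_m=j-k\}\cap\{X_m=k\},
\]
because on $A_m$ one has $S_{m+1}=S_m+X_m=(j-k)+k=j\ge0$ while $S_1,\dots,S_m<0$, so that $\tau=m+1$, $S_\tau=j$ and $X_{\tau-1}=k$; the converse inclusion is immediate. The first two events defining $A_m$ depend only on $X_0,\dots,X_{m-1}$, whereas $X_m$ is independent of them and has the law of $X_0$; hence
\[
\mathbb{P}[\tau<\infty,\,S_\tau=j,\,X_{\tau-1}=k]
=\mathbb{P}[X_0=k]\sum_{m\ge0}\mathbb{P}[S_n<0\ (1\le n\le m),\,S_m=j-k].
\]

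The decisive step is to identify the remaining occupation sum with $\mathbb{P}[\eta_{j-k}<\infty]$. Put $p:=j-k\le0$ and, for a path of length $m$, reverse the increments by setting $\hat X_i:=X_{m-1-i}$; the i.i.d.\ assumption makes $(\hat X_i)$ equal in law to $(X_i)$, and the reversed walk satisfies $\hat S_n=S_m-S_{m-n}$. Under this bijection the constraints $S_n\le-1$ for $1\le n\le m$ together with $S_m=p$ become $\hat S_i>p$ for $0\le i\le m-1$ together with $\hat S_m=p$. By the skip-free-to-the-left property (Eq.~(\ref{eq_skip_free_condition})) the walk cannot overshoot the level $p$ downwards, so the first entrance into $(-\infty,p]$ occurs exactly at $p$ and the latter event is precisely $\{\eta_p=m\}$; summing over $m$ and using $\hat S\overset{d}{=}S$ gives
\[
\sum_{m\ge0}\mathbb{P}[S_n<0\ (1\le n\le m),\,S_m=p]=\sum_{m\ge0}\mathbb{P}[\eta_p=m]=\mathbb{P}[\eta_{j-k}<\infty].
\]
Lemma~\ref{hitting_time_20230118184651} then evaluates this as $c^{\,k-j}$, and combining the displays yields $\mathbb{P}[X_0=k]\,c^{\,k-j}$, as claimed. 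The degenerate case $j=k$, where $p=0$, is immediate: only $m=0$ contributes and the sum equals $1=c^{0}$.

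I expect the second step to be the main obstacle. The naive alternative is to read the occupation sum as the Green's function of the walk killed on entering $[0,\infty)$, i.e.\ the expected number of visits to level $p$ before $\tau$; this entangles the probability of reaching $p$ with the expected number of returns and depends awkwardly on the distance $|p|$ to the killing boundary. The time-reversal identity sidesteps this entirely by converting the occupation sum directly into the first-passage probability of Lemma~\ref{hitting_time_20230118184651}. The one point requiring care is checking that reversal sends the ``stay strictly negative and end at $p$'' constraint onto the ``first hit $p$ at time $m$'' constraint; this is exactly where skip-freeness is essential, since it is what forces the first entrance into $(-\infty,p]$ to occur at $p$ itself rather than below it.
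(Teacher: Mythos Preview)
Your proof is correct and follows essentially the same approach as the paper: both arguments use time-reversal (duality) of the i.i.d.\ increment path to convert the occupation sum into a first-passage probability, which is then evaluated via Lemma~\ref{hitting_time_20230118184651}. The only cosmetic difference is the order of operations: you strip off the last increment $X_{\tau-1}=k$ first (by independence) and then reverse the remaining path of length $m$ to obtain $\mathbb{P}[\eta_{j-k}<\infty]$, whereas the paper reverses the full path of length $n$ first, so that $X_{\tau-1}=k$ becomes the condition $S_1=k$ on the reversed walk, and then peels this off by the Markov property to obtain $\mathbb{P}_k[\eta_j<\infty]$. These are two orderings of the same computation.
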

\begin{proof}
    Note the equality of the following events:
    \[\{S_{\tau}=X_{\tau-1},\tau<\infty\} = \{\tau = 1\} = \{S_1 \geq 0\}.\]
    Therefore, for $0 \leq j=k$, one has $\mathbb{P}[S_{\tau}=j,X_{\tau-1}=j,\tau< \infty] = \mathbb{P}[S_1 = j]$.

    So, for $0 \leq j <k$,
    \begin{align*}
        \mathbb{P}[\tau<\infty,S_{\tau}=j,X_{\tau-1}=k] &= \sum_{n=2}^{\infty}\mathbb{P}[\tau=n,S_n=j,X_{n-1}=k]\\
        &=\sum_{n=2}^{\infty} \mathbb{P}[S_1<0,\cdots,S_{n-1}<0,S_n=j,S_n-S_{n-1}=k].
    \end{align*}
The sum in the first equation starts from $n=2$ since it is assumed that $j<k$ (\(\{\tau=1\}\) occurs if and only if \(j=k\), which is already covered).
By the duality principle (also known as reflection principle), the last expression is equal to
\[\sum_{n=2}^{\infty}\mathbb{P}[S_n-S_{n-1}<0, S_n - S_{n-2}<0, \cdots,S_n-S_1<0, S_n=j,S_n-(S_n-S_1)=k].\]
So, one gets:
\begin{align*}
  \mathbb{P}[\tau<\infty,S_{\tau}=j,X_{\tau-1}=k]  &= \sum_{n=2}^{\infty}\mathbb{P}[S_n=j, S_{n-1}>j,S_{n-2}>j, \cdots,S_1>j,S_1=k]\\
  &=\sum_{n=2}^{\infty} \mathbb{P}[S_1=k] \mathbb{P}_k[S_1>j,\cdots,S_{n-2}>j,S_{n-1}=j]\\
        &=\mathbb{P}[S_1=k] \sum_{n=2}^{\infty}\mathbb{P}_k[\eta_{j}=n-1]\\
        &= \mathbb{P}[S_1=k] \mathbb{P}_k[0<\eta_j< \infty]\\
        &= \mathbb{P}[S_1=k]c^{k-j} = \mathbb{P}[X_0=k]c^{k-j}.
\end{align*}
The second equation follows by the Markov property.
The second to the last equation follows from Lemma~\ref{20230118184651} and from the fact that $\mathbb{P}[\eta_{j-k}=0]=\delta_j(k)$, where $\delta$ is the Dirac function.
\end{proof}

\begin{remark}
    If the random walk has negative drift, i.e., $\mathbb{E}[X_0]<0$, then $S_n \to -\infty$ a.s. as $n \to \infty$.
    Therefore, $c=1$, which implies that $\mathbb{P}[\tau<\infty,S_{\tau}=j,X_{\tau-1}=k] = \mathbb{P}[X_0=k]$, and 
    \[\mathbb{P}[\tau< \infty, X_{\tau-1}=k] = \sum_{j=0}^{k}\mathbb{P}[\tau<\infty,S_{\tau}=j,X_{\tau-1}=k] = (k+1) \mathbb{P}[X_0=k]. \]
\end{remark}

\begin{remark}
    If the random walk has positive drift, then $S_n \to \infty$ a.s., as $n \to \infty$.
    So, \(\tau<\infty\) a.s..
    Therefore, $\mathbb{P}[\tau< \infty, S_{\tau}=j, X_{\tau-1}=k]= \mathbb{P}[S_{\tau}=j,X_{\tau-1}=k]$ for $0 \leq j \leq k$.
\end{remark}

Assume that \(\mathbb{E}[X_0]>0\).
In this case, the following lemma shows that the random walk conditioned to hit \(-1\) is also a skip-free to the left random walk with a different increment distribution.

Let \(p=(p_k)_{k \geq -1}\) be the distribution of the increment \(X_0\), i.e., \(p_k := \mathbb{P}[X_0 = k]\), for all \(k \in \mathbb{Z}_{\geq -1}\).
Consider the harmonic function \(h\) (with respect to the random walk \(S\)) on \(\mathbb{Z}_{\geq 0}\) defined by \(h(i) = \mathbb{P}_i[\eta_{-1}<\infty]\), for all \(i \in \mathbb{Z}_{\geq 0}\).
In particular, \(h(0) = \mathbb{P}_0[\eta_{-1}<\infty]=c\).
Since \(h\) is harmonic, \(h(0) = \sum_{k=-1}^{\infty}p_kh(k) = \sum_{k=-1}^{\infty}p_k c^{k+1}\), which follows from Lemma~\ref{hitting_time_20230118184651}.
Therefore, 
\begin{equation} \label{eq_pi_tilde_distribution}
  \sum_{k=-1}^{\infty} p_k c^k = \sum_{k=-1}^{\infty} \mathbb{P}[X_0=k] c^k = 1.
\end{equation}

Let \(\hat{p} = (\hat{p}_k)_{k \geq -1}\) be the Doob \(h\)-transform of \(p\), where \(\hat{p}_k:=p_kc^k\).
Let 
\begin{equation} \label{eq_conditioned_RW}
    \hat{X}=(\hat{X}_n)_{n \in \mathbb{Z}}
\end{equation}
be an i.i.d. sequence of random variables whose common distribution is \(\hat{p}\), and \((\hat{S}_n)_{n \in \mathbb{Z}}\) be the skip-free to the left random walk whose increment sequence is \((\hat{X}_n)_{n \in \mathbb{Z}}\), defined as in Eq.~(\ref{eq_sums_increment}).

\begin{lemma}\label{20230113184856}
    Let \(\mathbb{E}[X_0]>0\), and \(\hat{\eta}_{-1}\) be the hitting time as defined in Eq.~(\ref{eq_eta_j}) for the process \((\hat{S}_n)_{n \in \mathbb{Z}}\).
    Then, the stopped random walk \(\hat{S} = (\hat{S}_{n \wedge \hat{\eta}_{-1}})_{n \geq 0}\) has the same law as the stopped random walk \((S_{n\wedge \eta_{-1}})_{n \geq 0}\) conditioned on \(\eta_{-1}< \infty\).
\end{lemma}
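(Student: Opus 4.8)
The plan is to identify $\hat{S}$, up to absorption at $-1$, as the Doob $h$-transform of $S$ by the harmonic function $h(i)=\mathbb{P}_i[\eta_{-1}<\infty]$, and then to verify that this $h$-transform is precisely $S$ conditioned on $\{\eta_{-1}<\infty\}$ by matching finite-dimensional laws. Both objects are time-homogeneous Markov chains started at $0$ and absorbed at $-1$, so it suffices to show that they induce the same law on every cylinder event $\{(\cdot_{m\wedge\eta})_{m=0}^n=\text{path}\}$; equality of laws as processes then follows by consistency.

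First I would record two preliminary facts. By Lemma~\ref{hitting_time_20230118184651} and a unit shift of the starting point, $h(i)=\mathbb{P}_i[\eta_{-1}<\infty]=\mathbb{P}_{i+1}[\eta_0<\infty]=c^{i+1}$ for all $i\geq -1$; and by Eq.~(\ref{eq_pi_tilde_distribution}) the family $\hat{p}_k=p_kc^k$ is a genuine probability distribution, so $\hat{S}$ is well defined. The transition kernel identity is then immediate: from state $i$ the walk $S$ moves to $i+k$ with probability $p_k$, whereas $\hat{S}$ moves there with probability $\hat{p}_k=p_kc^k=\tfrac{h(i+k)}{h(i)}p_k$, exhibiting $\hat{S}$ as the $h$-transform of $S$.

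The main computation is on finite paths. Fix $n$ and a stopped trajectory $(a_0=0,a_1,\ldots,a_n)$, and split into two cases. If the path is absorbed at $-1$ at some time $\ell\leq n$ (so $a_0,\ldots,a_{\ell-1}\geq 0$ and $a_\ell=\cdots=a_n=-1$), then the cylinder event lies inside $\{\eta_{-1}<\infty\}$, so conditioning merely divides its $S$-probability $\prod_{m<\ell}p_{a_{m+1}-a_m}$ by $c$; its $\hat{S}$-probability is $\prod_{m<\ell}\hat{p}_{a_{m+1}-a_m}$, and since the telescoping sum $\sum_{m<\ell}(a_{m+1}-a_m)=a_\ell-a_0=-1$ accumulates the factor $c^{-1}$, the two quantities coincide. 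If instead the path stays in $\mathbb{Z}_{\geq 0}$ throughout, the stopped walk agrees with the walk, and by the Markov property at time $n$ one has $\mathbb{P}[\text{path},\eta_{-1}<\infty]=\bigl(\prod_{m<n}p_{a_{m+1}-a_m}\bigr)h(a_n)$; dividing by $c$ and using $h(a_n)=c^{a_n+1}$ gives $\bigl(\prod_{m<n}p_{a_{m+1}-a_m}\bigr)c^{a_n}$, which is exactly the $\hat{S}$-probability $\prod_{m<n}\hat{p}_{a_{m+1}-a_m}$ by the same telescoping $\sum_{m<n}(a_{m+1}-a_m)=a_n$.

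The finite-dimensional laws therefore agree for every $n$, which gives the claim. The only point requiring care—which I would flag as the main obstacle, though it is conceptual rather than technical—is the correct handling of absorption: one must distinguish whether the trajectory has already reached $-1$, since the conditioning event $\{\eta_{-1}<\infty\}$ is automatic in the first case but contributes a Markov-property factor $h(a_n)=\mathbb{P}_{a_n}[\eta_{-1}<\infty]$ in the second. In both cases it is the displacement identity $\sum(a_{m+1}-a_m)=a_n-a_0$ that reconciles the $c$-weighting of $\hat{p}$ with the conditioning, and this is what makes the two descriptions match.
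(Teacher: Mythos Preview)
Your proof is correct and follows essentially the same route as the paper's: both match finite-dimensional distributions of the two stopped walks by factoring the conditional probability via the Markov property, invoking $\mathbb{P}_i[\eta_{-1}<\infty]=c^{i+1}$ from Lemma~\ref{hitting_time_20230118184651}, and telescoping the resulting power of $c$ against the product $\prod p_{x_i}c^{x_i}$. Your explicit framing as a Doob $h$-transform and your separate treatment of the already-absorbed case are tidier than the paper's single computation on paths with $\eta_{-1}\geq n$, but the substance is the same.
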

\begin{proof}
    Let $(x_0,x_1,\cdots,x_n)$ be arbitrary integers that satisfy $x_k \geq -1, \forall k \leq n$, $x_1+\cdots+x_m \geq 0, \forall m<n$, and $x_1+\cdots+x_n \geq -1$.
    Then, for all such \(n\) and \(x\),
  
   \begin{align*}
      &\mathbb{P}[S_0=0,S_1=x_1, S_2-S_1=x_2,\cdots,S_{(n \wedge \eta_{-1})}-S_{(n \wedge \eta_{-1}-1)}=x_n, \eta_{-1} \geq n|\eta_{-1}< \infty] \\
      &= \frac{\mathbb{P}[S_1=x_1]\mathbb{P}_{x_1}[S_1 = x_2,S_2-S_1=x_3,\cdots, S_{n-1}-S_{n-2}=x_{n},\eta_{-1}<\infty]}{\mathbb{P}[\eta_{-1}< \infty]}\\
      &= \frac{\left(\prod_{i=1}^n \mathbb{P}[S_1=x_i]\right) \mathbb{P}_{x_1+\cdots+x_n}[\eta_{-1}< \infty]}{\mathbb{P}[\eta_{-1}< \infty]}= \frac{\left(\prod_{i=1}^n \mathbb{P}[S_1=x_i]\right) c^{x_1+x_2+\cdots+x_n+1}}{c}\\
     &= \prod_{i=1}^n (\mathbb{P}[X_0=x_i]c^{x_i})\\
     &=\mathbb{P}[\hat{S}_1=x_1,\hat{S}_2-\hat{S}_1 = x_2, \cdots, \hat{S}_{(n \wedge \eta_{-1})}- \hat{S}_{(n \wedge \eta_{-1}-1)}=x_n, \hat{\eta}_{-1} \geq n].
    \end{align*}
      Thus, the conditioned random walk is a stopped random walk, stopped at the hitting time \(\hat{\eta}_{-1}\), whose increments have the common distribution \(\hat{p}\).
\end{proof}

The following lemma shows that the random walk \((\hat{S}_n)_{n \geq 0}\) has negative drift.
  
  \begin{lemma}\label{20230116135048}
    Assume \(\mathbb{E}[X_0]>0\). Then, \(\sum_{k=-1}^{\infty}k\hat{p}_k <0\), i.e., the mean of the increment \(\hat{X}_0\) is negative.
  \end{lemma}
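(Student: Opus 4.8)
The plan is to recast everything in terms of the probability generating function of the shifted increment $X_0+1$, which is a genuine non-negative integer-valued random variable since $X_0 \geq -1$. First I would set $g(s) := \mathbb{E}[s^{X_0+1}] = \sum_{k=-1}^{\infty} p_k s^{k+1}$ for $s \in [0,1]$. This is an analytic, increasing, convex function with $g(0) = p_{-1} > 0$ (by the last condition of Eq.~(\ref{eq_skip_free_condition})), $g(1) = 1$, and $g'(1) = \mathbb{E}[X_0]+1 > 1$ under the standing hypothesis $\mathbb{E}[X_0]>0$. Since $\mathbb{E}[X_0]>0$ forces $\mathbb{P}[X_0=-1]<1$, the law of $X_0+1$ is non-degenerate and $g$ is \emph{strictly} convex. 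Multiplying Eq.~(\ref{eq_pi_tilde_distribution}) by $c$ shows that $c$ is a fixed point of $g$, namely $g(c) = \sum_{k=-1}^{\infty} p_k c^{k+1} = c$.

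Next I would rewrite the target quantity as a derivative of $g$ at $c$. Term-by-term differentiation of the power series, legitimate for $c<1$ because the radius of convergence is at least $1$, gives $g'(c) = \sum_{k=-1}^{\infty}(k+1)p_k c^k$, so that $\sum_{k=-1}^{\infty} k\hat{p}_k = \sum_{k=-1}^{\infty} k p_k c^k = g'(c) - \sum_{k=-1}^{\infty} p_k c^k = g'(c) - 1$, where Eq.~(\ref{eq_pi_tilde_distribution}) is used once more. Thus the asserted inequality $\sum_{k=-1}^{\infty} k\hat{p}_k < 0$ is exactly $g'(c) < 1$.

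To obtain $g'(c)<1$, I would first argue that $c<1$. If instead $c=1$, then the skip-free property together with the strong Markov property (equivalently Lemma~\ref{hitting_time_20230118184651} with $j=0$) gives $\mathbb{P}[\eta_{-j}<\infty] = c^{j} = 1$ for every $j \geq 1$, so the walk a.s. reaches every negative level and $\liminf_n S_n = -\infty$; this contradicts $S_n \to +\infty$ a.s., which holds by the strong law of large numbers when $\mathbb{E}[X_0]>0$ (cf. the proof of Theorem~\ref{thm_phase_transition_stationary}). Hence $c<1$, and $g'(c)$ is finite. Now consider the function $\psi(s) := g(s) - s$ on $[c,1]$: it is strictly convex, continuous on $[c,1]$, differentiable on $(c,1)$, and vanishes at the two distinct points $s=c$ and $s=1$. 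By Rolle's theorem $\psi'$ has a zero $\xi \in (c,1)$, and since strict convexity makes $\psi'$ strictly increasing, $\psi'(c) < \psi'(\xi) = 0$, i.e. $g'(c) < 1$. Combining this with the identity of the previous paragraph yields that the mean of $\hat{X}_0$ equals $g'(c)-1 < 0$.

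The main obstacle is securing the two strictness inputs that rule out the critical boundary case $g'(c)=1$: the non-degeneracy of the increment law (guaranteeing strict convexity of $g$) and the verification that $c$ is the small root $c<1$. Of these, establishing $c<1$ is the one genuinely probabilistic step, handled here by the drift argument excluding $\liminf_n S_n = -\infty$; everything else reduces to the standard convexity analysis of a supercritical generating function.
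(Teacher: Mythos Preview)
Your proof is correct and follows essentially the same route as the paper: both arguments work with the generating function $g(s)=\sum_{k\ge -1}p_ks^{k+1}$ (the paper's $\psi$ is your $g(s)-s$), identify $\mathbb{E}[\hat X_0]=g'(c)-1$, and apply Rolle's theorem to $g(s)-s$ on $[c,1]$ together with strict convexity. Your write-up is slightly tidier in that you invoke strict monotonicity of $\psi'$ directly rather than detouring through the auxiliary zero $c'$, and you make the needed fact $c<1$ explicit, which the paper uses without comment.
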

  \begin{proof}
  
    Consider the function $\phi(x)=\sum_{k=0}^{\infty}(k+1)x^kp_k -1$ on the interval $[0,1]$.
    Using Eq. (\ref{eq_pi_tilde_distribution}) in the following, one obtains 
    \begin{align*}
      \phi(c) &= \sum_{k=0}^{\infty} (k+1)c^k p_k- 1 = \sum_{k=0}^{\infty} kc^k p_k +\left(\sum_{k=0}^{\infty}c^kp_k \right)- 1\\
      &= \sum_{k=0}^{\infty}kc^kp_k + 1- \frac{p_{-1}}{c} -1= \sum_{k=-1}^{\infty}kc^kp_k= \mathbb{E}[\hat{X}_0].
    \end{align*}
    The function $\phi$ has the following properties on $[0,1]$:
    \begin{itemize}
      \item $\phi(0)<0$: Since $\phi(0)=p_0-1<0$.
      \item $\phi(1)>0$: Since $\phi(1)= \sum_{k=0}^{\infty}(k+1)p_k-1 = \sum_{k=0}^{\infty}kp_k + 1-p_{-1} - 1 = \mathbb{E}[X_0]>0$.
      \item $\phi$ is strictly increasing since $\phi'(x)>0, \forall x \in (0,1)$.
    \end{itemize}
    Therefore, there exists a unique $c' \in (0,1)$ such that $\phi(c')=0$.
    The proof is complete if it is shown that $c<c'$, as this implies that $\phi(c)<\phi(c')=0$ by monotonicity of $\phi$, and from the fact that $\phi(c)=\mathbb{E}[\hat{X}_0]$.
  
    It is now shown that $c<c'$.
    Consider the function $\psi(x) = \sum_{k=0}^{\infty}x^{k+1}p_k + p_{-1} - x$ on the interval $[0,1]$.
    Observe that $\psi'(x) = \sum_{k=0}^{\infty}(k+1)x^kp_k - 1 = \phi(x)$,
    \[\psi(c)=\sum_{k=0}^{\infty}c^{k+1}p_k+p_{-1}-c = c \left(\sum_{k=0}^{\infty}c^k p_k + \frac{p_{-1}}{c} -1\right)=0,\]
    and $\psi(1) = \sum_{k=0}^{\infty}p_k+p_{-1}-1=0$.
    As $\psi(c)=\psi(1)=0$, there exists a $d \in (c,1)$ such that $\psi'(d)=0$.
    Since $\phi(d)=\psi'(d)=0$, one has $d=c'$.
    Thus, $c<c'$. 
  \end{proof}

  \subsection{Instances of unimodular Family Trees} \label{sec_examples}
  This subsection introduces two parametric families of unimodular Family Trees, namely, the typically rooted Galton-Watson Tree; which is of class \(\mathcal{F}/\mathcal{F}\), and the unimodularised marked ECS ordered bi-variate Eternal Kesten tree; which is of class \(\mathcal{I}/\mathcal{F}\).
  An instance of parametric family of unimodular Family Trees of class \(\mathcal{I}/\mathcal{I}\) called the unimodular Eternal Galton-Watson Trees parametrized by the offspring distribution \(\pi\) has already been described in Subsection \ref{subsec_EGWT}, where the mean \(m(\pi)=1\) and \(\pi(1)<1\).
  All the above Family Trees appear as the component of \(0\) in the record graph depending upon the mean of the increment.
  This will be proven in the next subsection.

\subsubsection{Typically rooted Galton-Watson Tree (TGWT)}
Let $\pi$ be a probability distribution on $\{0,1,2,3,\cdots\}$ such that its mean $0\leq m(\pi)<1$ and \(\hat{\pi}\) be the size-biased distribution of \(\pi\), given by \(\hat{\pi}(k) = \frac{k \pi(k)}{m(\pi)}\) for all \(k \in \{0,1,2,\cdots\}\).
The {\em Typically rooted Galton-Watson Tree} (\(TGWT(\pi)\)) with offspring distribution $\pi$ is a Family Tree $[\mathbf{T},\mathbf{o}]$ defined in the following way.
\begin{itemize}
    \item Add a parent to the root $\mathbf{o}$ with probability $m(\pi)$. Independently iterate the same to the parent of $\mathbf{o}$ and to all of its ancestors. So, the number of ancestors of the root $\mathbf{o}$ has a geometric distribution with the success probability $1-m(\pi)$.
    \item Let $Z$ be a random variable that has distribution $\hat{\pi}$ (the size-biased distribution of $\pi$). To each of the ancestors of $\mathbf{o}$, attach a random number of children independently with distribution the same as that of $Z-1$. Assign uniform order among the children of every ancestor of \(\mathbf{o}\).
    \item To each of these new children and to the root $\mathbf{o}$, attach independently ordered Galton-Watson Trees with offspring distribution $\pi$.
\end{itemize}
See Figure~\ref{figure_sgwt_2.31} for an illustration.

\begin{remark}
    The \(TGWT(\pi)\) is a finite unimodular ordered Family Tree (see Proposition~\ref{20230305162953}).
\end{remark}

\begin{figure}[h]
\begin{center}
      \includegraphics[scale=0.8]{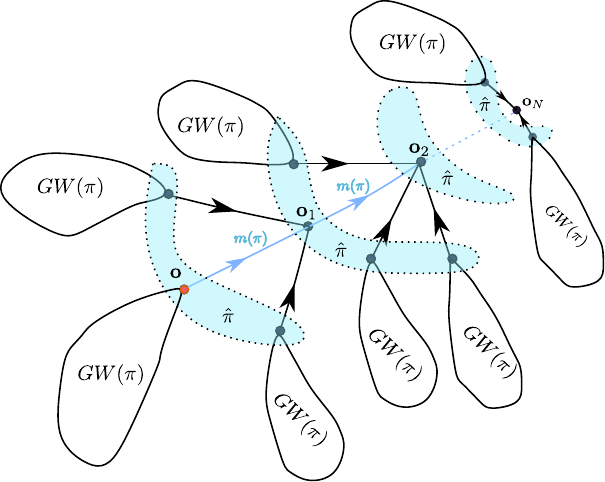}    
\end{center}
    \caption{Typically rooted Galton-Watson Tree with offspring distribution $\pi$ (\(\mathbf{o}\) is the root), \(m(\pi)\) is the probability that the root \(\mathbf{o}\) has a parent, \(\hat{\pi}\) is the offspring distribution of the parent of \(\mathbf{o}\), \(N\) is a geometric random variable with the success probability \(1-m(\pi)\).}
    \label{figure_sgwt_2.31}
\end{figure}

The nomenclature ``typically rooted'' in the typically rooted Galton-Watson Tree suggests that it is obtained by re-rooting to a typical vertex of a Galton-Watson Tree as in Def.~\ref{def_typical_rerooting}.
This is indeed true and is proved in Proposition~\ref{prop_tgwt_is_typical}.
A characterizing condition for \(TGWT\) is given in Proposition~\ref{20230305162953}.

\paragraph*{Bibliographic comment} A construction for the modified subcritical Galton-Watson Tree \(\mathfrak{T}\), a random tree that resembles \(TGWT\), is given by Jonsson and Stefánsson \cite{jonssonCondensationNongenericTrees2011} (for a simpler construction, see the subcritical case of \cite[Section 5]{jansonSimplyGeneratedTrees2012}).
In their construction, a vertex can have infinite degree with positive probability and the tree is undirected.
The special vertices of \(\mathfrak{T}\), which belong to the spine, have the offspring distribution same as that of the ancestors of the root of \(TGWT\), except for the last special vertex of \(\mathfrak{T}\) which may have infinite children.
Note that the undirected tree obtained by forgetting the directions of edges of \(TGWT\) is not the same as \(\mathfrak{T}\). 

\subsubsection{Bi-variate Eternal Kesten Tree}\label{subsec_bi_variate_ekt}
The unimodularised version of this tree is an instance of a unimodular tree belonging to the class \(\mathcal{I}/\mathcal{F}\).
Its construction is given using the typically re-rooted joining operation (as in Def.~\ref{def_typical_reroot_joining}).
See Figure~\ref{fig_bi_variate_EKT_alpha_beta} for an illustration.

The next paragraph introduces the bi-variate Eternal Kesten Tree, denoted as \(EKT(\alpha,\beta)\), with the offspring distributions \(\alpha\) and \(\beta\).
This tree is unordered, unlabeled and parametrized by \(\alpha\) and \(\beta\).
Although \(EKT(\alpha,\beta)\) is not unimodular, a unimodularised version can be obtained from it using the typically re-rooted joining operation.
To use this operation, it is necessary to have \(m(\beta)<1\).
However, to define \(EKT(\alpha,\beta)\), it is sufficient to have \(m(\alpha)<\infty\) and \(m(\beta)\leq 1\).

Let $\alpha, \beta$ be two probability distributions on $\{0,1,2,\cdots\}$ such that $m(\alpha)<\infty$ and $m(\beta) \leq 1$.
A {\em bi-variate Eternal Kesten Tree} with offspring distributions $\alpha,\beta$ is a random Family Tree $[\mathbf{T}',\mathbf{o}']$ consisting of a unique bi-infinite $F$-path $(\mathbf{o}_n)_{n \in \mathbb{Z}}$, where $\mathbf{o}_0=\mathbf{o}'$, with the following property:
 the sequence of Family Trees $([D(\mathbf{o}_n)\backslash D(\mathbf{o}_{n-1}), \mathbf{o}_n])_{n \in \mathbb{Z}}$ is i.i.d. with the following common distribution: the offspring distribution of the root is \(\alpha\), the descendant trees of the children of the root are i.i.d. Galton-Watson Trees with offspring distribution $\beta$ (denoted as \(GW(\beta)\)) and they are independent of the number of children of the root.
The Family Trees \(\{[D(\mathbf{o}_n)\backslash D(\mathbf{o}_{n-1}), \mathbf{o}_n]:n \in \mathbb{Z}\}\) are called the {\em bushes} of \(EKT(\alpha,\beta)\) and the Family Tree \([D(\mathbf{o}_0)\backslash D(\mathbf{o}_{-1}), \mathbf{o}_0]\) is called the {\em bush of the root}.

Observe that, if \(\beta\) has mean \(1\) and \(\alpha = \hat{\beta}-1\), where \(\hat{\beta}\) is the size-biased distribution of \(\beta\), then the descendant tree of the root \([D(\mathbf{o}'),\mathbf{o}']\) is the usual Kesten tree (see \cite[Section 2.3]{abrahamIntroductionGaltonWatsonTrees2015}).

Note that $EKT(\alpha,\beta)$ is the joining of the i.i.d. sequence $([\mathbf{T}_i,\mathbf{o}_i])_{i \in \mathbb{Z}}$, where $T_i = D(\mathbf{o}_i)\backslash D(\mathbf{o}_{i-1})$.
Keep in mind that $d_1(\mathbf{o}_i,\mathbf{T}_i)= d_1(\mathbf{o}_i,\mathbf{T}')-1$, where \(d_1(\mathbf{o}_i,\mathbf{T}_i)\) denotes the number of children of \(\mathbf{o}_i\) in \(\mathbf{T}_i\).

Furthermore, assume that \(m(\beta)<1\).
Then, \(GW(\beta)\) is subcritical, which implies that \(\mathbb{E}[\#V(\mathbf{T}_0)]< \infty\).
So, in this case, the necessary condition needed to apply typically re-rooted joining operation to the i.i.d. Family Trees \(([\mathbf{T}_i,\mathbf{o}_i])_{i \in \mathbb{Z}}\) is satisfied.
Let \([\mathbf{T},\mathbf{o}]\) be the typically re-rooted joining of \(([\mathbf{T}_i,\mathbf{o}_i])_{i \in \mathbb{Z}}\) (as in Def.~\ref{def_typical_reroot_joining}).
Then, by Theorem~\ref{thm:I_F_unimodularizable}, the EFT \([\mathbf{T},\mathbf{o}]\) is unimodular.
The distribution of \([\mathbf{T},\mathbf{o}]\) is called {\em the unimodularised bi-variate \(EKT(\alpha,\beta)\)}.
Clearly, the unimodularised \(EKT(\alpha,\beta)\) is of class \(\mathcal{I}/\mathcal{F}\) as it has a (unique) bi-infinite \(F\)-path.

An \emph{Every Child Succeeding (ECS) order} on \([\mathbf{T}',\mathbf{o}']\) is obtained by declaring that $\mathbf{o}_n$ is the smallest among $D_1(\mathbf{o}_{n+1})$ and using the uniform order on the remaining children for all $n \in \mathbb{Z}$.
The name ``ECS'' order comes from the fact that the succession line (see Def.~\ref{defn_succession_line} ) starting from the root reaches all the vertices on the bi-infinite path
(i.e., the succession line is an order preserving bijection from \(\mathbb{Z}\) to the vertices of the tree).
The unimodularised ECS ordered \(EKT(\alpha,\beta)\) is an example of a unimodular ordered EFT of class \(\mathcal{I}/\mathcal{F}\).  
In the next subsection, it is shown that the unimodularised ECS ordered \(EKT(\alpha,\beta)\) is the component of \(0\) in the record graph of the network \((\mathbb{Z},X)\), where the sequence \(X=(X_n)_{n \in \mathbb{Z}}\) is an i.i.d. sequence satisfying the conditions in Eq.~\ref{eq_skip_free_condition} and \(\mathbb{E}[X_0]>0\).
In this case, the distributions \(\alpha, \beta\) are related to the distribution of \(X_0\).

\begin{figure}[h]
    \begin{center}
          \includegraphics[scale=1.6]{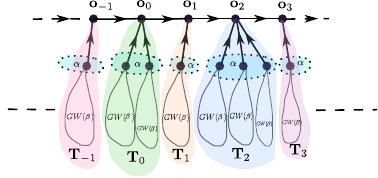}    
    \end{center}
        \caption{An illustration of bi-variate ECS ordered EKT with distributions $\alpha,\beta$ (\(\mathbf{o}_0\) is the root), \(\alpha\) is the offspring distribution of \(\mathbf{o}_0\) in \(\mathbf{T}_0\), \(\beta\) is the offspring distribution of every child of \(\mathbf{o}_0\) in \(\mathbf{T}_0\).}
        \label{fig_bi_variate_EKT_alpha_beta}
    \end{figure}


%

Note that, to obtain a unimodular EFT, it is sufficient to follow a stationary rule of assigning order to the vertices in the typically re-rooted joining of a stationary sequence of Family Trees, as explained in the following.
Let \(([\mathbf{T}_n,\mathbf{o}_n,Z_n])_{n \in \mathbb{Z}}\) be stationary sequence of marked ordered Family Trees, where \(Z_n\) is a random variable such that \(Z_n \leq d_1(\mathbf{o}_{n+1})\), for all \(n \in \mathbb{Z}\).
Let \([\mathbf{T}',\mathbf{o}']\) be the ordered EFT obtained by the joining of \(([\mathbf{T}_n,\mathbf{o}_n,Z_n])_{n \in \mathbb{Z}}\), where the order on \(\mathbf{T}'\) is given by the following: for all \(n \in \mathbb{Z}\), the order of \(\mathbf{o}_n\) among the children of \(\mathbf{o}_{n+1}\) is \(Z_n\).
Let \([\mathbf{T},\mathbf{o}]\) denote the EFT obtained by re-rooting to the typical root of \(\mathbf{T}_0\) in \(\mathbf{T}'\).
Then, \([\mathbf{T},\mathbf{o}]\) is unimodular.
This follows because, the EFT \([\mathbf{T}',\mathbf{o}']\) is invariant under the vertex-shift \(f\), defined on the set of EFTs \(T\) that has a unique bi-infinite path \(S_T\) in the following way:
\begin{equation*}
    f_T(u)= \begin{cases}
        F_T(u) \text{ if } u \in S_T \\
        u \text{ otherwise,}
    \end{cases} 
\end{equation*}
where \(F_T(u)\) is the parent of \(u\).

\subsection{Distribution of the component in the record graph} \label{subsec_recordGraph_randomWalk}

This subsection focuses on the distribution of the component of \(0\) in the record graph of \((\mathbb{Z},X)\), where \(X= (X_n)_{n \in \mathbb{Z}}\) is any i.i.d. sequence satisfying conditions of Eq.~(\ref{eq_skip_free_condition}).
 This subsection is divided into three parts, each corresponds to the three phases \(\mathbb{E}[X_0]<0, \mathbb{E}[X_0]=0\) and \(\mathbb{E}[X_0]>0\) respectively.

The following proposition shows that when \(\mathbb{E}[X_0] \leq 0\), the descendant tree of \(0\) in the record graph is given by the excursion set associated to \((S_n)_{n \leq 0}\).
The case when \(\mathbb{E}[X_0]=0\) has also been addressed in \cite{benniesRandomWalkApproach2000}.

An \(n+1\)-tuple of integers of the form \((0,s_1,\cdots,s_{n-1},1)\) with \(s_j \leq 0\) for all \( 1 \leq j \leq n-1 \) is called an {\em excursion set}.
Let $\delta=\max\{n<0: S_n\geq 1\} = \max\{n<0: S_n = 1\}$ (equality follows from the skip-free property).

\begin{proposition}\label{20230128172431}
    Let \(X=(X_n)_{n \in \mathbb{Z}}\) be an i.i.d. sequence satisfying the conditions in Eq.~(\ref{eq_skip_free_condition}), \((\mathbb{Z},X)\) be its associated network, and \([D(0),0]\) be the descendant tree of \(0\) in the record graph of \((\mathbb{Z},X)\).
    If $\mathbb{E}[X_{0}]\leq 0$, then \([D(0),0]\) is the ordered Galton-Watson Tree with the offspring distribution $\pi \overset{\mathcal{D}}{=} X_{0}+1$.
  \end{proposition}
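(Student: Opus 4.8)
The plan is to show that the descendant tree $[D(0),0]$ is a deterministic, bijective image of the block of increments straddling the last passage of $S$ through level $1$ in the past, and that this bijection is exactly the classical walk--tree (Lukasiewicz) correspondence, under which the i.i.d.\ law of the increments pushes forward to $GW(\pi)$.

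\emph{Step 1 (identifying $D(0)$).} I would first use $\mathbb{E}[X_0]\le 0$ to get that a.s.\ $\sup_{k\le 0}S_k=+\infty$: for $\mathbb{E}[X_0]<0$ the strong law gives $S_{-n}\to+\infty$, while for $\mathbb{E}[X_0]=0$ the walk is recurrent (Chung--Fuchs, as already used in Proposition~\ref{prop_r_graph_of_iid_II}). Since $S$ is skip-free to the left it increases by at most $1$ as the index decreases, so going back from $0$ it hits level $1$ before exceeding it; hence $\delta=\max\{n<0:S_n=1\}>-\infty$ a.s. Recalling $y(k,0)=-S_k$, the definition of $L_x$ gives $L_x(0)=\delta+1$, and Lemma~\ref{lemma_descendants} then yields $D(0)=\{\delta+1,\dots,0\}$, a.s.\ finite of size $N:=-\delta$.

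\emph{Step 2 (offspring counts and order).} Next, for $i\in D(0)$ one has $D(i)\subseteq D(0)$ finite, so $L_x(i)>-\infty$ and Lemma~\ref{lemma_offspring_count}(1) gives that the number of children of $i$ equals $X_{i-1}+1$. Moreover Lemma~\ref{lemma_rls_order} identifies the integer order on $D(0)$ with the RLS (depth-first) order of the ordered tree $[D(0),0]$. Thus, listing the vertices in depth-first order as $v_1<\dots<v_N$ (so $v_j=\delta+j$ and $v_N=0$), the offspring-count sequence is $(X_{v_j-1}+1)_{j=1}^N$, i.e.\ the increment block $(X_\delta,\dots,X_{-1})$ shifted by $1$.

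\emph{Step 3 (the law).} The offspring counts in depth-first order determine, and are determined by, the ordered rooted tree: this is the classical Lukasiewicz bijection between finite ordered rooted trees and lattice excursions (\cite{benniesRandomWalkApproach2000}, \cite{legallRandomTreesApplications2005b}, \cite{jimpitmanCombinatorialStochasticProcesses2006}), realized here through the record vertex-shift. Concretely, for any finite ordered rooted tree $t$ with $N$ vertices and depth-first offspring counts $c_1,\dots,c_N$, I would establish the identity of events
\[
\{[D(0),0]=t\}=\{(X_{-N},\dots,X_{-1})=(c_1-1,\dots,c_N-1)\}.
\]
The inclusion $\subseteq$ is Step~2; the reverse inclusion is the invertibility of the correspondence, the admissibility (excursion) constraints being exactly encoded in $t$ being a genuine tree and forcing $\delta=-N$. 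Since the $X_n$ are i.i.d.\ with $\mathbb{P}[X_0=k]=\pi(k+1)$,
\[
\mathbb{P}\big[[D(0),0]=t\big]=\prod_{j=1}^N\mathbb{P}[X_0=c_j-1]=\prod_{j=1}^N\pi(c_j)=\prod_{v\in V(t)}\pi\big(c(v)\big),
\]
which is precisely the probability that an ordered $GW(\pi)$ tree equals $t$.

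\emph{Conclusion and main difficulty.} Summing over $t$ gives total mass $\mathbb{P}[GW(\pi)\text{ finite}]=1$, since $m(\pi)=\mathbb{E}[X_0]+1\le 1$ and $\pi(0)=\mathbb{P}[X_0=-1]>0$ make $GW(\pi)$ (sub)critical and non-degenerate; this is consistent with $\mathbb{P}[\delta>-\infty]=1$ from Step~1. Hence $[D(0),0]\overset{\mathcal D}{=}GW(\pi)$. I expect the delicate point to be the reverse inclusion in Step~3: verifying rigorously, in the ``root-last'' orientation imposed by the RLS order, that every admissible offspring sequence is produced by exactly one increment block and yields $\delta=-N$ --- that is, pinning down the walk--tree bijection in this reversed convention --- together with the a.s.\ finiteness of $D(0)$, which is exactly where the hypothesis $\mathbb{E}[X_0]\le 0$ enters.
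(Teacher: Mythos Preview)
Your proof is correct and takes a genuinely different route from the paper's. The paper argues recursively: it shows $d_1(0)=X_{-1}+1\sim\pi$, identifies the children $i_1>\cdots>i_n$ of $0$ as stopping times for the backward walk $(S_{-k})_{k\ge 0}$, and invokes the strong Markov property to conclude that the excursions between successive children are i.i.d.\ copies of $(S_0,\dots,S_\delta)$, hence the subtrees of the children are independent with the same law as $[D(0),0]$ itself. You instead establish the event identity $\{[D(0),0]=t\}=\{(X_{-N},\dots,X_{-1})=(c_1-1,\dots,c_N-1)\}$ via the Lukasiewicz correspondence and read off the probability of each finite tree as the i.i.d.\ product over its vertices. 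The paper's recursive decomposition makes the branching structure explicit and verifies the \emph{definition} of $GW(\pi)$; your approach is more combinatorial, avoids the recursion entirely, and leans cleanly on the lemmas already proved (Lemmas~\ref{lemma_descendants}, \ref{lemma_offspring_count}, \ref{lemma_rls_order}, \ref{lemma:legall_sums}). The reverse inclusion you flag as delicate does go through: Lemma~\ref{lemma:legall_sums} applied to $(c_1,\dots,c_N)$ forces $S_k\le 0$ for $-N<k<0$ and $S_{-N}=1$, so $\delta=-N$ regardless of $(X_n)_{n<-N}$, and the depth-first offspring sequence then determines the ordered rooted tree uniquely.
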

  \begin{proof}
  Since the random walk $(S_{-n})_{n \geq 0}$ drifts to $\infty$, one has $\delta>-\infty$.
  It follows from the definition of $L_X(0)$ (see Eq.~(\ref{eq:L_x_defn})) that $L_X(0)= \delta-1$ since $y(\delta,0) = -S_{\delta} = -1$ and for any $\delta-1\leq j<0$, $y(j,0)=-S_j>-1$.
  Thus, $L_X(0)> - \infty$ a.s., which is the required condition in order to apply Lemma~\ref{lemma_offspring_count}.
  The first part of Lemma~\ref{lemma_offspring_count} implies that $d_1(0)=X_{-1}+1 \overset{\mathcal{D}}{=} \pi$.
  
  It is now shown that, conditioned on the number of children of $0$, the subtrees of these children are jointly independent Galton-Watson Trees with offspring distribution $\pi$.
  
  For each $m \geq 1$, let $i_m = \max\{n<0:y(i_m,0)=X_{-1}+1-m\}$ and let \(i_0=0\).
  By the third part of Lemma~\ref{lemma_offspring_count}, $i_1>i_2>\cdots>i_{X_{-1}+1}$ are the children of $0$, and $i_{X_{-1}+1}=L_X(0)$.
  The fact that, for each $m\geq 1$, $i_m$ is a stopping time for the random walk $(S_{-n})_{n \geq 0}$, and the third part of Lemma~\ref{lemma_offspring_count}, together imply the following:
  the excursions $((S_{i_k}-S_{i_{k}},S_{i_k-1}-S_{i_k},\cdots,S_{i_{k+1}}-S_{i_{k}}))_{k=1}^{X_{-1}}$, with the notation $S_{i_0}=0$, are independent and identically distributed as $(S_0,S_{-1},\cdots,S_{\delta})$.
  
  By Lemma~\ref{lemma:descendants}, for any $1 \leq k \leq X_{-1}+1$, the descendant tree of $i_k$ is completely determined by 
  \[(S_{i_k}-S_{i_{k-1}},S_{i_k-1}-S_{i_{k-1}},\cdots,S_{i_{k+1}}-S_{i_{k-1}}) \overset{\mathcal{D}}{=}(S_0,S_{-1},\cdots,S_{\delta}). \]
  This completes the proof.
  \end{proof}
  
  \begin{remark}
    Consider the i.i.d. sequence \(\hat{X}=(\hat{X}_n)_{n \in \mathbb{Z}}\) defined in Eq.~(\ref{eq_conditioned_RW}).
  Since \(\mathbb{E}[\hat{X}_0]<0\) by Lemma~\ref{20230116135048}, the assumption of Proposition~\ref{20230128172431} is satisfied.
  This implies that the descendant tree of \(0\) in the record graph of \([\mathbb{Z},0,\hat{X}]\) is a Galton-Watson Tree with offspring distribution the same as that of \(\hat{X}_0+1\).
  This fact will be used in the last part of Step 2 of Theorem~\ref{r_graph_positive_drift_20230203174636}.
  \end{remark}

  \subsubsection{Negative mean}
   This part of the subsection focuses on the case when the mean of the increment \(X_0\) is negative, and it has three key objectives.

  The first objective is to establish that the distribution of the component of \(0\) in the record graph of \((\mathbb{Z},X)\) is the \(TGWT\) with the offspring distribution the same as that of \(X_0+1\).
  This is proved in Theorem~\ref{20230213191656}.

  The second objective is to demonstrate that every \(TGWT(\alpha)\) is obtained by typically re-rooting \(GW(\alpha)\), where \(\alpha\) is any offspring distribution that has mean \(m(\alpha) <1\).
  To prove this, consider \([\mathbf{T},0]\), the component of \(0\) in the record graph of \((\mathbb{Z},X)\), where the increments follow the common distribution \(X_0+1 \overset{\mathcal{D}}{=}\alpha\).
  Initially, it is shown that the conditioned Family Tree \([\mathbf{T},0]|\{F(0)=0\}\), conditioned on the event that the root \(0\) of \(\mathbf{T}\) has no parent, follows the distribution \(GW(\alpha)\) (see Lemma~\ref{lemma_conditioned_is_GW}).
  Then, it is shown that \([\mathbf{T},0]\) is obtained by typically re-rooting \([\mathbf{T},0]|\{F(0)=0\}\) (see Proposition~\ref{prop_tgwt_is_typical}).

  The third objective is to give a characterization of $TGWT$, which is detailed in Proposition~\ref{20230305162953}.

\begin{theorem}\label{20230213191656}
  Let \(X=(X_n)_{n \in \mathbb{Z}}\) be an i.i.d. sequence satisfying the conditions in Eq.~(\ref{eq_skip_free_condition}), \((\mathbb{Z},X)\) be its associated network, and $[\mathbf{T},o]$ be the connected component of $0$  (rooted at $o=0$) in the record graph of $(\mathbb{Z},X)$.
  If \(\mathbb{E}[X_0]<0\), then $[\mathbf{T},o]$ is the Typically rooted Galton-Watson Tree with offspring distribution $\pi$, where \(\pi\) is the distribution of \(X_0+1\).
\end{theorem}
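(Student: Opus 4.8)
The plan is to match the explicit construction of $TGWT(\pi)$ ingredient by ingredient: the descendant tree of the root, the geometric ancestor line with size-biased offspring, the uniform position of the spine child, the i.i.d.\ $GW(\pi)$ subtrees hanging off the remaining children, and the independence of all of these. Throughout, note $\pi = X_0+1$ so that $m(\pi)=\mathbb{E}[X_0]+1\in[0,1)$ and $TGWT(\pi)$ is well defined.

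First I would record the facts forced by $\mathbb{E}[X_0]<0$. Since $S_n\to-\infty$ and $S_{-n}\to+\infty$ a.s., one has $L_X(i)>-\infty$ for every $i$ a.s., so Lemma~\ref{lemma_offspring_count} applies at every vertex and $d_1(i)=X_{i-1}+1$; moreover $c=\mathbb{P}[\eta_{-1}<\infty]=1$. By Theorem~\ref{thm_phase_transition_stationary} the component $\mathbf{T}$ is finite, hence it is the descendant tree of its unique parentless top ancestor. Proposition~\ref{20230128172431} already yields the descendant tree $[D(0),0]$ as an ordered $GW(\pi)$ tree. Since $D(0)=\{j:L_X(0)\le j\le 0\}$ is measurable with respect to the past increments read back only as far as the regeneration epoch $L_X(0)$ (where $S_{L_X(0)}=S_0=0$), this part will be independent of everything built from the walk outside $[L_X(0),0]$.

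Second I would analyze the ancestor line via the weak-record identity of Lemma~\ref{20230119141633}. Writing $\tau=R_X(0)$, with $c=1$ the lemma gives $\mathbb{P}[\tau<\infty,S_\tau=j,X_{\tau-1}=k]=\mathbb{P}[X_0=k]$ for $0\le j\le k$. Summing over $j$ and using that the $k=-1$ term of $m(\pi)=\sum_{k\ge-1}(k+1)\mathbb{P}[X_0=k]$ vanishes, one gets $\mathbb{P}[\tau<\infty]=\sum_{k\ge0}(k+1)\mathbb{P}[X_0=k]=m(\pi)$, so the root has a parent with probability $m(\pi)$. Conditioned on $\tau<\infty$, the same identity shows that the offspring count $d_1(a_1)=X_{\tau-1}+1$ has law $\hat\pi$ (since $\mathbb{P}[\tau<\infty,\,d_1(a_1)=n]=n\,\pi(n)$), and that $S_\tau$ is uniform on $\{0,\dots,X_{\tau-1}\}$. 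By part~2 of Lemma~\ref{lemma_offspring_count}, the rank of $0=a_0$ among the children of $a_1$ equals $d_1(a_1)-S_\tau$, hence is uniform on $\{1,\dots,d_1(a_1)\}$; as the order on children is the integer order, which equals the RLS order by Lemma~\ref{lemma_rls_order}, this reproduces the uniform child order of $TGWT$. The descendant trees of the remaining children of $a_1$ are coded by the successive excursions of the walk between consecutive children, which are i.i.d.\ copies of the excursion coding $[D(0),0]$; they are therefore i.i.d.\ $GW(\pi)$ trees, exactly as in Proposition~\ref{20230128172431} applied at $a_1$ in place of $0$.

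Third I would close the induction by a regeneration argument. The descendant tree $D(a_1)=\{j:L_X(a_1)\le j\le a_1\}$ is governed by the walk on $[L_X(a_1),a_1]$, whose endpoints satisfy $S_{L_X(a_1)}=S_{a_1}$ (shown inside the proof of Lemma~\ref{lemma_offspring_count}). By the strong Markov property at the ladder epoch $a_1$ together with the matching backward regeneration at $L_X(a_1)$, the increments outside this interval are independent of those inside and form a fresh walk started from the common boundary height $S_{a_1}$. Hence the portion of $\mathbf{T}$ strictly above $a_1$ has the same conditional law as the whole ancestor construction above $0$ and is independent of $D(a_1)$. Iterating, the number of ancestors is geometric with success probability $1-m(\pi)$, each ancestor independently has offspring law $\hat\pi$ with its spine child in uniform position and its other children carrying independent $GW(\pi)$ subtrees, and all of this is independent of $[D(0),0]$ --- precisely the description of $TGWT(\pi)$. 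I expect the main obstacle to be this last step: one must make precise the \emph{two-sided} renewal at each pair $(L_X(a_k),a_k)$ --- a forward stopping time together with a backward one sitting at the same height --- so that the finite excursion $D(a_k)$ decouples from the rest of the trajectory. The reflection/duality identities underlying Lemma~\ref{20230119141633} are exactly what is needed to justify this decoupling and to propagate independence up the spine.
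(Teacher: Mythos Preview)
Your proposal is correct and follows essentially the same six-step skeleton as the paper: descendant tree of the root via Proposition~\ref{20230128172431}, the parent probability $m(\pi)$, the size-biased offspring and uniform rank via Lemma~\ref{20230119141633} with $c=1$, the i.i.d.\ $GW(\pi)$ sibling subtrees via excursion decomposition, and then an induction up the spine by strong Markov. The one methodological difference is in computing $\mathbb{P}[\tau<\infty]$: the paper obtains it from unimodularity ($\mathbb{P}[\mathbf{o}\text{ has a parent}]=\mathbb{E}[d_1(\mathbf{o})]=m(\pi)$), whereas you sum Lemma~\ref{20230119141633} directly --- the paper in fact mentions both routes, so this is not a genuine divergence. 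Your explicit flagging of the two-sided regeneration at $(L_X(a_k),a_k)$ is well placed; the paper's Step~6 is brief on exactly this point, and the clean way to justify it is that conditionally on $S_\tau=j$ (a forward-measurable quantity), $L_X(a_1)$ is the first time the independent backward walk $(S_{-n})_{n\ge0}$ hits level $j$, hence a backward stopping time, so the post-$L_X(a_1)$ backward increments are fresh and independent of everything on $[L_X(a_1),a_1]$.
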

\begin{proof}
  The theorem is proved in several steps.

  \emph{Step 1} shows that the descendant tree of the root $o$ is a Galton-Watson Tree with offspring distribution $\pi$.
  \emph{Step 2} shows the probability that $o$ has a parent is equal to the mean \(m(\pi)\).
  One way to compute it is by using Kemperman's formula (see \cite[Proposition 3.7]{bhattacharyaRandomWalkBrownian2021}).
  Alternatively, one could use the unimodularity of $[\mathbf{T},o]$.
  The latter is used in this step.
  \emph{Step 3} shows that the offspring distribution of $F(o)$ conditioned on the event that $o$ has a parent is \(\hat{\pi}\) (the size-biased distribution of \(\pi\)).
 \emph{Step 4} shows that the distribution of the order of $o$ is uniform among its siblings conditioned on the event that $o$ has a parent.
\emph{Step 5} shows that the descendant trees of the siblings of $o$ conditioned on the event that $o$ has a parent are independent Galton-Watson Trees with offspring distribution $\pi$.
  
  The final step (\emph{Step 6}) shows that the offspring distribution of $F^2(o)$ conditioned on the event that $o$ has an ancestor of order \(2\) is the same as the offspring distribution of $F(o)$ conditioned on the event that $o$ has a parent.
  Similarly, it is shown that the descendant trees of siblings of $F(o)$ and its order among its siblings conditioned on the event that $o$ has an ancestor of order \(2\) have the same distribution as those of $o$ conditioned on the event that $o$ has a parent.

  \noindent {\em Step 1:} 
  Since the random walk $(S_{-n})_{n \geq 0}$ has positive drift, $-\infty<L_X(0)$.
  By Proposition~\ref{20230128172431}, the descendant tree of $o$ is GW($\pi$).

  \noindent {\em Step 2:}
  Since $[\mathbf{T},o]$ is the connected component of $0$ in the $R$-graph of a unimodular network $[\mathbb{Z},0,X]$, it is also unimodular by Lemma~\ref{lemma_f_graph_unimodular}.
  Therefore, 
  \begin{align*}
      \mathbb{P}[o \text{ has a parent}] &= \mathbb{E}\left[\sum_{u \in V(\mathbf{T})}\mathbf{1}\{F(o)=u\} \right]=\mathbb{E}\left[\sum_{u \in V(\mathbf{T})}\mathbf{1}\{F(u)=o\} \right]\\
      &= \mathbb{E}[d_1(o)]=m(\pi)=\mathbb{E}[X_{0}]+1,
  \end{align*}
  where \(m(\pi)\) is the mean of \(\pi\).
In particular, 
\[\mathbb{P}[S_n<0\quad \forall n>0] = \mathbb{P}[o \text{ does not have parent}] = -\mathbb{E}[X_{0}].\]

\noindent {\em Step 3:} 
Note that $o$ has a parent if and only if $\tau<\infty$ (see the paragraph above Lemma~\ref{20230119141633} for the definition of \(\tau\)).
So, $\mathbb{P}[\tau< \infty] = m(\pi)$.
Since $(S_{-n})_{n \geq 0}$ drifts to $+\infty$, part (1) of Lemma~\ref{lemma_offspring_count} implies the equality of the two events $\{d_1(F(o))=n,\tau<\infty\} = \{X_{\tau-1}=n-1,\tau< \infty\}$, for any $n>0$.
Therefore, for any $n>0$,
\begin{align}
  \mathbb{P}[d_1(F(o))=n,\tau<\infty] &= \sum_{j=0}^{n-1}\mathbb{P}[\tau<\infty,X_{\tau-1}=n-1,S_{\tau}=j] \notag\\
  &= \sum_{j=0}^{n-1}\mathbb{P}[S_1=n-1]c^{j-n+1} =n\pi(n). \label{eq_1_2.25} 
\end{align} 
By Lemma~\ref{20230119141633}, Eq.~(\ref{eq_1_2.25}) follows from the previous equation.
The last equation follows since $c=\mathbb{P}[S_n=-1 \text{ for some } n>0]=1$ as the random walk $(S_n)_{n \geq 0}$ drifts to $-\infty$.
Therefore,
\[\mathbb{P}[d_1(F(o))=n|\tau< \infty] = \frac{n\pi(n)}{m(\pi)}= \hat{\pi}(n),\]
for all $n>0$.

\noindent {\em Step 4:}
Let $c_k(u)$ denote $k$-th child of $u$ for any vertex $u$ and positive integer $k$.
Note that the equality of the two events $\{\tau< \infty,c_k(F(o))=o,d_1(o)=n\}= \{\tau<\infty, X_{\tau-1}=n-1,S_{\tau}=n-k\}$ for any $0<k\leq n$ follows from part (3) of Lemma~\ref{lemma_offspring_count} (with \(i_m\) as \(0\) and \(i\) as \(F(0)\)).
Therefore, using Lemma~\ref{20230119141633} and $c=1$, one gets that for any $0<k \leq n$,
\[\mathbb{P}[\tau< \infty,c_k(F(o))=o,d_1(o)=n] = \mathbb{P}[X_0 = n-1].\]
Thus, for any $0<k \leq n$
\[\mathbb{P}[c_k(F(o))=o|d_1(o)=n,\tau<\infty] = \frac{\mathbb{P}[X_0 = n-1]}{n \mathbb{P}[X_0 = n-1]}=\frac{1}{n},\]
which is the uniform order among the siblings of $o$.

\noindent {\em Step 5:}
On the event $\{\tau<\infty, d_1(F(o))=n, c_k(F(o))=o\}$, let $i_n<i_{n-1}<\cdots<i_1$ be the positions of the children of $F(o)$.
Then, conditioned on $\{\tau<\infty, d_1(F(o))=n, c_k(F(o))=o\}$, for each $1\leq j \leq n-1$, the part of the random walk $(0, S_{i_j-1}-S_{i_j},S_{i_j-2}-S_{i_j},\cdots, S_{i_{j+1}}-S_{i_j})$ is an excursion set by Lemma~\ref{lemma_offspring_count} (part 3).
Further, these excursion sets are independent of one another because the times $i_j$ (for $1 \leq j \leq n$) are stopping times.
Indeed, for all $1 \leq j \leq n-1$, $S_{i_{j+1}}-S_{i_j}=1$ and $S_l<S_{i_{j+1}}$ for all $i_{j+1}<l \leq i_j$.
Therefore, for each $1 \leq j \leq n-1$, $(0, S_{i_j-1}-S_{i_j},S_{i_j-2}-S_{i_j},\cdots, S_{i_{j+1}}-S_{i_j})$ is the skip-free to the right random walk $(S_{-n})_{n \geq 0}$ conditioned on $\eta_1< \infty$, and stopped at $\eta_{1}$, where $\eta_1 = \min\{n>0: S_{-n}=1\}$.
But $(S_{-n})_{n \geq 0}$ drifts to $+\infty$.
So, \(\eta_1<\infty\) a.s..
Hence, $(0, S_{i_j-1}-S_{i_j},S_{i_j-2}-S_{i_j},\cdots, S_{i_{j-1}}-S_{i_j})$ has the same distribution as $(0,S_{-1},\cdots,S_{\eta_1})$.
Therefore, by Proposition~\ref{20230128172431}, for each $1 \leq j \leq n$, the descendant trees are independent $GW(\pi)$.

\noindent {\em Step 6:}
Observe that the distribution of $(S_n-S_{\tau})_{n \geq \tau}$ conditioned on $\tau< \infty$ is the same as that of $(S_n)_{n \geq 0}$ by the strong Markov property.
Therefore, 
\[\mathbb{P}[F(o) \text{ has a parent }|\{o \text{ has a parent}\}] = \mathbb{P}[o \text{ has a parent}] = m(\pi).\]
Since $(S_{-n})_{n \geq 0}$ drifts to $+ \infty$ a.s., it reaches $S_{R^2(0)+1}$ a.s..
Therefore, by part (1) of Lemma~\ref{lemma_offspring_count}, the distribution of $d_1(F^2(o))$ conditioned on the event $\{o \text{ has a grandparent}\}$ is equal to the distribution of $X_{R^2(0)-1}+1$ conditioned on $\{R^2(0)>R(0)\}$ (i.e., $o$ has a grandparent) which has the same distribution as that of $X_{R(0)-1}+1$ conditioned on $R(0)>0$ (i.e., \(X_{\tau-1}\) conditioned on \(\tau< \infty\)).
In particular, the distribution of $[D(F^2(o))\backslash D(F(o)),o]$ conditioned on $\{o \text{ has a grandparent}\}$ is the same as that of $[D(F(o))\backslash D(o),o]$ conditioned on $\{o \text{ has a parent}\}$.
By induction, it follows that $[D(F^n(o))\backslash D(F^{n-1}(o)),o]$ conditioned on $\{o \text{ has }n- \text{th ancestor}\}$ has the same distribution as that of $[D(F(o))\backslash D(o),o]$ conditioned on $\{o \text{ has a parent}\}$.
\end{proof}

It is now shown that the Typically rooted Galton-Watson Tree is the typical re-rooting (as in Def.~\ref{def_typical_reroot_joining}) of a Galton-Watson Tree.
Let \([\mathbf{T},o]\) be the Family Tree as in Theorem~\ref{20230213191656}, and let \(\{F(o)=o\}\) denote the event that \(o\) does not have a parent.
Let 
\begin{equation} \label{eq_cond_tree_no_parent}
    [\mathbf{T}',\mathbf{o}'] \overset{\mathcal{D}}{:=} [\mathbf{T},o]|\{F(o)=o\},
\end{equation}
be the Family Tree obtained from \([\mathbf{T},o]\) by conditioning on the event that \(o\) does not have a parent.
The first step consists in showing that the distribution of \([\mathbf{T}',\mathbf{o}']\) is \(GW(\pi)\).
Since \(m(\pi)=\mathbb{E}[X_0]+1<1\), the size of \(GW(\pi)\) is finite in mean.
Thus, the typical re-rooting of \(GW(\pi)\) is defined.
The next step consists in proving that \([\mathbf{T},o]\) is the typical re-rooting of \([\mathbf{T}',\mathbf{o}']\).
By varying the distribution of \(X_0\), it is shown that every Typically rooted Galton-Watson Tree is the typical re-rooting of a Galton-Watson Tree.

The following lemma uses the notation of Theorem~\ref{20230213191656}.

\begin{lemma}\label{lemma_conditioned_is_GW}
    Let \([\mathbf{T}',\mathbf{o}']\) be the Family Tree defined in Eq.~\ref{eq_cond_tree_no_parent}, with \([\mathbf{T},0]\) as the component of \(0\) in the record graph of \((\mathbb{Z},X)\), where \(X\) is the sequence as in Theorem~\ref{20230213191656}.
    Then, the distribution of \([\mathbf{T}',\mathbf{o}']\) is \(GW(\pi)\).
\end{lemma}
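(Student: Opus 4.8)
The plan is to exploit the fact that, for an i.i.d.\ increment sequence, the event $\{F(0)=0\}$ that the root has no parent depends only on the \emph{future} increments, whereas the descendant tree of $0$ depends only on the \emph{past} increments; the two are therefore independent, and conditioning on one leaves the law of the other untouched. Combined with the already-established description of the descendant tree in Proposition~\ref{20230128172431}, this immediately identifies $[\mathbf{T}',\mathbf{o}']$ as $GW(\pi)$.

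First I would translate the conditioning event. Since the parent of $0$ in $\mathbf{T}$ is its record $R_X(0)$, the definition in Eq.~(\ref{eq_recordMap}) gives $\{F(0)=0\}=\{S_n<0 \text{ for all } n>0\}=\{\tau=\infty\}$, with $\tau$ as in Eq.~(\ref{eq_tau_j_weak_record}). Next I would argue that on this event the whole component $\mathbf{T}$ coincides with the descendant tree $[D(0),0]$: the record graph is an acyclic forest in which every vertex has out-degree at most $1$ (Remark~\ref{remark_record_graph_forest}), so each finite component has exactly one out-degree-$0$ vertex, namely its root; when $0$ has no parent it is that root, whence the component equals $D(0)$.

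The core of the argument is the measurability/independence step. By Lemma~\ref{lemma:descendants}, $D(0)=\{j:L_X(0)\le j\le 0\}$, and both $L_X(0)$ and all record edges among the vertices of $D(0)$ are determined by the increments $(X_j)_{j<0}$; hence the ordered rooted tree $[D(0),0]$ is $\sigma\big((X_j)_{j<0}\big)$-measurable. On the other hand, $\{S_n<0\ \forall n>0\}$ is $\sigma\big((X_j)_{j\ge 0}\big)$-measurable. Because $X$ is i.i.d., these two $\sigma$-algebras are independent, so $[D(0),0]$ is independent of $\{F(0)=0\}$, and conditioning on the latter does not alter the distribution of the former.

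Finally, since $\mathbb{E}[X_0]<0$, Proposition~\ref{20230128172431} applies and gives that the unconditional law of $[D(0),0]$ is the ordered Galton--Watson tree with offspring distribution $\pi\overset{\mathcal{D}}{=}X_0+1$; hence $[\mathbf{T}',\mathbf{o}']$ is $GW(\pi)$. I expect the only delicate point to be the clean separation of ``past'' and ``future'' dependence---in particular, verifying that the tree structure carried on $D(0)$ never appeals to any increment of nonnegative index---together with the elementary but worth-stating observation that a parentless root forces the component to be exactly the descendant tree.
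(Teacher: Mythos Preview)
Your proposal is correct and follows essentially the same approach as the paper's proof: both identify $\{F(0)=0\}=\{S_n<0\ \forall n>0\}$ as a function of the future increments, observe that $[D(0),0]$ is a function of the past increments, invoke independence of the i.i.d.\ sequence to conclude that conditioning leaves the law of $[D(0),0]$ unchanged, and then apply Proposition~\ref{20230128172431}. Your version is slightly more explicit in justifying why the component equals $D(0)$ on the conditioning event, which the paper leaves implicit.
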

\begin{proof}
    Note that the events \(\{F(o)=o\}\) and \(\{S_n<0 \, \forall n>0\}\) are one and the same because the latter event is the same as \(\{R(0)=0\}\).
    The random variables \(\{S_n:n \leq 0\}\) are independent of the event \(\{S_n<0 \, \forall n>0\}\), and the descendant tree of \(0\) is a measurable function of \(\{S_n:n \leq 0\}\).
    This implies that the distribution of the descendant tree \([D(o),o]\) of \(o\) conditioned on the event \(\{F(o)=o\}\) is the same as that of \([D(o),o]\) (the unconditioned descendant tree of \(o\)).
    Therefore,
    \begin{equation*}
        [\mathbf{T}',\mathbf{o}']= [D(o),o]|\{F(o)=o\}  \overset{\mathcal{D}}{=} [D(o),o].
    \end{equation*}

    Since \(\mathbb{E}[X_0]<0\), by Proposition~\ref{20230128172431}, the distribution of \([D(o),o]\) is \(GW(\pi)\).
\end{proof}

\begin{proposition}\label{prop_tgwt_is_typical}
Let \(\alpha\) be a probability measure on \(\{0,1,2,\cdots\}\) such that its mean \(m(\alpha)<1\).
Then, the \(TGWT(\alpha)\) is the typical re-rooting of \(GW(\alpha)\).
\end{proposition}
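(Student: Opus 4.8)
The plan is to realize $TGWT(\alpha)$ concretely as the component of $0$ in a record graph and then extract the typical re-rooting formula from a single application of the mass transport principle. Given $\alpha$ with $m(\alpha)<1$, I would first note that necessarily $\alpha(0)>0$: otherwise $\alpha$ is supported on $\{1,2,\dots\}$ and $m(\alpha)\geq 1$. Hence one may take an i.i.d.\ sequence $X=(X_n)_{n\in\mathbb Z}$ with $X_0+1\overset{\mathcal D}{=}\alpha$; this satisfies Eq.~(\ref{eq_skip_free_condition}) and has $\mathbb E[X_0]=m(\alpha)-1<0$. By Theorem~\ref{20230213191656} the component $[\mathbf T,\mathbf o]$ of $0$ in the record graph of $(\mathbb Z,X)$ is then exactly $TGWT(\alpha)$, and by Lemma~\ref{lemma_f_graph_unimodular} it is unimodular; being of negative drift it is a.s.\ finite.

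The core of the argument is one mass transport. Let $\rho(\mathbf T)$ denote the unique root of $\mathbf T$ (the unique vertex without a parent), a covariant measurable selection on finite Family Trees. For a fixed nonnegative measurable $g$ on $\mathcal T_*$, I would apply Def.~\ref{def_unimodularity} to the transport $h([\mathbf T,u,v])=\mathbf 1\{v=\rho(\mathbf T)\}\,g([\mathbf T,u])$. Summing over the second coordinate collapses $h_+$ to $g([\mathbf T,\mathbf o])$, since there is exactly one root, while summing over the first coordinate gives $h_-=\mathbf 1\{\mathbf o=\rho(\mathbf T)\}\sum_u g([\mathbf T,u])$. Unimodularity therefore yields
\[
\mathbb E\big[g([\mathbf T,\mathbf o])\big]
=\mathbb P[\mathbf o=\rho(\mathbf T)]\;\mathbb E\Big[\textstyle\sum_{u} g([\mathbf T,u]) \,\Big|\, \mathbf o=\rho(\mathbf T)\Big].
\]

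To finish I would identify the two factors on the right. The event $\{\mathbf o=\rho(\mathbf T)\}$ is precisely $\{F(0)=0\}$, so by Lemma~\ref{lemma_conditioned_is_GW} the conditional law of $[\mathbf T,\mathbf o]$ given this event is $GW(\alpha)$; and since $[\mathbf T,\mathbf o]$ is $TGWT(\alpha)$, its defining construction gives $\mathbb P[\mathbf o=\rho(\mathbf T)]=1-m(\alpha)$. As a subcritical tree $GW(\alpha)$ has $\mathbb E_{GW(\alpha)}[\#V]=1/(1-m(\alpha))$ by summing the geometric population sizes, the constant $1-m(\alpha)$ equals $1/\mathbb E_{GW(\alpha)}[\#V]$, and the display becomes
\[
\mathbb E\big[g([\mathbf T,\mathbf o])\big]
=\frac{1}{\mathbb E_{GW(\alpha)}[\#V]}\,\mathbb E_{GW(\alpha)}\Big[\textstyle\sum_{u} g([\mathbf T,u])\Big],
\]
which is exactly the defining formula of the typical re-rooting of $GW(\alpha)$ in Def.~\ref{def_typical_rerooting}. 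Taking $g=\mathbf 1_A$ for measurable $A\subseteq\mathcal T_*$ identifies the two distributions.

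I expect the main obstacle to be bookkeeping rather than depth. Two points need care: checking that $\rho(\mathbf T)$ is a genuinely covariant and measurable selection so that $h$ is an admissible transport, and lining up the normalizing constant produced by unimodularity, $\mathbb P[\mathbf o=\rho]=1-m(\alpha)$, with the mean size $\mathbb E_{GW(\alpha)}[\#V]$ appearing in Def.~\ref{def_typical_rerooting}. Both reduce to already-established facts (the construction of $TGWT(\alpha)$ together with Lemma~\ref{lemma_conditioned_is_GW}, and the geometric sum for subcritical trees), so the only real content is the clean mass-transport collapse.
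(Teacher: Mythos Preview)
Your proof is correct and follows essentially the same route as the paper: realize $TGWT(\alpha)$ via Theorem~\ref{20230213191656}, apply the mass transport $h([T,u,v])=\mathbf 1\{F(v)=v\}\,g([T,u])$ (your $\{v=\rho(T)\}$ is exactly $\{F(v)=v\}$), and invoke Lemma~\ref{lemma_conditioned_is_GW} to identify the conditioned tree as $GW(\alpha)$. The only cosmetic difference is in the normalizing constant: you compute $\mathbb P[\mathbf o=\rho]=1-m(\alpha)$ and $\mathbb E_{GW(\alpha)}[\#V]=1/(1-m(\alpha))$ separately, whereas the paper obtains their product directly by plugging $A=\mathcal T_*$ into the same mass-transport identity, which is slightly more self-contained but amounts to the same thing.
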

\begin{proof}
    Take an i.i.d. sequence of random variables \(X' = (X_n')_{n \in \mathbb{Z}}\) such that their common distribution is given by \(\mathbb{P}[X'_0=j]=\alpha(j+1)\), for all \(j \in \{-1,0,1,2,\ldots\}\).
    Let \([\mathbf{T}, \mathbf{o}]\) be the component of \(0\) in the record graph of \((\mathbb{Z},X')\) and let \([\mathbf{T}',\mathbf{o}']\) be the Family Tree obtained from \([\mathbf{T},\mathbf{o}]\) by conditioning on the event that \(\mathbf{o}\) does not have a parent.
    By Theorem~\ref{20230213191656}, the distribution of \([\mathbf{T}, \mathbf{o}]\) is \(TGWT(\alpha)\), which is unimodular; and by Lemma~\ref{lemma_conditioned_is_GW}, the distribution of \([\mathbf{T}',\mathbf{o}']\) is \(GW(\alpha)\), which has finite mean size.
    It is shown below that \([\mathbf{T}, \mathbf{o}]\) is the typical re-rooting of \([\mathbf{T}',\mathbf{o}']\).

    For any measurable subset \(A\) of \(\mathcal{T}_*\), consider the function \(g_A:\mathcal{T}_{**} \to \mathbb{R}_{\geq 0}\) defined by \(g_A([T,u,v]) = \mathbf{1}_A([T,u]) \mathbf{1}\{F(v)=v\}\).
    Then, for any measurable set \(A\), one has,
    \begin{equation*}
        \mathbb{E}\left[\sum_{u \in V(\mathbf{T})}g_A([\mathbf{T},\mathbf{o},u])\right]= \mathbb{E}[\mathbf{1}_A([\mathbf{T},\mathbf{o}])] = \mathbb{P}[[\mathbf{T},\mathbf{o}] \in A],
    \end{equation*}
since there is exactly one vertex that does not have a parent.
Similarly,
\begin{align*}
    \mathbb{E}\left[\sum_{u \in V(\mathbf{T})}g_A([\mathbf{T},u,\mathbf{o}])\right] &= \mathbb{E}\left[\mathbf{1}\{F(\mathbf{o})=\mathbf{o}\} \sum_{u \in V(\mathbf{T})}\mathbf{1}_A([\mathbf{T},u])\right]\\
    &= \mathbb{E}\left[ \sum_{u \in V(\mathbf{T}')}\mathbf{1}_A([\mathbf{T}',u])\right] \mathbb{P}[F(\mathbf{o})=\mathbf{o}].
\end{align*}
The last step follows since \([\mathbf{T}',\mathbf{o}']\) is the Family Tree obtained from \([\mathbf{T},\mathbf{o}]\) by conditioning  on the event \(\{F(\mathbf{o})=\mathbf{o}\}\).

By the unimodularity of \([\mathbf{T},\mathbf{o}]\), one gets 
\begin{equation*}
    \mathbb{P}[[\mathbf{T},\mathbf{o}] \in A] = \mathbb{E}\left[ \sum_{u \in V(\mathbf{T}')}\mathbf{1}_A([\mathbf{T}',u])\right] \mathbb{P}[F(\mathbf{o})=\mathbf{o}].
\end{equation*}

By taking \(A=\mathcal{T}_*\), one has \( \mathbb{P}[F(\mathbf{o})=\mathbf{o}] \mathbb{E}[\#V(\mathbf{T})]=1\).
Thus, for any measurable set \(A\), 
\[\mathbb{P}[[\mathbf{T},\mathbf{o}] \in A] = \frac{1}{\mathbb{E}[\#V(\mathbf{T})]}\mathbb{E}\left[ \sum_{u \in V(\mathbf{T}')}\mathbf{1}_A([\mathbf{T}',u])\right].\]
\end{proof}

The next proposition gives a characterizing condition for a Typically rooted Galton-Watson Tree.
The proof of this proposition depends on the following lemma which says that every unimodular tree is characterized by the descendant tree of its root.

\begin{lemma}\label{20230303153138}
    Let $[\mathbf{T},\mathbf{o}]$ be a unimodular Family Tree.
    Then, $[\mathbf{T},\mathbf{o}]$ is completely characterized by $[D(\mathbf{o}),\mathbf{o}]$.
\end{lemma}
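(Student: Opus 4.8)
The plan is to show that the law $\mu$ of $[D(\mathbf{o}),\mathbf{o}]$ determines the law of $[\mathbf{T},\mathbf{o}]$. The starting point is that $[\mathbf{T},\mathbf{o}]$ is an almost sure local limit of the descendant trees of the successive ancestors of $\mathbf{o}$. Writing $F$ for the parent vertex-shift (with $F(u)=u$ when $u$ has no parent) and $W_n:=[D(F^n(\mathbf{o})),\mathbf{o}]$ for the descendant tree of the genuine $n$-th ancestor re-rooted at $\mathbf{o}$, one has $D(F^n(\mathbf{o}))\uparrow \mathbf{T}$: indeed any vertex of $\mathbf{T}$ shares a common ancestor $F^m(\mathbf{o})$ with $\mathbf{o}$, hence lies in $D(F^m(\mathbf{o}))$. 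Since any finite ball $(\mathbf{T},\mathbf{o})_r$ is contained in $D(F^r(\mathbf{o}))$, this gives $W_n\to[\mathbf{T},\mathbf{o}]$ in $\mathcal{T}_*$ a.s. It therefore suffices to recover enough of the law of each $W_n$ from $\mu$ and then pass to the limit.

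The key device is the mass transport principle (Definition~\ref{def_unimodularity}). Fix $n\ge 0$ and a bounded continuous $\phi:\mathcal{T}_*\to\mathbb{R}_{\ge 0}$, and let $A_n$ be the event that $\mathbf{o},F(\mathbf{o}),\dots,F^n(\mathbf{o})$ are $n+1$ distinct vertices (genuine $n$-th ancestor). Apply the principle to $h([T,u,v])=\mathbf{1}\{u\in D_n(v)\}\,\phi([D(v),u])$, i.e.\ each vertex sends mass to its genuine $n$-th ancestor, weighted by the descendant tree of that ancestor viewed from the sender. The outgoing mass at $\mathbf{o}$ is $\phi(W_n)\mathbf{1}_{A_n}$, while the incoming mass is $\sum_{u\in D_n(\mathbf{o})}\phi([D(\mathbf{o}),u])$. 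The latter ranges over the $n$-th descendants of $\mathbf{o}$ inside $D(\mathbf{o})$ and evaluates $\phi$ only on re-rootings of $D(\mathbf{o})$, so it is a functional of $[D(\mathbf{o}),\mathbf{o}]$ alone; hence $\mathbb{E}[\phi(W_n)\mathbf{1}_{A_n}]$ is determined by $\mu$ for all $n$ and $\phi$. On the event $A_\infty:=\bigcap_n A_n$ that $\mathbf{o}$ has infinitely many ancestors (which, by the foil classification, holds a.s.\ in classes $\mathcal{I}/\mathcal{F}$ and $\mathcal{I}/\mathcal{I}$), $\mathbf{1}_{A_n}\equiv1$ and $W_n\to[\mathbf{T},\mathbf{o}]$, so dominated convergence yields $\mathbb{E}[\phi([\mathbf{T},\mathbf{o}])\mathbf{1}_{A_\infty}]=\lim_n\mathbb{E}[\phi(W_n)\mathbf{1}_{A_n}]$, determined by $\mu$.

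I expect the main obstacle to be the finite trees (class $\mathcal{F}/\mathcal{F}$), where $\mathbf{o}$ has a top ancestor $\rho$ with $D(\rho)=\mathbf{T}$: the naive transport that sends mass to $\rho$ fails, because under reversal it produces the factor $\mathbf{1}\{\mathbf{o}\text{ has no parent}\}$, which is \emph{not} measurable with respect to $[D(\mathbf{o}),\mathbf{o}]$. To circumvent this I isolate the contribution of $\{\operatorname{height}(\mathbf{o})=n\}=A_n\setminus A_{n+1}$, on which $W_n=[\mathbf{T},\mathbf{o}]$, using a second transport $h'([T,u,v])=\mathbf{1}\{u\in D_{n+1}(v)\}\,\phi([D(F_T^n(u)),u])$ that sends mass to the $(n{+}1)$-th ancestor but reads only the $n$-th ancestor's subtree. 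Its outgoing mass at $\mathbf{o}$ is $\phi(W_n)\mathbf{1}_{A_{n+1}}$, and its incoming mass $\sum_{w\in D_1(\mathbf{o})}\sum_{u\in D_n(w)}\phi([D(w),u])$ is again a functional of $[D(\mathbf{o}),\mathbf{o}]$, since each $w$ is a child of $\mathbf{o}$ and $D(w)\subseteq D(\mathbf{o})$. Hence $\mathbb{E}[\phi(W_n)\mathbf{1}_{A_{n+1}}]$ is determined by $\mu$ as well.

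Subtracting the two identities gives $\mathbb{E}[\phi([\mathbf{T},\mathbf{o}]);\operatorname{height}(\mathbf{o})=n]=\mathbb{E}[\phi(W_n)\mathbf{1}_{A_n}]-\mathbb{E}[\phi(W_n)\mathbf{1}_{A_{n+1}}]$, each term a $\mu$-functional; summing over $n\ge0$ recovers the finite-tree part $\mathbb{E}[\phi([\mathbf{T},\mathbf{o}])\mathbf{1}_{A_\infty^c}]$. Adding the $A_\infty$ contribution from the second paragraph, $\mathbb{E}[\phi([\mathbf{T},\mathbf{o}])]$ is determined by $\mu$ for every bounded continuous $\phi$, and since such $\phi$ are law-determining on the Polish space $\mathcal{T}_*$, the distribution of $[\mathbf{T},\mathbf{o}]$ is determined by that of $[D(\mathbf{o}),\mathbf{o}]$. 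The only points requiring care are the genuineness bookkeeping in the indicators $\mathbf{1}\{u\in D_n(v)\}$ (to avoid spurious contributions from the fixed-point convention at the top) and the justification of the local convergence $W_n\to[\mathbf{T},\mathbf{o}]$, both of which are routine given Lemma~\ref{lemma:descendants} and the definition of the metric $d_*$.
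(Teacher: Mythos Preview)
Your approach is essentially the paper's: both use the mass transport principle to express $\mathbb{P}\bigl[[D(F^n(\mathbf{o})),\mathbf{o}]\in A\bigr]$ as $\mathbb{E}\bigl[\sum_{u\in D_n(\mathbf{o})}\mathbf{1}_A[D(\mathbf{o}),u]\bigr]$, a functional of the law of $[D(\mathbf{o}),\mathbf{o}]$, and then let $n\to\infty$ using $[D(F^n(\mathbf{o})),\mathbf{o}]\to[\mathbf{T},\mathbf{o}]$. The paper carries this out in a single identity and does not separately treat the class~$\mathcal{F}/\mathcal{F}$ case; your second transport $h'$ and the height decomposition $A_n\setminus A_{n+1}$ make explicit the bookkeeping around the fixed-point convention $F(u)=u$ at the top (the point you correctly flag, that ``$\mathbf{o}$ has no parent'' is not measurable with respect to $[D(\mathbf{o}),\mathbf{o}]$), which the paper's one-line argument elides. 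So your version is a more careful rendering of the same idea rather than a different route.

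One small correction: your closing appeal to Lemma~\ref{lemma:descendants} is misplaced, since that lemma concerns the record map on $(\mathbb{Z},x)$ and plays no role here; the local convergence $W_n\to[\mathbf{T},\mathbf{o}]$ follows directly from the fact that the $r$-ball of $\mathbf{o}$ in $\mathbf{T}$ is contained in $D(F^r(\mathbf{o}))$ together with the definition of $d_*$.
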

\begin{proof}
    Observe that $[D(F^n(\mathbf{o})),\mathbf{o}]$ converges weakly to $[\mathbf{T},\mathbf{o}]$ as $n \to \infty$.
    Indeed, $[D(F^n(\mathbf{o})),\mathbf{o}]_r \overset{\mathcal{D}}{=} [\mathbf{T},\mathbf{o}]_r$ for all $n>r$.

   For any measurable set $A$,
    \begin{align*}
        \mathbb{P}[[D(F^n(\mathbf{o})),\mathbf{o}] \in A] &= \mathbb{E}\left[\sum_{u \in V(\mathbf{T})} \mathbf{1}_A [D(u),\mathbf{o}] \mathbf{1}\{u = F^n(\mathbf{o})\} \right].
    \end{align*}
    By unimodularity, the equation on the right-hand side is equal to
       \[ \mathbb{E}\left[\sum_{u \in V(\mathbf{T})} \mathbf{1}_A [D(\mathbf{o}),u] \mathbf{1}\{\mathbf{o} = F^n(u)\} \right].\]
      Therefore,
     \[ \mathbb{P}[[D(F^n(\mathbf{o})),\mathbf{o}] \in A]= \mathbb{E}\left[\sum_{u \in D_n(\mathbf{o})} \mathbf{1}_A[D(\mathbf{o}),u]\right].\]
\end{proof}

Lemma~\ref{20230303153138} is closely related to \cite[Proposition 10]{aldousAsymptoticFringeDistributions1991}.
In the language of this work, the latter proposition proves that if the descendant tree \([D(\mathbf{o}),\mathbf{o}]\) of a rooted Family Tree \([\mathbf{T},\mathbf{o}]\) is a fringe distribution, then \([\mathbf{T},\mathbf{o}]\) is completely described by \([D(\mathbf{o}),\mathbf{o}]\).
The connection between \cite[Proposition 10]{aldousAsymptoticFringeDistributions1991} and Lemma~\ref{20230303153138} follows from the observation that for any unimodular tree \([\mathbf{T},\mathbf{o}]\) of class \(\mathcal{I}/\mathcal{I}\), its descendant tree \([D(\mathbf{o}),\mathbf{o}]\) is a fringe distribution.
However, Lemma~\ref{20230303153138} is also applicable to the unimodular Family trees of class \(\mathcal{I}/\mathcal{F}\).
For more details on this connection, see \cite[Bibliographical Comments (Section 6.3)]{baccelliEternalFamilyTrees2018a}.
 
The following proposition and its proof are analogous to the characterization of Eternal Galton-Watson Tree of \cite{baccelliEternalFamilyTrees2018a}.
However, the following proposition differs from the latter as its statement is about finite Family Trees, whereas the latter is about EFTs.

In the following proposition, only the random objects are denoted in bold letters.

For a (deterministic) Family Tree $[T,o]$, the non-descendant tree of the root $o$ is the Family Tree $[D^c(o),o]$, where $D^c(o)$ is the subtree induced by $(V(T)\backslash D(o))\cup \{o\}$.
For any vertex $u$ of $T$, let $c_j(u)$ denote the $j$-th child of $u$.

 \begin{proposition}[Characterization of $TGWT$]\label{20230305162953}
    A random \emph{finite} Family Tree $[\mathbf{T},\mathbf{o}]$ is a Typically rooted Galton-Watson Tree ($TGWT$) if and only if
    \begin{enumerate}
        \item it is unimodular, and
        \item the number of children of the root $d_1(\mathbf{o})$ is independent of the non-descendant tree of the root $\mathbf{o}$.
    \end{enumerate} 
 \end{proposition}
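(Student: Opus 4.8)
The plan is to prove the two implications separately, reducing the harder direction to the law of the descendant tree of the root through Lemma~\ref{20230303153138}.

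For the \emph{only if} direction, recall from Proposition~\ref{prop_tgwt_is_typical} that $TGWT(\pi)$ is the typical re-rooting of $GW(\pi)$; hence it is unimodular by the lemma following Definition~\ref{def_typical_rerooting}, giving (1). For (2), note that in the construction of $TGWT(\pi)$ the descendant tree $[D(\mathbf o),\mathbf o]$ of the root is, by definition, an independent copy of $GW(\pi)$, while the non-descendant tree $[D^c(\mathbf o),\mathbf o]$ is assembled from independent ingredients (the geometric ancestor chain, the size-biased offspring $\hat\pi$ of the ancestors, and the independent $GW(\pi)$ bushes hanging from the ancestors). Thus $[D(\mathbf o),\mathbf o]$ is independent of $[D^c(\mathbf o),\mathbf o]$, and in particular $d_1(\mathbf o)$ is independent of $[D^c(\mathbf o),\mathbf o]$.

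For the \emph{if} direction, let $\pi$ be the law of $d_1(\mathbf o)$. The key mass-transport identity comes from $h([T,u,v])=\mathbf 1\{v=F(u)\}\,\mathbf 1_B([D(u),u])$, which under (1) yields
\[
\mathbb E\big[\mathbf 1\{\mathbf o\text{ has a parent}\}\,\mathbf 1_B([D(\mathbf o),\mathbf o])\big]=\mathbb E\Big[\sum_{c:\,F(c)=\mathbf o}\mathbf 1_B([D(c),c])\Big]
\]
for every measurable set $B$ of rooted Family Trees. Taking $B$ to be everything gives $\mathbb P[\mathbf o\text{ has a parent}]=\mathbb E[d_1(\mathbf o)]=m(\pi)$; since $[\mathbf T,\mathbf o]$ is finite a.s., an infinite ancestor line is excluded, forcing $m(\pi)<1$, so that $TGWT(\pi)$ is well defined. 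Because both $[\mathbf T,\mathbf o]$ and $TGWT(\pi)$ are unimodular and the latter has descendant tree $GW(\pi)$, Lemma~\ref{20230303153138} reduces the whole direction to the single claim that the descendant tree $[D(\mathbf o),\mathbf o]$ has law $GW(\pi)$.

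To prove that claim I would argue by induction on the height of the descendant tree that $[D(\mathbf o),\mathbf o]$ obeys the Galton-Watson branching property: conditionally on $d_1(\mathbf o)=n$, the subtrees $[D(c_1),c_1],\dots,[D(c_n),c_n]$ are i.i.d.\ with the common law of $[D(\mathbf o),\mathbf o]$. The base case is exactly the definition of $\pi$, and, using that $\{\mathbf o\text{ has a parent}\}$ is a function of the non-descendant tree, hypothesis (2) decouples it from $d_1(\mathbf o)$ and anchors the root offspring law as $\pi$. The inductive step refines the transport above by carrying, together with $[D(u),u]$, the joint descendant/non-descendant configuration of a child $u$; unimodularity then transfers the generation-$1$ independence (2) one level down, showing that sibling subtrees decouple and are each distributed as the root's own descendant tree. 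Propagating the i.i.d.\ sibling structure from the single-generation independence hypothesis is the crux and the main obstacle of the proof; it follows the pattern of the $EGWT$ characterization (Theorem~\ref{theorem_characterisation_egwt}) in \cite{baccelliEternalFamilyTrees2018a}, the finite subcritical setting here playing the role of the critical eternal one there. Finally, the uniform order among the children of each vertex is recovered exactly as in Step~4 of Theorem~\ref{20230213191656}, since independence of the offspring count from the surrounding structure leaves the order uniform.
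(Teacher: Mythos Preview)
Your approach is essentially the same as the paper's: both directions rest on Lemma~\ref{20230303153138}, and the hard implication is an induction (on generation depth) that peels off one child's descendant subtree at a time via a mass transport re-rooting to that child. Your \emph{only if} direction is actually slightly cleaner than the paper's, which routes unimodularity through the random-walk representation (Theorem~\ref{20230213191656}) rather than directly through Proposition~\ref{prop_tgwt_is_typical} as you do.

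One point deserves sharpening. Your stated inductive claim is only that, conditionally on $d_1(\mathbf o)=k$, the subtrees $[D(c_j),c_j]$ are i.i.d.\ copies of $[D(\mathbf o),\mathbf o]$. But the induction does not close with that statement alone: after you re-root to the child $c_j$, what remains on the other side is not merely the other siblings' subtrees but the full non-descendant tree of $c_j$, which mixes the original $D^c(\mathbf o)$ with the remaining siblings. The paper's inductive hypothesis is therefore the full factorisation
\[
\mathbb{P}[A]=\mathbb{P}[d_1(\mathbf o)=k]\,\mathbb{P}[D^c(\mathbf o)\in A']\prod_{i=1}^k\mathbb{P}[D(\mathbf o)\in A_i],
\]
carrying the $A'$ factor throughout; hypothesis~(2) is exactly what anchors the base case $n=0$ of this stronger statement. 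Your phrase ``joint descendant/non-descendant configuration'' suggests you see this, but your formal inductive statement should be upgraded accordingly, and the transport function you want is the paper's $h_j([T,o])=\mathbf 1_A([T,F(o)])\,\mathbf 1\{o=c_j(F(o))\}$, applied successively for $j=1,\dots,k$. Also, no separate argument for the uniform order is needed here: once you work with ordered events $D(c_j(\mathbf o))\in A_j$ indexed by the child's rank, the order is built into the factorisation, so the reference to Step~4 of Theorem~\ref{20230213191656} is unnecessary.
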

\begin{proof}
    Let $[\mathbf{T},\mathbf{o}]$ be a random Family Tree whose distribution is $TGWT(\pi)$, where $\pi$ is a probability measure on $\{0,1,2,3,\cdots\}$ with mean $m(\pi)<1$.
    Consider an i.i.d. sequence $X=(X_n)_{n \in \mathbb{Z}}$ of random variables whose common distribution is given by $\mathbb{P}[X_0=n] = \pi(n+1)$ for all $n \in \{-1,0,1,2,\cdots\}$, and let $[\mathbb{Z},0,X]$ be its network.
    Since $\mathbb{E}[X_0]<0$, by Theorem~\ref{20230213191656}, the connected component of $0$ in the $R$-graph of $[\mathbb{Z},0,X]$ is $TGWT(\pi)$.
    Therefore, $[\mathbf{T},\mathbf{o}]$ is unimodular by Lemma~\ref{lemma_f_graph_unimodular}.
    The second condition is satisfied by $TGWT$, which follows from its construction.

    It is now shown that if a random finite Family Tree $[\mathbf{T},\mathbf{o}]$ satisfies the above conditions $1$ and $2$, then it is a $TGWT$.

    Let $E$ denote the event that $o$ has a parent.
    For a positive integer $k$, let $A = (A';A_1,A_2,\cdots,A_k)$ be an event of the form
    \begin{equation}\label{eq_event_comp_child}
        d_1(o)=k, D^c(o) \in A', D(c_1(o)) \in A_1,\cdots, D(c_k(o)) \in A_k.
    \end{equation}
    By Lemma~\ref{20230303153138}, any unimodular Family Tree is characterized by the descendant tree of its root.
    So, it is sufficient to prove that
    \begin{equation} \label{eq:1_2.35}
        \mathbb{P}[A] = \mathbb{P}[d_1(\mathbf{o})=k] \mathbb{P}[D^c(\mathbf{o}) \in A'] \left(\prod_{i=1}^{k}\mathbb{P}[D(\mathbf{o}) \in A_i]\right),
    \end{equation}
    for any such event \(A\) of the form given by Eq (\ref{eq_event_comp_child}).
    Assume further that the events $A_1,A_2,\cdots,A_k$ depend only on the descendant tree up to the \(n\)-th generation of $o$, which is the union of \(D_1(o)\), \(D_2(o)\), \(\cdots\), \(D_n(o)\).
    It suffices to show Eq.~(\ref{eq:1_2.35}) for such events because of the local topology.

    The result is proved by induction on $n$.
    For $n=0$, one has $\mathbb{P}[A]= \mathbb{P}[d_1(\mathbf{o})=k]\mathbb{P}[D^c(\mathbf{o}) \in A']$ by condition $2$ of the hypothesis.
    So, assume that the result is true for $n-1$.   
    Let $n \geq 1$ and $1 \leq j \leq k$.
    For any Family Tree $[T,o]$, let $h_j([T,o]):= \mathbf{1}_A[T,F(o)] \mathbf{1}\{o = c_j(F(o))\}$.
    Then, for any $1 \leq j \leq k$,
  \begin{equation} \label{eq:2_2.35}
      \mathbf{1}_A[T,o] = \sum_{u \in D_1(o)} \mathbf{1}_A[T,F(u)] \mathbf{1}\{u=c_j(o)\} = \sum_{u \in D_1(o)}h_j([T,u]).
  \end{equation}
  Note that, since \(k \geq 1\), the event \(\{[T,o] \in A\}\) is a subset of the event \(\{D_1(o)\not = \emptyset\}\)
  Applying Eq.~(\ref{eq:2_2.35}) for $j=1$, one gets
  \begin{align}
    \mathbb{P}[A] &= \mathbb{E}\left[\sum_{u \in V(\mathbf{T})} \mathbf{1}_A[\mathbf{T},F(u)] \mathbf{1}\{u = c_1(\mathbf{o})\} \mathbf{1}\{u \in D_1(\mathbf{o})\}\right] \notag\\
    &= \mathbb{E}\left[\sum_{u \in V(\mathbf{T})}\mathbf{1}_A[\mathbf{T},F(\mathbf{o})] \mathbf{1}\{\mathbf{o} = c_1(u)\} \mathbf{1}\{\mathbf{o} \in D_1(u)\}\right] \label{eq:3_2.35}\\
    &= \mathbb{E}\left[\mathbf{1}_A[\mathbf{T},F(\mathbf{o})] \mathbf{1}\{ \mathbf{o} \in E\}  \mathbf{1}\{\mathbf{o} = c_1(F(\mathbf{o}))\} \right] \notag\\
    &= \mathbb{P}[\mathbf{o} \in E, \mathbf{o} = c_1(F(\mathbf{o})),[\mathbf{T},F(\mathbf{o})] \in A ]\label{eq:4_2.35}\\
    &= \mathbb{P}[D(\mathbf{o}) \in A_1, \mathbf{o} \in E, \mathbf{o} = c_1(F(\mathbf{o})),[\mathbf{T},F(\mathbf{o})] \in A(A';\mathcal{T}_*,A_2,\cdots,A_k) ] \notag.
  \end{align}
  In the above, Eq.~(\ref{eq:3_2.35}) follows from unimodularity.
  Note that $D(o) \in A_1$ depends on one generation less that of $[T,F(o)] \in A$.
  Therefore, by induction,
  \begin{align*}
    \mathbb{P}[A]
    &= \mathbb{P}[D(\mathbf{o}) \in A_1] \mathbb{P}[\mathbf{o} \in E, \mathbf{o} = c_1(F(\mathbf{o})),[\mathbf{T},F(\mathbf{o})] \in A(A';\mathcal{T}_*,A_2,\cdots,A_k)]\\
    &= \mathbb{P}[D(\mathbf{o}) \in A_1] \mathbb{P}[A(A';\mathcal{T}_*,A_2,\cdots,A_k)].
  \end{align*}
  The last equation is obtained by Eq.~(\ref{eq:4_2.35}).

  Applying Eq.(\ref{eq:2_2.35}) with $j=2$ to $A(A';\mathcal{T}_*,A_2,\cdots,A_k)$, one gets
  \begin{align*}
    \mathbb{P}[A&(A';\mathcal{T}_*,A_2,\cdots,A_k)]\\
     &= \mathbb{P}[D(\mathbf{o}) \in A_2]\mathbb{P}[\mathbf{o} \in E, \mathbf{o} = c_1(F(\mathbf{o})), [\mathbf{T},F(\mathbf{o})] \in A(A';\mathcal{T}_*,\mathcal{T}_*,\cdots,A_k)]\\
    &=\mathbb{P}[D(\mathbf{o}) \in A_2]\mathbb{P}[A(A';\mathcal{T}_*,\mathcal{T}_*,\cdots,A_k)].
  \end{align*}
 By iterating the same procedure for $j=3,4,\cdots$, one gets
 \begin{align*}
    \mathbb{P}[A] &= \left(\prod_{i=1}^{k} \mathbb{P}[D(\mathbf{o}) \in A_i]\right) \mathbb{P}[d_1(\mathbf{o})=k, D^c(\mathbf{o}) \in A']\\
    &= \left(\prod_{i=1}^{k} \mathbb{P}[D(\mathbf{o}) \in A_i]\right) \mathbb{P}[d_1(\mathbf{o})=k] \mathbb{P}[ D^c(\mathbf{o}) \in A'].
 \end{align*}
Thus, Eq.~(\ref{eq:1_2.35}) holds, which completes the proof. 
 \end{proof}

 \subsubsection{Zero mean}\label{subsec:r_graph_egwt}
 \begin{theorem} \label{theorem:R-graph_egwt}
    Let $X=(X_n)_{n \in \mathbb{Z}}$ be a sequence  of random variables satisfying the conditions of Eq.~\ref{eq_skip_free_condition}, $(\mathbb{Z},X)$ be its associated network, and $[\mathbf{T},0]$ be the connected component of $0$ in the record graph of $(\mathbb{Z},X)$.
    If $\mathbb{E}[X_0]=0$, then $[\mathbf{T},0]$ is the ordered Eternal Galton-Watson Tree $EGWT(\pi)$ with offspring distribution $\pi \stackrel{\mathcal{D}}{=}X_{0}+1$.
  \end{theorem}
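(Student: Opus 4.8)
The plan is to identify $[\mathbf{T},0]$ as an $EGWT(\pi)$ by verifying the hypotheses of the characterization Theorem~\ref{theorem_characterisation_egwt}: that $[\mathbf{T},0]$ is a unimodular EFT whose root offspring count is independent of its non-descendant tree, and then to pin down the offspring distribution and the uniform order among children. First I would record the easy structural facts. Since $[\mathbb{Z},0,X]$ is unimodular, Lemma~\ref{lemma_f_graph_unimodular} gives that $[\mathbf{T},0]$ is unimodular and, conditionally on being infinite, a unimodular EFT. Because $\mathbb{E}[X_0]=0$, Proposition~\ref{prop_r_graph_of_iid_II} shows the record graph is of class $\mathcal{I}/\mathcal{I}$, hence $[\mathbf{T},0]$ is a.s. infinite and is a unimodular EFT. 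The offspring law is read off from Proposition~\ref{20230128172431}: as $\mathbb{E}[X_0]\le 0$, the descendant tree $[D(0),0]$ is $GW(\pi)$ with $\pi\overset{\mathcal D}{=}X_0+1$; in particular $d_1(0)\sim\pi$, with $m(\pi)=\mathbb{E}[X_0]+1=1$, and $\pi(1)=\mathbb{P}[X_0=0]<1$ since $\mathbb{P}[X_0=-1]>0$.

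The heart of the argument is the independence $d_1(0)\perp D^c(0)$. Being of class $\mathcal{I}/\mathcal{I}$, every vertex has finitely many descendants, so $L_X(0)>-\infty$ a.s., and the first part of Lemma~\ref{lemma_offspring_count} gives $d_1(0)=X_{-1}+1$ a.s. By Lemma~\ref{lemma_descendants} the descendant set of $0$ is the interval $[L_X(0),0]$, and the computation at the end of the proof of Lemma~\ref{lemma_offspring_count} yields the regeneration identities $S_{L_X(0)}=0$, $S_{L_X(0)-1}=1$, and $X_{L_X(0)-1}=-1$. It is convenient to read the past through the reversed walk $S'_k=-S_{-k}$, whose increments $X'_k=X_{-1-k}$ are i.i.d. with the law of $X_0$ (so $S'$ is again skip-free to the left, and recurrent since $c=1$ at mean $0$); then $[D(0),0]$, and hence $d_1(0)=X'_0+1$, is a measurable function of the initial excursion of $S'$ up to its first hitting time $T_{-1}=\inf\{k:S'_k=-1\}$, which is finite a.s.

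I would then show that $D^c(0)$ is a function of data independent of this excursion. The ancestral spine $0,R(0),R^2(0),\dots$ and the bushes attached to the right of $0$ depend only on the future increments $(X_n)_{n\ge 0}$, which are independent of the entire past. Moreover, by the claim proved inside Lemma~\ref{lemma_descendants}, any vertex $j<L_X(0)$ satisfies $R(j)<L_X(0)$ or $R(j)>0$, so the vertices strictly below $L_X(0)$ (the ``far past'') never attach inside $[L_X(0),0]$ but only to the spine; their configuration and attachment heights are governed by $(S'_{T_{-1}+k})_{k\ge 0}$, which by the strong Markov property at $T_{-1}$ is independent of the initial $S'$-excursion. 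Thus the pair $(\text{future},\text{far past})$ is jointly independent of the initial excursion, and since $D^c(0)$ is a function of this pair (the level-$0$ gluing $S_{L_X(0)}=S_0=0$ ensuring no dependence on the excursion interior), it is independent of $d_1(0)$. Applying Theorem~\ref{theorem_characterisation_egwt} then identifies the unordered $[\mathbf{T},0]$ as $EGWT(\pi)$.

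Finally I would upgrade this to the \emph{ordered} $EGWT(\pi)$ by checking that, conditionally on the tree shape, the integer order among the children of each vertex is uniform; this is precisely the computation of Step~4 in the proof of Theorem~\ref{20230213191656}, transplanted to the critical regime $c=1$, showing that the rank of $0$ among the children of $R(0)$ is uniform, with the analogous statement propagating along the spine. I expect the main obstacle to be making the regeneration decomposition rigorous: one must verify carefully that none of the constituents of $D^c(0)$ (spine, right-bushes, far-past subtrees, and their attachment points) depends on the internal increments of the initial $S'$-excursion, which is exactly what the strong Markov property at $T_{-1}$ combined with the identity $S_{L_X(0)}=S_0=0$ is designed to guarantee.
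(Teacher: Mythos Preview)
Your proposal is correct and takes essentially the same approach as the paper: both invoke the characterization Theorem~\ref{theorem_characterisation_egwt} after establishing, via the strong Markov property for the time-reversed walk at the stopping time $-L_X(0)$, that the (ordered) descendant tree $D(0)$---and hence $d_1(0)$---is independent of the non-descendant tree $D^c(0)$. The only cosmetic difference is that the paper argues the independence directly at the level of ordered trees (so the order comes for free), whereas you first apply the characterization in the unordered setting and then separately verify the uniform order by transplanting Step~4 of Theorem~\ref{20230213191656} to the recurrent case $c=1$.
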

  \begin{proof}
    It is first shown that \([\mathbf{T},0]\) satisfies the condition of Theorem~\ref{theorem_characterization_egwt}.
    Observe that $L_X(0)$ is a stopping time for the random walk with increments $(X_{-n})_{n\geq 1}$ and \(-\infty<L_X(0)\) a.s..
    By the strong Markov property, the random variables $Y=(X_{-n})_{n > -L_X(0)}$ are independent of  $Z=(X_{-n})_{n=1}^{n=L_X(0)}$.
    Let $(\mathbf{T}',0)$ denote the non-descendant tree of \(o\), i.e., the tree generated by $(\mathbf{T}\backslash D(0))\cup \{0\}$.
    By Lemma~\ref{lemma:descendants}, the subtree $(D(0),0)$ is a function of $Z$ and the subtree $(\mathbf{T}',0)$ is a function of $Y \cup (X_n)_{n \geq 0}$.
    So, the unordered tree $(D(0),0)$ is independent of the unordered tree $(\mathbf{T}',0)$.
    The second part of Lemma~\ref{lemma_offspring_count} implies that, conditionally on the event that $0$ has children, the order of a child is independent of $Y \cup (X_n)_{n \geq 0}$, whereas the order of any vertex in $(\mathbf{T}',0)$ is a function of $Y \cup (X_n)_{n \geq 0}$.
    Thus, the ordered subtree $(D(0),0)$ is independent of the ordered subtree $(\mathbf{T}',0)$.
    So, \([\mathbf{T},0]\) satisfies the sufficient condition of Theorem~\ref{theorem_characterization_egwt}.
    
    The first statement of Lemma~\ref{lemma_offspring_count} implies that $d_1(0) = X_{-1}+1 \overset{\mathcal{D}}{=}X_0+1$, and thus completes the proof.
  \end{proof}

  \subsubsection{Positive mean}

  Let $X=(X_n)_{n \in \mathbb{Z}}$ be an i.i.d. sequence that satisfies the conditions of Eq.~(\ref{eq_skip_free_condition}) with $0<\mathbb{E}[X_{0}]\leq \infty$, and $(\mathbb{Z},X)$ be its associated network.

Recall the following notations of Subsection~\ref{subsec_properties_skip_free}.
Let $\eta_{-1}$ be the hitting time at \(-1\) defined as in Eq.~(\ref{eq_eta_j}) for \(j=-1\), and $c = \mathbb{P}[\eta_{-1}< \infty]$ be the probability that the hitting time at \(-1\) is finite.
Let $[\mathbf{T},\mathbf{o}]$ be the connected component of \(0\) (with root $0 = \mathbf{o}$) in the record graph of $(\mathbb{Z},X)$.

Let $\tilde{\pi},\bar{\pi}$ be the probability distributions on \(\{0,1,2,\cdots\}\) given by:
\begin{align} \label{eq:pi_defn}
 \tilde{\pi}(k) &= c^{k-1} \mathbb{P}[X_{0}=k-1],\\ \nonumber
 \bar{\pi}(k)&= \mathbb{P}[X_{0}\geq k] c^{k}.
\end{align}

\begin{remark}
  The fact that \(\bar{\pi}\) is a probability distribution follows from Eq.~(\ref{eq_pi_bar_distribution}), in the proof of Theorem~\ref{r_graph_positive_drift_20230203174636}.
  Similarly, it follows from Eq.~(\ref{eq_pi_tilde_distribution}) that \(\tilde{\pi}\) is a probability distribution.
\end{remark}

Recall the construction bi-variate Eternal Kesten Tree (see Subsection~\ref{subsec_bi_variate_ekt}).
Let $[\mathbf{T}_0,\mathbf{o}_0]$ be a Family Tree whose distribution is given by the following: the offspring distribution of $\mathbf{o}_0$ is \(\bar{\pi}\).
The descendant trees of the children of $\mathbf{o}_0$ are independent Galton-Watson Trees with offspring distribution $\tilde{\pi}$, and they are independent of \(d_1(\mathbf{o}_0)\).
Let \(([\mathbf{T}_i,\mathbf{o}_i])_{i \in \mathbb{Z}}\) be an i.i.d. sequence of Family Trees.
Recall that the distribution of the Family Tree obtained from the typically re-rooted joining of \(([\mathbf{T}_i,\mathbf{o}_i])_{i \in \mathbb{Z}}\) is the unimodularised \(EKT(\bar{\pi},\tilde{\pi})\).
Since the mean \(m(\tilde{\pi})<1\) (by Lemma~\ref{20230116135048}), the typically re-rooted joining operation is well-defined for this sequence.
Recall the ECS order on the joining of \(([\mathbf{T}_i,\mathbf{o}_i])_{i \in \mathbb{Z}}\) (see Subsection~\ref{subsec_bi_variate_ekt}).

\begin{theorem}\label{r_graph_positive_drift_20230203174636}
   If  \(0<\mathbb{E}[X_{0}]\leq \infty\), then the distribution of $[\mathbf{T}, \mathbf{o}]$ is the unimodularised ECS ordered \(EKT(\bar{\pi},\tilde{\pi})\).
\end{theorem}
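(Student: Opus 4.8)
The plan is to combine the structural classification of class $\mathcal{I}/\mathcal{F}$ unimodular EFTs (Theorem~\ref{thm_eft_I_f_joining}) with an explicit random-walk analysis of the bushes hanging off the bi-infinite path. First I would fix the macroscopic structure. Since $\mathbb{E}[X_0]>0$, ergodicity gives $S_n\to+\infty$ and $S_{-n}\to-\infty$, so by Theorem~\ref{thm_phase_transition_stationary} the record graph is connected and $[\mathbf{T},\mathbf{o}]$ is a unimodular EFT of class $\mathcal{I}/\mathcal{F}$ (unimodularity also following from Lemma~\ref{lemma_f_graph_unimodular}). Its bi-infinite path is the covariant set of vertices with infinitely many descendants, which by Lemma~\ref{lemma_descendants} and the relation $L_X(i)=-\infty \iff t_X(i)\geq 0$ is exactly the set of two-sided ladder points $\{i:S_i\geq S_m \text{ for all } m\leq i\}$; enumerate these as $(\mathbf{o}_n)_{n\in\mathbb{Z}}$. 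By Lemma~\ref{lemma_descendants} again $D(\mathbf{o}_n)=\{j:j\leq \mathbf{o}_n\}$, so the $n$-th bush $[\mathbf{T}_n,\mathbf{o}_n]=[D(\mathbf{o}_n)\setminus D(\mathbf{o}_{n-1}),\mathbf{o}_n]$ is carried by the integer interval $(\mathbf{o}_{n-1},\mathbf{o}_n]$ and $\mathbf{o}_{n-1}$ is its unique path child. Theorem~\ref{thm_eft_I_f_joining} then reduces the problem to identifying the law of the stationary bush sequence $([\mathbf{T}_n,\mathbf{o}_n])_{n\in\mathbb{Z}}$ and checking that $[\mathbf{T},\mathbf{o}]$ is its typically re-rooted joining carrying the ECS order.

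Next I would compute the within-bush offspring law of the root. Writing $\tau$ for the weak upper record epoch started at $\mathbf{o}_{n-1}$, one has $\mathbf{o}_n=\mathbf{o}_{n-1}+\tau$, $t_X(\mathbf{o}_n)=S_\tau$ and $X_{\mathbf{o}_n-1}=X_{\tau-1}$; hence by Lemma~\ref{20230405112810} the number of children of $\mathbf{o}_n$ inside the bush is $d_1(\mathbf{o}_n,\mathbf{T})-1=X_{\tau-1}-S_\tau$. Summing the joint law of Lemma~\ref{20230119141633} over $\{X_{\tau-1}-S_\tau=l\}$ gives
\[
\mathbb{P}[X_{\tau-1}-S_\tau=l]=\sum_{k\geq l}p_k c^l=c^l\,\mathbb{P}[X_0\geq l]=\bar{\pi}(l),
\]
so the bush root reproduces with law $\bar{\pi}$ (this also exhibits $\bar{\pi}$ as a probability distribution). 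For the descendant trees of the non-path children I would reverse the walk at $\mathbf{o}_n$ and invoke Lemma~\ref{20230113184856}: each excursion between two consecutive children is the walk stopped at $\eta_{-1}$ conditioned on $\eta_{-1}<\infty$, i.e.\ a stopped copy of the negative-drift $h$-transformed walk $\hat{X}$ with increment law $\hat{p}_k=p_kc^k$. Applying Proposition~\ref{20230128172431} (through the Remark following it) to $\hat{X}$ then identifies each such tree as an independent $GW(\tilde{\pi})$ with $\tilde{\pi}(k)=\hat{p}_{k-1}=c^{k-1}\mathbb{P}[X_0=k-1]$, the conditional independence given the offspring count holding because the successive child positions are stopping times for the reversed walk, exactly as in Steps~4--5 of Theorem~\ref{20230213191656}.

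I would then assemble the pieces: the strong Markov property at the ladder epochs $(\mathbf{o}_n)$ makes the bushes i.i.d., each with root offspring $\bar{\pi}$ and i.i.d.\ $GW(\tilde{\pi})$ descendant subtrees independent of that count, which is precisely the bush law defining $EKT(\bar{\pi},\tilde{\pi})$. Since $m(\tilde{\pi})<1$ by Lemma~\ref{20230116135048}, the typically re-rooted joining is well defined, and comparison with Theorem~\ref{thm_eft_I_f_joining} yields that $[\mathbf{T},\mathbf{o}]$ is the unimodularised $EKT(\bar{\pi},\tilde{\pi})$. For the order, $\mathbf{o}_{n-1}$ is the integer-smallest, hence by Lemma~\ref{lemma_rls_order} the RLS-smallest, child of $\mathbf{o}_n$, which is the defining property of the ECS order, while the remaining children inherit the uniform order on siblings by the uniformity argument of Step~4 of Theorem~\ref{20230213191656}. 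The main obstacle is the middle step: correctly recognising the inter-ladder excursions as stopped copies of the $h$-transformed walk and thereby pinning down $\bar{\pi}$, $\tilde{\pi}$ and the independence structure simultaneously; the outer reduction via Theorem~\ref{thm_eft_I_f_joining} and the order identification are comparatively routine.
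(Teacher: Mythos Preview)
Your proposal is correct and follows essentially the same route as the paper: both reduce via Theorem~\ref{thm_eft_I_f_joining} to identifying the bush sequence, compute the bush-root offspring law $\bar{\pi}$ from Lemma~\ref{20230119141633} together with Lemma~\ref{20230405112810}, identify the non-path subtrees as $GW(\tilde{\pi})$ through the $h$-transform of Lemma~\ref{20230113184856} and Proposition~\ref{20230128172431}, and then check the ECS order. The only cosmetic difference is that the paper explicitly conditions on $\mathbf{o}\in\neswarrow$ first, proves the forward bushes $(\mathbf{T}'_i)_{i\geq 1}$ are i.i.d.\ by strong Markov, and then invokes stationarity from Theorem~\ref{thm_eft_I_f_joining} to extend to $i\in\mathbb{Z}$; you instead work directly with the two-sided ladder points and appeal to their regenerative structure, which amounts to the same thing but is slightly more compressed on the backward half.
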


\begin{proof}
   Since the random walk has positive drift, by  Theorem~\ref{thm_phase_transition_stationary}, the unimodular EFT $[\mathbf{T},\mathbf{o}]$ is of class $\mathcal{I}/\mathcal{F}$.
   Let $\mathbf{o} \in \neswarrow$ denote the event that $\mathbf{o}$ belongs to the bi-infinite path of $\mathbf{T}$.
   Let $[\mathbf{T}',\mathbf{o}']$ be the conditioned Family Tree obtained from $[\mathbf{T},\mathbf{o}]$ by conditioning on the event $\mathbf{o} \in \neswarrow$, denoted as \([\mathbf{T},\mathbf{o}]|\mathbf{o} \in \neswarrow\).
   By Theorem~\ref{thm_eft_I_f_joining}, $[\mathbf{T}',\mathbf{o}']$ is the joining of some stationary sequence of Family Trees $([\mathbf{T}'_i,\mathbf{o}'_i])_{i \in \mathbb{Z}}$.
   The proof is complete once it is shown that this sequence is i.i.d. (shown in \emph{Step 1}), that the following distributional equality holds: $[\mathbf{T}'_1,\mathbf{o}'_1] \overset{\mathcal{D}}{=} [\mathbf{T}_0,\mathbf{o}_0]$ (shown in \emph{Step 2}), and that \([\mathbf{T}',\mathbf{o}']\) has ECS order --- \(\mathbf{o}_i'\) is the smallest child of \(\mathbf{o}_{i+1}'\) in \(\mathbf{T}'\), for all \(i \in \mathbb{Z}\) (shown in \emph{Step 3}).
   
   \noindent \emph{Step 1, $([\mathbf{T}'_i,\mathbf{o}'_i])_{i \in \mathbb{Z}}$ is an i.i.d. sequence:}
   Since the increments have positive mean, $S_n \to +\infty$ as $n \to + \infty$ and $S_n \to - \infty$ as $n \to -\infty$.
   By Lemma~\ref{lemma:descendants}, $\mathbf{o} \in \neswarrow$ if and only if \(L_X(0) = \infty\), and the latter condition holds  if and only if $S_n \leq 0, \forall n \leq -1$.

   For any $u \in \mathbb{Z}$, let $\mathfrak{S}(u)$ denote the (random) subtree of the descendants of $u$ (including \(u\)) in $\mathbf{T}$. 
   The distribution of $([\mathbf{T}'_i,\mathbf{o}'_i])_{i \geq 1}$ is the same as that of 
   \[([\mathfrak{S}(F^n(\mathbf{o}))\backslash \mathfrak{S}(F^{n-1}(\mathbf{o})),F^n(\mathbf{o})])_{n \geq 1}|\mathbf{o} \in \neswarrow\]
   (distribution obtained by conditioning on \(\mathbf{o} \in \neswarrow\)), where \(F\) is the parent vertex-shift on \(\mathbf{T}\) with the convention that $F^0(\mathbf{o})= \mathbf{o}$ (and \(F^n(\mathbf{o})\) is the ancestor of order \(n\) of \(\mathbf{o}\)).
   On the event $\mathbf{o} \in \neswarrow$, each subtree $\mathfrak{S}(F^n(\mathbf{o}))\backslash \mathfrak{S}(F^{n-1}(\mathbf{o}))$ is completely determined by the part of the random walk
   \[(S_{R^{n-1}(0)+1}-S_{R^{n-1}(0)}, \ldots, S_{R^{n}(0)}-S_{R^{n-1}(0)}),\]
   for each $n \geq 1$ (see Figure~\ref{fig_I_f_trajectory} for an illustration).
The distribution of the sequence 
   \[((S_{R^{n-1}(0)+1}-S_{R^{n-1}(0)}, \ldots, S_{R^{n}(0)}-S_{R^{n-1}(0)}))_{n \geq 1}\]conditioned on $\{S_n \leq 0: n \leq -1\}$ is the same as that of the unconditioned sequence (the same sequence without the condition) because the sequence is independent of the event on which it is conditioned.
   Moreover, it is an i.i.d. sequence by the strong Markov property of the random walk.
   This implies that $([\mathbf{T}'_i,\mathbf{o}'_i])_{i \geq 1}$ is an i.i.d. sequence.
   \begin{figure}[h]
    \begin{center}
          \includegraphics[scale=0.7]{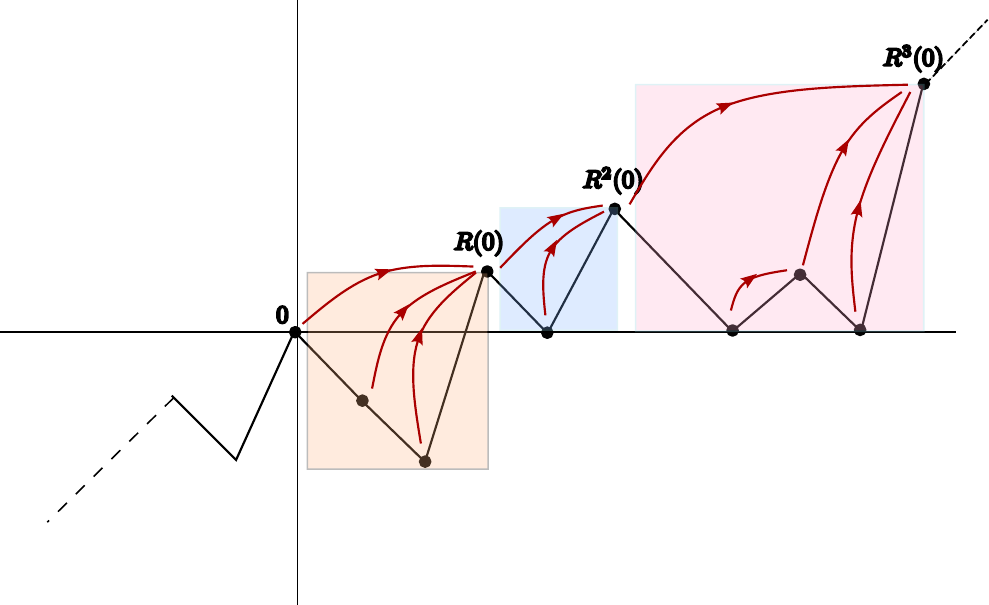}    
    \end{center}
        \caption{Illustration of the subtrees of ancestors of \(0\) on a trajectory conditioned on the event that \(S_n \leq 0\) for all \(n \leq 0\) when \(\mathbb{E}[X_0]>0\). Trajectories in the boxes correspond to the subtrees \(\mathbf{T}_1,\mathbf{T}_2, \mathbf{T}_3, \cdots\) (see Step 1 of Theorem~\ref{r_graph_positive_drift_20230203174636}).}  
        \label{fig_I_f_trajectory}
    \end{figure}

   Further, the stationarity of $([\mathbf{T}_i',\mathbf{o}_i'])_{i \in \mathbb{Z}}$ implies that $([\mathbf{T}_i',\mathbf{o}_i'])_{i \in \mathbb{Z}}$  is an i.i.d. sequence (the index set here is \(\mathbb{Z}\), whereas in the previous sentence it is \(i \geq 1\)).

   \noindent \emph{Step 2, \([\mathbf{T}'_1, \mathbf{o}'_1] \overset{\mathcal{D}}{=} [\mathbf{T}_0,\mathbf{o}_0]\):}
   It is first shown that the offspring distribution of $\mathbf{o}'_1$ is \(\bar{\pi}\).
   Let $\tau = \inf\{n>0: S_n \geq 0\}$ and \(\infty\) if \(S_n<0,\, \forall n>0\), and $X_{\tau-1}$, $S_{\tau}$ be the increment and location of the random walk at this time.
   Since the increments have positive mean, $\tau< \infty$ a.s., and therefore \(R(0)=\tau\) a.s. (if \(\tau=\infty\) then \(R(0)\) is defined to be \(0\)). 

   The equality of the following events holds:
   \[\{d_1(F(\mathbf{o}))=1\} = \{\tau = 1\} = \{X_{\tau-1} = S_{\tau}\} = \{S_1 \geq 0\}.\] 
   Therefore,
   \[\mathbb{P}[d_1(F(\mathbf{o}))=1, \mathbf{o} \in \neswarrow] = \sum_{j=0}^{\infty}(\mathbb{P}[X_0 = j] \mathbb{P}[ \mathbf{o} \in \neswarrow]) = \mathbb{P}[X_0 \geq 0] \mathbb{P}[\mathbf{o} \in \neswarrow].\]
   Consider now the event \(d_1(F(\mathbf{o}))=k\) with \(k>1\) (since \(0\) is the child of \(\tau\), the event \(\mathbb{P}[d_1(\tau) = 0]=0\)).
   Observe that, on the event $\mathbf{o} \in \neswarrow$, no negative integer is a child of $F(\mathbf{o})$.
   To see this, it is sufficient to show that \(L_X(0)=- \infty\). Because, the latter condition implies that all the non-negative integers are the descendants of \(0\) by Lemma~\ref{lemma:descendants}.
   So, none of them are the children of \(\tau\) (see Figure~\ref{fig_I_f_trajectory}).
   The latter condition is equivalent to \(S_n \leq 0,\, \forall n<0\), which is further equivalent to \(\mathbf{o} \in \neswarrow\).
   Thus, the observation follows. 
   This observation implies that \(0\) is the last child of \(\tau\).
   So, \(l_X(\tau) = 0\) and \(t_X(\tau) = S_{\tau} \geq 0\).
   Using Lemma~\ref{20230405112810}, one gets the relation \(d_1(\tau)=X_{\tau-1}+1-S_{\tau}\).
   Use this relation to get
   \[\mathbb{P}[d_1(F(\mathbf{o}))=k, \mathbf{o} \in  \neswarrow] = \sum_{j=k-1}^{\infty}\mathbb{P}[X_{\tau-1}=j, S_{\tau}= j-(k-1)] \mathbb{P}[\mathbf{o} \in \neswarrow].\]
   Then, apply Lemma~\ref{20230119141633} to the first term inside the sum to get 
   \[\mathbb{P}[X_{\tau-1}=j, S_{\tau}= j-(k-1)] = \mathbb{P}[X_0 = j]c^{k-1},  0<k-1 \leq j.\]
   Using this, one gets
   \begin{align*}
       \mathbb{P}[d_1(F(\mathbf{o}))=k, \mathbf{o} \in  \neswarrow] &= \sum_{j=k-1}^{\infty}\mathbb{P}[X_0=j]c^{k-1}\mathbb{P}[\mathbf{o} \in \neswarrow]\\
       &= \mathbb{P}[X_0 \geq k-1] c^{k-1} \mathbb{P}[\mathbf{o} \in \neswarrow].
   \end{align*}
   Thus, the offspring distribution of \(\mathbf{o}_1'\) in the tree \(T_1'\) is given by: for any $k \geq 1$,
   \begin{align} \label{eq_pi_bar_distribution}
    \mathbb{P}[d_1(\mathbf{o}_1',T_1')=k-1]=\mathbb{P}[d_1(\mathbf{o}_1',T') = k] &= \mathbf{P}[d_1(F(\mathbf{o}))=k|\mathbf{o} \in \neswarrow] \nonumber\\ 
    &= \mathbb{P}[X_0 \geq k-1]c^{k-1}\\
    &= \bar{\pi}(k-1), \nonumber
   \end{align}
   which proves the first result.

   Recall the construction of unimodularised bi-variate \(EKT(\alpha,\beta)\).
   Note that the offspring distributions \(\alpha, \beta\) need to have means \(m(\alpha)<\infty\) and \(m(\beta)<1\) to define the typically re-rooted joining of i.i.d. bi-variate \(GW(\alpha,\beta)\) Family Trees. 
   It is now shown that \(m(\bar{\pi})<\infty\) (since \(\alpha=\bar{\pi}\) here).
   This follows from the bound \(X_{\tau-1}-S_{\tau} \leq \tau\), which is a consequence of the skip-free to the left property.
   Since \(\mathbb{E}[X_0]>0\), the classical result on stopping time \cite{gutStoppedRandomWalks2009} implies \(\mathbb{E}[\tau]<\infty\).
   Thus, \(m(\bar{\pi}) = \mathbb{E}[X_{\tau-1} - S_{\tau}] \leq \mathbb{E}[\tau]<\infty\).
   The fact that \(m(\tilde{\pi})<1\) follows from Lemma~\ref{20230116135048}.

   It is now shown that, conditioned on $\{d_1(\mathbf{o}_1',T')=k\}$, the descendant subtrees of the children of $\mathbf{o}_1'$ are independent Galton-Watson Trees with offspring distribution $\tilde{\pi}$.

   On $\{d_1(F(\mathbf{o}))=k\}$, let $0=i_k<i_{k-1}< \cdots<i_1$ be the children of $F(\mathbf{o})$.
   Observe that, on the event $\{d_1(F(\mathbf{o}))=k\}$, for each $1 \leq j \leq k$, the part of the random walk $(0, S_{i_j-1}-S_{i_j},S_{i_j-2}-S_{i_j},\ldots, S_{i_{(j+1)}}-S_{i_j})$ is an excursion set because, by the third part of Lemma~\ref{lemma_offspring_count}, $S_{i_{(j+1)}}-S_{i_{j}}=1$ and $S_l \leq S_{i_{(j+1)}}$ for all $i_{(j+1)}<l \leq i_j$.
   Further, these excursion sets are mutually independent because the times $i_j$, for $1 \leq j \leq k$, are stopping times.
   Therefore, for each $j$ chosen above, $(0, S_{i_j-1}-S_{i_j},S_{i_j-2}-S_{i_j},\cdots, S_{i_{(j+1)}}-S_{i_j})$ obtained by conditioning on \(\{d_1(F(\mathbf{o}))=k\}\) is the conditioned skip-free to the right random walk of $(S_{-n})_{n \geq 0}$ conditioned on $\eta_{1}< \infty$ and stopped at $\eta_1$, where $\eta_1 = \min\{n >0: S_{-n}=1\}$ and \(-\infty\) if \(S_{-n}<1\) for all \(n>0\).
   By Lemma~\ref{20230113184856} (and applying the same to $(S_n)_{n \geq 0}$ and $\eta_{-1}$), the conditioned random walk is an unconditioned random walk $(\hat{S}_n)_{n \geq 0}$ whose increments $(\hat{X}_n)_{n \geq 1}$ have distribution $\mathbb{P}[\hat{X}_1=k] = \mathbb{P}[X_1=k]c^{k}$, for all $k \geq -1$.
   Moreover, by Lemma~\ref{20230116135048}, the random walk $(\hat{S}_n)_{n \geq 0}$ has negative drift.
   Therefore, by Proposition~\ref{20230128172431}, the descendant subtree of the children of \(\mathbf{o}_1'\) is a Galton-Watson Tree with offspring distribution $\tilde{\pi}$. 

   \noindent {\em Step 3, \(\mathbf{o}'_i\) is the smallest child of \(\mathbf{o}'_{i+1}\),\(\forall i \in \mathbb{Z}\):}
Observe that an integer \(j\) belongs to the bi-infinite path of \(\mathbf{T}\) if and only if \(t(j) \geq 0\).
This follows because, for any \(j \in \mathbb{Z}\), its type \(t(j)\) is non-negative  if and only if \(y(n,j) \geq 0\) for all \(n<j\).
The latter condition is satisfied if and only if \(L(j)=-\infty\).
By Lemma~\ref{lemma:descendants}, \(L(j)=-\infty\) if and only if \(j\) has infinitely many descendants.

For any $i \in \mathbb{Z}$ with \(t(i) \geq 0\), let $l(i) :=l_X(i)= \max\{m<i: y(m,i)=t(i)\}$ and $l^n(i):=l^{n-1}(l(i))$ for all $n>1$.
By Lemma~\ref{smallest_child_20230404185835}, it follows that $l(i)$ is the smallest among the children of $i$.

 On the event $\{S_n\leq 0 \quad \forall n<0\}$ (which is equivalent to $\mathbf{o} \in \neswarrow$), one has $t(0)\geq 0$.
 Since $S_m \leq S_{l^n(0)}$ for all $m \leq l^n(0), n \geq 1$, one has $t(l^n(0))\geq 0$ for all $n \geq 1$.
 This implies that \(\mathbf{o}'_{-n} = l^n(0)\), for all \(n \geq 1\).
 So, by the above discussion, on the event $\mathbf{o} \in \neswarrow$, one gets that $R(l^n(0))=l^{n-1}(0)$ for all $n \geq 1$ (with the notation $l^0(0)=0$).
 Moreover, on the same event, the vertices of the bi-infinite $F$-path of $\mathbf{T}$ are $\{l^n(0): n \geq 1\} \cup \{0\} \cup \{R^m(0): m \geq 1\}$. 
 Since, on the event $\mathbf{o} \in \neswarrow$, for any $m \geq 1$, $S_{R^m(0)}-S_n > S_{R^m(0)}-S_{R^{m-1}(0)} $ for all $R^{m-1}(0) < n < R^m(0)$, one has $l(R^m(0))=R^{m-1}(0)$ (with the notation that $R^0(0)=0$).
 Therefore, \(R^{m}(0) = \mathbf{o}_m', \ \forall m \geq 1\); \(l^n(0)\) is the last child of \(l^{n-1}(0)\), for all \(n \geq 1\); and \(R^m(0)\) is the last child of \(R^{m+1}(0)\), for all \(m \geq 0\).
 Thus, $\mathbf{o}'_i$ has the smallest order among the children of $\mathbf{o}'_{i+1}$ for all $i \in \mathbb{Z}$.
 Since the order of $(\mathbf{o}'_i)_{i\in \mathbb{Z}}$ characterizes the order on $[\mathbf{T}', \mathbf{o}']$, this completes Step 3.

\end{proof}

\section{Record representable EFTs}

This section focuses on the following question:
Given a unimodular ordered EFT \([\mathbf{T},\mathbf{o}]\), does there exist a stationary sequence \(Y=(Y_n)_{n \in \mathbb{Z}}\) of random variables taking values in \(\{-1,0,1,2,\ldots\}\) such that \([\mathbf{T},\mathbf{o}]\) is the component of \(0\) in the record graph of the network \((\mathbb{Z},Y)\)?
If such a sequence exists, then \([\mathbf{T},\mathbf{o}]\) is considered to be {\em record representable}.
Theorem~\ref{thm_representation} provides sufficient conditions for \([\mathbf{T},\mathbf{o}]\) to be record representable.
In particular, it is shown that every unimodular ordered EFT of class \(\mathcal{I}/\mathcal{I}\) is record representable.
As for the unimodular ordered EFTs of class \(\mathcal{I}/\mathcal{F}\), it is shown that if they have unique succession lines, then they are record representable.

Theorem~\ref{thm_representation} is proved in the following way: first, it is shown that on any unimodular ordered EFT, the functions \(a\) and \(b\) (given in Def.~\ref{defn_a_b}) are well-defined.
In an ordered EFT, the function \(a\) maps a vertex to its immediate predecessor, whereas the function \(b\) maps a vertex to its immediate successor.
Next, it is shown that every unimodular ordered EFT has at most two succession lines.
A backward map \(\Phi_R\) is defined on the set of ordered Family Trees that have at most one bi-infinite \(F\)-path.
The image of the map \(\Phi_R\) is a sequence that encodes the number of offsprings along the succession line.
This map extends the coding of finite ordered trees defined in \cite{legallRandomTreesApplications2005b}.
It is shown that \(\Phi_R\) is bijective on the set of ordered EFTs that have unique succession lines.
The stationary sequence \(Y\) is obtained by taking the image of \([\mathbf{T},\mathbf{o}]\) under  \(\Phi_R\).
The stationarity of \(Y\) follows by the Point-stationarity Theorem~\ref{thm_point_stationarity}.

An immediate corollary of Theorem~\ref{thm_representation} is that if \(X=(X_n)_{n \in \mathbb{Z}}\) is a stationary sequence of random variables taking values in \(\mathbb{Z}\), \((\mathbb{Z},X)\) be its associated network, \(f\) is a vertex-shift on \((\mathbb{Z},X)\) and \([\mathbb{Z}^f_X(0),0]\) be the component of \(0\) in the \(f\)-graph of \((\mathbb{Z},X)\) such that \([\mathbb{Z}^f_X(0),0]\) has a unique succession line, then there exist a stationary sequence \(Y=(Y_n)_{n \in \mathbb{Z}}\) of random variables taking values in \(\{-1,0,1,2,\ldots\}\) such that the component \([\mathbb{Z}^R_Y(0),0]\) of $0$ in the record graph of the network \((\mathbb{Z},Y)\) has the same distribution as that of \([\mathbb{Z}^f_X(0),0]\).
In particular, if \([\mathbb{Z}^f_X(0),0]\) is of class \(\mathcal{I}/\mathcal{I}\), then such a stationary sequence \(Y\) exists.
The vertex-shift \(f\) on \((\mathbb{Z},X)\) is said to be {\em record representable} if \([\mathbb{Z}^f_X(0),0]\) is record representable.

\begin{theorem}\label{thm_representation}
    Let \([\mathbf{T},\mathbf{o}]\) be a unimodular ordered EFT such that \(\mathbf{T}\) has a unique succession line.
    Then, there exists a stationary sequence \(Y=(Y_n)_{n \in \mathbb{Z}}\) of random variables taking values in \(\{-1,0,1,2,\cdots\}\) such that \([\mathbf{T},\mathbf{o}] \overset{\mathcal{D}}{=}[\mathbb{Z}^R_Y(0),0]\), where \(R\) is the record vertex-shift and \(\mathbb{Z}^R_Y(0)\) is the component of \(0\) in the record graph \(\mathbb{Z}^R_Y\) of the network \((\mathbb{Z},Y)\).
    In particular, if \([\mathbf{T},\mathbf{o}]\) is of class \(\mathcal{I}/\mathcal{I}\), then such a stationary sequence \(Y\) exists.
\end{theorem}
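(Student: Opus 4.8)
The plan is to construct an explicit, covariant, and \emph{bijective} coding $\Phi_R$ between ordered Eternal Family Trees (EFTs) with a unique succession line and bi-infinite sequences in $\{-1,0,1,2,\ldots\}^{\mathbb{Z}}$, to set $Y := \Phi_R([\mathbf{T},\mathbf{o}])$, and to deduce the stationarity of $Y$ from the Point-stationarity Theorem~\ref{thm_point_stationarity}. The coding extends the classical Lukasiewicz depth-first encoding of finite ordered trees (cf.~\cite{legallRandomTreesApplications2005b}) to the bi-infinite setting, by reading off the number of children of the vertices met along the succession line. The distributional identity $[\mathbf{T},\mathbf{o}]\overset{\mathcal{D}}{=}[\mathbb{Z}^R_Y(0),0]$ will then say precisely that $\Phi_R$ is an inverse to the record construction, transported through the push-forward of measures.

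First I would fix the combinatorial backbone. By Definition~\ref{defn_a_b} the map $a$ sending a vertex to the next vertex of its succession line (so that $u_{n+1}=a(u_n)$) and its inverse $b$ are mutually inverse vertex-shifts wherever defined, and Lemma~\ref{lemma:b_exists} guarantees $b$ is total when there is no smallest vertex. On a unimodular ordered EFT I would argue, via Proposition~\ref{prop_injective_bijective} (an immediate successor in a total order is injective, hence a.s.\ bijective on a unimodular network), that $a$ is a.s.\ a bijection, so that iterating $a$ and $b$ from $\mathbf{o}=u_0$ yields a genuinely bi-infinite succession line $(u_n)_{n\in\mathbb{Z}}$. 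The hypothesis that this succession line is unique is then used to show that $(u_n)$ enumerates every vertex of $\mathbf{T}$ exactly once; crucially, along a bi-infinite $F$-path (present exactly in class $\mathcal{I}/\mathcal{F}$), uniqueness forces the spine-predecessor of each path vertex to be its \emph{smallest} child, for otherwise iterating $b$ would terminate inside the finite bush of a smaller child and the tree would carry a second succession line. This is where the phrase ``at most one bi-infinite $F$-path'' and the uniqueness assumption do their work.

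With the succession line available I would define $\Phi_R([\mathbf{T},\mathbf{o}])=Y$ by $Y_n := d_1(u_{n+1})-1$, the number of children of $u_{n+1}$ minus one. The heart of the proof is to verify that $\Phi_R$ inverts the record construction: applying the record vertex-shift to $Y$, rooting at $0$, and ordering children by the integer order returns $[\mathbf{T},\mathbf{o}]$. Here I would invoke Lemma~\ref{lemma_rls_order} to identify the RLS order on the record graph of $Y$ with the integer order (so that position $n$ is $u_n$), Lemmas~\ref{lemma_descendants} and \ref{lemma_interval_property} to control the descendant sets, and Lemmas~\ref{lemma_offspring_count}, \ref{20230405112810}, \ref{smallest_child_20230404185835} to recover offspring counts and the order among children from $Y$. \textbf{The step I expect to be the main obstacle} is the class $\mathcal{I}/\mathcal{F}$: one must prove that in the canonical sequence $Y$ every vertex lying on the bi-infinite path has type exactly $0$, so that the type correction $t_x(i)$ in Lemma~\ref{20230405112810} vanishes and $d_1(i)=Y_{i-1}+1$ is recovered, while Lemma~\ref{smallest_child_20230404185835} simultaneously reproduces the ECS-style ordering of the spine that uniqueness forced above. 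Establishing this round-trip — rather than a mere surjection — is exactly what makes $\Phi_R$ a bijection, and it is the technically delicate portion of the argument.

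Finally, to obtain stationarity I would exploit the covariance of $\Phi_R$. Re-rooting $[\mathbf{T},\mathbf{o}]$ from $u_0$ to $u_1=a(\mathbf{o})$ corresponds under $\Phi_R$ precisely to applying the shift $T$ to $Y$, since $\Phi_R(\theta_a[\mathbf{T},\mathbf{o}])=T(\Phi_R[\mathbf{T},\mathbf{o}])$. As $a$ is an a.s.\ bijective vertex-shift on the unimodular EFT $[\mathbf{T},\mathbf{o}]$, Theorem~\ref{thm_point_stationarity} gives that $\theta_a$ preserves the law of $[\mathbf{T},\mathbf{o}]$; transporting this invariance through $\Phi_R$ shows that the law of $Y$ is invariant under $T$, i.e.\ $Y$ is stationary. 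Because $\mathbf{T}$ is an EFT, the root $0=u_0$ has infinitely many ancestors in the record graph of $Y$, so Lemma~\ref{lemma_stationary_one_component} makes this record graph a.s.\ connected; combined with the bijectivity of $\Phi_R$, its component of $0$ equals $[\mathbf{T},\mathbf{o}]$ in distribution. The last assertion follows since a class-$\mathcal{I}/\mathcal{I}$ unimodular ordered EFT is one-ended and carries no bi-infinite $F$-path, whence its succession line is automatically unique and the general statement applies.
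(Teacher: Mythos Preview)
Your overall architecture matches the paper's exactly: define $\Phi_R$ by $Y_n=d_1(u_{n+1})-1$ along the succession line, prove $\Psi_R\circ\Phi_R=\mathrm{Id}$ on ordered EFTs with a unique succession line, and deduce stationarity of $Y$ from Point-stationarity (Theorem~\ref{thm_point_stationarity}) applied to the bijective vertex-shift $a$. The paper packages the well-definedness of $a,b$ as Lemma~\ref{lemma_a_b_wellDefined} and the class analysis as Lemma~\ref{lemma_unique_succession_line}, just as you outline.

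Where you diverge is in the proof of the inversion. You plan to run the \emph{forward} structural lemmas about record graphs (Lemmas~\ref{lemma_offspring_count}, \ref{20230405112810}, \ref{smallest_child_20230404185835}) on $Y$ and, in the $\mathcal{I}/\mathcal{F}$ case, establish that spine vertices have type exactly $0$ so the correction in Lemma~\ref{20230405112810} vanishes. The paper instead proves the inversion directly (Lemma~\ref{lemma_encoded_sequence_same_tree}): it shows that if $u_j$ is the parent of $u_i$ in $T$ then $R_y(i)=j$, by observing that every elder sibling of $u_i$ has a finite descendant tree (this is precisely where uniqueness of the succession line enters, via emptiness of the set $W$), and then computing the partial Lukasiewicz sums with Lemma~\ref{lemma:legall_sums}. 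This single computation handles both classes uniformly, bypassing the type function entirely; the obstacle you anticipate simply does not arise. Your route would work, but the ``type $=0$ on the spine'' verification unwinds to the same finite-tree sum identity, so you would end up redoing Lemma~\ref{lemma:legall_sums} inside a longer argument.

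One minor point: your appeal to Lemma~\ref{lemma_stationary_one_component} at the end is unnecessary. Once $\Psi_R\circ\Phi_R=\mathrm{Id}$ is established, the component of $0$ in the record graph of $Y$ is already identified with $[\mathbf{T},\mathbf{o}]$; connectedness of the whole record graph is not part of the statement and need not be proved.
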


\begin{corollary}
    Let \(X=(X_n)_{n \in \mathbb{Z}}\) be a stationary sequence of integer-valued random variables and \([\mathbb{Z},0,X]\) be its associated network.
    Let \(f\) be a vertex-shift on \((\mathbb{Z},X)\) such that \(0\) has a.s. infinitely many ancestors in the \(f\)-graph \(\mathbb{Z}^f_X\) and the component \(\mathbb{Z}^f_X(0)\) of \(0\) in \(\mathbb{Z}^f_X\) has a unique succession line.
    Then, there exists a stationary sequence \(Y=(Y_n)_{n \in \mathbb{Z}}\) of random variables taking values in \(\{-1,0,1,2,\cdots\}\) such that \([\mathbb{Z}^f_X(0),0] \overset{\mathcal{D}}{=}[\mathbb{Z}^R_Y(0),0]\), where \(R\) is the record vertex-shift and \(\mathbb{Z}^R_Y(0)\) is the component of \(0\) in the record graph \(\mathbb{Z}^R_Y\) of the network \((\mathbb{Z},Y)\).
\end{corollary}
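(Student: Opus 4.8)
The plan is to recognize this statement as a direct specialization of Theorem~\ref{thm_representation}: I would manufacture, from the data $(X,f)$, a unimodular ordered EFT with a unique succession line and then feed it into that theorem. First I would note that since $X$ is stationary, the network $[\mathbb{Z},0,X]$ is unimodular, so Lemma~\ref{lemma_f_graph_unimodular} applies and the component $[\mathbb{Z}^f_X(0),0]$ of the root in the $f$-graph is a unimodular Family Tree. The hypothesis that $0$ has a.s.\ infinitely many ancestors in $\mathbb{Z}^f_X$ says precisely that the forward orbit $(f^n(0))_{n\geq 0}$ is a.s.\ infinite, and these distinct vertices all lie in $\mathbb{Z}^f_X(0)$; hence $\mathbb{Z}^f_X(0)$ is a.s.\ infinite. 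By the second assertion of Lemma~\ref{lemma_f_graph_unimodular}, $[\mathbb{Z}^f_X(0),0]$ is therefore a.s.\ a \emph{unimodular EFT} (in particular every vertex has out-degree $1$), with no further conditioning needed.

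Next I would equip this unordered EFT with an order. Assigning independently and uniformly at random a total order to the children of each vertex is a covariant and measurable operation, and, as recorded in the discussion of question (Q3), it yields a unimodular \emph{ordered} Family Tree. Combined with the previous step, $[\mathbb{Z}^f_X(0),0]$ so ordered is a unimodular ordered EFT. The remaining hypothesis of Theorem~\ref{thm_representation}, namely that this tree has a unique succession line, is exactly the assumption imposed in the corollary on $\mathbb{Z}^f_X(0)$.

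With all hypotheses verified, I would invoke Theorem~\ref{thm_representation} to obtain a stationary sequence $Y=(Y_n)_{n\in\mathbb{Z}}$ taking values in $\{-1,0,1,2,\ldots\}$ such that the uniformly ordered $[\mathbb{Z}^f_X(0),0]$ is equal in distribution to $[\mathbb{Z}^R_Y(0),0]$; forgetting the superimposed order then gives the stated distributional equality $[\mathbb{Z}^f_X(0),0]\overset{\mathcal{D}}{=}[\mathbb{Z}^R_Y(0),0]$. The final sentence of the corollary, about the class $\mathcal{I}/\mathcal{I}$, follows because a one-ended unimodular ordered directed tree always has a unique succession line, so that case requires no additional succession-line assumption.

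Since this is a corollary, there is no deep obstacle; the points that genuinely require care are (i) deducing that ``infinitely many ancestors'' forces the component to be infinite and therefore \emph{eternal}, which I would not attempt vertex-by-vertex but instead read off from the classification packaged in Lemma~\ref{lemma_f_graph_unimodular}, and (ii) making sure the uniform ordering is performed covariantly so that unimodularity is preserved and so that the succession-line uniqueness hypothesis is understood to refer to precisely the ordered tree handed to Theorem~\ref{thm_representation}. Once these are pinned down, the statement is immediate.
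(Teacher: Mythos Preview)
Your approach is essentially the paper's: verify that $[\mathbb{Z}^f_X(0),0]$ is a unimodular ordered EFT satisfying the hypotheses of Theorem~\ref{thm_representation}, and invoke that theorem. The paper's proof is two lines and leaves all of this implicit, so your expansion is reasonable.

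There is one genuine misstep. The step where you impose an independent uniform order on the children is unnecessary and creates a mismatch with the hypothesis. The $f$-graph $\mathbb{Z}^f_X$ already carries a canonical order: its vertex set is $\mathbb{Z}$, so the children of each vertex inherit the integer order, and this order is covariant (it is determined by the network structure of $(\mathbb{Z},X)$). The corollary's hypothesis that $\mathbb{Z}^f_X(0)$ ``has a unique succession line'' is stated with respect to this natural order---compare Example~\ref{example_climbing_vs}, where the ECS property is checked using the integer notion of ``smallest child''. If you overwrite this by an independent uniform order, the unique-succession-line property need not transfer: for trees of class $\mathcal{I}/\mathcal{F}$, whether the set $W$ of Eq.~(\ref{eq_W}) is empty depends on the order. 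So simply retain the integer order; with it, $[\mathbb{Z}^f_X(0),0]$ is already a unimodular ordered EFT, the succession-line hypothesis applies verbatim, and Theorem~\ref{thm_representation} gives the conclusion directly, as an equality of ordered trees, with no ``forgetting the order'' step needed.

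A minor point: the corollary has no final sentence about class $\mathcal{I}/\mathcal{I}$; you are thinking of the last sentence of Theorem~\ref{thm_representation}.
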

\begin{proof}
    Since \(X\) is stationary, the network \([\mathbb{Z}^f_X(0),0]\) is unimodular.
    Apply Theorem~\ref{thm_representation} to get the result.
\end{proof}

Theorem~\ref{thm_representation} is proved after the following lemmas.

Let \([\mathbf{T},\mathbf{o}]\) be a unimodular ordered EFT.
Denote the RLS (total) order obtained on the vertices of \(\mathbf{T}\) by \(\prec\) (see Section \ref{subsec_RLS}).
The set of minimal vertices of \(\mathbf{T}\) is a covariant subset.
It can be either empty or a singleton.
Since \(\mathbf{T}\) has infinitely many vertices, by No Infinite/ Finite Inclusion (Lemma~\ref{lemma_no_infinite_finite}), a.s., \(\mathbf{T}\) does not have any minimal vertices.
Therefore, the map \(b:V(\mathbf{T})\to V(\mathbf{T})\) given by \(b(u) = \max\{v: v \prec u\}\) (if \(\max\) exists) is a.s. well-defined.
The map \(b\) is injective on \(V(\mathbf{T})\) since \(T\) is totally ordered.
Since \([\mathbf{T},\mathbf{o}]\) is unimodular and \(b\) is injective, it follows from Proposition~\ref{prop_injective_bijective} that \(b\) is bijective a.s..
But \(a(b(u)) = u\), for all \(u \in V(\mathbf{T})\), where \(a(v) = \min\{w:w \succ v\}\).
Since the range of \(b\) is \(V(\mathbf{T})\), the map \(a\) is well-defined on \(V(\mathbf{T})\).
Thus, the following lemma holds:

\begin{lemma}\label{lemma_a_b_wellDefined}
    Let \([\mathbf{T},\mathbf{o}]\) be a unimodular ordered EFT.
    Then, the maps \(a:V(\mathbf{T}) \to V(\mathbf{T})\), \(b:V(\mathbf{T}) \to V(\mathbf{T})\) are a.s. well-defined.
\end{lemma}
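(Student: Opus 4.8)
The plan is to reduce the well-definedness of both maps to a single structural fact: that a unimodular ordered EFT has, almost surely, no \(\prec\)-minimal vertex. Once this is established, the existence of the immediate successor \(b\) is immediate from Lemma~\ref{lemma:b_exists}, and the existence of the immediate predecessor \(a\) — which, as the counterexample preceding Definition~\ref{defn_a_b} shows, can genuinely fail even for non-maximal vertices — is recovered only indirectly, by exhibiting \(a\) as the inverse of \(b\). So the proof naturally splits into three stages: (i) rule out a minimal vertex; (ii) deduce that \(b\) is a well-defined injective vertex-shift and hence bijective; (iii) identify \(a\) with \(b^{-1}\).

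For stage (i), I would observe that the collection of \(\prec\)-minimal vertices is a covariant subset \(\mathbf{\mathfrak{S}}\) of \(V(\mathbf{T})\): the RLS order is canonically determined by the ordered tree structure, so minimality is preserved by isomorphisms, and the measurability condition holds. Because \(\prec\) is a \emph{total} order, a tree has at most one \(\prec\)-smallest vertex, so \(\mathbf{\mathfrak{S}}_{\mathbf{T}}\) is either empty or a singleton. Since \([\mathbf{T},\mathbf{o}]\) is an EFT it is infinite a.s., so taking the covariant partition whose unique block on each tree is the whole (connected) vertex set, the No Infinite/Finite Inclusion lemma (Lemma~\ref{lemma_no_infinite_finite}) forbids an infinite block from meeting \(\mathbf{\mathfrak{S}}_{\mathbf{T}}\) in a finite non-empty set. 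A singleton intersection would be exactly such a forbidden configuration, so \(\mathbf{\mathfrak{S}}_{\mathbf{T}}\) must be empty a.s.; equivalently, no vertex is \(\prec\)-smallest. By Lemma~\ref{lemma:b_exists}, \(b(u)\) then exists for every \(u\), so \(b\) is a.s. well-defined.

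For stage (ii), I would first note that \(b\) is injective because \(\prec\) is total: if \(b(u)=b(v)=w\) with, say, \(u\prec v\), then \(u\in B(v)\) gives \(u\preceq b(v)=w\prec u\), a contradiction. Next I would check that \(b\) is a vertex-shift — covariance is inherited from the covariance of the RLS order, and measurability of \([\mathbf{T},u,v]\mapsto \mathbf{1}\{b(u)=v\}\) follows since \(b(u)\) is a measurable functional of the doubly-rooted ordered tree. Unimodularity of \([\mathbf{T},\mathbf{o}]\) together with injectivity then yields bijectivity of \(b\) a.s. by Proposition~\ref{prop_injective_bijective}. Finally, for stage (iii), I would verify directly from the definitions that \(a(b(u))=u\): writing \(w=b(u)\), any \(z\succ w\) with \(z\prec u\) would force \(z\preceq b(u)=w\), impossible, so \(u=\min\{z:z\succ w\}=a(w)\). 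Bijectivity of \(b\) means every vertex is of the form \(b(u)\) for a unique \(u\), whence \(a=b^{-1}\) is defined on all of \(V(\mathbf{T})\).

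The main obstacle I anticipate is not any single computation but getting the abstract machinery aligned: one must recognize the minimal-vertex set as a covariant subset and pair it with the correct covariant partition so that Lemma~\ref{lemma_no_infinite_finite} applies, and one must confirm that \(b\) qualifies as a genuine vertex-shift before invoking the injective-implies-bijective principle. The conceptual subtlety worth emphasizing is the asymmetry between \(a\) and \(b\): \(b\) is handled directly, but \(a\) cannot be, since immediate predecessors may fail to exist; it is precisely the bijectivity of \(b\), forced by unimodularity, that repairs this and makes \(a\) well-defined everywhere.
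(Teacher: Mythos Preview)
Your proposal is correct and follows essentially the same route as the paper: rule out a \(\prec\)-minimal vertex via the No Infinite/Finite Inclusion lemma applied to the covariant singleton-or-empty set of minimal vertices, deduce that \(b\) is everywhere defined and injective, upgrade to bijectivity using Proposition~\ref{prop_injective_bijective}, and then recover \(a\) as \(b^{-1}\) from the identity \(a(b(u))=u\). Your write-up is slightly more explicit than the paper's (you spell out the trivial covariant partition, the vertex-shift verification for \(b\), and the injectivity and \(a\circ b=\mathrm{id}\) arguments), but the structure and the key lemmas invoked are identical.
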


Let \(T\) be an ordered EFT.
For any vertex \(u \in V(T)\), let \(U((T,u)) = (u_n)_{n \in \mathbb{Z}}\) be the succession line passing through \(u\) (see Def. \ref{defn_succession_line}).
The set \(\{u_n:n \in \mathbb{Z}\}\) is called the succession line of \(u\).
It is easy to check that if two succession lines intersect then they are one and the same.

Let \([\mathbf{T},\mathbf{o}]\) be a unimodular ordered EFT.
Lemma~\ref{lemma_a_b_wellDefined} implies that \(U((\mathbf{T},u))\) has distinct entries for all \(u \in V(\mathbf{T})\).
In Lemma~\ref{lemma_unique_succession_line}, it is shown that a every unimodular ordered EFT has at most two succession lines.
Note that by the classification theorem, \([\mathbf{T},\mathbf{o}]\) is either of class \(\mathcal{I}/\mathcal{I}\) or of class \(\mathcal{I}/\mathcal{F}\).

\begin{lemma}\label{lemma_unique_succession_line}
    Let \([\mathbf{T},\mathbf{o}]\) be a unimodular ordered EFT.
    \begin{itemize}
        \item If \([\mathbf{T},\mathbf{o}]\) is of class \(\mathcal{I}/\mathcal{I}\), then \(\mathbf{T}\) has a unique succession line and it contains all the vertices of \(\mathbf{T}\).
        \item If \([\mathbf{T},\mathbf{o}]\) is of class \(\mathcal{I}/\mathcal{F}\), then \(\mathbf{T}\) has at most two succession lines; together they contain all the vertices of \(\mathbf{T}\).
    \end{itemize}
\end{lemma}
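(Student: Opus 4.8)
The plan is to reformulate ``succession line'' purely in terms of the order $\prec$ and then count equivalence classes. By Lemma~\ref{lemma_a_b_wellDefined} the maps $a$ and $b$ are a.s.\ well-defined on all of $V(\mathbf{T})$, and since $a(b(u))=u=b(a(u))$ they are mutually inverse $\prec$-order bijections with $a(u)\succ u$. Hence $a$ has no finite cycles, and the succession line through $u$ (Def.~\ref{defn_succession_line}) is exactly the $a$-orbit $(a^n(u))_{n\in\mathbb{Z}}$, with the convention $a^{-1}=b$. Consequently two vertices lie on the same succession line if and only if only finitely many vertices lie strictly $\prec$-between them; I will call such an equivalence class a \emph{block}, so that the number of succession lines equals the number of blocks. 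I also record that, since $a$ and $b$ are defined everywhere, $V(\mathbf{T})$ has no $\prec$-maximum and no $\prec$-minimum. Throughout I use the following \emph{interval property} of the RLS order: for every vertex $w$ the descendant set $D(w)$ is a $\prec$-interval whose maximum is $w$; this follows from Remark~\ref{remark:rls_order} together with the fact that an ancestor is $\succ$ all of its descendants.

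For the class $\mathcal{I}/\mathcal{I}$ case the argument is short. By the foil classification (Section~\ref{par_foil_classification}) the tree has a single end and every vertex has finitely many descendants. Since $\mathbf{T}$ is an EFT with one end, the ancestor rays $u,F(u),F^2(u),\dots$ of any two vertices $u,v$ eventually coincide, so $u$ and $v$ have a common ancestor $w$. Then $D(w)$ is finite and, by the interval property, is a $\prec$-interval containing both $u$ and $v$; hence only finitely many vertices lie between them, so $u$ and $v$ lie in the same block. As $u,v$ are arbitrary, there is a single block, i.e.\ a unique succession line, and it contains every vertex.

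For the class $\mathcal{I}/\mathcal{F}$ case, let $\Gamma=(p_i)_{i\in\mathbb{Z}}$ be the unique bi-infinite $F$-path, so that $F(p_i)=p_{i+1}$ and $\Gamma$ is exactly the set of vertices with infinitely many descendants; every other vertex has a finite descendant set and its ancestor ray meets $\Gamma$, whence $V(\mathbf{T})=\bigcup_i D(p_i)$ and any two vertices have a common ancestor. I would show there are at most two blocks in three steps. \textbf{(i)} All path vertices lie in one block $P$: the vertices strictly between consecutive $p_i\prec p_{i+1}$ are precisely those in the subtrees of the children of $p_{i+1}$ that are larger than $p_i$ in the child order, a finite set by local finiteness and by finiteness of off-path descendant sets. \textbf{(ii)} Put $B:=\{u\in V(\mathbf{T}):u\prec p_i\ \forall i\}$. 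Any two $u\prec v$ in $B$ are finitely apart: taking a common ancestor $w$, either $w\notin\Gamma$, so $D(w)$ is finite and contains the interval $(u,v)$, or $w=p_k\in\Gamma$, in which case $u$ and $v$ lie in descendant subtrees of two children of $p_k$ both \emph{smaller} than the path-child $p_{k-1}$ (otherwise $u$ or $v$ would be $\succ p_{k-1}$, contradicting $u,v\in B$), and these subtrees together with the finitely many subtrees between them form a finite set. Together with the fact that $B$ has no $\prec$-maximum (its maximum would force $a$ to leap over some $p_k$) and no $\prec$-minimum (which would be a global minimum), this shows $B$ is empty or a single block, necessarily distinct from $P$. \textbf{(iii)} Finally $V(\mathbf{T})=B\sqcup P$: if $u\notin B$ then $p_j\preceq u$ for some $j$, while $u\in D(p_m)$ gives $u\preceq p_m$; the interval $[p_j,p_m]$ meets $B$ nowhere and is finite, so $u$ lies in $P$. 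Hence there are at most two blocks, and together they exhaust $V(\mathbf{T})$.

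The main obstacle is step \textbf{(ii)}: controlling the vertices hanging strictly below the entire bi-infinite path and proving they cannot fragment into more than one succession line. The two delicate points are that a finite nonempty ``leftover'' below $\Gamma$ is impossible --- its $\prec$-maximum would have no immediate $\prec$-successor within it, since its successor would be forced to skip a path vertex, contradicting that $a$ is defined everywhere --- and that the possibly infinitely many finite left-hanging subtrees concatenate into a single block because consecutive ones are $\prec$-adjacent. Both rely essentially on Lemma~\ref{lemma_a_b_wellDefined} and on the DFS interval property, not on any further probabilistic input; the $\mathcal{I}/\mathcal{I}$ case, by contrast, is purely order-theoretic once the common-ancestor observation is in hand.
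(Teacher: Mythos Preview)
Your strategy is the same as the paper's: partition $V(\mathbf{T})$ into the set $B=\{u:u\prec p_i\ \forall i\}$ (the paper's $W$) and its complement $P$ (the paper's $W'$), and show each is a single succession line. The $\mathcal{I}/\mathcal{I}$ case and steps (i), (iii) are correct and match the paper.

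There is, however, a genuine slip in step (ii). In the case $w=p_k\in\Gamma$ you assert that $u$ and $v$ must lie under children of $p_k$ that are \emph{strictly smaller} than $p_{k-1}$, justifying this by ``otherwise $u$ or $v$ would be $\succ p_{k-1}$''. That justification only rules out children $c>p_{k-1}$; it does \emph{not} rule out $c=p_{k-1}$ itself. If, say, $u\in D(p_{k-1})$, then $u\preceq p_{k-1}$, which is perfectly compatible with $u\in B$, yet the subtree $D(p_{k-1})$ is infinite and your finiteness conclusion fails. Concretely, take $k$ to be the least index with both $u,v\in D(p_k)$; then one of them (the $\prec$-larger one, $v$) can sit inside $D(p_{k-1})$ while the other hangs strictly left of $p_{k-1}$, and your argument as written breaks.

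The fix is short. For $u\in B$ let $j(u)$ be the least index with $u\in D(p_{j(u)})$; then $u$ lies under a child of $p_{j(u)}$ strictly smaller than $p_{j(u)-1}$, so $u$ belongs to the finite set $L_{j(u)}:=\bigcup_{c<p_{j(u)-1}}D(c)$. One checks that $k<k'$ implies every element of $L_{k'}$ is $\prec$ every element of $L_k$, hence for $u\prec v$ in $B$ with $j(u)=k_1\ge k_2=j(v)$ the interval $[u,v]$ is contained in $\bigcup_{k_2\le k\le k_1}L_k$, a finite union of finite sets. This replaces your case split on $w$ entirely. Alternatively, and closer to the paper, you can bypass the finiteness computation by first proving directly that $a$ and $b$ preserve $B$ (the paper does this in one line: $b(u)\prec u$ gives $b(u)\in B$; and $a(u)\preceq p_j$ for every $j$ since $p_j\in\{w:w\succ u\}$, while $a(u)=p_j$ would force $b(p_j)=u\prec p_{j-1}$, contradicting step (i)), and then invoking the $L_k$ decomposition only once. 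Your ``no max/no min'' detour is equivalent to this closure argument but less direct.
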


\begin{proof}

    If \([\mathbf{T},\mathbf{o}]\) is of class \(\mathcal{I}/\mathcal{I}\), then every vertex of \(\mathbf{T}\) has finitely many descendants.
    Let \(v \in V(\mathbf{T})\).
    Consider the smallest common ancestor \(w\) of \(v\) and \(\mathbf{o}\).
    The descendant tree of \(w\) is finite and it contains both \(v\) and \(\mathbf{o}\).
    Thus, the succession lines of \(v\) and \(o\) intersect, which implies that they are one and the same.

    Assume that \([\mathbf{T},\mathbf{o}]\) is of class \(\mathcal{I}/\mathcal{F}\).
    Let \(\neswarrow\) denote the unique bi-infinite path of \(\mathbf{T}\).
    Consider the set
  \begin{equation}\label{eq_W}
      W=\{u \in V(\mathbf{T}): u \prec v \ \forall v \in \neswarrow\}.
  \end{equation}
Denote its complement \(V(\mathbf{T})\backslash W\) by \(W'\).
Firstly, it is shown that \(W'\) has a unique succession line and all the vertices of \(W'\) belong to this succession line.
See Figure~\ref{fig_i_f_succession_lines} for an illustration.

If \(u \in W'\), then there exists a vertex \(v \in \neswarrow\) such that \(u \succ v\) (one may safely assume it because if \(u \in \neswarrow\) then u has a child \(v' \in \neswarrow\) and \(u \succ v' \)).
The vertex \(a(u) = \min\{w: w \succ u\}\) also satisfies \(a(u) \succ v\) since \(a(u) \succ u\), implying that \(a(u) \in W'\).
The vertex \(b(u)= \max\{w: w \prec u\}\) satisfies \(b(u) \succeq v\) since \(v \in \{w:w \prec u\}\), implying that \(b(u) \in W'\).
Therefore, for every vertex \(u \in W'\), the succession line of \(u\) is contained in  \(W'\).
For any two vertices \(u,v\) in \(W'\), there are finitely many vertices in between them, i.e., the set \(\{w:u \prec w \prec v\}\) is finite (assuming that \(u \prec v\)).
Therefore, the succession line of \(u\) and \(v\) are one and the same.
Thus, \(W'\) has a unique succession line and this succession line contains all the vertices of \(W'\).

It is now shown that if \(W\) is non-empty, then it has a unique succession line and all the vertices of \(W\) belong to this succession line.
Let \(u \in W\).
The vertex \(a(u)= \min\{w: w \succ u\}\) also belongs to \(W\) because \(\neswarrow \subset \{w:w \succ u\}\) and \(a(u) \not \in \neswarrow\).
The vertex \(b(u)= \max\{w: w \prec u\}\) also belongs to \(W\) since \(b(u)\prec u\) and \(u \in W\).
Therefore, the succession line of \(u\) is contained in \(W\).
For any two vertices \(u,v\) in \(W\), there are finitely many vertices between them.
Thus, \(W\) has a unique succession line (if \(W\) is non-empty).
This completes the proof.
\end{proof}

\begin{figure}[htbp]
       \includegraphics[scale=0.85]{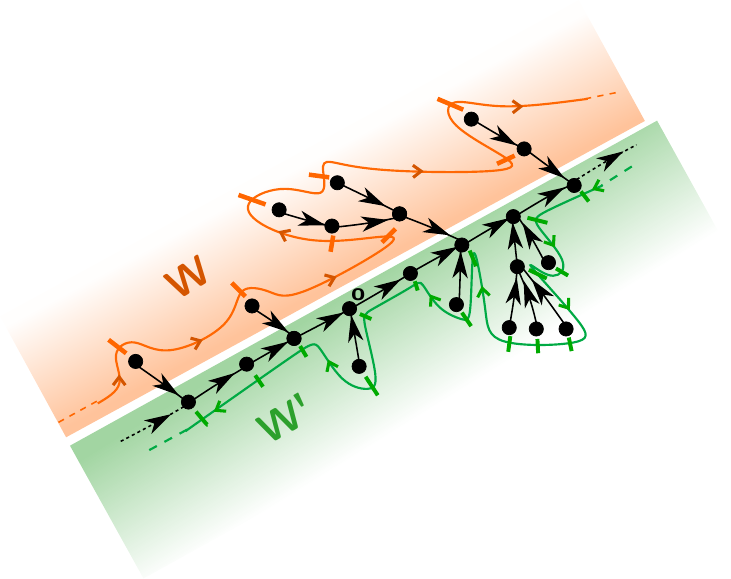}
       \caption{An Illustration of the \(\mathcal{I}/\mathcal{F}\) case in the proof of Lemma~\ref{lemma_unique_succession_line}.
       The set \(W\) and its complement \(W'\) are highlighted. Each one of these sets contains a unique succession line passing through all of its vertices. The arrow direction of the succession line indicates the images under the map \(b\), i.e., the immediate successor of the previous vertex. In this figure, the root \(\mathbf{o}\) belongs to the succession line containing the bi-infinite path.}
       \label{fig_i_f_succession_lines}
     \end{figure}

\begin{remark}
    It follows from the proof of Lemma~\ref{lemma_unique_succession_line} that if \([\mathbf{T},\mathbf{o}]\) is unimodular ordered EFT of class \(\mathcal{I}/\mathcal{F}\) that has a unique  succession line, then it has the ECS (Every Child Succeeding) order (see Subsection \ref{subsec_bi_variate_ekt} for the definition).
\end{remark}

\begin{example}[Unimodular EFT with two succession lines]
  Consider the unimodularised bi-variate Eternal Kesten Tree with offspring distributions \(\alpha,\beta\), as described in Subsection~\ref{subsec_bi_variate_ekt}, but with a uniform order imposed on the vertices of the bi-infinite path instead of the ECS order.
  The construction of this tree is as follows: start with the typically re-rooted joining \([\mathbf{T},\mathbf{o}]\) of i.i.d. sequence of bi-variate ordered \(GW(\alpha, \beta)\), where the offspring distributions \(\alpha\) and \(\beta\) have means \(m(\alpha)<\infty\) and \(m(\beta)<1\).
  Let \((\mathbf{o}'_n)_{n \in \mathbb{Z}}\) denote the vertices of the bi-infinite path of \([\mathbf{T}',\mathbf{o}']\), where \((\mathbf{o}'_n)\) is the child of \((\mathbf{o}'_{n+1})\), for all \(n \in \mathbb{Z}\), and  \(\mathbf{o}_0'\) is the ancestor of the root \(\mathbf{o}'\).
  For each \(n \in \mathbb{Z}\), assign a uniform order to the children of \(\mathbf{o}_n'\).
  In particular, given that \(\mathbf{o}_{n+1}'\) has \(k\) children, the order of \(\mathbf{o}_n'\) is uniformly distributed on \({1,2,\ldots,k}\).
  The resulting unimodularised ordered bi-variate \(EKT(\alpha,\beta)\) \([\mathbf{T},\mathbf{o}]\) has infinitely many vertices in the set \(W\) (Eq.~\ref{eq_W}) with positive probability.
  According to Lemma~\ref{lemma_unique_succession_line}, \([\mathbf{T},\mathbf{o}]\) has two succession lines with positive probability.
\end{example}

The following lemma is used to associate a sequence to an ordered finite tree.

\begin{lemma} \label{lemma:legall_sums}
    Let $T$ be a finite ordered tree and $v_1 \prec v_2 \prec \cdots \prec v_n$ be the vertices of $T$ ordered according to the RLS order. Then,
    \begin{enumerate}
      \item $\sum_{i=1}^n (d_1(v_i)-1) = -1$.
      \item $\sum_{i = 1}^k (d_1(v_i)-1) < 0$,  for all $1<k \leq n$.
    \end{enumerate}
  \end{lemma}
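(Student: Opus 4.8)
The plan is to prove the two assertions separately, the first by a global edge count that ignores the order entirely, and the second by attaching a combinatorial meaning to each partial sum via the RLS order. For assertion (1), I would note that $\sum_{i=1}^n d_1(v_i)$ counts every parent--child edge of $T$ exactly once, since $d_1(v)$ is the number of $c$ with $F(c)=v$ and $c\neq v$. A finite Family Tree on $n$ vertices has exactly one vertex with no parent (its root) and every other vertex is the child of a unique vertex, so $\sum_{i=1}^n d_1(v_i)=n-1$. Hence $\sum_{i=1}^n(d_1(v_i)-1)=(n-1)-n=-1$, which is (1). Note that this argument does not use the ordering at all.

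For assertion (2), write $A_k=\{v_1,\dots,v_k\}$ for the set of the $k$ smallest vertices in the RLS order. The crucial structural step — and, I expect, the real content of the lemma — is to show that $A_k$ is \emph{closed under passing to children}: if $v\in A_k$ and $c$ is a child of $v$, then $c\in A_k$. This is immediate from the definition of $\prec$, because $v=F(c)$ makes $v$ an ancestor of $c$, so $v\succ c$, i.e. $c\prec v$; writing $v=v_j$ with $j\le k$ and using $\{u:u\prec v_j\}=\{v_1,\dots,v_{j-1}\}$, we conclude $c\in A_k$. In words, a parent always has precedence over its children, so an initial segment of the RLS order is downward closed for the child relation.

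Given this closure, I would reinterpret the partial sum. The quantity $\sum_{i=1}^k d_1(v_i)$ is the number of parent--child edges $(c,v)$ with $v\in A_k$, and closure forces every such child $c$ to lie in $A_k$; hence this count equals the number of vertices of $A_k$ whose parent also lies in $A_k$. Subtracting $k=\lvert A_k\rvert$ gives $\sum_{i=1}^k(d_1(v_i)-1)=-\,\lvert\{\,c\in A_k:\ c\text{ is the root of }T,\text{ or }F(c)\notin A_k\,\}\rvert$, which is $-1$ times the number of roots of the subforest of $T$ induced on $A_k$. Since $A_k$ is a nonempty finite subset of the finite tree $T$, this induced subforest has at least one root (follow parents upward from any vertex of $A_k$ until the chain leaves $A_k$ or reaches the global root), so the partial sum is at most $-1$ and therefore strictly negative, proving (2). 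Taking $k=n$ leaves only the global root as a root of the induced forest, recovering the value $-1$ of (1) as a sanity check.

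The main obstacle is thus entirely conceptual rather than computational: one must spot and correctly phrase the closure property, after which the counting is routine bookkeeping. The one point requiring care is the Family-Tree convention $F(r)=r$ at the root $r$, which must be handled so that the root is never counted as its own child and so that it is precisely the root that accounts for the final value $-1$; everything else follows mechanically from the edge count and the closure of initial RLS segments.
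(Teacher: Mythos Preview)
Your proof is correct, and for part (2) it takes a genuinely different route from the paper. The paper proves (2) by induction on $k$: the base case $k=1$ uses that $v_1$ is a leaf, and the inductive step splits into two cases according to whether $v_{k+1}$ is a leaf or the parent of $v_k$; in the latter case the sum is broken at the first descendant of $v_{k+1}$ and part (1) is invoked on the descendant subtree. Your argument instead identifies the structural fact that an initial RLS segment $A_k$ is closed under taking children, and then reads off the partial sum as minus the number of roots of the forest induced on $A_k$. This is more conceptual: it gives not just the inequality but an exact combinatorial value for every partial sum in one stroke, avoids the case split, and makes the $k=n$ case visibly collapse to part (1). The paper's induction, on the other hand, stays closer to the recursive structure of the RLS order and foreshadows how that order is used later (e.g.\ in Lemma~\ref{lemma_encoded_sequence_same_tree}, where contiguous blocks of the succession line are descendant subtrees). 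Both arguments rely on the same underlying edge count for part (1).
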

  
  \begin{proof}
  The first equation follows from the fact that for a finite tree $T$,
  \begin{equation*}
    \sum_{u \in V(T)} d_1(u)= \#V(T) - 1.
  \end{equation*}
  
  The second equation is now proved by induction on $k$.
  If $k=1$ then $v_1$ is a leaf and hence $d_1(v_1)-1 = -1$.
  Now, assume that the statement is true for $1 \leq m \leq k$, where \(k>1\).
  Observe that by the RLS order, either $v_{k+1}$ is a leaf or $v_{k+1}$ is the parent of $v_k$. 
  In the latter case, there exists $1 \leq j \leq k$ such that $v_j<v_{j+1} < \cdots < v_k$ (written according to the RLS order) are all the descendants of $v_{k+1}$.
  If $v_{k+1}$ is a leaf then, by induction, 
  \begin{align*}
    \sum_{i=1}^{k+1} (d_1(v_i)-1) &= \sum_{i=1}^k (d_1(v_i)-1) + d_1(v_{k+1})-1 <-1.
  \end{align*}
  
  The only other possibility is that $v_{k+1}$ is the parent of $v_k$.
   Then, 
  \begin{align*}
    \sum_{i=1}^{k+1} (d_1(v_i)-1) &= \sum_{i = 1}^{j-1}(d_1(v_i) - 1) +  \sum_{i=j}^{k+1}(d_1(v_i)-1) \leq 0 +( -1).
  \end{align*}
  The last step follows from the fact that the first sum on the right-hand side is at most $0$ (by induction), while the second sum is on the descendant tree of $v_{k+1}$ which is equal to $-1$ by the first result of the lemma.
  \end{proof}

Consider the following subset
\begin{equation}
    \widetilde{\mathcal{T}}_*:=\{[T,o]: T \text{ is an ordered EFT with a unique succession line}\}
\end{equation}
of EFTs.
It follows from Lemma~\ref{lemma_a_b_wellDefined} and Lemma~\ref{lemma_unique_succession_line} that for all \([T,o] \in \widetilde{\mathcal{T}}_*\), the succession line \(U((T,o)) = (u_n)_{n \in \mathbb{Z}}\) passing through \(o\) with \(u_0=o\) is a bijection from \(\mathbb{Z}\) to \(V(T)\) that maps \(n\) to \(u_n\), for all \(n \in \mathbb{Z}\).

\begin{definition}[Backward map]
  Define the backward map \(\Phi_R\) on the set \( \mathcal{T}'_*\) of Family Trees that have at most one bi-infinite \(F\)-path in the following way.
For all \([T,o] \in \mathcal{T}'_*\), \(\Phi_R([T,o]) := [\mathbb{Z},0,y]\), where \(y = (y_n)_{n \in \mathbb{Z}}\) is the sequence given by \(y_n = d_1(u_{n+1})-1\), for all \(n \in \mathbb{Z}\) and \((u_n)_{n \in \mathbb{Z}} = U((T,o))\) is the succession line passing through \(o\).
\end{definition}

The sequence \(y=(y_n)_{n \in \mathbb{Z}}\) takes values in \(\{-1,0,1,2,\cdots\}^{\mathbb{Z}}\).
It is shown that \(\Psi_R \circ \Phi_R = I\) on \(\widetilde{\mathcal{T}}_*\), where \(\Psi_R([\mathbb{Z},0,y]) = [\mathbb{Z}^R_y(0),0]\) is the component of \(0\) in the record graph of the network \((\mathbb{Z},y)\).
The proof relies on the property that every elder sibling of any vertex (if they exist), of a Family Tree with a unique succession line, has finitely many descendants.

\begin{lemma}\label{lemma_encoded_sequence_same_tree}
    For all \([T,o] \in \widetilde{\mathcal{T}}_*\), \(\Psi_R \circ \Phi_R([T,o]) = [T,o]\).
\end{lemma}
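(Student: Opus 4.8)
The plan is to exhibit an explicit isomorphism of ordered rooted trees between the component of $0$ in the record graph $\mathbb{Z}^R_y$ of $(\mathbb{Z},y)$ and $[T,o]$, where $y=(y_n)$ with $y_n=d_1(u_{n+1})-1$ and $(u_n)_{n\in\mathbb{Z}}=U((T,o))$. The candidate isomorphism is the succession line itself, $n\mapsto u_n$. Since $[T,o]\in\widetilde{\mathcal{T}}_*$, this map is a bijection $\mathbb{Z}\to V(T)$; moreover consecutive elements of the succession line are immediate neighbours in the RLS order, so $u_n\prec u_{n+1}$ with no vertex strictly between them, and hence $n\mapsto u_n$ is an order isomorphism from $(\mathbb{Z},<)$ onto $(V(T),\prec)$. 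In particular every order interval $\{v:u_i\prec v\preceq u_j\}$ is finite, being the image of $\{i+1,\dots,j\}$. Writing $S$ for the walk with increments $y$ and $S_0=0$, one has for $i<j$ the identity $S_j-S_i=\sum_{m=i+1}^{j}(d_1(u_m)-1)$, so the whole argument reduces to controlling these partial sums of ``number of children minus one'' read along the succession line.

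First I would record the structural fact, flagged in the text, that for $[T,o]\in\widetilde{\mathcal{T}}_*$ every elder sibling of every vertex has a finite descendant tree. Indeed, if $w$ is a sibling of $v$ with $w\succ v$, then by Remark~\ref{remark:rls_order} every descendant of $w$ is $\succ v$, while every descendant of $w$ is $\preceq w$ because ancestors dominate their descendants in $\prec$; hence $D(w)\subseteq\{u:v\prec u\preceq w\}$, a finite interval by the previous paragraph, so $D(w)$ is finite. This is the only place the unique succession line hypothesis is really used, and it is what makes the finite tree identities of Lemma~\ref{lemma:legall_sums} available.

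Next, fix $i\in\mathbb{Z}$, let $u_p:=F_T(u_i)$ be the parent of $u_i$ (it exists since $T$ is an EFT) and let $c_1\prec\dots\prec c_d$ be the children of $u_p$, with $u_i=c_r$. The core claim is $R_y(i)=p$. Using Remark~\ref{remark:rls_order} and the fact that each vertex is the $\prec$-maximum of its own descendant tree, one identifies the interval past $u_i$ up to its parent as $\{v:u_i\prec v\preceq u_p\}=\big(\bigsqcup_{s=r+1}^{d}D(c_s)\big)\sqcup\{u_p\}$, each $D(c_s)$ being finite by the structural fact. Applying part (1) of Lemma~\ref{lemma:legall_sums} to each finite tree $D(c_s)$ gives $\sum_{v\in D(c_s)}(d_1(v)-1)=-1$ (children counts inside $D(c_s)$ agree with those in $T$ since descendants are downward closed), whence $S_p-S_i=-(d-r)+(d_1(u_p)-1)=-(d-r)+(d-1)=r-1\ge 0$, so $R_y(i)\le p$. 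For the reverse, take any $i<n<p$; then $u_n$ lies in some block $D(c_t)$ with $t>r$, and $S_n-S_i=-(t-1-r)+\Sigma$, where $\Sigma$ is the sum of $d_1(\cdot)-1$ over the RLS-initial segment of the finite tree $D(c_t)$ up to $u_n$. By part (2) of Lemma~\ref{lemma:legall_sums} (together with the leaf case), $\Sigma\le -1$, so $S_n-S_i\le -(t-1-r)-1<0$. Hence $S_n<S_i$ for all $i<n<p$ while $S_p\ge S_i$, i.e.\ $R_y(i)=p$.

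Finally, since $R_y(i)$ is exactly the index of the $T$-parent of $u_i$ and $R_y(i)=p>i$ (so there are no self-loops), the bijection $n\mapsto u_n$ carries the edge set of $\mathbb{Z}^R_y$ onto that of $T$; because it also preserves the orders ($m<n\iff u_m\prec u_n$) and the child order is the restriction of the RLS order (consistently with Lemma~\ref{lemma:rls_order}, which identifies the integer order with the RLS order on each component), it is an isomorphism of ordered rooted directed trees sending $0$ to $o$. As $T$ is connected, $\mathbb{Z}^R_y$ is connected and its component of $0$ is all of it; therefore $[\mathbb{Z}^R_y(0),0]=[T,o]$, that is $\Psi_R\circ\Phi_R([T,o])=[T,o]$. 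I expect the main obstacle to be the middle step: pinning down the block decomposition of the interval between a vertex and its parent and justifying the finiteness of the elder sibling subtrees, since it is precisely this finiteness, guaranteed by the unique succession line, that lets the finite tree sum identities of Lemma~\ref{lemma:legall_sums} drive the record computation.
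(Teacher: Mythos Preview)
Your proof is correct and follows essentially the same approach as the paper: both use the succession line $n\mapsto u_n$ as the isomorphism, decompose the RLS interval between $u_i$ and its parent $u_p$ into the descendant trees of the elder siblings of $u_i$ together with $\{u_p\}$, and apply Lemma~\ref{lemma:legall_sums} to show that the partial sums are negative strictly between $i$ and $p$ and nonnegative at $p$, giving $R_y(i)=p$. Your justification for the finiteness of the elder-sibling subtrees via the inclusion $D(w)\subseteq\{u:v\prec u\preceq w\}$ into a finite order interval is slightly more self-contained than the paper's appeal to the set $W$ of Eq.~(\ref{eq_W}), but the substance is the same.
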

\begin{proof}
    Let \((u_n)_{n \in \mathbb{Z}} = U((T,o))\) be the succession line of \((T,o)\) and $[\mathbb{Z},0,y] = \Phi_R([T,o])$, where $y=(y_n)_{n \in \mathbb{Z}}$.

    It is now proved that the bijective map \(\alpha:\mathbb{Z} \to V(T)\) defined by \(\alpha(n)= u_n\), for all \(n \in \mathbb{Z}\), induces a rooted network isomorphism \(\alpha\) from \([\mathbb{Z}^R_y(0),0]=\Psi_R([\mathbb{Z},0,y])\) to \((T,o)\).
    Note that, if \(u_j\) is a parent of \(u_i\) in \(T\) for some integers \(i\) and \(j\) then \(j>i\), which follows because \((u_n)_{ \in \mathbb{Z}}\) is the succession line of \((T,o)\).
    In view of this, to prove that \(\alpha\) induces a rooted network isomorphism, it is enough to show that for all $i \in \mathbb{Z}$, if \(u_j\) is the parent of \(u_i\) in \(T\) for some  \(j>i\), then \(j=R_y(i)\).
    It is enough to show the latter because (assuming the latter implication) if \(j = R_y(i)\) and \(u_k\) is a parent of \(u_i\) for some \(k>i\), then \(R_y(i)=k\), which implies that \(j=k\), and \(u_k = u_j\).
    
   Let \(i \in \mathbb{Z}\) and \(u_j\) be the parent of \(u_i\) in \(T\) for some integer $j>i$.
    If $j=i+1$, then $x_i = d_1(u_{i+1})-1 \geq 0$, because $u_i$ is a child of $u_{i+1}$.
    Hence, $R_y(i)=i+1$.
  
   So, assume that $j>i+1$.
   This implies that either $u_{i+1}$ is a leaf and a descendant of a sibling of $u_i$ or \(u_{i+1}\) is a sibling of \(u_i\).
   Every sibling \(v\) of \(u_i\) such that \(v \succ u_i\) has finitely many descendants.
   This follows because if there exists a sibling \(v \succ u_i\) of \(u_i\) that has infinitely many descendants, then \(u_i \in W\) (see Eq.~(\ref{eq_W})).
   But \(W\) is empty since \(T\) has a unique succession line.

    Let $u_{i_1} \prec u_{i_2} \prec \cdots  \prec u_{i_n}$ be the elder siblings of $u_i$ (i.e., \(u_i \prec u_{i_1}\)), and $T_1,T_2,\cdots,T_n$ be their descendant trees respectively.
   For $i<k<j$, let $l(k)$ be the smallest element of the set $\{1,2,\cdots,n\}$ such that $\{u_{i+1},u_{i+2},\cdots,u_k\} \subset V(T_1) \cup V(T_2) \cup \cdots V(T_{l(k)})$, and $u_{k_1}$ be the smallest vertex of $T_{l(k)}$.
  
   Then, the sum $z(i,k) = \sum_{m=i}^{k-1} y_m= \sum_{m=i+1}^{k}(d_1(u_m)-1)$ can be written as 
   \begin{align*}
     z(i,k) &= \left(\sum_{u \in V(T_1)}d_1(u)-1 \right)+ \cdots + \left(\sum_{u \in V(T_{l(k)-1})}d_1(u) - 1 \right) + \sum_{m=k_1}^k (d_1(u_m)-1)\\
     &< (-1) + \cdots + (-1) + \sum_{m=k_1}^k (d_1(u_m)-1)< 0,
   \end{align*}
   with the notation that \(V(T_0)\) is an empty set.
   The last steps follow from the first and the second statements of Lemma~\ref{lemma:legall_sums}. 
  
    On the other hand, the sum $z(i,j) = \sum_{m=i+1}^j (d_1(u_m)-1)$ can be written as
    \begin{align*}
      z(i,j) &= \left(\sum_{u \in V(T_1)}d_1(u)-1 \right)+ \cdots + \left(\sum_{u \in V(T_n)}d_1(u) - 1 \right) + d_1(u_j)-1\\
      &= -n + d_1(u_j) - 1 \geq 0.
    \end{align*}
    The above steps follow from the first part of Lemma~\ref{lemma:legall_sums}, and by the assumption that $u_j$ has at least $n+1$ children, namely $u_i,u_{i_1},\cdots,u_{i_n}$.
    Thus, $R_x(i)=j$, completing the proof.
\end{proof}

\begin{proof}[Proof of Theorem~\ref{thm_representation}]
    Let \([\mathbb{Z},0,Y] = \Phi_R([\mathbf{T},\mathbf{o}])\).
    Since \(\mathbf{T}\) has a unique succession line, it follows that a.s., \(\Psi_R \circ \Phi_R([\mathbf{T},\mathbf{o}]) = [\mathbf{T},\mathbf{o}]\) by Lemma~\ref{lemma_encoded_sequence_same_tree}.
    Therefore, a.s., \([\mathbb{Z}^R_Y(0),0] = [\mathbf{T},\mathbf{o}]\).

    It is now shown that the sequence \(Y\) is stationary.
    Let \((u_n)_{n \in \mathbb{Z}}=U((\mathbf{T},\mathbf{o}))\) be the succession line with \(u_0=\mathbf{o}\).
    Since \(a\) and \(b\) are well-defined on a unimodular EFT (by Lemma~\ref{lemma_a_b_wellDefined}), it implies that the map \(u_n \mapsto u_{n+i}, \forall n \in \mathbb{Z}\)  is bijective for any \(i \in \mathbb{Z}\).
    Therefore, the map \([\mathbf{T},\mathbf{o}] \mapsto [\mathbf{T},u_i]\) is measure-preserving, for any \(i \in \mathbb{Z}\), by Point-stationarity Theorem~\ref{thm_point_stationarity}.
    Since, \([\mathbb{Z},0,T_iY] = \Phi_R([\mathbf{T},u_i])\), the map \((Y_n)_{n \in \mathbb{Z}} \mapsto (Y_{n+i})_{n \in \mathbb{Z}}\) is measure-preserving for any \(i \in \mathbb{Z}\).
    Thus, \(Y\) is stationary.
\end{proof}

\subsection{Examples of record representable vertex-shifts}

\begin{example}[Strict record vertex-shift on an i.i.d. sequence (class \(\mathcal{I}/\mathcal{I}\))]\normalfont \label{example_strict_record}
    An illustration of this example can be seen in Figure~\ref{fig_example_strict_record}.
    The strict record vertex-shift \(SR\) is defined on the set of networks of the form \([\mathbb{Z},0,x]\), where \(x=(x_n)_{n \in \mathbb{Z}}\) is a sequence of integers.
    It is defined by 
    \[SR(i) = \begin{cases*}
        \inf\{n>i:\sum_{k=i}^{n-1}x_k>0\} \text{ if the infimum exists,}\\
        i \text{ otherwise,}
    \end{cases*}\]
    for all \(i \in \mathbb{Z}\).
    Let \(X=(X_n)_{n \in \mathbb{Z}}\) be an i.i.d. sequence of random variables taking values in \(\mathbb{Z}\) such that their common mean exists.
    Let \(\Psi_{SR}\) be the map that takes any network of the form \([\mathbb{Z},0,x]\) and maps to the component of \(0\) in the \(SR\)-graph of \((\mathbb{Z},x)\).
    One can show that the component of \(0\) in the \(SR\)-graph is of class \(\mathcal{I}/\mathcal{I}\) if \(\mathbb{E}[X_0]=0\) and \(\mathbb{E}[X_0^2]>0\), because \(\limsup_{n \to \infty}S_{-n} = \infty\); which implies that the descendant tree of \(0\) is a.s. finite, and because \(0\) has infinitely many ancestors; which follows from the Chung-Fuchs recurrence theorem.
    By Theorem~\ref{thm_representation}, there exists a stationary sequence \(Y=(Y_n)_{n \in \mathbb{Z}}\) of random variables taking values in \(\{-1,0,1,2,\cdots\}\) such that \([\mathbb{Z}^{SR}_X(0),0] \overset{\mathcal{D}}{=} [\mathbb{Z}_Y^R(0),0]\).
    \begin{figure}[htbp]
        \includegraphics[scale=0.60]{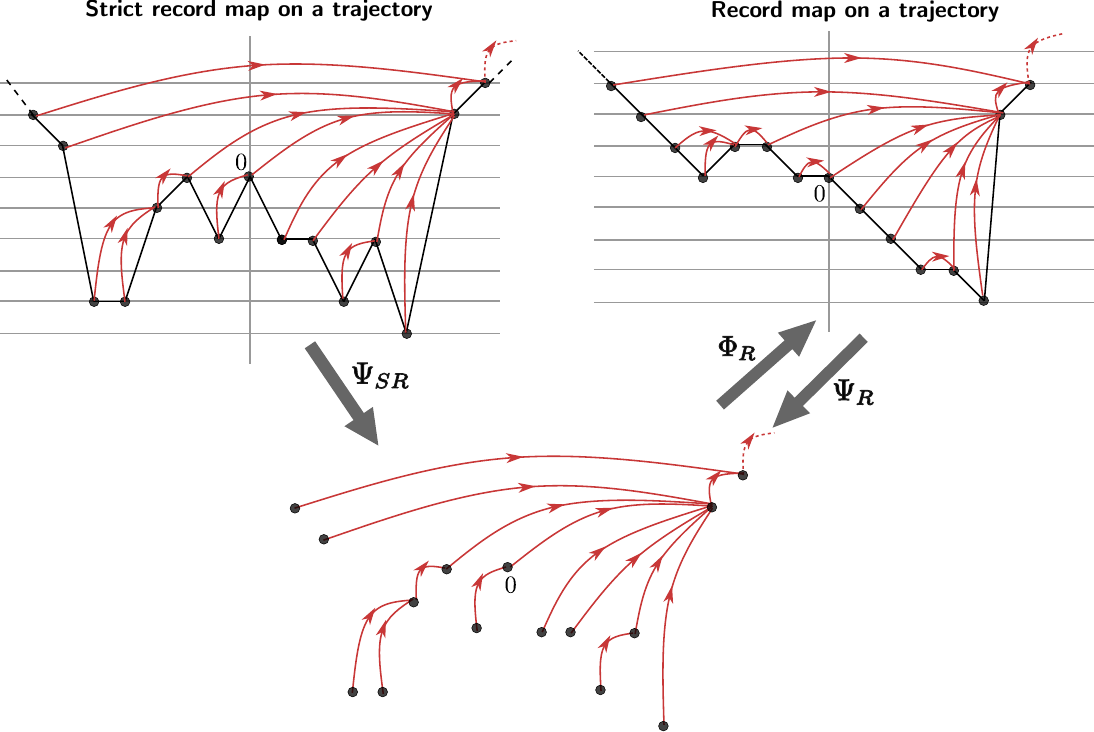}
        \caption{An Illustration of Example \ref{example_strict_record}.
        The top left figure shows the trajectory of a realization of the i.i.d. sequence \(X=(X_n)_{n \in \mathbb{Z}}\), where \(X_n \in \mathbb{Z}, \forall n\).
        The  \(SR\)-graph (strict record graph) is drawn in red on the trajectory of the top left figure.
        The component of \(0\) in the \(SR\)-graph is drawn in the bottom figure.
        The top right figure shows the trajectory of the sequence \(Y=(Y_n)_{n \in \mathbb{Z}}\) which is obtained by applying the backward map \(\Phi_R\) to the rooted tree of the bottom figure.
        The component of \(0\) in the record graph of the trajectory in the top right figure is the same as the component of \(0\) in the \(SR\)-graph.}
        \label{fig_example_strict_record}
      \end{figure}
\end{example}

\begin{example}[Climbing point vertex-shift that depends on the future (class \(\mathcal{I}/\mathcal{F}\))]\normalfont \label{example_climbing_vs}
    An illustration of this example is shown in Figure~\ref{fig_example_climbing_vertex-shift}.
    Define a vertex-shift $C$ (C for climbing point) based on an example taken from \cite[Example 1(a)]{fossStochasticSequencesRegenerative2013a}.
    Let \(X=(X_n)_{n \in \mathbb{Z}}\) be an i.i.d. sequence of random variables taking values in \(\{-1,0,1\}\) with probabilities given by: \(\mathbb{P}[X_0=1]=p, \mathbb{P}[X_0=-1]=q\) and \(\mathbb{P}[X_0 =  0]=1-p-q\), where \(0<q<p\) and \(p+q<1\).
    Define the vertex-shift on the networks of the form \([\mathbb{Z},0,x]\), where \(x=(x_n)_{n \in \mathbb{Z}}\) is a sequence with \(x_n \in \{-1,0,1\}\ \forall n\) by
    \[C(i) = \begin{cases}
        \inf\{n>i:\sum_{k=n}^{m}x_k \geq 0, \forall m>n\} \text{ if the infimum exists},\\
        i \text{ otherwise,}
    \end{cases}\]
    for all \(i \in \mathbb{Z}\).
    Let \(\Psi_C\) denote the map that takes the networks of the form \([\mathbb{Z},0,x]\) and maps to the connected component of \(0\) in the \(C\)-graph of \((\mathbb{Z},x)\).
    Note that \(C(i)=k\) for some \(k>i\) if and only if \(k\) is the smallest integer larger than \(i \) such that \(S_{k} = \min\{S_n:n>i\}\).
    Since \(\mathbb{P}[X_0]=p-q>0\), the random walk \((S_n)_{n \geq 0}\) drifts to \(+\infty\), where \(S_0=0, S_n = \sum_{k=0}^{n-1}X_k\).
    Therefore, the event \(C(i)>i\) for all \(i \in \mathbb{Z}\) occurs with probability \(1\).
    Since, \(S_{-n}=\sum_{k=-n}^{-1}-X_k\) converges to \(-\infty\) as \( n \to \infty\), the random variable \(\max\{S_{-n}:n \geq 0\}\) is finite a.s.
    Thus, some ancestor of \(0\) in the \(C\)-graph has infinitely many descendants.
    So, the component \([\mathbb{Z}_X^C(0),0]\) of \(0\) in the \(C\)-graph is of class \(\mathcal{I}/\mathcal{F}\).
    One can show that the smallest child of every vertex which lies on the bi-infinite path of \(\mathbb{Z}^C_X(0)\) belongs to the bi-infinite path.
    Therefore, \([\mathbb{Z}_X^C(0),0]\) is of class \(\mathcal{I}/\mathcal{F}\) and has a unique succession line (since it has ECS order).
    \begin{figure}[htbp]
           \includegraphics[scale=0.5]{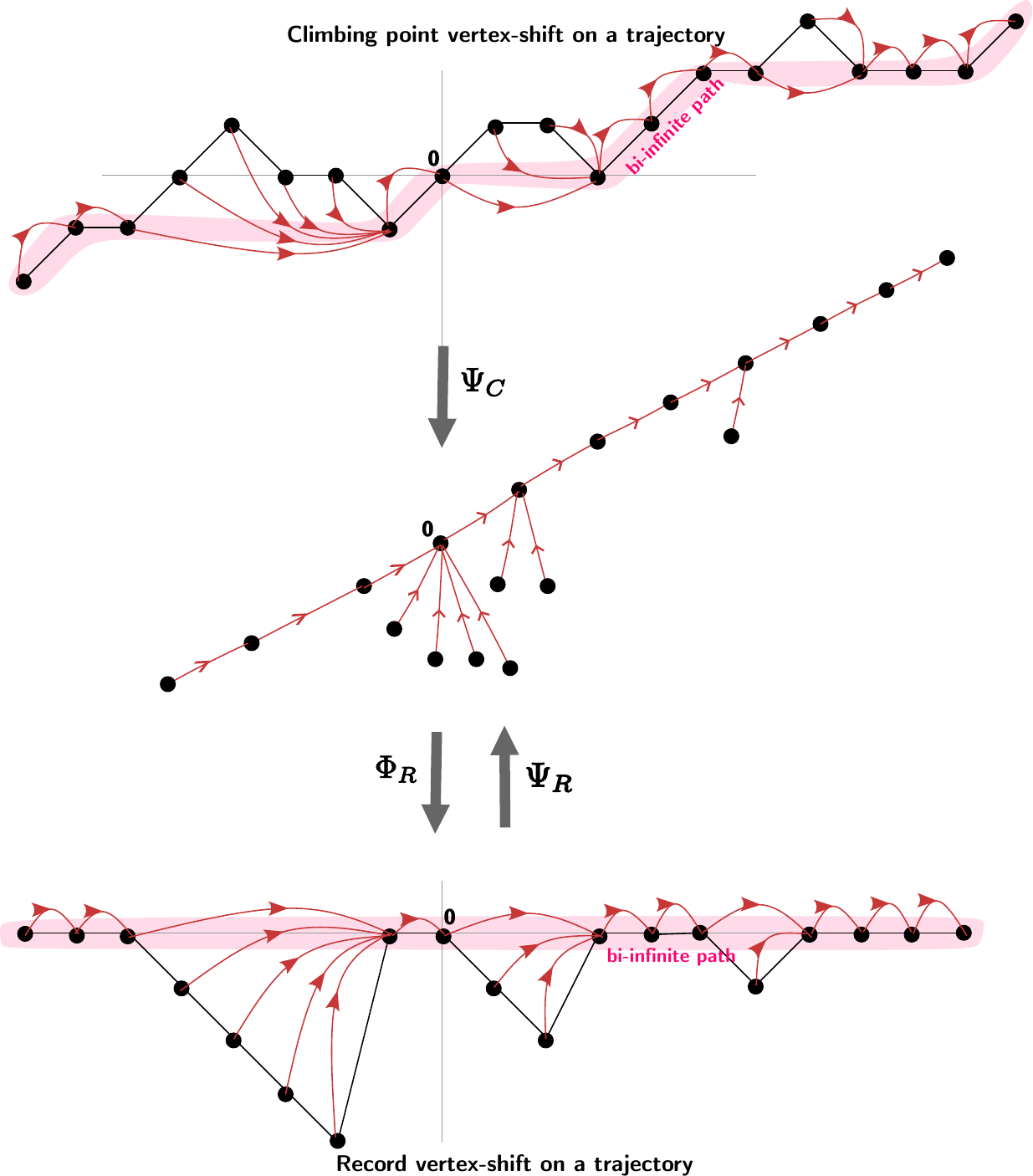}
           \caption{An Illustration of Example \ref{example_climbing_vs}.
           The top figure shows the trajectory of a realization of the i.i.d. sequence \(X=(X_n)_{n \in \mathbb{Z}}\), where \(X_n \in \mathbb{Z}, \forall n\).
           The  \(C\)-graph (\(C\) for climbing point vertex-shift) is drawn in red on the trajectory of the top figure.
           The vertices on the bi-infinite path are highlighted.
           The component of \(0\) in the \(C\)-graph is drawn in the middle figure.
           The bottom figure shows the trajectory of the sequence \(Y=(Y_n)_{n \in \mathbb{Z}}\) which is obtained by applying the backward map \(\Phi_R\) to the rooted tree shown in the middle figure.
           The component of \(0\) in the record graph of the trajectory in the bottom figure is the same as the component of \(0\) in the \(C\)-graph.}
           \label{fig_example_climbing_vertex-shift}
         \end{figure}
\end{example}

%
%

\begin{acks}[Acknowledgments]
  Most of the material contained in this paper was previously posted (in modified form) in the PhD thesis \cite{roychoudhuryRecordsStationaryProcesses2023} of the second author.
\end{acks}
\begin{funding}
 This research work was funded by the European Union (ERC NEMO 788851).
\end{funding}

\end{document}